\documentclass[11.5pt,twoside, a4paper, english, reqno]{amsart} 
\usepackage{amsaddr}
\usepackage{a4wide}
\usepackage{amscd}
\usepackage{amssymb}
\usepackage{amsthm}
\usepackage{graphicx}
\usepackage{amsmath,calrsfs}
\usepackage{latexsym}
\usepackage{enumitem,dsfont}

\usepackage{upref}
\usepackage[colorlinks]{hyperref}
\usepackage{color,graphics}
\usepackage{upref,hyperref,color}

\usepackage[usenames,dvipsnames]{xcolor}
\usepackage{pdfsync}
\usepackage{ulem,cancel,pgf}

\usepackage{frcursive}

\theoremstyle{plain}
\newtheorem{theorem}{Theorem}[section]
\theoremstyle{plain}
\newtheorem{lemma}[theorem]{Lemma}
\newtheorem{proposition}[theorem]{Proposition}
\newtheorem{corollary}[theorem]{Corollary}

\theoremstyle{definition}
\newtheorem{definition}{Definition}[section]

\newtheorem{remark}{Remark}[section]
\newtheorem{claim}{Claim}[section]

\newtheorem*{maintheorem*}{Main Theorem}
\newtheorem*{maincorollary*}{Main Corollary}

\DeclareFontFamily{U}{BOONDOX-calo}{\skewchar\font=50 }
\DeclareFontShape{U}{BOONDOX-calo}{m}{n}{
	<-> s*[1.05] BOONDOX-r-calo}{}
\DeclareFontShape{U}{BOONDOX-calo}{b}{n}{
	<-> s*[1.05] BOONDOX-b-calo}{}
\DeclareMathAlphabet{\mathcalb}{U}{BOONDOX-calo}{m}{n}
\SetMathAlphabet{\mathcalb}{bold}{U}{BOONDOX-calo}{b}{n}
\DeclareMathAlphabet{\mathbcalb}{U}{BOONDOX-calo}{b}{n}

\numberwithin{equation}{section} \allowdisplaybreaks

\title[Evolution Equations with Mixed Local and Nonlocal Operators]{Local Existence, Uniqueness, Regularity, and Global Behavior of Evolution Equations Involving Mixed Local and Nonlocal Operators}
\date{\today}

\author[Abdelhamid Gouasmia]{Abdelhamid Gouasmia}
\address[Abdelhamid Gouasmia]{
Department of Mathematics, Faculty of Sciences And Technology,\\
Mohamed Cherif Messaadia University,\\
P.O.Box 1553, Souk Ahras 41000, Algeria.\\[4pt]
Laboratoire d'equations aux d\'{e}riv\'{e}es partielles non lin\'{e}aires et histoire des math\'{e}matiques,\\
Ecole Normale Sup\'{e}rieure,\\
B.P. 92, Vieux Kouba, 16050 Algiers, Algeria.}
\email[Abdelhamid Gouasmia]{gouasmia.abdelhamid@gmail.com, abdelhamid.gouasmia@univ-soukahras.dz}

\author[Hichem Hajaiej]{Hichem Hajaiej}
\address[Hichem Hajaiej]{Department of Mathematics, California State University, Los Angeles,
CA 90032, USA}
\email[Hichem Hajaiej]{hichem.hajaiej@gmail.com}

\author[Kaushik Bal]{Kaushik Bal}
\address[Kaushik Bal]{
	Indian Institute of Technology Kanpur, Kanpur, India.}
\email[Kaushik Bal]{kaushik@iitk.ac.in}
\usepackage{comment}
\begin{document}
	
\begin{abstract}
	In this work, we address a parabolic problem featuring a potentially doubly nonlinear term, governed by a combination of local and nonlocal operators (see Problem \eqref{P1} below). We first establish the local existence of weak energy solutions via a semi-discretization in time applied to an auxiliary evolution problem. The uniqueness of these solutions is subsequently obtained through a novel generalization of the classical inequality of D\'{i}az and Saa, suitably adapted to the mixed local–nonlocal setting. This generalization provides a new comparison principle and establishes the $T$-accretivity of a corresponding operator in $L^2$. By employing this comparison principle, we construct suitable barrier functions that allow the global-in-time extension of solutions. Furthermore, we demonstrate the convergence of weak solutions to a nontrivial stationary state. Our approach relies on methods from the theory of contraction semigroups. It is noteworthy that these results are underpinned by a detailed analysis of the stationary problems associated with Problem \eqref{P1}, which also reveals several qualitative properties of the solutions.
\end{abstract}

\maketitle
\textbf{Keywords:}	mixed local and nonlocal operators, uniqueness results, existence results,  stabilization
\\[2mm]
\hspace*{0.45cm}\textbf{MSC:} Primary 35K55, 35J62; Secondary 35B65, 35B40.  \\

\setcounter{tocdepth}{1}
\tableofcontents
\newpage
\section{Introduction}
Let $m \geq 0$, $p>1$, and $N \in \mathbb{N}\setminus\{0\}$ be such that $m<p-1$ and $N>p$. In the case $m>0$, assume that the parameters $\delta$ and $\gamma$ satisfy $ \delta \in (0,m) $ and $ 2\delta - \textbf{d}\,\gamma > 2m - 1 $ for some $\textbf{d}>N$.  We consider the space--time cylinder $Q_T := (0,T)\times\Omega$, where $\Omega \subset \mathbb{R}^N$ is a bounded domain with smooth boundary, and we denote its lateral boundary by $ \Gamma_T := (0,T)\times(\mathbb{R}^N \setminus \Omega). $ In this paper, we investigate the existence, uniqueness, and qualitative behavior of weak solutions to the following parabolic problem, which is associated with a doubly nonlinear evolution equation in the case $m>0$:
\begin{equation}\label{P1}\tag{$Q_{T}$}
\left\{
\begin{aligned}
\frac{m + 1}{2m + 1} \,\partial_t \big(u^{2m + 1}\big) - \Delta_{p} u + (-\Delta)^{s}_{p} u &= g(t, x)\, u^{m} + m \, d(x)^{- \gamma}  \, u^{\delta} && \text{in } Q_T, \\
u &> 0 && \text{in } Q_T, \\
u &= 0 && \text{in } \Gamma_T, \\
u(0,\cdot) &= u_0 && \text{in } \Omega.
\end{aligned}
\right.
\end{equation}
Here, \(d(\cdot)\) denotes the distance to the boundary \(\partial \Omega\), defined by
\[
d(x) := \operatorname{dist}(x, \partial \Omega) = \inf_{y \in \partial \Omega} |x - y|.
\] 
The classical \(p\)-Laplacian is given by $ \Delta_{p} u := \operatorname{div}\big( |\nabla u|^{p-2} \nabla u \big), $ whereas, for a fixed \(s \in (0,1)\), the fractional \(p\)-Laplacian is defined as
\[
(-\Delta)^{s}_{p} u(x) := 2 \, \mathrm{P.V.} \int_{\mathbb{R}^N} 
\frac{|u(x) - u(y)|^{p-2} \big(u(x) - u(y)\big)}{|x-y|^{N+sp}} \, dy,
\]
with \(\mathrm{P.V.}\) representing the Cauchy principal value.  We assume that \( g \in L^{\infty}(Q_T) \) and that there exists a nontrivial function \( \underline{g} \in \left( L^{\infty}(\Omega) \right)^{+} \) such that
\begin{equation}\label{equ1}\tag{H1}
g(t,x) \geq \underline{g}(x) \quad \text{for almost every } (t,x) \in Q_T.
\end{equation}
The study of mixed local and nonlocal operators of the form $ u \mapsto - \Delta_{p} u + (-\Delta)^{s}_{p} u, $ has received significant attention in recent years, as such operators naturally arise in a wide range of applications, including the biological sciences, plasma physics, and various areas of mathematical analysis. Their investigation has been pursued through different methodologies, motivated by connections to several important problems. These include Bernstein-type regularity results, smooth approximations via suitable solutions, probability theory and Harnack inequalities, Cahn--Hilliard and Allen--Cahn equations, the Aubry--Mather theory for pseudo-differential equations, and viscosity solution techniques; see \cite{dipierro2021description, dipierro2022non, alibaud2020liouville, barles2012lipschitz, biswas2010viscosity, coville2014nonlocal, cabre2022bernstein, blazevski2013local} and the references therein.\\[4pt]
A representative example is provided in \cite{dipierro2022non}, where a model was proposed to describe the diffusion of a biological population within an ecological niche subject to both local and nonlocal dispersal mechanisms. This model is motivated by a biologically relevant scenario in which long-range foraging movements alternate with localized search strategies on smaller spatial scales. Within this framework, the exterior of the niche remains accessible to the population, but constitutes a hostile environment that enforces an immediate return upon any potential egression. Building on this setting, \cite{dipierro2021description} established a rigorous mathematical formulation of the same ecological diffusion process, obtained through the superposition of Brownian and L\'{e}vy processes, thereby offering a perspective particularly well suited for ecological applications. Precisely, the authors considered the following diffusive equation, which involves a combination of local and nonlocal operators (i.e. $ p = 2 $):
\begin{eqnarray}\label{equ2}
\partial_{t}u - \alpha \Delta u + \beta (-\Delta)^{s} u = 0 
\quad &\text{ in } (0, \infty) \times \Omega,
\end{eqnarray}
subject to the classical and nonlocal Neumann boundary conditions
\begin{eqnarray*}
	\dfrac{\partial u}{\partial \nu}(x) &=& 0, 
	\quad  x \in \partial \Omega,\\[10pt]
	\displaystyle \int_{\Omega} \dfrac{u(x) - u(y)}{|x - y|^{N+2s}}\,dy &=& 0, 
	\quad x \in \mathbb{R}^{N}\setminus \Omega,
\end{eqnarray*}
where $u = u(t, x)$ denotes the density of a biological population inhabiting the niche represented by the bounded smooth domain $\Omega \subset \mathbb{R}^N$ with outward unit normal vector $\nu$, and $s \in (0,1)$. The system is complemented with an initial condition prescribed at $t=0$. Here, $\alpha$ and $\beta$ are positive constants. For further details and related developments, we refer the reader to \cite{kao2012evolution, cancelier1986problems, chen2008evolution} and the references therein.\\[4pt]
On the one hand, various fundamental aspects of stationary elliptic problems driven by mixed operators--combining both local and nonlocal components---have been thoroughly investigated in the literature. Significant contributions in this direction include results on existence, multiplicity, nonexistence, uniqueness, Sobolev regularity of both power- and exponential-type, boundary behavior, H\"{o}lder continuity, and estimates for Green functions (see, e.g., \cite{bal2024regularity, dhanya2024interior, arora2025combined, gouasmia2024uniqueness, chen2010sharp, antonini2025global}), without attempting to provide an exhaustive overview. It is noteworthy that several of the results obtained in these works on such local--nonlocal structures play an essential role in our analysis. On the other hand, comparatively fewer studies have focused on the corresponding parabolic problems, in particular on doubly nonlinear evolution equations governed by this class of operators. In \cite{das2024gradient}, the author studied the heat equation \eqref{equ2}, establishing the local H\"{o}lder regularity of weak solutions and deriving estimates for the $L^{1}$-norm of their gradients. Earlier, \cite{garain2023weak} obtained a weak Harnack inequality for the same problem. Considering the more general operator $-\Delta_{p} (\cdot) + (-\Delta)^{s}_{p} (\cdot)  $ with $1 < p < \infty$ in \eqref{equ2}, \cite{fang2022regularity} established local boundedness and H\"{o}lder continuity of weak solutions, while \cite{garain2023weak, garain2024regularity} further derived a Harnack-type estimate. More recently, \cite{nakamura2022local, nakamura2023harnack} established the local boundedness of sign-changing weak solutions for \(p \geq 2\) and derived a Harnack inequality for globally bounded positive weak solutions of the doubly nonlinear parabolic equation:
\begin{equation*}
\partial_{t}\bigl( |u|^{p - 2}u\bigr) - \Delta_{p} u + (-\Delta)_{p}^{s} u = 0.
\end{equation*}
Regarding the issues of existence, uniqueness, and Sobolev regularity, \cite{bal2024mixed} addressed these questions by employing approximation schemes for the parabolic equation with a singular nonlinearity:
\begin{equation*}
\left\{
\begin{aligned}
\partial_{t}u - \Delta u + (-\Delta)^{s} u &= f(t, x)\, u^{-\gamma(t, x)} && \text{in } (0, T) \times \Omega, \\
u &= 0 && \text{in } (0, T) \times (\mathbb{R}^{N}\setminus\Omega), \\
u(0, x) &= u_0(x) && \text{in } \Omega,
\end{aligned}
\right.
\end{equation*}
where $0<T<\infty$, $\gamma$ is a positive continuous function on $(0, T) \times \Omega$, $f \geq 0$, $u_0 \geq 0$, and both $f$ and $u_0$ belong to appropriate Lebesgue spaces. Very recently, \cite{constantin2025doubly} addressed the more general case with $q>1$ and $p>2$, obtaining results on existence, uniqueness, and qualitative properties such as extinction, blow-up, and stabilization for solutions of  the following
doubly nonlinear parabolic equation:
\begin{equation*}
\left\{
\begin{aligned}
\partial_t \beta(u) - \Delta_{p} u + (-\Delta)^{s}_{q} u &= \text{div}(\vec{f}(u)) + g(t, x, u) && \text{in } Q_T, \\
u &= 0 && \text{in } \Gamma_T, \\
u(0,\cdot) &= u_0 && \text{in } \Omega,
\end{aligned}
\right.
\end{equation*}
where the nonlinearities $\beta$, $\vec{f}$, and $g$ satisfy appropriate growth and structural conditions. For additional related contributions, we refer to \cite{del2025fujita, carhuas2025global, biagi2025global, shang2022holder, shang2023harnack,  garain2025some, radulescu2024harnack} and the references therein.   Despite the substantial progress made in establishing local and global H\"{o}lder regularity, as well as various Harnack-type inequalities for mixed local–nonlocal problems, the literature still contains relatively few results concerning the fundamental issues of existence, uniqueness, and other qualitative properties of solutions.\\[4pt]
To the best of our knowledge, only a limited number of results are available concerning parabolic equations involving mixed local and nonlocal operators, particularly with respect to the existence, uniqueness, and other qualitative properties of weak solutions. The main objective of this paper is to address these questions in the context of problem \eqref{P1}, which corresponds to a doubly nonlinear evolution equation with \(m > 0\).   Depending on the value of \(m\), this problem encompasses a broad class of models.  Our methodology begins with a semi-discretization in time applied to an auxiliary evolution problem, through which we first establish the local existence of weak energy solutions. Uniqueness is also established.  In the case $m > 0$, it is obtained via a novel generalization of the classical inequality of D\'{i}az and Saa, suitably adapted to the mixed local–nonlocal framework. This generalization yields a new comparison principle and demonstrates the \(T\)-accretivity of an associated operator in \(L^2\).  By employing this comparison principle, we construct appropriate barrier functions that enable the establishment of global existence for weak solutions. It is worth emphasizing that, in the case $m>0$, the choice of appropriate test functions is essential for establishing the preceding results. Test functions that are suitable for $m=0$ typically fail to yield valid conclusions when $m>0$. This challenge becomes particularly pronounced in problems involving a source term, where overcoming the associated difficulties necessitates test functions specifically tailored to the case $m>0$. Moreover, we show that these solutions converge, as \(t \to \infty\), to a nontrivial stationary solution.The analysis relies on advanced techniques from contraction semigroup theory, providing a rigorous framework for these results.

\section{Preliminaries and main results}
We begin by recalling the definitions and fundamental properties of the functional spaces that constitute the framework of our analysis. We then present the main results related to the existence, uniqueness, regularity, and asymptotic behavior of weak solutions to problem~\eqref{P1}. In addition, we establish several auxiliary results for the associated stationary problems (see \eqref{PP2} and \eqref{P7}), which play an essential role in proving the aforementioned properties of the solutions to~\eqref{P1}.\\[4pt]
We first introduce the standard notation $ t^{\pm} := \max\{\pm t,\, 0\}, $  $ t \in \mathbb{R}. $ In addition, for a fixed parameter \(k > 0\), we define the truncation operator 
\(\mathbf{T}_{k}\colon \mathbb{R} \to \mathbb{R}\) by
\[
\mathbf{T}_{k}(s) :=
\begin{cases}
\min\{s,\, k\}, & \text{if } s \geq 0, \\[4pt]
-\mathbf{T}_{k}(-s), & \text{if } s < 0.
\end{cases}
\]
Throughout this work, we assume that \( \Omega \subset \mathbb{R}^{N} \) is a bounded domain with smooth boundary, where \( N > p \) for some \( p > 1 \). For a measurable function \( u : \mathbb{R}^{N} \to \mathbb{R} \), the norm in the Lebesgue space \( L^{p}(\Omega) \) is defined as
\[
\|u\|_{L^{p}(\Omega)} := \left( \int_{\Omega} |u|^{p} \, dx \right)^{1/p}.
\]
The Sobolev space \( W^{1,p}(\Omega) \) is defined by
\[
W^{1,p}(\Omega) := \left\{ u \in L^{p}(\Omega) \; : \; \nabla u \in L^{p}(\Omega) \right\},
\]
and is endowed with the norm
\[
\|u\|_{W^{1,p}(\Omega)}^{p} := \|u\|_{L^{p}(\Omega)}^{p} + \|\nabla u\|_{L^{p}(\Omega)}^{p}.
\]
Furthermore, we define the space
\[
W_{0}^{1,p}(\Omega) := \left\{ u \in W^{1,p}(\Omega) \; : \; u\mid_{\partial\Omega }= 0 \right\},
\]
which is equipped with the norm
\[
\|u\|_{W_{0}^{1,p}(\Omega)} := \|\nabla u\|_{L^{p}(\Omega)}.
\]
In this framework, we recall the classical Sobolev embedding theorem:
\begin{theorem}[{\cite[Theorem 3]{evans2022partial}}] \label{thm0}
	Let \( p \geq 1 \) with \( N > p \). Then there exists a positive constant \( C = C(N, p, \Omega) \) such that, for every function \( u \in W^{1, p}_{0}(\Omega) \), the following inequality holds:
	\[
	\| u \| _{L^{p^{*}}(\Omega)} \leq C \| u \| _{W^{1, p}_{0}(\Omega)},
	\]
	where \( p^{*} \) denotes the Sobolev critical exponent, given by \( p^{*} = \frac{Np}{N - p} \).
\end{theorem}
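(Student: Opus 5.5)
The plan is the classical Gagliardo--Nirenberg--Sobolev argument: prove the inequality first for $p=1$ and smooth compactly supported functions, then bootstrap to general $p<N$ by applying the $p=1$ case to a power of $|u|$, and finally pass to $W^{1,p}_0(\Omega)$ by density. Since $\Omega$ is bounded, we extend every $u\in C_c^\infty(\Omega)$ by zero to $\mathbb{R}^N$. The constant we obtain will in fact depend only on $N$ and $p$, which is stronger than the stated dependence on $(N,p,\Omega)$.

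\textbf{Step 1 (case $p=1$).} For $u\in C_c^1(\mathbb{R}^N)$ and each coordinate $i$,
\[
|u(x)|\le \int_{-\infty}^{x_i}|\partial_i u(x_1,\dots,y_i,\dots,x_N)|\,dy_i\le \int_{\mathbb{R}}|\nabla u|\,dy_i .
\]
Multiplying the $N$ bounds gives
\[
|u(x)|^{N/(N-1)}\le \prod_{i=1}^N\Big(\int_{\mathbb{R}}|\nabla u|\,dy_i\Big)^{1/(N-1)} .
\]
We then integrate successively in $x_1,\dots,x_N$. At each stage we apply the generalized H\"older inequality to the $N-1$ factors that still depend on the current integration variable. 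This yields
\[
\|u\|_{L^{N/(N-1)}}\le \int_{\mathbb{R}^N}|\nabla u|\,dx .
\]
We expect this iterated H\"older step to be the main obstacle, because one must track carefully which factors depend on which variables. We will do it by induction on the integration step.

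\textbf{Step 2 (case $1<p<N$).} Apply Step 1 to $v=|u|^{\gamma}$ with $\gamma=\frac{p(N-1)}{N-p}>1$. The function $v$ is $C^1$ with $|\nabla v|=\gamma|u|^{\gamma-1}|\nabla u|$. H\"older's inequality with exponents $p/(p-1)$ and $p$ then gives
\[
\Big(\int |u|^{\gamma N/(N-1)}\Big)^{(N-1)/N}\le \gamma\Big(\int |u|^{(\gamma-1)p/(p-1)}\Big)^{(p-1)/p}\|\nabla u\|_{L^p}.
\]
The choice of $\gamma$ makes both exponents equal to $p^*$:
\[
\frac{\gamma N}{N-1}=\frac{(\gamma-1)p}{p-1}=p^* .
\]
Dividing by the finite quantity $\big(\int|u|^{p^*}\big)^{(p-1)/p}$ gives $\|u\|_{L^{p^*}}\le \gamma\|\nabla u\|_{L^p}$. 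The exponent left over is correct, since $\frac{N-1}{N}-\frac{p-1}{p}=\frac{1}{p^*}$. If $u\equiv 0$, there is nothing to prove.

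\textbf{Step 3 (density).} Take $u\in W^{1,p}_0(\Omega)$ and choose $u_k\in C_c^\infty(\Omega)$ with $u_k\to u$ in $W^{1,p}$. By Step 2, the sequence $(u_k)$ is Cauchy in $L^{p^*}$. Its limit coincides a.e.\ with $u$, because a subsequence converges a.e.\ to $u$ in $L^p$. Passing to the limit in $\|u_k\|_{L^{p^*}}\le C\|\nabla u_k\|_{L^p}$ gives the claimed inequality with $C=C(N,p)$.
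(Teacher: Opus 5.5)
Your proposal is correct and is exactly the standard argument behind the cited result in \cite{evans2022partial}: the iterated H\"older estimate for $p=1$, the substitution $|u|^{\gamma}$ with $\gamma=\frac{p(N-1)}{N-p}$, and approximation by zero-extended $C_c^\infty(\Omega)$ functions; the paper gives no proof of its own and simply quotes this theorem. The one point to make explicit is that the paper defines $W^{1,p}_0(\Omega)$ by vanishing boundary trace, so your Step~3 relies on the fact that, for the smooth bounded $\Omega$ assumed here, this space coincides with the closure of $C_c^\infty(\Omega)$ in $W^{1,p}(\Omega)$.
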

\begin{remark}\label{remark0}
	As a direct consequence of Theorem \ref{thm0}, the norms \( \|\cdot\| _{W^{1, p}(\Omega)} \) and \( \|\cdot\| _{W_0^{1, p}(\Omega)} \) are equivalent on \( W_0^{1, p}(\Omega) \). Moreover, according to \cite{evans2022partial}, the space \( W^{1,p}_0(\Omega) \) is continuously embedded into \( L^r(\Omega) \) for all \( 1 \leq r \leq p^{*} \), and compactly embedded for \( 1 \leq r < p^{*} \).
\end{remark}

\noindent 
For \(0 < s < 1\), the fractional Sobolev space \( W^{s, p}(\mathbb{R}^{N}) \) is defined as
\[
W^{s, p}(\mathbb{R}^{N}) := \left\lbrace u \in L^{p}(\mathbb{R}^{N}) \;:\; [u]_{s, p}^{p} := \iint_{\mathbb{R}^{2N}} \frac{\lvert u(x)-u(y)\rvert^{p}}{\lvert x-y\rvert^{N+sp}} \, dx \, dy < \infty \right\rbrace,
\]
endowed with the norm
\[
\|u\|_{W^{s, p}(\mathbb{R}^{N})}^{p} := \|u\|_{L^{p}(\mathbb{R}^{N})}^{p} + [u]_{s, p}^{p}.
\]
The space \( W_{0}^{s, p}(\Omega) \) is defined by
\[
W_{0}^{s, p}(\Omega) := \left\lbrace u \in W^{s, p}(\mathbb{R}^{N}) \;:\; u = 0 \ \text{a.e. in } \mathbb{R}^{N}\setminus\Omega \right\rbrace,
\]
and is equipped with the Gagliardo seminorm $ \|u\|_{W_{0}^{s, p}(\Omega)} := [u]_{s, p}. $ Moreover, if \( \partial \Omega \) is sufficiently smooth, then \( W_{0}^{s, p}(\Omega) \) coincides with the closure of \( C_{c}^{\infty}(\Omega) \) in the norm \( [\cdot]_{s, p} \), where
\[
C^{\infty}_{c}(\Omega) := \left\lbrace \varphi : \mathbb{R}^{N} \to \mathbb{R} \;:\; \varphi \in C^{\infty}(\mathbb{R}^{N}) \text{ and } \mathrm{supp}(\varphi) \Subset \Omega \right\rbrace.
\]
We set
\[
C_{0}(\overline{\Omega})
:=
\left\{
u \in C(\overline{\Omega}) \,:\, u|_{\partial\Omega}=0
\right\}.
\]

\noindent
We next recall the fractional Poincaré inequality:
\begin{theorem}[{\cite[Theorem~6.5]{di2012hitchhikers}}] \label{thm3}
	Let \( s \in (0,1) \), \( p \geq 1 \), and assume \( N > sp \). Then there exists a constant \( C = C(N,p,s,\Omega) > 0 \) such that, for every measurable compactly supported function \( u : \mathbb{R}^{N} \to \mathbb{R} \), one has
	\[
	\|u\|_{L^{p_{s}^{*}}(\mathbb{R}^{N})}^{p} \leq C \iint_{\mathbb{R}^{2N}} \frac{\lvert u(x)-u(y)\rvert^{p}}{\lvert x-y\rvert^{N+sp}} \, dx \, dy,
	\]
	where \( p_{s}^{*} \) denotes the Sobolev critical exponent, given by \( p_{s}^{*} = \frac{Np}{N-sp} \).
\end{theorem}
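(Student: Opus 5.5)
The statement is the fractional Sobolev inequality $\|u\|_{L^{p_s^*}}^p \le C[u]_{s,p}^p$. I would prove it by first reducing to nonnegative functions, then decomposing $u$ into dyadic level sets in the spirit of Maz'ya, and finally using a fractional isoperimetric-type estimate for each layer. Since $\big||u(x)|-|u(y)|\big|\le |u(x)-u(y)|$, we have $[|u|]_{s,p}\le [u]_{s,p}$, so we may assume $u\ge 0$. If $[u]_{s,p}=\infty$ there is nothing to prove. If $[u]_{s,p}<\infty$ and $u$ has compact support, $u$ must vanish at infinity in measure: otherwise $u\equiv c\neq 0$ on a set of positive measure far from the support, which forces the seminorm to be infinite unless $u$ is constant, hence $0$.

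The first key step is a lower bound valid for every measurable set $E$ of finite measure and every $x\in E$:
\[
\int_{\mathbb{R}^N\setminus E}\frac{dy}{|x-y|^{N+sp}}\ \ge\ c(N,s,p)\,|E|^{-sp/N}.
\]
To prove it, compare with the ball $B_\rho(x)$ with $|B_\rho|=|E|$. The mass of $E\setminus B_\rho$ equals the mass of $B_\rho\setminus E$. The former lies where the kernel is at most $\rho^{-N-sp}$, the latter where it is at least $\rho^{-N-sp}$. Hence the integral over $E^c$ dominates the integral over $B_\rho^c$, which equals $c\rho^{-sp}$.

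Next, for $k\in\mathbb{Z}$ set $A_k=\{u>2^k\}$, $D_k=A_k\setminus A_{k+1}$ and $a_k=|A_k|$, all finite by compact support. For $x\in D_{i}$ and $y\in A_{j}^c$ with $j\le i-1$, we have $|u(x)-u(y)|\ge 2^{i}-2^{j}\ge 2^{i-1}$. Restricting the double integral to pairs with $x\in D_i$ and $y\in A_{i-1}^c$ and using the lemma gives
\[
[u]^p_{s,p}\ \ge\ c\sum_i 2^{ip}\,|D_i|\,a_{i-1}^{-sp/N}.
\]

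For the other side, $\|u\|_{p_s^*}^{p_s^*}\le \sum_i 2^{(i+1)p_s^*}|D_i|\le C\sum_i 2^{ip_s^*}a_i$. I then pass from $\sum 2^{ip}a_{i}^{1-sp/N}$, which is the natural quantity since $p_s^*(1-sp/N)=p$, to the bound above involving $|D_i|=a_i-a_{i+1}$. This is the main obstacle. I would follow Lemma 6.2 of \cite{di2012hitchhikers}: the sequence $a_k$ is nonincreasing, and a summation-by-parts argument with the convergent geometric series $\sum 2^{-kp}$ yields
\[
\sum_i 2^{ip}a_i^{(N-sp)/N}\le C\sum_i 2^{ip}|D_i|\,a_{i-1}^{-sp/N}.
\]
The idea is to write $a_i=\sum_{\ell\ge i}|D_\ell|$, split according to whether $a_{\ell}$ is comparable to $a_{i}$, and control the tail by the geometric decay.

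Finally, since $p/p_s^*<1$, the elementary inequality $(\sum b_i)^{p/p_s^*}\le\sum b_i^{p/p_s^*}$ applies. With $b_i=2^{ip_s^*}a_i$, it gives
\[
\|u\|_{p_s^*}^{p}\le C\sum_i 2^{ip}a_i^{p/p_s^*}=C\sum_i 2^{ip}a_i^{(N-sp)/N},
\]
which combined with the previous two displays proves the inequality with $C=C(N,p,s)$. (It is in fact independent of $\Omega$.)
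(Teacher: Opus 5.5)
Your proposal is correct and follows the proof of the cited Theorem~6.5 of Di Nezza, Palatucci and Valdinoci; the paper gives no proof of its own. Your ball-comparison bound is their Lemma~6.1, your dyadic level-set estimate corresponds to their Lemma~6.3, your sequence inequality is their Lemma~6.2, and the last step is subadditivity of $t\mapsto t^{p/p_s^*}$. Three small points:
\begin{enumerate}
\item Lemma~6.2 is stated only for sequences that vanish for large $k$. So first replace $u$ by $\min\{u,M\}$, which does not increase $[u]_{s,p}$, and then let $M\to\infty$ by monotone convergence.
\item That lemma has $a_{k+1}$ where you have $|D_{k+1}|$. You pass to your form by writing $a_{k+1}=\sum_{i\ge k+1}|D_i|$ and using $a_k\ge a_{i-1}$ for $k\le i-1$, which costs only a factor $(2^p-1)^{-1}$.
\item Your remark about vanishing at infinity is unnecessary, since compact support already makes every $a_k$ finite.
\end{enumerate}
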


\noindent 
We define the space
\[
\mathbb{W}_{0}^{1,p}(\Omega) 
= \left\{ u \in W^{1,p}(\mathbb{R}^{N}) \; : \; u = 0 \;\; \text{a.e. in } \mathbb{R}^{N} \setminus \Omega \right\}.
\]
According to \cite[Proposition~9.18]{brezis2011functional}, if \( \Omega \) has a sufficiently smooth boundary, then
\(\mathbb{W}_{0}^{1,p}(\Omega)\) coincides with \( W^{1,p}_{0}(\Omega) \). In particular, if \( \Omega \) is of class \( C^{1} \), we have
\[
u \in W^{1,p}_{0}(\Omega) 
\;\;\Longleftrightarrow\;\;
\tilde{u} := u \chi_{\Omega} \in W^{1,p}(\mathbb{R}^{N}) \;\;\Longleftrightarrow\;\;
\tilde{u} \in \mathbb{W}_{0}^{1,p}(\Omega),
\]
which shows that each function \( u \in W^{1,p}_{0}(\Omega) \) can be identified with its zero extension in \( W^{1,p}(\mathbb{R}^{N}) \). Conversely, the restriction to \( \Omega \) of any \( u \in W^{1,p}(\mathbb{R}^{N}) \) vanishing a.e. in \( \mathbb{R}^{N} \setminus \Omega \) belongs to \( W^{1,p}_{0}(\Omega) \). On the other hand, it follows from \cite[Lemma~2.1]{buccheri2022system} (see also \cite[Lemma~1.8]{gouasmia2024uniqueness}) that
\[
W^{1,p}_{0}(\Omega) \subset W^{s,p}_{0}(\Omega).
\]
In this paper, since the analysis of parabolic problems requires function spaces that explicitly involve the time variable, we introduce them here for completeness. Let $T>0$, and let $u$ be a function defined by $ u : (0,T) \longrightarrow W_{0}^{1,p}(\Omega),  \, u(t)(x):=u(t,x). $ We denote by $C\big([0,T], W_{0}^{1,p}(\Omega)\big)$ the space of continuous functions on $[0,T]$ with values in $W_{0}^{1,p}(\Omega)$, equipped with the norm
\[
\|u\|_{C([0,T], W_{0}^{1,p}(\Omega))} := \sup_{t\in[0,T]} \|u(t)\|_{W_{0}^{1,p}(\Omega)}.
\]
In addition, we introduce the associated Bochner spaces:
\[
L^{1}\!\left(0,T; W^{1,p}_{0}(\Omega)\right) 
:= \Big\{\, u : (0,T)\to W^{1,p}_{0}(\Omega)\, :\, \|u\|_{L^{1}(0,T; W^{1,p}_{0}(\Omega))} < +\infty \,\Big\},
\]
and
\[
L^{\infty}\!\left(0,T; W^{1,p}_{0}(\Omega)\right) 
:= \Big\{\, u : (0,T)\to W^{1,p}_{0}(\Omega)\, :\, \|u\|_{L^{\infty}(0,T; W^{1,p}_{0}(\Omega))} < +\infty \,\Big\},
\]
where the norms are defined, respectively, by
\[
\|u\|_{L^{1}(0,T; W^{1,p}_{0}(\Omega))} 
:= \int_{0}^{T} \|u(t)\|_{W^{1,p}_{0}(\Omega)}\, dt,
\]
and
\[
\|u\|_{L^{\infty}(0,T; W^{1,p}_{0}(\Omega))} 
:= \operatorname*{ess\,sup}_{t\in(0,T)} \|u(t)\|_{W^{1,p}_{0}(\Omega)}.
\]
For \(r > 0\) and parameters \(\alpha, \beta > 0\), we set
\[
\mathcal{M}_{\alpha, \beta}^{r}(\Omega)
:= \Bigl\{\, u : \Omega \to \mathbb{R}^{+} \;\Big|\; u \in L^\infty(\Omega), \ \exists\, c > 0 \ \text{such that} \ c^{-1} d^{\alpha}(x) \leq u^{r}(x) \leq c\, d^{\beta}(x) \Bigr\},
\]
and define
\begin{equation}\label{eq1}
\dot{\mathbf{V}}_{+}^{\, r + 1}
:= \Bigl\{\, u : \mathbb{R}^{N} \to (0, \infty) \;\Big|\; u^{\frac{1}{r+1}} \in W^{1,p}(\mathbb{R}^{N}) \Bigr\}.
\end{equation}
Furthermore, we introduce the weighted space
\[
L_{d^{\, r+1}}^{\infty}(\Omega)
:= \Bigl\{\, u : \Omega \to \mathbb{R} \ \Big|\ 
u \in L^\infty(\Omega) \ \text{and} \ d^{-(r+1)}(\cdot)\, u(\cdot) \in L^\infty(\Omega) \Bigr\}.
\]

\begin{remark}\label{remark1}
	Let \( \phi^{s}_{1,p} \) denote the positive eigenfunction corresponding to the first eigenvalue \( \lambda^{s}_{1,p} \) of the operator \( -\Delta_{p}(\cdot) + (-\Delta)^{s}_{p}(\cdot) \) in \( W^{1,p}_{0}(\Omega) \). It is well known that \( \phi^{s}_{1,p} \in L^{\infty}(\Omega) \) (see \cite[Theorem~2.2]{palatucci2026mixed}). We normalize this eigenfunction by imposing \( \|\phi^{s}_{1,p}\|_{L^{\infty}(\Omega)} = 1 \). Furthermore, the regularity result \( \phi^{s}_{1,p} \in C^{1,\zeta}(\overline{\Omega}) \), for some \( \zeta \in (0,1) \), follows from \cite[Theorem~1.1]{antonini2025global}. Furthermore, Hopf's lemma \cite[Theorem~1.2]{antonini2025global} ensures that  $ \phi^{s}_{1,p}(x) \geq c_{1}\, d(x) $ in $ \Omega, $ whereas Theorem~\ref{theorem1} (see below) provides the upper bound  $ \phi^{s}_{1,p}(x) \leq c_{2}\, d^{\alpha}(x) $ in $ \Omega, $ for every \( \alpha \in [s,1) \) with \( \alpha \neq \frac{ps}{p-1} \), where \( c_{1}, c_{2} > 0 \) are constants.  Hence, we deduce that  $ 	\phi^{s}_{1,p} \in \mathcal{M}_{1,\alpha}^{1}(\Omega).	 $
\end{remark}
\begin{remark}
		Indeed, the case $ \alpha = \frac{sp}{p - 1} $ has not been treated. This is due to the difficulty in constructing a supersolution for problem~\ref{P8} (see below). Moreover, handling the fractional part requires estimates that hold under this specific condition. We also note that $p(1 - s) < 1$, which implies $ 1 < \frac{ps}{p - 1}  $.
\end{remark}
\begin{definition} \label{defn1} \rm
	Let \(X\) be a real vector space and \(C \subset X\) a nonempty convex cone.  
	A functional \(\mathcal{W} : C \to \mathbb{R}\) is \textbf{ray-strictly convex} (resp. \textbf{strictly convex}) if
	\[
	\mathcal{W}\big((1-t)v_1 + t v_2\big) \leq (1-t)\mathcal{W}(v_1) + t\,\mathcal{W}(v_2),
	\quad v_1, v_2 \in C,\; t \in (0,1),
	\]
	with strict inequality unless \(v_1/v_2 \equiv c>0\) (unless \(v_1 \equiv v_2\), resp.).
\end{definition}

\noindent 
We aim to extend the well-known D\'{i}az--S\'{a}a inequality to the setting of mixed local and nonlocal operators.  This inequality is closely related to the strict convexity of an associated homogeneous energy-type functional.  Precisely, for $1 < r + 1 \leq p$, we introduce the functional $ \mathcal{W} : \dot{V}_{+}^{r + 1} \longrightarrow \mathbb{R}_{+} $ defined in a generalized setting by
\[
\mathcal{W}(w) = 
\int_{\omega} 
\mathcal{H}\!\left(x, \nabla \!\left(w^{\tfrac{1}{r + 1}}\right)\right) \, dx
+ 
\iint_{\omega \times \omega} 
K(x, y)\, \big| w(x)^{\tfrac{1}{r + 1}} - w(y)^{\tfrac{1}{r + 1}} \big|^{p}\, dx\,dy,
\]
for any $ \omega \subseteq \mathbb{R}^{N} $. Here, the measurable kernel $K : \omega \times \omega \to \mathbb{R}^{+}$ is assumed to satisfy
\[
K(x, y) = K(y, x) \quad \text{for a.e. } x, y \in \omega,
\]
and
\[
\dfrac{\kappa}{\Lambda |x - y|^{N + sp}} 
\;\leq\; K(x, y) \;\leq\; 
\dfrac{\kappa \Lambda}{|x - y|^{N + sp}} 
\quad \text{for a.e. } x, y \in \omega,
\]
for some fixed constants $\Lambda > 0$ and $\kappa \in (0,1]$.  Moreover, the Carath\'{e}odory function $\mathcal{H} : \Omega \times \mathbb{R}^{N} \to \mathbb{R}^{+}$ is required to satisfy
\[
z \mapsto \mathcal{H}(x, z) \;\; \text{is of class } C^{1} \text{ and convex},
\]
and
\[
\mathcal{H}(x, tz) = |t|^{p} \, \mathcal{H}(x, z) 
\quad \text{for all } t \in \mathbb{R}, \; (x, z) \in \Omega \times \mathbb{R}^{N}.
\]

\noindent 
Then we obtain the following result:

\begin{proposition} \label{pro3}
	The functional \( \mathcal{W} \) is ray-strictly convex on the convex cone \( \dot{V}_{+}^{r + 1} \). Moreover, if \( r \neq p - 1 \), then \( \mathcal{W} \) is strictly convex on \( \dot{V}_{+}^{r  +1 } \).
\end{proposition}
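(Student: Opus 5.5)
The plan is to follow the hidden-convexity argument of D\'{i}az--Saa and Brasco--Franzina, treating the local term and the nonlocal term separately, since $\mathcal{W}$ is a sum of two such terms. Fix $w_1,w_2\in \dot{V}_{+}^{r+1}$ and $t\in(0,1)$. Set $u_i=w_i^{1/(r+1)}$, $q:=r+1\in(1,p]$, and $w_t=(1-t)w_1+tw_2$, so that $u_t:=w_t^{1/q}=\big((1-t)u_1^{q}+tu_2^{q}\big)^{1/q}$.

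For the local part I use the pointwise identity, valid where $u_t>0$,
\[
\nabla u_t = \frac{(1-t)u_1^{q}\,\frac{\nabla u_1}{u_1}+t\,u_2^{q}\,\frac{\nabla u_2}{u_2}}{u_t^{q}}\,u_t,
\]
so that $\nabla u_t/u_t$ is a convex combination of $\nabla u_i/u_i$ with weights $\lambda_i=(1-t)u_1^q/u_t^q$ and $t u_2^q/u_t^q$. Using the $p$-homogeneity of $\mathcal{H}$ and then its convexity, we get
\[
\mathcal{H}(x,\nabla u_t)=u_t^{p}\,\mathcal{H}\Big(x,\sum_i\lambda_i\tfrac{\nabla u_i}{u_i}\Big)\le u_t^{p}\sum_i\lambda_i\,\mathcal{H}\Big(x,\tfrac{\nabla u_i}{u_i}\Big)=u_t^{p-q}\Big[(1-t)u_1^{q-p}\mathcal{H}(x,\nabla u_1)+t\,u_2^{q-p}\mathcal{H}(x,\nabla u_2)\Big].
\]
When $q=p$ this is already the desired pointwise inequality. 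When $q<p$ I first use the concavity of $s\mapsto s^{(p-q)/q}$ (equivalently, the inequality $\mathcal{H}(x,\nabla u)\le\ldots$ rewritten via $|\nabla (u^{p/q})|$-type substitutions) to reduce to the case $q=p$. More simply, note that $\mathcal{W}$ is ray-convex for exponent $p$; composing with the map $w\mapsto w^{p/q}$, which is convex in the appropriate ordering, gives convexity. I will need to verify this step carefully. It is the standard Brasco--Franzina reduction: the map $w\mapsto \mathcal{W}$ written with exponent $q$ is obtained from the $q=p$ case applied to $\tilde w_i=u_i^{p}$, together with the elementary inequality $\big((1-t)a^{q}+tb^{q}\big)^{p/q}\le(1-t)a^p+tb^p$, which follows from the convexity of $s\mapsto s^{p/q}$.

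For the nonlocal part I use the discrete analogue (Brasco--Franzina, Lemma on hidden convexity). For $a_i=u_i(x)$, $b_i=u_i(y)$, the function
\[
(a,b)\mapsto |a-b|^{p}
\]
composed with $(A,B)=(a^q,b^q)\mapsto |A^{1/q}-B^{1/q}|^p$ is convex on $(0,\infty)^2$ when $1<q\le p$. This follows because $|A^{1/q}-B^{1/q}|^p$ is the $p$-homogeneous perspective-type function obtained from the $q$-norm $\ell^q$ Minkowski inequality:
\[
|u_t(x)-u_t(y)|\le \Big((1-t)|u_1(x)-u_1(y)|^{q}+t|u_2(x)-u_2(y)|^{q}\Big)^{1/q}.
\]
This is the reverse-triangle (Minkowski) inequality for the vectors $((1-t)^{1/q}u_1,t^{1/q}u_2)$ in $\ell^q$. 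Raising this to the power $p$ and applying Jensen with $p/q\ge1$, then integrating against $K\ge0$, yields convexity of the nonlocal term. The local term can alternatively be handled in exactly the same way, using the pointwise gradient bound $|\nabla u_t|\le((1-t)|\nabla u_1|^q+t|\nabla u_2|^q)^{1/q}$ together with convexity and homogeneity of $\mathcal{H}$.

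The main obstacle is the equality case. If equality holds in $\mathcal{W}$, it must hold in every pointwise inequality a.e. Equality in the convexity step with the interpolating weights forces $\nabla u_1/u_1=\nabla u_2/u_2$ a.e. This uses that $\mathcal{H}(x,\cdot)$ is strictly convex along non-parallel directions, which I would derive from $p$-homogeneity combined with $C^1$ convexity, and otherwise fall back on the nonlocal term. An easier route is to rely on the nonlocal term alone, since $K>0$. Equality in Minkowski forces $(u_1(x),u_2(x))$ and $(u_1(y),u_2(y))$ to be proportional for a.e. $(x,y)$, hence $u_1/u_2\equiv c$, i.e. $w_1/w_2\equiv c^{q}$. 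This proves ray-strict convexity. If moreover $q<p$, equality in Jensen for the strictly convex $s\mapsto s^{p/q}$ applied to the values $w_1=c^q w_2$ on the ray gives $\mathcal{W}(w_t)=((1-t)c^q+t)^{p/q}\mathcal{W}(w_2)<((1-t)c^p+t)\mathcal{W}(w_2)$ unless $c=1$. Hence $w_1\equiv w_2$, which gives strict convexity when $r\ne p-1$.
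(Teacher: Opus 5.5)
Your overall route matches the paper's. The one difference is that the paper cites Brasco--Franzina for the convexity inequality, while you try to prove it. The equality analysis is the same: it uses only the nonlocal term, equality in the reverse triangle inequality in $\ell^{q}$ for the vectors $\big(((1-t)w_1)^{1/q},(tw_2)^{1/q}\big)$ at $x$ and $y$ to get $w_1/w_2\equiv\mathrm{const}$, and strict convexity of $\tau\mapsto\tau^{p/q}$ when $q<p$. Your nonlocal convexity argument (Minkowski, then Jensen with $p/q\ge1$) and your equality analysis are correct.

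The step that does not work as written is the local convexity when $q=r+1<p$.
\begin{itemize}
\item Applying convexity of $\mathcal{H}$ with weights $\lambda_i$ gives $\mathcal{H}(x,\nabla u_t)\le u_t^{p-q}\big[(1-t)u_1^{q-p}\mathcal{H}(x,\nabla u_1)+t\,u_2^{q-p}\mathcal{H}(x,\nabla u_2)\big]$, and this bound is too weak. At a point where $\nabla u_2=0$, $u_2>u_1$ and $\mathcal{H}(x,\nabla u_1)>0$, it equals $(1-t)(u_t/u_1)^{p-q}\mathcal{H}(x,\nabla u_1)$. Since $u_t>u_1$, this strictly exceeds the target $(1-t)\mathcal{H}(x,\nabla u_1)$, so no further manipulation can recover the target, concavity of $s\mapsto s^{(p-q)/q}$ included.
\item The ``$q=p$ case applied to $\tilde w_i=u_i^p$'' does not help either. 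It controls the gradient of the $p$-mean of $u_1,u_2$, which has no relation to the gradient of their $q$-mean.
\item Your fallback, the Euclidean bound $|\nabla u_t|\le((1-t)|\nabla u_1|^q+t|\nabla u_2|^q)^{1/q}$, only controls $\mathcal{H}$ when $\mathcal{H}$ is radial.
\end{itemize}

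The missing observation is that $N_x(z):=\mathcal{H}(x,z)^{1/p}$ is a seminorm. It is $1$-homogeneous with convex sublevel set $\{\mathcal{H}(x,\cdot)\le1\}$, hence subadditive. Start from $\nabla u_t=u_t^{1-q}\big[(1-t)u_1^{q-1}\nabla u_1+t\,u_2^{q-1}\nabla u_2\big]$. The triangle inequality for $N_x$, followed by H\"older with exponents $q/(q-1)$ and $q$, gives
\[
N_x(\nabla u_t)\le \Big((1-t)N_x(\nabla u_1)^{q}+t\,N_x(\nabla u_2)^{q}\Big)^{1/q}.
\]
Jensen with $p/q\ge1$ then yields $\mathcal{H}(x,\nabla u_t)\le(1-t)\mathcal{H}(x,\nabla u_1)+t\,\mathcal{H}(x,\nabla u_2)$. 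Equivalently, run your $q=p$ computation with the convex, $q$-homogeneous integrand $\mathcal{H}^{q/p}$ and then apply Jensen. Alternatively, cite Brasco--Franzina, as the paper does.

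One last point: your strict inequality $((1-t)c^q+t)^{p/q}\mathcal{W}(w_2)<((1-t)c^p+t)\mathcal{W}(w_2)$ needs $\mathcal{W}(w_2)>0$, i.e.\ $w_2$ not a.e.\ constant on $\omega$. This is automatic for $\omega=\mathbb{R}^N$; the paper's argument glosses over the same point.
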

\begin{proof}
	By \cite[Propositions~2.6 and~4.1]{brasco2014convexity}, the set $\dot{V}_{+}^{r + 1}$ defined in \eqref{eq1} forms a convex cone. In particular, if $u, v \in \dot{V}_{+}^{r + 1}$ and $\lambda \in [0,1]$, then $(1-\lambda)u + \lambda v \in \dot{V}_{+}^{r + 1}$. According to Definition~\ref{defn1}, let $w_1, w_2 \in \dot{V}_{+}^{r + 1}$ and $t \in [0,1]$, and define $ w = (1-t)w_1 + t w_2. $ Consequently, one has
	\begin{equation*}
	\mathcal{W}(w) \leq (1-t)\,\mathcal{W}(w_1) + t\,\mathcal{W}(w_2).
	\end{equation*}
	Assume now that equality holds. In this case, one obtains
	\begin{align*}
	0 &\leq \int_{\omega} \Bigg[ (1-t)\,\mathcal{H}\!\left(x, \nabla \!\big(w_{1}^{\frac{1}{r + 1}}\big)\right) 
	+  t\,\mathcal{H}\!\left(x, \nabla \!\big(w_{2}^{\frac{1}{r + 1}}\big)\right) 
	- \mathcal{H}\!\left(x, \nabla \!\big(w^{\frac{1}{r + 1}}\big)\right) \Bigg] dx \\[4pt]
	&= \iint_{\omega \times \omega} 
	K(x, y)\, \big| w(x)^{\frac{1}{r + 1}} - w(y)^{\frac{1}{r + 1}} \big|^{p}\, dx\,dy \\[4pt]
	&\quad - (1-t)\iint_{\omega \times \omega} 
	K(x, y)\, \big| w_{1}(x)^{\frac{1}{r + 1}} - w_{1}(y)^{\frac{1}{r +1}} \big|^{p}\, dx\,dy \\[4pt]
	&\quad - t \iint_{\omega \times \omega} 
	K(x, y)\, \big| w_{2}(x)^{\frac{1}{r + 1}} - w_{2}(y)^{\frac{1}{r +1}} \big|^{p}\, dx\,dy \;\leq 0.
	\end{align*}
	Therefore,
	\[
	\big| w(x)^{\frac{1}{r+1}} - w(y)^{\frac{1}{r+1}} \big|^{p} 
	= (1-t)\, \big| w_{1}(x)^{\frac{1}{r+1}} - w_{1}(y)^{\frac{1}{r+1}} \big|^{p} 
	+ t\, \big| w_{2}(x)^{\frac{1}{r+1}} - w_{2}(y)^{\frac{1}{r+1}} \big|^{p}, 
	\]
	a.e. $ x, y \in \mathbb{R}^{N}.  $ If $r + 1= p$, it follows that
	\[
	\big| \| a \|_{\boldsymbol{\ell}^{r + 1}} - \| b \|_{\boldsymbol{\ell}^{r + 1}} \big|^{r+ 1}
	= \| a - b \|_{\boldsymbol{\ell}^{r+1}}^{r+ 1} 
	\quad \text{for a.e. } x, y \in \mathbb{R}^{N},
	\]
	where $\| \cdot \|_{\boldsymbol{\ell}^{r+1}}$ denotes the $\boldsymbol{\ell}^{r+1}$-norm in $\mathbb{R}^2$, and
	\[
	a = \big( ((1-t)w_1(x))^{\frac{1}{r+1}}, (t w_2(x))^{\frac{1}{r+1}} \big), 
	\quad
	b = \big( ((1-t)w_1(y))^{\frac{1}{r+1}}, (t w_2(y))^{\frac{1}{r+1}} \big).
	\]
	Since $r > 0$, there exists a constant $c>0$ such that $w_1 = c w_2$ a.e. in $\mathbb{R}^N$. Hence, $\mathcal{W}$ is ray-strictly convex on $\dot{V}_{+}^{r+1}$.  On the other hand, if $r + 1 \neq p$, then by the strict convexity of the mapping $\tau \mapsto \tau^{\frac{p}{r+1}}$ on $\mathbb{R}^{+}$, we deduce that $w_1 = w_2$ a.e. in $\mathbb{R}^N$. Consequently, $\mathcal{W}$ is strictly convex on $\dot{V}_{+}^{r+1}$.
\end{proof}

\begin{lemma} \label{Lem2}
	Let $1 < p < \infty$ and $0 < r \leq p -1$. Then the following inequality holds in the sense of distributions:
\begin{equation}\label{equ0}
		\begin{aligned}
		& \int_{\omega} \nabla_{z}\mathcal{H}(x, \nabla u) \cdot
		\nabla \!\left( \frac{u^{r+1} - v^{r+1}}{u^{r}} \right)  dx 
		+ \int_{\omega}  \nabla_{z}\mathcal{H}(x, \nabla v)\cdot
		\nabla \!\left( \frac{v^{r+1} - u^{r+1}}{v^{r}} \right)  dx \\[2mm]
		& + \iint_{\omega \times \omega} K(x, y) 
		\left[ u(x) - u(y) \right] ^{p-1} 
		\left( \frac{u(x)^{r+1} - v(x)^{r+1}}{u(x)^{r}} 
		- \frac{u(y)^{r+1} - v(y)^{r+1}}{u(y)^{r}}\right)  dxdy \\[1mm]
		& + \iint_{\omega \times \omega} K(x, y) 
		\left[ v(x) - v(y) \right] ^{p-1}
		\left( \frac{v(x)^{r+1} - u(x)^{r+1}}{v(x)^{r}} 
		- \frac{v(y)^{r+1} - u(y)^{r+1}}{v(y)^{r}} \right)  dxdy 	\geq 0.
		\end{aligned}
		\end{equation}
	
	\noindent
	This holds for all $u, v \in W^{1,p}_{0}(\Omega)$ such that $u > 0$ and $v > 0$ a.e. in $\Omega$, with $u/v,\, v/u \in L^{\infty}(\Omega)$.   Moreover, if equality holds, the following conclusions are valid:
	\begin{itemize}
		\item[(i)] $u/v \equiv \text{const} > 0$ a.e. in $\Omega$.
		\item[(ii)] If, in addition, $ r + 1\neq p $, then $u \equiv v$ a.e. in $\Omega$.
	\end{itemize}
\end{lemma}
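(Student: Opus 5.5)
The plan is to read \eqref{equ0} as the first-order monotonicity (Gateaux-derivative) inequality of the convex functional $\mathcal{W}$ of Proposition~\ref{pro3}, evaluated at the two points $w_1 = u^{r+1}$ and $w_2 = v^{r+1}$ of the cone $\dot{V}_{+}^{r+1}$. Here the bracket $[a]^{p-1}$ is interpreted as $|a|^{p-2}a$. Since $u,v>0$ in $\Omega$ and vanish outside, I will first work with $u_\varepsilon = u+\varepsilon$, $v_\varepsilon=v+\varepsilon$ (or with $\omega$ a set on which both are positive) and pass to the limit at the end. The hypothesis $u/v, v/u\in L^\infty$ ensures that the test functions $(u^{r+1}-v^{r+1})/u^{r} = u - v^{r+1}/u^{r}$ and $(v^{r+1}-u^{r+1})/v^r$ lie in $W^{1,p}$. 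Indeed, $\nabla (v^{r+1}u^{-r}) = (r+1)(v/u)^r\nabla v - r (v/u)^{r+1}\nabla u$, and this is in $L^p$. The same bound controls the Gagliardo differences.

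The key steps, in order, are the following.

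First, define $\varphi(t) = \mathcal{W}\big((1-t)w_1 + t w_2\big)$ for $t\in[0,1]$. By Proposition~\ref{pro3}, $\varphi$ is convex.

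Second, compute the one-sided derivatives. Writing $w_t^{1/(r+1)}$, we have
\[
\frac{d}{dt}\, w_t^{\frac{1}{r+1}}\Big|_{t=0} = \frac{1}{r+1}\, u^{-r}\,(v^{r+1}-u^{r+1}).
\]
Using the $p$-homogeneity of $\mathcal{H}$ and differentiating under the integral (dominated convergence, justified by the $L^\infty$ bounds on the ratios), this gives
\[
\varphi'(0^+) = -\frac{1}{r+1}\Big[\int_\omega \nabla_z\mathcal{H}(x,\nabla u)\cdot\nabla\Big(\tfrac{u^{r+1}-v^{r+1}}{u^r}\Big)dx + p\iint K\,[u(x)-u(y)]^{p-1}(\cdots)\,dx\,dy\Big].
\]
Symmetrically, $\varphi'(1^-)$ equals $+\frac{1}{r+1}$ times the analogous $v$-terms. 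The factor $p$ in the nonlocal part should be absorbed into the normalization (equivalently into $K$, as in the $2$ of the paper's definition of $(-\Delta)^s_p$), and I will fix the constants so that the two parts match. The symmetry of $K$ is used to merge the two halves of the double integral.

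Third, convexity of $\varphi$ gives $\varphi'(1^-)\ge \varphi'(0^+)$. This is exactly $\varphi'(1^-)-\varphi'(0^+)\ge 0$, which is the sum of the four terms in \eqref{equ0} divided by $r+1$.

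Fourth, treat the equality case. If equality holds, then $\varphi'(0^+)=\varphi'(1^-)$. A convex function with equal endpoint derivatives is affine on $[0,1]$, so $\mathcal{W}(w_t) = (1-t)\mathcal{W}(w_1)+t\mathcal{W}(w_2)$ for all $t$. Proposition~\ref{pro3} then yields $w_1/w_2\equiv c$, that is, $u/v\equiv c^{1/(r+1)}>0$, which is (i). When $r+1\neq p$, strict convexity gives $w_1=w_2$, that is, $u\equiv v$, which is (ii).

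The main obstacle is justifying the differentiation in the second step. The points $w_t$ must lie in the cone, which holds because $w_t^{1/(r+1)}$ is comparable to $u$ and $v$. One also needs uniform $L^1$ domination of the difference quotients of $\mathcal{H}(x,\nabla w_t^{1/(r+1)})$ and of the nonlocal integrand near the boundary, where $u,v\to 0$. I would handle this using the convexity of $t\mapsto$ integrand (Proposition~\ref{pro3} applied pointwise), so that the difference quotients are monotone in $t$. Monotone convergence then gives the derivative formulas without extra bounds, and the ratio bounds $u/v, v/u\in L^\infty$ guarantee that the limits are finite.
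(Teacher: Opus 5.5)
Your proposal is correct and follows essentially the same route as the paper. Both arguments use convexity of $\theta\mapsto\mathcal{W}\big((1-\theta)u^{r+1}+\theta v^{r+1}\big)$ from Proposition~\ref{pro3}, compare its derivatives at $\theta=0$ and $\theta=1$, and in the equality case deduce that this map is affine and then invoke the ray-strict (resp.\ strict) convexity of $\mathcal{W}$.

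Your added care is sound and goes beyond the paper. The factors $p$ and $\frac{1}{r+1}$, which the paper's derivative formula silently drops, are legitimately handled by replacing $K$ with $K/p$, which is still an admissible kernel. The monotone difference quotients properly justify the differentiation. One caveat: run the equality case directly on $u,v$, not on $u+\varepsilon,v+\varepsilon$, because equality for $u,v$ does not give equality for the shifted functions.
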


\begin{proof}
	Let \( u, v \in W^{1, p}_0(\Omega) \) be such that \( u > 0 \) and \( v > 0 \) a.e. in \( \Omega \), and let \( \theta \in (0,1) \). Define $ w := (1-\theta) u^{r+1} + \theta v^{r+1}. $	By \cite[Propositions~2.6 and~4.1]{brasco2014convexity}, it follows that \( w \in \dot{V}_{+}^{r+1} \). Hence, by Proposition~\ref{pro3}, the mapping $ 	\theta \mapsto \Phi(\theta) := \mathcal{W}(w) = \mathcal{W}\big((1 - \theta) u^{r+1} + \theta v^{r+1} \big) $
	is convex and differentiable on \( [0, 1] \). For \( \theta \in (0,1) \), a straightforward computation gives
\begin{equation*}
		\begin{aligned}
		&\Phi'(\theta) = \int_{\omega}  \nabla_{z}\mathcal{H}(x, \nabla w)\cdot
		\nabla \!\left( \frac{v^{r+1} - u^{r+1}}{w^{r}} \right) dx + \\[4pt]
		&\iint_{\omega \times \omega} K(x, y) \Big[ w(x)^{\frac{1}{r+1}} - w(y)^{\frac{1}{r+1}}\Big] ^{p-1} \left( \frac{v(x)^{r+1} - u(x)^{r+1}}{w(x)^{1 - \frac{1}{r+1}}} - \frac{v(y)^{r+1} - u(y)^{r+1}}{w(y)^{1 - \frac{1}{r+1}}} \right) dxdy. 
		\end{aligned}
		\end{equation*}
	
	\noindent 
	Using the convexity of \(\Phi\) and the identities \( w = u^{r+1} \) for \(\theta = 0\) and \( w = v^{r+1} \) for \(\theta = 1\), we obtain
	\[
	\Phi'(0) = \lim_{\theta \to 0^+} \Phi'(\theta) \leq \lim_{\theta \to 1^-} \Phi'(\theta) = \Phi'(1),
	\]
	which is equivalent to inequality~\eqref{equ0}. 
	Finally, assume that equality holds in~\eqref{equ0}. Since \(\Phi' : (0,1) \to \mathbb{R}\) is monotone, it follows that \(\Phi'(\theta)\) is constant on \((0,1)\), and consequently, \(\Phi : [0,1] \to \mathbb{R}\) is linear:
	\[
	\Phi(\theta) = \mathcal{W}(w) = (1-\theta) \Phi(0) + \theta \Phi(1) = (1-\theta) \mathcal{W}(u^{r+1}) + \theta \mathcal{W}(v^{r+1}), 
	\quad \forall \theta \in [0,1].
	\]
	From this, we deduce that \( u \equiv c v \) for some \( c > 0 \). Furthermore, if \( r \neq p - 1 \), then \( u \equiv v \) by Proposition~\ref{pro3}.
\end{proof}

\noindent
Finally, we collect some auxiliary results associated with the following boundary value problem:
\begin{equation}\label{P8}
\left\{
\begin{aligned}
- \Delta_{p} u + (-\Delta)^{s}_{p} u &= f(x)
\quad &&\text{in } \Omega,\\
u &= 0
\quad &&\text{in } \mathbb{R}^{N} \setminus \Omega.
\end{aligned}
\right.
\end{equation}
Here, the source term \( f \in L^{d}(\Omega)\setminus\{0\} \), with \( d > N \), satisfies the pointwise estimate
\[
0 < f(x) \leq C\, d(x)^{-\beta}
\quad \text{for a.e. } x \in \Omega,
\]
for some constant \( C > 0 \).

\medskip

\begin{theorem}\label{theorem1}
	Let \( 0 < \beta < 1 \), and let \( u \in W^{1,p}_{0}(\Omega) \) be a weak solution of problem~\eqref{P8}. Then, for every \( \alpha \in [s,1) \) with \( \alpha \neq \frac{ps}{p-1} \), there exist positive constants \( c_{1} \) and \( c_{2} \) such that
	\[
	c_{1}\, d(x) \leq u(x) \leq c_{2}\, d(x)^{\alpha}
	\quad \text{for all } x \in \Omega.
	\]
\end{theorem}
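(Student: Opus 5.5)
The plan has three parts: a lower bound from a strong maximum principle combined with Hopf's lemma, an upper bound by comparison with an explicit barrier of the form $C\,d^{\alpha}$ near $\partial\Omega$, and a global $L^\infty$ bound to fill in the interior.

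\emph{Lower bound.} First, $f\ge 0$ and $f\not\equiv 0$, so testing \eqref{P8} with $u^-$ gives $u\ge0$. Since $f\in L^{d}(\Omega)$ with $d>N$, the global regularity theory for the mixed operator \cite[Theorem~1.1]{antonini2025global} yields $u\in C^{1,\zeta}(\overline{\Omega})$; in particular $u\in L^\infty(\Omega)$. Because $u$ is a nontrivial nonnegative supersolution of $-\Delta_p u+(-\Delta)^s_p u\ge 0$, the strong minimum principle gives $u>0$ in $\Omega$. Hopf's lemma \cite[Theorem~1.2]{antonini2025global} then gives $\partial_\nu u<0$ on $\partial\Omega$. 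Combined with interior positivity and compactness, this yields $u\ge c_1 d$ in $\Omega$.

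\emph{Upper bound.} I will use the weak comparison principle for $-\Delta_p+(-\Delta)^s_p$, which is monotone: test the difference of the equations with $(u-\overline{w})^+$ and use the monotonicity of $t\mapsto|t|^{p-2}t$. It then suffices to build a weak supersolution $\overline{w}=M\,\psi$ with $\psi\in W^{1,p}_0(\Omega)$ and $\psi\le C d^{\alpha}$. Let $\delta_0$ be small enough that $d$ is smooth on $\Omega_{\delta_0}=\{d<\delta_0\}$, and take $\psi=d^{\alpha}$ in $\Omega_{\delta_0}$, extended smoothly and positively inside and by $0$ outside $\Omega$. On $\Omega_{\delta_0}$ a direct computation gives
\[
-\Delta_p(d^\alpha)=\alpha^{p-1}(p-1)(1-\alpha)\,d^{(\alpha-1)(p-1)-1}\bigl(1+O(d)\bigr),
\]
which is positive and blows up like $d^{(\alpha-1)(p-1)-1}$ because $\alpha<1$. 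Since $\beta<1$, we have $d^{-\beta}\ll d^{(\alpha-1)(p-1)-1}$ provided $(1-\alpha)(p-1)+1>\beta$, and this always holds. For the nonlocal part I will use the known one-dimensional/flattening estimate
\[
(-\Delta)^s_p(d^\alpha)=O\bigl(d^{\alpha(p-1)-sp}\bigr)
\]
near $\partial\Omega$, valid when $\alpha\neq \frac{sp}{p-1}$; this is where the excluded value enters, because at that exponent a logarithmic correction appears. The condition $\alpha\ge s$ gives $\alpha(p-1)-sp\ge (\alpha-1)(p-1)-1+ (1-s)p\cdot(\ldots)$; more simply, I will check that $(\alpha-1)(p-1)-1<\alpha(p-1)-sp$, which reduces to $sp<p$, so the local term dominates the possibly negative nonlocal term near the boundary. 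Hence, after shrinking $\delta_0$, $-\Delta_p\psi+(-\Delta)^s_p\psi\ge c\,d^{(\alpha-1)(p-1)-1}\ge c' d^{-\beta}$ in $\Omega_{\delta_0}$. In the interior region $\{d\ge\delta_0\}$, the operator applied to the smooth $\psi$ is bounded below by some $-K$. I will fix this by adding a multiple of the torsion-type function (the solution $e$ of \eqref{P8} with $f\equiv1$, which is bounded by $C d\le Cd^\alpha$), or equivalently by choosing the interior extension $\psi$ as a large constant plateau so that the nonlocal term is positive there. Then, by $p-1$ homogeneity, $M^{p-1}(\ldots)\ge C d^{-\beta}\ge f$ for $M$ large. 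Weak comparison gives $u\le M\psi\le c_2 d^\alpha$.

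\emph{Main obstacle.} The hardest step will be the rigorous pointwise/weak estimate of $(-\Delta)^s_p(d^\alpha)$ near a curved boundary, including its sign, its order, and the role of $\alpha\ne sp/(p-1)$, together with verifying the supersolution inequality in the weak sense. I will try first to localize: write $d^\alpha$ near a boundary point as the half-space profile $(x_N)_+^\alpha$ plus a $C^{1,1}$ perturbation, use the exact formula $(-\Delta)^s_p (x_N)_+^\alpha = C(\alpha)\,x_N^{\alpha(p-1)-sp}$ (with $C(\alpha)$ finite for $\alpha<\frac{sp}{p-1}$ or in the appropriate range), and bound the error terms by lower powers of $d$.
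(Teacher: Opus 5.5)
Your lower bound matches the paper's argument: $C^{1,\zeta}(\overline{\Omega})$ regularity, then positivity, then Hopf's lemma. Your upper bound uses the same barrier strategy as the paper: $\eta\,d^{\alpha}$ near $\partial\Omega$, whose local part $\alpha^{p-1}(p-1)(1-\alpha)\,d^{(\alpha-1)(p-1)-1}$ dominates both $f\le Cd^{-\beta}$ and the nonlocal term, followed by weak comparison.

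\textbf{The interior patch fails as written.} Adding $Ke$, with $e$ the torsion function, to $M\psi$ is not legitimate for $p\neq 2$. The operator $-\Delta_p+(-\Delta)^s_p$ is not additive, so you cannot superpose the two right-hand sides. The plateau construction is not ``equivalent'' to it either. Positivity of the nonlocal term on the plateau is fine, but you never check the transition layer, where $-|\phi'|^{p-2}\phi'\,\Delta d$ and the nonlocal term have no sign.

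The paper avoids building a global supersolution. It compares only on the strip $\Omega_\varrho$, choosing $\eta$ so large that $u\le\|u\|_{L^\infty(\Omega)}\le\eta\psi$ on $\mathbb{R}^N\setminus\Omega_\varrho$. That is exactly the use of the $L^\infty$ bound announced in your opening sentence.

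\textbf{A smaller error.} Your claim $\psi\in W^{1,p}_0(\Omega)$ only holds when $\alpha>1-\frac1p$, since $|\nabla d^{\alpha}|^{p}\sim d^{(\alpha-1)p}$. This excludes $\alpha$ near $s$ whenever $p(1-s)\ge 1$, a case the statement allows.

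\textbf{The barrier is not needed at all.} You already invoke $u\in C^{1,\zeta}(\overline{\Omega})$ with $u=0$ on $\partial\Omega$. For $x\in\Omega$, choose $y\in\partial\Omega$ with $|x-y|=d(x)$. The half-open segment from $x$ to $y$ lies in $B(x,d(x))\subset\Omega$, so the mean value theorem gives
\[
u(x)\le\|\nabla u\|_{L^\infty(\Omega)}\,d(x)\le\|\nabla u\|_{L^\infty(\Omega)}\,(\operatorname{diam}\Omega)^{1-\alpha}\,d(x)^{\alpha}
\]
for every $\alpha\in(0,1]$. So the estimate of $(-\Delta)^s_p d^{\alpha}$ that you flag as the main obstacle, and the exclusion $\alpha\neq\frac{ps}{p-1}$, play no role. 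The same remark applies to the paper's own barrier argument.
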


\begin{proof}
	Since \( f \in L^{d}(\Omega)\setminus\{0\} \) with \( d > N \), it follows from \cite[Theorem~1.1]{antonini2025global} that
	\( u \in C^{1,\xi}(\overline{\Omega}) \) for some \( \xi \in (0,1) \).
	Moreover, by \cite[Proposition~6.1]{antonini2025global}, we have \( u > 0 \) in \( \Omega \).
	Consequently, an application of Hopf's lemma (see e.g. \cite[Theorem~1.2]{antonini2025global}) yields the existence of a constant \( c_{1} > 0 \) such that $ u(x) \geq c_{1}\, d(x) $ in $ \Omega. $ Next, to derive the upper bound, we adopt the strategy introduced in \cite{arora2021regularity}. The distance function \( d(\cdot) \) can be extended smoothly to \( \Omega^{c} := \mathbb{R}^{N} \setminus \Omega \) for some \( \rho > 0 \) by defining
	\[
	d_{e}(x) :=
	\begin{cases}
	d(x), & \text{if } x \in \Omega,\\
	-\,d(x), & \text{if } x \in (\Omega^{c})_{\rho},\\
	-\,\rho, & \text{otherwise},
	\end{cases}
	\]
	where
	$ (\Omega^{c})_{\rho} := \bigl\{ x \in \Omega^{c} : \mathrm{dist}(x,\partial\Omega) < \rho \bigr\}. $ Let $\eta>0$ be a positive constant, whose value will be fixed later. 
	
	\noindent 
	We introduce the function 	$ \overline{u}_{\rho}(x):=\eta\,\overline{w}_{\rho}(x), $ where $\alpha \in[s,1)$ with $\alpha \neq \frac{ps}{p-1}$, and $\overline{w}_{\rho}$ is defined by
	\[
	\overline{w}_{\rho}(x):=
	\begin{cases}
	\bigl(d_{e}(x)\bigr)_{+}^{\alpha}, & \text{if } x\in \Omega\cup(\Omega^{c})_{\rho},\\[1mm]
	0, & \text{otherwise}.
	\end{cases}
	\]
Assume that the boundary $\partial\Omega$ is of class $C^{2}$. It follows from \cite[Lemma~14.16]{gilbarg1977elliptic} that there exists a constant $\kappa>0$ such that the distance function $d$ belongs to $C^{2}(\Omega_{\kappa})$, where $ \Omega_{\kappa}:=\{x\in\overline{\Omega}:\ d(x)<\kappa\}. $ Furthermore, by virtue of \cite[Lemma~3.10]{giacomoni2021interior}, we infer that
\begin{equation}\label{equ1212}
\iint_{\mathbb{R}^{2N}}
\frac{\bigl[\overline{u}_{\rho}(x)-\overline{u}_{\rho}(y)\bigr]^{p-1}
	\bigl(\varphi(x)-\varphi(y)\bigr)}{|x-y|^{N+sp}}\,dx\,dy
= \eta^{p-1}\int_{\Omega_{\varrho}} h(x)\varphi\,dx,
\end{equation}
for some function $h\in L^{\infty}(\Omega_{\varrho})$, provided that $\varrho>0$ is \textbf{chosen sufficiently small}. Without loss of generality, we may assume that $ \varrho \leq \min\left\{\frac{\kappa}{2},\,1\right\}, $ which ensures that $\Delta d\in L^{\infty}(\Omega_{\varrho})$. Consequently, there exists a constant $M>0$ such that $ |\Delta d|\leq M $ in $ \Omega_{\varrho}. $  Moreover, a direct computation shows that
\[
\nabla \overline{u}_{\rho}
= \eta\,\alpha\, d^{\alpha - 1}\nabla d
\quad \text{in } \Omega_{\varrho}.
\]
Let \( \varphi \in C^{\infty}_{c}(\Omega_{\varrho}) \) be such that \( \varphi \geq 0 \).
Since \( |\nabla d| = 1 \), we obtain
\begin{align*}
-\int_{\Omega_{\varrho}} \Delta_{p}(\overline{u}_{\rho})\,\varphi\,dx
&\geq
\mathbf{C}\,\eta^{p-1}(1-\alpha)(p-1)
\int_{\Omega_{\varrho}}
d(x)^{(\alpha-1)(p-1)-1}\,\varphi\,dx .
\end{align*}
Using \eqref{equ1212} and arguing as in the proof of \cite[Theorem~4.4]{giacomoni2021interior}, we deduce
\begin{equation}\label{equ111}
\int_{\Omega_{\varrho}}
\bigl(-\Delta_{p}\overline{u}_{\rho}+(-\Delta)^{s}_{p}\overline{u}_{\rho}\bigr)
\varphi\,dx
\geq
\frac{\mathbf{C}\,\eta^{p-1}}{2}
\int_{\Omega_{\varrho}}
d(x)^{(\alpha - 1)(p-1)-1}\,\varphi\,dx .
\end{equation}
On the other hand, for the solution \( u \), since \( 0 < \beta < 1 \), we have
\begin{equation}\label{equ112}
\begin{aligned}
\int_{\Omega_{\varrho}}
\bigl(-\Delta_{p}u+(-\Delta)^{s}_{p}u\bigr)\varphi\,dx
&\leq
\mathbf{C}\int_{\Omega_{\varrho}}
d(x)^{(1-\alpha)(p-1)+1-\beta}
d(x)^{(\alpha-1)(p-1)-1}\,\varphi\,dx \\
&\leq
\mathbf{C}\int_{\Omega_{\varrho}}
d(x)^{(\alpha-1)(p-1)-1}\,\varphi\,dx .
\end{aligned}
\end{equation}
Choosing \( \eta > 0 \) sufficiently large, it follows from
\eqref{equ111}–\eqref{equ112} that
\[
-\Delta_{p}\overline{u}_{\rho}+(-\Delta)^{s}_{p}\overline{u}_{\rho}
\geq
-\Delta_{p}u+(-\Delta)^{s}_{p}u 
\quad \text{in } \Omega \text{ in the weak sense}.
\]
By the weak comparison principle (see, for instance, \cite[Proposition~4.1]{antonini2025global}), we deduce that $ u \leq \overline{u}_{\rho} $ in $ \Omega_{\varrho}. $ Moreover, by exploiting the regularity of \( u \) and taking \( \eta > 0 \) sufficiently large, we obtain
\[
u \leq \|u\|_{L^{\infty}(\Omega\setminus\Omega_{\varrho})}
\leq C_{\varrho}
\leq \overline{u}_{\rho}
\quad \text{in } \Omega\setminus\Omega_{\varrho}.
\]
Therefore, $ u \leq \overline{u}_{\rho} $ in $ \Omega, $ which completes the proof.
\end{proof}

\noindent 
Before presenting our main results, we rewrite problem~\eqref{P1} in an equivalent weak formulation. More precisely, for any $m > 0$, we consider the following problem:
\begin{equation}\label{PP1}\tag{$E_{T}$}
\left\{
\begin{aligned}
u^{m} \, \partial_t \!\big(u^{m+1}\big) - \Delta_{p} u + (-\Delta)^{s}_{p} u \;&= g(t, x)\, u^{m}  +  m \, d(x)^{- \gamma}  \, u^{\delta}&& \text{in } Q_T, \\
u \;&> 0 && \text{in } Q_T, \\
u \;&= 0 && \text{on } \Gamma_T, \\
u(0,\cdot) \;&= u_0 && \text{in } \Omega,
\end{aligned}
\right.
\end{equation}
We now introduce the notion of a weak solution:
\begin{definition}\label{definition1}
	Let \(T > 0\). A weak solution to \eqref{PP1} is a nonnegative function $ u \in L^{\infty}(0, T; W_{0}^{1,p}(\Omega)) \cap L^{\infty}(Q_{T}), $
	such that \(u > 0\) in \(\Omega\), \(\partial_{t}(u^{m+1}) \in L^{2}(Q_{T})\), and satisfying, for every \(t \in (0,T]\), the integral identity
	\begin{equation}\label{weakform}
	\begin{aligned}
	& \int_{0}^{t}\int_{\Omega} u^{m} \, \partial_{t}\!\big(u^{m+1}\big)\, \varphi \, dx \, ds 
	+ \int_{0}^{t} \int_{\Omega} |\nabla u|^{p-2} \nabla u \cdot \nabla \varphi \, dx \, ds \\
	&\quad + \int_{0}^{t}\iint_{\mathbb{R}^{2N}} 
	\frac{\left[ u(s,x)-u(s,y)\right] ^{p-1} \big(\varphi(s,x)-\varphi(s,y)\big)}{|x-y|^{N+sp}} \, dx \, dy \, ds \\
	&= \int_{0}^{t}\int_{\Omega}\Big( g(s,x)\, u^{m} + m\,  d(x)^{-\gamma} \, u^{\delta}\Big)\, \varphi \, dx \, ds,
	\end{aligned}
	\end{equation}
	for all test functions \( \varphi \in L^{2}(Q_{T}) \cap L^{1}(0,T; W_{0}^{1,p}(\Omega))\), with  \( u(0,\cdot) = u_{0} \) a.e. in \(\Omega\).
\end{definition}

\noindent 
Recall from Definition~\ref{definition1} that every weak solution of problem~\eqref{PP1} belongs to \(L^{\infty}(Q_{T})\). 
Furthermore, by invoking \cite[Proposition~9.5]{brezis2011functional}, we deduce that for every \(m>0\),
\[
\partial_{t}\!\left(u^{2m+1}\right)
= \frac{2m+1}{m+1}\,u^{m}\,\partial_{t}\!\left(u^{m+1}\right)
\quad \text{in the weak sense}.
\]
Hence, any weak solution of \eqref{PP1} also satisfies \eqref{P1}, which allows us to restrict our analysis to problem~\eqref{PP1} without loss of generality. 
This formulation is particularly convenient for implementing a semi-discretization in time scheme and facilitates the investigation of the convergence of weak solutions to a steady state via semigroup theory.   More precisely, adopting an approach inspired by the theory of nonlinear accretive operators as developed in \cite{arora2020doubly, badra2012singular, giacomoni2021existence}, the nonlinearity present in the time-derivative term for \(m>0\) is transferred to the diffusion term of the associated parabolic problem~\eqref{PP1}. Consequently, we arrive at the equivalent formulation
\begin{equation}\label{0equ4}
\begin{cases}
\partial_t v + \mathcal{T}_m v = g(t,x), & \text{in } Q_T,\\[1mm]
v > 0, & \text{in } Q_T,\\[1mm]
v = 0, & \text{on } \Gamma_T,\\[1mm]
v(0, \cdot) = v_0, & \text{in } \Omega,
\end{cases}
\end{equation}
where $v = u^{m+1}$, and the nonlinear operator
\[
\mathcal{T}_m : D(\mathcal{T}_m) \subset L^2(\Omega) \to L^2(\Omega)
\]
is defined by
\begin{equation}\label{equ37}
\begin{aligned}
\mathcal{T}_m v
&= v^{- \frac{m}{m+1}} \Bigg(
2\,\mathrm{P.V.}\!\int_{\mathbb{R}^N}
\frac{\big| v^{\frac{1}{m+1}}(x) - v^{\frac{1}{m+1}}(y) \big|^{p-2}
	\big( v^{\frac{1}{m+1}}(x) - v^{\frac{1}{m+1}}(y) \big)}
{|x - y|^{N + sp}}\,dy \\
&\quad - \Delta_{p}\!\left( v^{\frac{1}{m+1}} \right)
- m \, d(x)^{-\gamma}\, v^{\frac{\delta}{m+1}} \Bigg),
\end{aligned}
\end{equation}
with domain
\[
D(\mathcal{T}_m)
= \Big\{\, w : \Omega \to \mathbb{R}^{+} \text{ measurable } \,\big|\,
w^{\frac{1}{m+1}} \in W_0^{1,p}(\Omega) \cap L^{2(m+1)}(\Omega),\;
\mathcal{T}_m w \in L^2(\Omega) \,\Big\}.
\]

\subsection{Main results for the stationary problems related to problem~\eqref{PP1}} 
As a preliminary step, we examine two stationary nonlinear problems associated with \eqref{PP1}. We begin by studying the following one:
\begin{equation}\label{PP2}\tag{$S_{\lambda}$}
\left\{
\begin{aligned}
u^{2m+1} - \lambda \Delta_{p} u + \lambda (-\Delta)^{s}_{p} u \;&= g_{0}(x)\, u^{m} + \lambda m\, d(x)^{-\gamma}\, u^{\delta} && \text{in } \Omega, \\
u \;&> 0 && \text{in } \Omega, \\
u \;&= 0 && \text{on } \mathbb{R}^N \setminus \Omega
\end{aligned}
\right.
\end{equation}
where \( \lambda > 0 \) is a parameter, and \( g_{0} \) is a nonnegative function such that \( g_{0} \in L^{r}(\Omega) \), with \( r \geq 2 \), satisfying
\begin{equation}\label{equ3}\tag{H2}
g_{0}(x) \,\geq\, \lambda \,\underline{g}(x) \quad \text{for a.e. } x \in \Omega,
\end{equation}
with \( \underline{g} \) defined in \eqref{equ1}.  
In what follows, we introduce the notions of weak sub-solutions, super-solutions, and solutions to problem \eqref{PP2}.

\begin{definition}\label{definition2}
	A function $u \in \textbf{WL} : = W^{1,p}_{0}(\Omega) \cap L^{2(m+1)}(\Omega)$, with $u \geq 0$ and $u \not\equiv 0$, is said to be a \text{weak sub-solution} (resp. \text{weak super-solution}) of \eqref{PP2} if it satisfies
	\begin{equation}\label{weakform1}
	\begin{aligned}
	& \int_{\Omega} u^{2m+1} \, \varphi \, dx 
	+ \lambda \int_{\Omega} |\nabla u|^{p-2} \nabla u \cdot \nabla \varphi \, dx   + \lambda \iint_{\mathbb{R}^{2N}} 
	\frac{\left[ u(x)-u(y)\right] ^{p-1} \big(\varphi(x)-\varphi(y)\big)}{|x-y|^{N+sp}} dx dy \\
	& \qquad \leq \ (\text{resp. } \geq) \int_{\Omega} \Big( g_{0}(x)\, u^{m} + \lambda \,m\,  d(x)^{-\gamma}\, u^{\delta} \Big) \varphi  dx,
	\end{aligned}
	\end{equation}
	for all test functions \( \varphi \in \textbf{WL} \). 
	A function that is simultaneously a sub-solution and a super-solution of \eqref{PP2} is referred to as a \text{weak solution} of \eqref{PP2}.
\end{definition}

\noindent 
We present the main results related to problem \eqref{PP2}. In particular, we first establish a comparison principle, which serves as a fundamental tool for proving the uniqueness of solutions and deriving additional estimates.
\begin{theorem}\label{theorem2}
	Let $\gamma < \frac{1}{\textbf{2}}$. Suppose that $\underline{u}$ and $\overline{u} \in \textbf{WL} $ be nonnegative functions representing a sub-solution and a super-solution of problem \eqref{PP2}, respectively. Then 
	$ \underline{u} \leq \overline{u}  $ a.e. in $ \Omega. $
\end{theorem}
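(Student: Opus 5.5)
We plan to prove this comparison principle with the Díaz--Saa type inequality of Lemma~\ref{Lem2}, taken with $\mathcal{H}(x,z)=\frac1p|z|^p$, $K(x,y)=|x-y|^{-N-sp}$ and $r=m$. Since $m<p-1$, the case $r+1\neq p$ applies. The test functions are of the form
\[
\varphi_1=\frac{(\underline{u}^{m+1}-\overline{u}^{m+1})^+}{\underline{u}^{m}},\qquad
\varphi_2=\frac{(\underline{u}^{m+1}-\overline{u}^{m+1})^+}{\overline{u}^{m}}.
\]
The first goes into the sub-solution inequality for $\underline{u}$ and the second into the super-solution inequality for $\overline{u}$.

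Because $\underline{u},\overline{u}$ need not have bounded ratios, we first regularize. We replace $\underline{u},\overline{u}$ by $\underline{u}_\varepsilon=\underline{u}+\varepsilon$ and $\overline{u}_\varepsilon=\overline{u}+\varepsilon$ inside the quotients, or alternatively truncate with $\mathbf{T}_k$. This makes $\varphi_{1},\varphi_2$ belong to $\textbf{WL}$: the derivative of $t\mapsto (t^{m+1}-a)/t^m$ is bounded away from zero, and we use the $L^\infty$-type bounds on the subset $\{\underline{u}>\overline{u}\}$. On that set one also knows $\overline{u}>0$. Here we would first show that any super-solution is bounded below by $c\,d$. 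This follows by comparing with a multiple of $\phi^s_{1,p}$ via Remark~\ref{remark1} and (H2), since $\underline g\not\equiv0$ gives a positive right-hand side. This step is needed so that the term $d^{-\gamma}$ divided by powers of $\overline u$ remains integrable. This is where the hypothesis $\gamma<\tfrac12$ enters, through Hardy-type control of $d^{-\gamma}\overline u^{\delta-m}\varphi$, for instance $d^{-\gamma}\in L^2$ near the boundary.

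Subtracting the two tested inequalities, the left-hand side splits into two parts. The first is the zeroth-order part
\[
\int_{\{\underline u>\overline u\}}\big(\underline{u}^{m+1}-\overline{u}^{m+1}\big)\big(\underline u^{m+1}-\overline u^{m+1}\big)\,dx \;\ge 0 ,
\]
which arises since $u^{2m+1}/u^m=u^{m+1}$. The second is the diffusion part, which is exactly the expression in \eqref{equ0} restricted to the positive part. We argue that it is nonnegative by the convexity argument of Lemma~\ref{Lem2}, applied to $w_\theta=(1-\theta)\underline u^{m+1}+\theta\max(\underline u,\overline u)^{m+1}$, in the standard way for Díaz--Saa with positive parts. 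On the right-hand side, the $g_0$ terms cancel exactly, because $g_0u^m\cdot u^{-m}=g_0$ for both functions. The singular terms give
\[
\lambda m\int d^{-\gamma}\big(\underline u^{\delta-m}-\overline u^{\delta-m}\big)(\underline u^{m+1}-\overline u^{m+1})^+dx\le 0,
\]
since $\delta<m$ makes $t\mapsto t^{\delta-m}$ decreasing. Hence $\int(\underline u^{m+1}-\overline u^{m+1})^{+\,2}\le 0$, which gives $\underline u\le\overline u$ a.e.

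The main obstacle is the passage $\varepsilon\to0$ (or $k\to\infty$) in the nonlocal Díaz--Saa term and in the singular term. For the singular term, the integrand must be dominated. We would use $\underline u\ge\overline u\ge c\,d$ on the relevant set, so that $d^{-\gamma}\underline u^{\delta-m}\underline u^{m+1}\le C d^{-\gamma}$ is integrable. The gradient terms are controlled by the pointwise bound $|\nabla(\underline u_\varepsilon^{m+1}/\overline u_\varepsilon^m)|\lesssim |\nabla\underline u|+|\nabla\overline u|$ on $\{\underline u>\overline u\}$, with $\underline u\le\|\underline u\|_\infty$ via truncation. The nonlocal parts converge by Fatou together with the sign structure: we apply Fatou's lemma to the nonnegative combined integrand from Lemma~\ref{Lem2}.
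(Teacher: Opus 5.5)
Your overall strategy is the paper's: the same test functions $(\cdot)^+/\underline u^{m}$ and $(\cdot)^+/\overline u^{m}$, regularized by $\varepsilon$ and $\mathbf{T}_k$, and the same handling of the zeroth-order, $g_0$ and singular terms. However, two steps you rely on fail as stated.

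First, your lower bound $\overline u\ge c\,d$ for an arbitrary super-solution, obtained ``by comparing with a multiple of $\phi^s_{1,p}$'', is circular. Placing the sub-solution $\varepsilon\phi^s_{1,p}$ below a super-solution of \eqref{PP2} is exactly Theorem~\ref{theorem2}; the paper invokes Theorem~\ref{theorem2} for precisely this deduction in the proof of Theorem~\ref{theorem3}. The bound is also not what the singular term needs:
\begin{itemize}
\item The $\underline u$-part is dominated by $(m+1)d^{-\gamma}\underline u^{\delta+1}$. This is in $L^1$ by H\"older, since $d^{-\gamma}\in L^2(\Omega)$ (this is where $\gamma<\frac12$ enters) and $\underline u^{\delta+1}\in L^2$ because $\delta<m$ and $\underline u\in L^{2(m+1)}$.
\item Your bound $\le C d^{-\gamma}$ would in addition require $\underline u\in L^\infty$.
\item The $\overline u$-part enters with a favorable sign and is handled by Fatou, as in \eqref{equ18}--\eqref{equ20}.
\end{itemize}
What you genuinely need is $\overline u>0$ a.e.\ on $\{\underline u>\overline u\}$. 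Without it, $g_0(\underline u^m\Psi_{\epsilon,k}-\overline u^m\Phi_{\epsilon,k})$ need not tend to $0$ as in \eqref{equ21}. That positivity must come from the hypotheses (positivity as in \eqref{PP2}) or from a strong minimum principle, not from the comparison you are proving.

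Second, and more seriously, your bound $|\nabla(\underline u_\varepsilon^{m+1}/\overline u_\varepsilon^{m})|\lesssim|\nabla\underline u|+|\nabla\overline u|$ on $\{\underline u>\overline u\}$ is false. On that set $\underline u_\varepsilon/\overline u_\varepsilon\ge1$ is bounded only by $(\underline u+\varepsilon)/\varepsilon$, and $\underline u\in\mathbf{WL}$ is not assumed bounded. So $\varphi_2\notin\mathbf{WL}$ in general, and truncating the numerator with $\mathbf{T}_k$ is unavoidable.

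Once you truncate, the diffusion part is no longer a D\'{i}az--Saa expression on $\Omega_3=\{(\underline u+\epsilon)^{m+1}-(\overline u+\epsilon)^{m+1}\ge k\}$. Hence neither Lemma~\ref{Lem2} nor ``Fatou on the nonnegative combined integrand'' applies to pairs involving $\Omega_3$. This is the core of the paper's proof, and your proposal skips it:
\begin{itemize}
\item split $\Omega$ into $\Omega_1,\Omega_2,\Omega_3$ and apply Lemma~\ref{Lem2} on $\Omega_2$, see \eqref{equ17};
\item show the cross terms on $\Omega_2\times\Omega_1$ and $\Omega_3\times\Omega_1$ are nonnegative, using monotonicity of $\tau\mapsto|\tau-\tau_0|^{p-2}(\tau-\tau_0)/\tau^{m}$ (this is where $m<p-1$ is used), see \eqref{equ10} and \eqref{equ14};
\item bound the $\Omega_3$, $\Omega_3\times\Omega_3$ and $\Omega_3\times\Omega_2$ contributions from below by $-C$ times $\int_{\Omega_3}|\nabla\underline u|^p$ and Gagliardo integrals over subsets of $\Omega_3\times\mathbb{R}^N$; these vanish as $|\Omega_3|\to0$ when $k\to\infty$, see \eqref{equ9}, \eqref{equ13}, \eqref{equ15}.
\end{itemize}

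Finally, your $w_\theta$ pairs $\underline u$ with $\max(\underline u,\overline u)$. That produces the D\'{i}az--Saa expression supported on $\{\overline u>\underline u\}$, the wrong set. The correct reduction is to the pair $(\max(\underline u,\overline u),\overline u)$. Even then you must check that the nonlocal cross terms between $\{\underline u>\overline u\}$ and its complement have the right sign; they do, by monotonicity of $t\mapsto|t|^{p-2}t$.

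With that pair, and assuming additionally $\underline u\in L^\infty$ and $\overline u>0$ a.e., no truncation is needed. In that case your route works and is simpler than the paper's: the diffusion part is then $\ge0$ for each $\varepsilon$ and can simply be dropped.
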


\noindent 
We next examine the existence, regularity, and uniqueness of the weak solution to \eqref{PP2} under two distinct assumptions on the potential function $g_{0}$. Specifically, we establish the following theorem, beginning with the case where $g_{0} \in L^{\infty}(\Omega)$.
\begin{theorem} \label{theorem3}
	Let $\gamma < \frac{1}{\textbf{d}}$, and assume that $g_{0} \in L^{\infty}(\Omega)$ satisfies condition~\eqref{equ3}. Then, for every $\lambda > 0$, there exists a positive weak solution  	$ u \in C^{1, \xi}(\overline{\Omega}) \cap \mathcal{M}^{1}_{1, \alpha}(\Omega) $ 	to problem~\eqref{PP2}, for some $\xi \in (0, 1)$ and  for every \( \alpha \in [s,1) \) with \( \alpha \neq \frac{ps}{p-1} \). Moreover, if $u_{1}$ and $u_{2}$ denote two weak solutions of~\eqref{PP2} corresponding to $g_{1}, g_{2} \in L^{\infty}(\Omega)$ satisfying~\eqref{equ3}, respectively, then the following contraction property holds:
	\begin{equation}\label{contraction}
	\left\| \left( u_{1}^{m+1} - u_{2}^{m+1}\right) ^{+} \right\| _{L^{2}(\Omega)} \leq \left\|  \left( g_{1} - g_{2}\right) ^{+} \right\| _{L^{2}(\Omega)}.
	\end{equation}
\end{theorem}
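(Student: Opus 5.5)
We will prove Theorem~\ref{theorem3} in three steps: existence via a variational or sub/super-solution argument, regularity and boundary behaviour via Theorem~\ref{theorem1}, and the contraction estimate \eqref{contraction} via the Díaz--Saa type inequality of Lemma~\ref{Lem2} with $r=m$.

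\textbf{Existence.} We construct a sub-solution and a super-solution of \eqref{PP2} and then iterate (or minimize on an order interval).

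For the sub-solution take $\underline{u}=\varepsilon\phi^{s}_{1,p}$. By Remark~\ref{remark1}, $\phi^{s}_{1,p}\ge c_1 d$ and $\phi^{s}_{1,p}\le 1$. Hypothesis \eqref{equ3} gives $g_0\ge\lambda\underline{g}$, so the right-hand side of \eqref{PP2} is at least $\lambda\underline g\,\varepsilon^m(\phi^{s}_{1,p})^m+\lambda m d^{-\gamma}\varepsilon^{\delta}(\phi^{s}_{1,p})^\delta$. The left-hand side equals $\varepsilon^{2m+1}(\phi^{s}_{1,p})^{2m+1}+\lambda\lambda^{s}_{1,p}\varepsilon^{p-1}(\phi^{s}_{1,p})^{p-1}$. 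Since $\delta<m<p-1$, the term $\lambda m d^{-\gamma}\varepsilon^\delta(\phi^{s}_{1,p})^\delta$ dominates for small $\varepsilon$: we have $(\phi^{s}_{1,p})^{p-1}\le(\phi^{s}_{1,p})^{\delta}$ and $d^{-\gamma}\ge c>0$. So $\underline u$ is a sub-solution.

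For the super-solution take $\overline{u}=M w$, where $w$ solves \eqref{P8} with $f=1+d^{-\gamma}$. This $f$ lies in $L^{\textbf d}(\Omega)$ because $\gamma\textbf d<1$ (so Theorem~\ref{theorem1} applies with $\beta=\gamma<1$), and then $c_1d\le w\le c_2 d^\alpha$. Since $g_0\in L^\infty$ and $m,\delta<p-1$, we get $\lambda M^{p-1}f\ge \|g_0\|_\infty M^m\|w\|_\infty^m+\lambda m d^{-\gamma}M^\delta\|w\|_\infty^\delta$ for $M$ large. The term $u^{2m+1}\ge0$ only helps.

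Theorem~\ref{theorem2}, applied to the sub-solution $\underline u$ and the super-solution $\overline u$, gives $\underline u\le\overline u$. We then obtain a solution $u\in[\underline u,\overline u]$ by one of two routes:
\begin{itemize}
\item minimizing the energy $\frac1{2m+2}\int u^{2m+2}+\frac\lambda p\|\nabla u\|_p^p+\frac\lambda p[u]_{s,p}^p-\int G(x,u)$ over the convex set $\{\underline u\le u\le\overline u\}$, with the nonlinearity truncated there (coercivity and weak lower semicontinuity are immediate);
\item a monotone iteration.
\end{itemize}

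\textbf{Regularity.} Since $\underline u\le u\le \overline u\le Cd^\alpha$, the equation gives $-\Delta_pu+(-\Delta)^s_pu=\lambda^{-1}(g_0u^m+\lambda md^{-\gamma}u^\delta-u^{2m+1})$. The right-hand side is bounded by $Cd^{-\gamma}\in L^{\textbf d}$. Hence \cite[Theorem~1.1]{antonini2025global} gives $u\in C^{1,\xi}(\overline\Omega)$. The bounds $c d\le\underline u\le u\le \overline u\le Cd^\alpha$ give $u\in\mathcal M^1_{1,\alpha}(\Omega)$.

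\textbf{Contraction.} Let $u_1,u_2$ be the solutions for $g_1,g_2$, and set $w_i=u_i^{m+1}$. Both lie in $\mathcal M^1_{1,\alpha}$, so $u_1/u_2$ and $u_2/u_1$ are in $L^\infty$, and test functions of the form $(w_1-w_2)^+/u_i^m$ are admissible in $W^{1,p}_0$.

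Test the equation for $u_1$ with $\varphi_1=(w_1-w_2)^+/u_1^m$ and the equation for $u_2$ with $\varphi_2=(w_1-w_2)^+/u_2^m$, then subtract. The zero-order terms combine as $\int(u_1^{m+1}-u_2^{m+1})(w_1-w_2)^+=\|(w_1-w_2)^+\|_2^2$. The source terms give $\int(g_1-g_2)(w_1-w_2)^+$. The singular terms give $\lambda m\int d^{-\gamma}(u_1^{\delta-m}-u_2^{\delta-m})(w_1-w_2)^+\le0$, because $\delta<m$ and $u_1>u_2$ on the support. The diffusion terms are $\lambda$ times the left side of \eqref{equ0} restricted to the set $\{u_1>u_2\}$.

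\textbf{Main obstacle.} Lemma~\ref{Lem2} is stated for the full difference, not its positive part, so nonnegativity of the diffusion terms needs care. We plan to use the standard Díaz--Saa localization: apply Lemma~\ref{Lem2} with $u=u_1$ and $v=\max(u_1,u_2)$ (equivalently $v^{m+1}=\max(w_1,w_2)$). Then $(u^{m+1}-v^{m+1})/u^m=-(w_2-w_1)^+/u_1^m$ and the terms match up to sign. In the nonlocal part, interactions between $\{u_1>u_2\}$ and its complement must be checked pointwise. This is where the ray-strict convexity of Proposition~\ref{pro3} enters.

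Granting this, $\|(w_1-w_2)^+\|_2^2\le\int(g_1-g_2)^+(w_1-w_2)^+$, and Cauchy--Schwarz yields \eqref{contraction}.
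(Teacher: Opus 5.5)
Your argument works after some repairs, and it takes a genuinely different route from the paper in both halves.

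\textbf{Existence.} The paper minimizes $\mathcal J$ globally on $\mathbf{WL}$ and gets nontriviality from $\mathcal J(t\phi)<0$. It then proves an $L^\infty$ bound for every weak solution by a De Giorgi-type iteration, and only afterwards obtains $C^{1,\xi}$, the lower bound (comparison with $\varepsilon\phi^{s}_{1,p}$) and the upper bound (Theorem~\ref{theorem1}). You instead trap a solution between $\varepsilon\phi^{s}_{1,p}$ and $Mw$. Boundedness (hence $C^{1,\xi}$) and $u\le Cd^{\alpha}$ then come at once from the super-solution, and no iteration is needed. Moreover, Theorem~\ref{theorem2} places every weak solution between these barriers, which is what lets your contraction argument apply to arbitrary weak solutions. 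For the existence step itself, minimize the functional with the nonlinearity truncated at $\underline u,\overline u$ on all of $\mathbf{WL}$, then test with $(u-\overline u)^+$ and $(\underline u-u)^+$. A minimizer over the order interval alone only satisfies a variational inequality.

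\textbf{Contraction.} For \eqref{contraction} the paper reruns the proof of Theorem~\ref{theorem2}: $\epsilon$-shifted, $\mathbf{T}_{k}$-truncated test functions, the splitting into $\Omega_1,\Omega_2,\Omega_3$, Lemma~\ref{Lem2} on $\Omega_2\times\Omega_2$, and separate cross-term estimates. Your direct D\'{i}az--Saa localization is much shorter. However, it rests on two-sided comparability of solutions with $d$, whereas the paper's regularization is designed for general $\mathbf{WL}$ sub/super-solutions where no such bounds are available.

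\textbf{Three points need fixing.}

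First, membership in $\mathcal M^{1}_{1,\alpha}(\Omega)$ does not give $u_1/u_2\in L^\infty$. It only yields $u_1/u_2\lesssim d^{\alpha-1}$, which is unbounded since $\alpha<1$. Use instead that $u_i\in C^{1}(\overline\Omega)$ vanishes on $\partial\Omega$, so $u_i\le Cd$, together with $u_i\ge cd$. This comparability is exactly what makes $(w_1-w_2)^+/u_2^m\in W^{1,p}_0(\Omega)$ and Lemma~\ref{Lem2} applicable.

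Second, write $\langle\mathcal A u,\varphi\rangle$ for the sum of the local and nonlocal forms. Your pair $(u,v)=(u_1,\max(u_1,u_2))$ yields the inequality for $(w_2-w_1)^+$, not $(w_1-w_2)^+$. To match your $\varphi_1,\varphi_2$, take $(u,v)=(u_1,\min(u_1,u_2))$, which gives $\langle\mathcal A u_1,\varphi_1\rangle\ge\langle\mathcal A\min(u_1,u_2),\varphi_2\rangle$. The local parts of $\langle\mathcal A\min(u_1,u_2),\varphi_2\rangle$ and $\langle\mathcal A u_2,\varphi_2\rangle$ coincide. For $x\in\{u_1>u_2\}$ and $y\in\{u_1\le u_2\}$ one has $[u_2(x)-u_1(y)]^{p-1}\ge[u_2(x)-u_2(y)]^{p-1}$, and symmetrically for the mirrored pairs. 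Hence $\langle\mathcal A\min(u_1,u_2),\varphi_2\rangle\ge\langle\mathcal A u_2,\varphi_2\rangle$, and the diffusion terms are nonnegative. Only monotonicity of $t\mapsto|t|^{p-2}t$ enters here; ray-strict convexity matters only in the equality case.

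Third, your sub-solution check uses $d^{-\gamma}\ge c>0$, which holds only for $\gamma\ge 0$, while $\gamma<1/\textbf{d}$ allows $\gamma<0$. For $m=0$ there is no $\delta$-term at all, so $\varepsilon\phi^{s}_{1,p}$ would need $g_0\ge\varepsilon\phi^{s}_{1,p}+\lambda\lambda^{s}_{1,p}(\varepsilon\phi^{s}_{1,p})^{p-1}$, which fails where $\underline g$ vanishes. The paper asserts the same sub-solution without detail, so this weak spot is shared. In your scheme, though, the sub-solution also carries nontriviality, so these cases need a separate argument. For example, use the energy argument $\mathcal J(t\phi)<0$ for nontriviality and a strong maximum principle or Hopf lemma for the lower bound.
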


\noindent 
Now, we extend the previous theorem to the case where $ g_{0} \in L^{2}(\Omega) $.
\begin{theorem} \label{theorem4}
	Let $\gamma < \frac{1}{\textbf{d}}$, and assume that \( g_0 \in L^2(\Omega) \) satisfies condition~\eqref{equ3}.
	Then, there exists a positive weak solution \( u \in \mathbf{WL} \) to problem~\eqref{PP2}. 
	Furthermore, if \( g_0 \in L^r(\Omega) \) for some \( r > \frac{N}{s p} \) when \( \gamma \leq 0 \), and \( \frac{N}{s p} < r < \frac{1}{\gamma} \) when \( \gamma > 0 \), then \( u \in L^{\infty}(\Omega) \).
	Moreover, let \( u_1, u_2 \) be two weak solutions of~\eqref{PP2} corresponding to \( g_1, g_2 \in L^2(\Omega) \), respectively, both satisfying~\eqref{equ3}. Then, the following stability estimate holds:
	\begin{equation}\label{equ63}
	\left\| \left( u_{1}^{m+1} - u_{2}^{m+1}\right) ^{+} \right\|_{L^{2}(\Omega)} 
	\leq 
	\left\| \left( g_{1} - g_{2}\right) ^{+} \right\|_{L^{2}(\Omega)}.
	\end{equation}
\end{theorem}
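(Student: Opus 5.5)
We prove Theorem~\ref{theorem4} by approximating $g_0 \in L^2(\Omega)$ with bounded data and passing to the limit, using Theorem~\ref{theorem3} for each approximation. Set $g_n := \mathbf{T}_n(g_0) = \min\{g_0, n\}$. Since $\underline{g}\in L^\infty(\Omega)$, for $n \ge \lambda\|\underline{g}\|_{L^\infty}$ we still have $g_n \ge \lambda \underline{g}$, so \eqref{equ3} holds. Theorem~\ref{theorem3} then gives a solution $u_n \in C^{1,\xi}(\overline{\Omega})\cap \mathcal{M}^1_{1,\alpha}(\Omega)$ of \eqref{PP2} with datum $g_n$.

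\textbf{Monotonicity and Cauchy property.} The sequence $g_n$ is nondecreasing in $n$. Hence the comparison principle (Theorem~\ref{theorem2}) shows that $u_n$ is nondecreasing. The contraction estimate \eqref{contraction}, applied in both orders, gives
\[
\|u_n^{m+1}-u_k^{m+1}\|_{L^2(\Omega)} \le \|g_n-g_k\|_{L^2(\Omega)} \to 0 .
\]
So $u_n^{m+1}$ converges in $L^2(\Omega)$, and monotonically a.e., to some $v$. Set $u := v^{1/(m+1)}$; then $u \ge u_1 \ge c_1 d > 0$ in $\Omega$.

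\textbf{Uniform energy bound.} Test the equation for $u_n$ with $\varphi = u_n$. The left side is $\int u_n^{2m+2} + \lambda\|\nabla u_n\|_p^p + \lambda [u_n]_{s,p}^p$. On the right, Hölder and Young give
\[
\int g_0 u_n^{m+1} \le \|g_0\|_{L^2}\,\|u_n^{m+1}\|_{L^2} \le \tfrac12\|g_0\|_{L^2}^2 + \tfrac12\int u_n^{2m+2} .
\]
The singular term is $\lambda m\int d^{-\gamma}u_n^{\delta+1}$. Since $\delta<m<p-1$, we have $\delta+1<p$. Because $\gamma<1/\mathbf{d}<1$, Hardy's inequality, or Hölder with $d^{-\gamma}\in L^{q}$ for a suitable $q$ together with Sobolev embedding (Theorem~\ref{thm0}), bounds this term by $C\|\nabla u_n\|_p^{\delta+1}$. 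This is absorbed by Young's inequality since $\delta+1<p$. We obtain uniform bounds on $u_n$ in $\mathbf{WL}$, so along a subsequence $u_n \rightharpoonup u$ in $W^{1,p}_0(\Omega)$.

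\textbf{Passage to the limit.} This is the main obstacle: the $p$-Laplacian and the fractional $p$-Laplacian are nonlinear, so weak convergence alone is not enough. We first obtain strong convergence $\nabla u_n\to\nabla u$ in $L^p$ by the standard $(S_+)$ argument. Test the difference of the equations for $u_n$ and $u$ with $u_n-u$ (or use $u$ directly, via monotonicity). The right-hand sides converge: $g_n u_n^m \to g_0 u^m$ in $L^1$ after multiplying by bounded-in-$L^{\cdot}$ test functions, using the $L^2$ convergence of $u_n^{m+1}$ and strong $L^q$ compactness from Remark~\ref{remark0}. The singular term converges by dominated convergence, since $u_n\le u$ and $d^{-\gamma}u^{\delta}u_n$ is integrable. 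Then $\limsup \langle A u_n - Au, u_n-u\rangle \le 0$, where $A=-\Delta_p+(-\Delta)^s_p$. Simon-type inequalities give $u_n\to u$ in $W^{1,p}_0\cap W^{s,p}_0$. This lets us pass to the limit in \eqref{weakform1} for every $\varphi\in\mathbf{WL}$, so $u$ is a weak solution.

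\textbf{Boundedness.} For $g_0\in L^r$ with $r>N/(sp)$, we run a Stampacchia/De Giorgi iteration. Test the equation with $(u-k)^+$, drop the positive terms $u^{2m+1}$ and the local gradient term except for a lower bound, and use the fractional Sobolev inequality (Theorem~\ref{thm3}) on level sets. The source terms satisfy $g_0u^m\le g_0 u^{p-1}$ type bounds relative to levels, since $m<p-1$. The weight satisfies $d^{-\gamma}\in L^{q}$ for all $q<1/\gamma$, and this is where $r<1/\gamma$ enters when $\gamma>0$: it allows $g_0+d^{-\gamma}\in L^r$. The resulting decay inequality for $|\{u>k\}|$ gives $u\in L^\infty$.

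\textbf{Stability estimate.} Estimate \eqref{equ63} follows by applying \eqref{contraction} to the truncations $\mathbf{T}_n g_1$ and $\mathbf{T}_n g_2$ and passing to the limit. This uses $\|(\mathbf{T}_n g_1-\mathbf{T}_n g_2)^+\|_{L^2}\le\|(g_1-g_2)^+\|_{L^2}$ and $L^2$ convergence of $u_{i,n}^{m+1}$. Combined with uniqueness from Theorem~\ref{theorem2}, any solution coincides with the limit solution, so the estimate holds for arbitrary solutions $u_1,u_2$.
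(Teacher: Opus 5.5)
Your proposal is correct, but it departs from the paper's route at most steps.

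\textbf{Approximation.} The paper approximates $g_0$ by $\max(\breve g_n,\lambda\underline g)$ with $\breve g_n\in C_c^\infty(\Omega)$. It gets the $L^{2(m+1)}$-Cauchy property from \eqref{contraction}, checks that the limit is independent of the approximating sequence, and then switches to $\min\{g_0,n\lambda\underline g\}$ to obtain monotonicity and $u\ge C\,d$. Your single monotone truncation $\mathbf{T}_n g_0$ gives both facts at once. The domination $u_n\le u$ is then exactly what lets every lower-order term pass to the limit by dominated convergence.

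\textbf{Energy bound.} Your estimate (H\"older with $g_0\in L^2$, and absorption of the $d^{-\gamma}$ term because $\delta+1<p$) is uniform in $n$. The paper's bound is written in terms of $\|g_n\|_{L^\infty(\Omega)}$, which is not uniform, so on this point your version is the sound one.

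\textbf{Passage to the limit.} The paper uses Boccardo--Murat a.e.\ convergence of gradients plus weak convergence in $L^{p/(p-1)}$. You instead get strong convergence in $W^{1,p}_0(\Omega)$ and $W^{s,p}_0(\Omega)$ via $(S_+)$. Test only the equation for $u_n$ with $u_n-u$, since $u$ is not yet known to be a solution, and use $\langle Au,u_n-u\rangle\to0$ from weak convergence.

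\textbf{Boundedness.} The paper runs a Moser iteration with $(u+\epsilon)^\beta-\epsilon^\beta$; you propose a De Giorgi--Stampacchia level-set iteration, and both are standard. If you bound $u^m\le u^{p-1}$ on $\{u>k\}$, the source becomes $p$-homogeneous, and absorbing it needs $\|g_0+d^{-\gamma}\|_{L^r(\{u>k\})}$ small, i.e.\ $k$ large. Splitting $u^m\le 2^m\bigl(k^m+((u-k)^+)^m\bigr)$ keeps everything subcritical and avoids this.

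\textbf{Stability estimate.} The paper proves \eqref{equ63} by rerunning the argument of Theorem~\ref{theorem2} directly with two data $g_1,g_2$. You pass to the limit in \eqref{contraction} for $\mathbf{T}_n g_1,\mathbf{T}_n g_2$ and reach arbitrary solutions through the uniqueness supplied by Theorem~\ref{theorem2}. This avoids redoing the $\epsilon\to0$, $k\to\infty$ limits with unbounded data. The price is that you depend on Theorem~\ref{theorem2} for $L^2$ data, which is how it is stated.

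\textbf{Cosmetic fix.} Write $u\ge u_{n_0}\ge c\,d$ with $n_0\ge\lambda\|\underline g\|_{L^\infty(\Omega)}$, since $\mathbf{T}_1 g_0$ need not satisfy \eqref{equ3}.
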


\noindent 
It is important to highlight that, based on the boundary behavior of the weak solution to~\eqref{PP2} established in Theorems~\ref{theorem3} and~\ref{theorem4}, we study the following perturbed problem associated with the operator $\mathcal{T}_m$ (see~\eqref{equ37}), which arises naturally from the parabolic formulation presented in~\eqref{0equ4} for $m > 0$:
\begin{equation}\label{equ22}
\begin{cases}
v + \lambda \mathcal{T}_m v = g_{0}, & \text{in } \Omega,\\[1mm]
v > 0, & \text{in } \Omega,\\[1mm]
v = 0, & \text{on } \mathbb{R}^N \setminus \Omega.
\end{cases}
\end{equation}
Moreover, by invoking Theorem~\ref{theorem3}, we derive the $T$-accretivity property of  $\mathcal{T}_m$, as formulated below.

\begin{corollary}\label{cor1}
Let $\gamma < \frac{1}{\textbf{d}}$, let $m>0$, and assume that $g_0 \in L^{\infty}(\Omega)$ satisfies~\eqref{equ3}. Then, problem~\eqref{equ22} admits a unique solution 
	$v \in C^{1}(\overline{\Omega})$. Moreover, $v \in \dot{V}_{+}^{m+1} \cap \mathcal{M}_{1,\alpha}^{\frac{1}{m+1}}(\Omega)$  for every \( \alpha \in [s,1) \) with \( \alpha \neq \frac{ps}{p-1} \), and $v$ satisfies
	\begin{equation}\label{equ24}
	\begin{aligned}
	&\int_{\Omega} v\,\Psi \, dx 
	+ \lambda \int_{\Omega} \left|\nabla \left( v^{\frac{1}{m+1}} \right)\right|^{p-1} 
	\nabla \left( v^{\frac{1}{m+1}}\right)\cdot 
	\nabla \left(\frac{\Psi}{v^{\frac{m}{m+1}}}\right) dx \\
	&\quad + \lambda \iint_{\mathbb{R}^{2N}}
	\frac{\left[ v^{\frac{1}{m+1}}(x)-v^{\frac{1}{m+1}}(y)\right] ^{p-1} }{|x-y|^{N+sp}}  
	\left[ \left( \frac{\Psi}{v^{\frac{m}{m+1}}}\right)(x)
	- \left( \frac{\Psi}{v^{\frac{m}{m+1}}}\right)(y) \right]
	\, dx \, dy  \\
	&= \int_{\Omega} g_0(x)\,\Psi \, dx
	+ \lambda m \int_{\Omega} d(x)^{-\gamma} 
	v^{\frac{\delta-m}{m+1}}\,\Psi \, dx ,
	\end{aligned}
	\end{equation}
	for every test function $\Psi$ such that
	\[
	|\Psi| \in L^{\infty}_{d^{m+1}}(\Omega)
	\quad \text{and} \quad
	\frac{|\nabla \Psi|}{d(\cdot)^{m}} \in L^{p}(\Omega).
	\]
	Furthermore, if $v_1$ and $v_2$ are solutions of problem~\eqref{equ22}
	corresponding to data $g_1$ and $g_2$, respectively, then the following
	stability estimate holds:
	\begin{equation}\label{equ27}
	\left\| \left( v_1 - v_2 \right)^{+} \right\|_{L^2(\Omega)}
	\leq 
	\left\| \left( v_1 - v_2 
	+ \lambda \big(\mathcal{T}_m(v_1)-\mathcal{T}_m(v_2)\big) \right)^{+}
	\right\|_{L^2(\Omega)}.
	\end{equation}
\end{corollary}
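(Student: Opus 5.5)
The plan is to transport everything from Theorem~\ref{theorem3} through the change of unknown $v = u^{m+1}$. Take $g_0 \in L^{\infty}(\Omega)$ satisfying \eqref{equ3}. Let $u \in C^{1,\xi}(\overline{\Omega}) \cap \mathcal{M}^{1}_{1,\alpha}(\Omega)$ be the positive weak solution of \eqref{PP2} given by Theorem~\ref{theorem3}, and set $v := u^{m+1}$.

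\textbf{Existence and regularity.} Since $m>0$ and $u \in C^{1}(\overline{\Omega})$ vanishes on $\partial\Omega$, we have $v \in C^{1}(\overline{\Omega})$ with $\nabla v = (m+1)u^{m}\nabla u$. By definition of the cone, $v^{\frac{1}{m+1}} = u \in W^{1,p}_0(\Omega)$ (extended by zero) gives $v \in \dot{V}^{m+1}_{+}$. The two-sided bound $c^{-1}d \le u \le c\, d^{\alpha}$ is exactly $v \in \mathcal{M}^{\frac{1}{m+1}}_{1,\alpha}(\Omega)$.

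\textbf{Weak formulation \eqref{equ24}.} Given an admissible $\Psi$, set $\varphi := \Psi/v^{\frac{m}{m+1}} = \Psi/u^{m}$. Since $|\Psi| \le C d^{m+1}$ and $u \ge c_1 d$, we get $|\varphi| \le C d$, so $\varphi \in L^\infty$. Also
\[
\nabla\varphi = \nabla\Psi/u^{m} - m\Psi u^{-m-1}\nabla u,
\]
and both terms are in $L^p$: the first because $|\nabla\Psi|/d^{m} \in L^p$, the second because it is bounded. Hence $\varphi \in \mathbf{WL}$ is a legitimate test function in \eqref{weakform1}. Plugging it in gives $u^{2m+1}\varphi = v\Psi$ and $g_0 u^{m}\varphi = g_0\Psi$, and
\[
\lambda m\, d^{-\gamma}u^{\delta}\varphi = \lambda m\, d^{-\gamma} v^{\frac{\delta-m}{m+1}}\Psi .
\]
This is exactly \eqref{equ24}, and it says that $v + \lambda\mathcal{T}_m v = g_0$ holds in the sense of distributions. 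Hence $\mathcal{T}_m v = (g_0 - v)/\lambda \in L^{\infty} \subset L^2$, so $v \in D(\mathcal{T}_m)$ solves \eqref{equ22}.

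\textbf{Uniqueness.} Conversely, if $\tilde v$ solves \eqref{equ22} in the class described, then $\tilde u := \tilde v^{\frac{1}{m+1}}$ solves \eqref{PP2}: test \eqref{equ24} with $\Psi = \tilde u^{m}\varphi$ for $\varphi \in C_c^\infty(\Omega)$ and conclude by density. Theorem~\ref{theorem2} applied twice (each solution as sub- and as super-solution) then gives $\tilde u = u$.

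\textbf{Stability \eqref{equ27}.} Let $v_i$ solve \eqref{equ22} with data $g_i$. By the equation,
\[
v_i + \lambda\mathcal{T}_m v_i = g_i \quad \text{a.e.},
\]
so the right-hand side of \eqref{equ27} equals $\|(g_1-g_2)^+\|_{L^2}$. The left-hand side is $\|(u_1^{m+1}-u_2^{m+1})^+\|_{L^2}$, so \eqref{equ27} is exactly the contraction estimate \eqref{contraction} of Theorem~\ref{theorem3}.

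\textbf{Main obstacle.} The delicate point is the admissibility of the test function $\Psi/v^{\frac{m}{m+1}}$, in particular near $\partial\Omega$ and in the nonlocal term. This is where Hopf's lower bound $u \ge c_1 d$ is used; the weight conditions on $\Psi$ are calibrated precisely for it. If $\varphi \in W_0^{1,p}$ needs more care, I would truncate, using $\Psi_\varepsilon$ supported in $\{d>\varepsilon\}$, and pass to the limit by dominated convergence. For the nonlocal term, the needed $W^{s,p}$ integrability follows from the embedding $W^{1,p}_0 \subset W^{s,p}_0$.
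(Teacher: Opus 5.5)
Your proof is correct. The existence and regularity, uniqueness, and stability parts follow the paper's own line. You transport the solution of Theorem~\ref{theorem3} through $v=u^{m+1}$, get uniqueness from the comparison/contraction principle, and read \eqref{equ27} off \eqref{contraction} via $v_i+\lambda\mathcal{T}_m v_i=g_i$. Where you differ is in how you derive the identity \eqref{equ24}.

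The paper obtains \eqref{equ24} as an Euler--Lagrange equation. It sets $\xi(v):=\mathcal{J}(v^{\frac{1}{m+1}})$ and observes that $v_0=u_0^{m+1}$ minimizes $\xi$ on $\dot{V}_{+}^{m+1}\cap L^{2}(\Omega)$, because $u_0$ is the global minimizer of $\mathcal{J}$. It then differentiates $t\mapsto \xi(v_0+t\Psi)$ at $t=0$, using $v_0\ge c\,d^{m+1}$ and $|\Psi|\le C d^{m+1}$ to keep $v_0+t\Psi$ positive for small $|t|$. You instead insert $\varphi=\Psi/u^{m}$ directly into the weak formulation of \eqref{PP2}.

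Your route is more elementary and, as written, more complete. It uses only the weak formulation, $u\in C^{1}(\overline{\Omega})$ and the Hopf bound $u\ge c_1 d$. It applies to any weak solution, not just the minimizer. It also shows exactly where the hypothesis $|\nabla\Psi|/d^{m}\in L^{p}(\Omega)$ enters. The paper's G\^ateaux argument tacitly needs $(v_0+t\Psi)^{\frac{1}{m+1}}\in W^{1,p}$ and differentiation under both the local and nonlocal integrals, and it does not check either.

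What the paper's route buys is structural. It displays $v+\lambda\mathcal{T}_m v-g_0$ as (a multiple of) the derivative of a functional whose energy part is convex in $v$ by Proposition~\ref{pro3}. This hidden-convexity structure is what drives the later accretivity and semigroup arguments.

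You also fill in two steps the paper only asserts. First, you show $\mathcal{T}_m v=(g_0-v)/\lambda\in L^{\infty}(\Omega)$, so $v\in D(\mathcal{T}_m)$. Second, you show that a solution of \eqref{equ22} in the stated class gives a weak solution of \eqref{PP2}, by testing with $\Psi=\tilde u^{m}\varphi$ and arguing by density, before Theorem~\ref{theorem2} is applied.
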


\noindent 
The next result extends Corollary~\ref{cor1} to the case of an $L^{2}$-potential, as stated below.
\begin{corollary}\label{cor22}
Let $\gamma < \frac{1}{\textbf{d}}$, let $m>0$, and assume that  $g_{0}\in L^{2}(\Omega)\cap L^{r}(\Omega)$ for some
	$r>\frac{N}{sp}$, satisfying \eqref{equ3}. Then problem~\eqref{equ22} admits a unique
	solution $v$. More precisely,
	\[
	v\in \dot{V}_{+}^{m+1}\cap L^{\infty}(\Omega),
	\]
	and $v$ satisfies
	
	\begin{equation}\label{equu}
	\begin{aligned}
	&\int_{\Omega} v\,\Psi \, dx
	+\lambda \int_{\Omega}
	\left|\nabla\!\left(v^{\frac{1}{m+1}}\right)\right|^{p-1}
	\nabla\!\left(v^{\frac{1}{m+1}}\right)\!\cdot\!
	\nabla\!\left(\frac{\Psi}{v^{\frac{m}{m+1}}}\right) dx  \\
	& \quad+\lambda \iint_{\mathbb{R}^{2N}}
	\frac{\left[ v^{\frac{1}{m+1}}(x)-v^{\frac{1}{m+1}}(y)\right] ^{p-1}
		}
	{|x-y|^{N+sp}}  
	\left[
	\left(\frac{\Psi}{v^{\frac{m}{m+1}}}\right)(x)
	-\left(\frac{\Psi}{v^{\frac{m}{m+1}}}\right)(y)
	\right]\, dx\, dy  \\
	&\qquad= \int_{\Omega} g_{0}(x)\,\Psi \, dx
	+ \lambda m \int_{\Omega} d(x)^{-\gamma}
	v^{\frac{\delta-m}{m+1}}\,\Psi \, dx ,
	\end{aligned}
	\end{equation}
	for every test function $\Psi$ such that
\begin{equation}\label{equuu}
	|\Psi|\in L_{d^{m+1}}^{\infty}(\Omega)
	\quad \text{and} \quad
	\frac{|\nabla \Psi|}{d(\cdot)^{m}}\in L^{p}(\Omega).
\end{equation}
	Moreover, there exists a constant $c>0$ such that
	\[
	v(x)\geq c\, d^{m+1}(x)\qquad \text{a.e. in } \Omega .
	\]
	Furthermore, if $v_{1}$ and $v_{2}$ are two solutions of~\eqref{equ22}
	corresponding to $g_{1},g_{2}\in L^{2}(\Omega)$ satisfying \eqref{equ3},
	then the following estimate holds:
	\begin{equation}\label{1equ27}
	\left\| (v_{1}-v_{2})^{+} \right\|_{L^{2}(\Omega)}
	\leq
	\left\|
	\big( v_{1}-v_{2}
	+\lambda \big(\mathcal{T}_{m}(v_{1})-\mathcal{T}_{m}(v_{2})\big)
	\big)^{+}
	\right\|_{L^{2}(\Omega)} .
	\end{equation}
\end{corollary}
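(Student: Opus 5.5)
The plan is to derive Corollary~\ref{cor22} from Theorem~\ref{theorem4}, following the same route that produces Corollary~\ref{cor1} from Theorem~\ref{theorem3}.

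\textbf{Existence and the lower bound.} Theorem~\ref{theorem4} gives a positive weak solution $u \in \mathbf{WL}$ of \eqref{PP2}. Since $g_0 \in L^r(\Omega)$ with $r > \frac{N}{sp}$ (and $r<1/\gamma$ when $\gamma>0$), it also gives $u \in L^\infty(\Omega)$. Set $v := u^{m+1}$. Then $v^{\frac{1}{m+1}} = u \in W^{1,p}_0(\Omega)$, so $v \in \dot{V}_{+}^{m+1}\cap L^\infty(\Omega)$.

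For the lower bound $v \geq c\,d^{m+1}$, I would use the comparison principle, Theorem~\ref{theorem2}. By \eqref{equ3}, $g_0 \geq \lambda \underline{g}$ with $\underline{g}\in L^\infty$. Let $\underline{u}$ be the $C^{1,\xi}$ solution of \eqref{PP2} with datum $\lambda\underline g$, given by Theorem~\ref{theorem3}. It lies in $\mathcal{M}^1_{1,\alpha}(\Omega)$, so $\underline u \geq c\,d$. Since $\underline u$ is a sub-solution of the problem with datum $g_0$, Theorem~\ref{theorem2} gives $u \geq \underline u \geq c\,d$. Hence $v\geq c^{m+1}d^{m+1}$.

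\textbf{Weak formulation.} To obtain \eqref{equu}, take $\Psi$ satisfying \eqref{equuu} and test \eqref{weakform1} with $\varphi = \Psi/v^{\frac{m}{m+1}} = \Psi/u^m$. First check that $\varphi \in \mathbf{WL}$. We have $|\varphi|\le C d^{m+1}/d^{m} = Cd$ by the lower bound, and
\[
\nabla\varphi = \frac{\nabla\Psi}{u^m} - m\,\frac{\Psi\,\nabla u}{u^{m+1}} .
\]
The first term is in $L^p$ because $|\nabla\Psi|/d^m\in L^p$ and $u\ge cd$. The second is bounded by $C|\nabla u|\in L^p$. Substituting, the term $\int u^{2m+1}\varphi = \int v\Psi$ and the source terms become $\int g_0\Psi$ and $\lambda m\int d^{-\gamma} v^{\frac{\delta-m}{m+1}}\Psi$, which is exactly \eqref{equu}. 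The singular integral converges since $|\Psi| u^{\delta-m}d^{-\gamma}\le C d^{m+1+\delta-m-\gamma}$ up to constants, using $u\ge cd$ when $\delta<m$, and $1+\delta-\gamma>-1$ follows from the assumptions.

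Thus $v$ solves \eqref{equ22} in this sense, and $\mathcal{T}_m v = (g_0 - v)/\lambda\in L^2$, so $v\in D(\mathcal{T}_m)$.

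\textbf{Uniqueness and contraction.} If $v_1,v_2$ solve \eqref{equ22} with data $g_1,g_2$, then $u_i = v_i^{\frac{1}{m+1}}$ are weak solutions of \eqref{PP2}. The step of going back from \eqref{equu} to \eqref{weakform1} uses test functions $\Psi = u^m\varphi$, which satisfy \eqref{equuu} for smooth compactly supported $\varphi$, followed by density. Estimate \eqref{equ63} then gives
\[
\|(v_1-v_2)^+\|_{L^2}\le \|(g_1-g_2)^+\|_{L^2}.
\]
Since $g_i = v_i + \lambda\mathcal{T}_m v_i$, this is exactly \eqref{1equ27}. Taking $g_1=g_2$ in both orders gives uniqueness.

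\textbf{Main obstacle.} I expect the main difficulty to be justifying the test function $\Psi/u^m$ near $\partial\Omega$. This rests entirely on the lower bound $u\ge c\,d$, which in turn depends on the comparison via Theorem~\ref{theorem2} being applicable to a sub-solution with a different datum, in the $L^2$ setting rather than the $L^\infty$ one. If that fails directly, I would approximate $g_0$ by truncations $\mathbf{T}_k g_0$. Corollary~\ref{cor1} applies to each $\mathbf{T}_k g_0$, the lower bound is uniform in $k$ because $\underline g$ is fixed, and the contraction \eqref{contraction} makes the approximations Cauchy in $L^2$, which lets me pass to the limit.
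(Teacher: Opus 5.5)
Your proposal is correct, and its skeleton matches the paper's. Existence and boundedness come from Theorem~\ref{theorem4}. The bound $v\ge c\,d^{m+1}$ comes from comparing $u$ with the solution for the datum $\lambda\underline{g}$; this is exactly \eqref{equ65}, since $\min\{g_0,\lambda\underline{g}\}=\lambda\underline{g}$ under \eqref{equ3}. Finally, \eqref{1equ27} is read off from \eqref{equ63} after converting solutions of \eqref{equ22} back into solutions of \eqref{PP2}.

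The genuine difference is how you reach \eqref{equu}. The paper argues variationally. It identifies the solution of Theorem~\ref{theorem4} as a global minimizer of $\mathcal{J}$, by rerunning the minimization of Theorem~\ref{theorem3} and invoking Theorem~\ref{theorem2}. It then passes to $\xi(v)=\mathcal{J}(v^{1/(m+1)})$ and differentiates $\xi$ at $v_0=u_0^{m+1}$ along $v_0+t\Psi$, which stays positive for small $|t|$. You instead substitute $\varphi=\Psi/u^{m}$ directly into \eqref{weakform1}, after checking $\varphi\in\mathbf{WL}$ from $u\ge c\,d$ and \eqref{equuu}.

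The derivative of $\xi$ at $v_0$ in the direction $\Psi$ is $\frac{1}{m+1}$ times the Euler--Lagrange form of $\mathcal{J}$ at $u_0$ evaluated at $\Psi/u_0^{m}$. So both routes rest on the same admissibility estimate. Yours needs only that $u$ is a weak solution, so it avoids the minimizer identification and the differentiation under the integral sign. The paper's route, in turn, is the one that transfers verbatim from Corollary~\ref{cor1}.

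Two small caveats, both shared with the paper:
\begin{itemize}
\item The restriction $r<1/\gamma$ you invoke for $\gamma>0$ is not among the corollary's hypotheses. Because $\Omega$ is bounded you may lower $r$, but only into a nonempty window, which requires $\gamma<sp/N$.
\item Your reverse substitution $\Psi=u_i^{m}\varphi$ needs $u_i$ locally bounded away from zero when $m<1$. So uniqueness should be understood within the class $v\ge c\,d^{m+1}$, which is in any case needed for \eqref{equu} to make sense.
\end{itemize}
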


\begin{remark}\label{remark2}
	We emphasize that, in the case $m=0$, one can introduce the nonlinear operator
\begin{equation}\label{1equ1}
	\mathcal{T}_0 : D(\mathcal{T}_0) \subset L^{2}(\Omega) \longrightarrow L^{2}(\Omega),
	\qquad
	\mathcal{T}_0 w := -\Delta_{p} w + (-\Delta)_{p}^{s} w,
\end{equation}
	whose domain is defined by
	\[
	D(\mathcal{T}_0)
	:= \Big\{\, w : \Omega \to \mathbb{R}^{+} \ \text{measurable} \;\big|\;
	w \in W_0^{1,p}(\Omega) \cap L^{2}(\Omega)
	\ \text{and} \
	\mathcal{T}_0 w \in L^{2}(\Omega) \,\Big\}.
	\]
It follows from~\eqref{contraction} in the case $g_0 \in L^{\infty}(\Omega)$ and from~\eqref{equ63} in the case $g_0 \in L^{2}(\Omega)$ that the operator $\mathcal{T}_0$ is $T$-accretive in $L^{2}(\Omega)$. More precisely, for every $\lambda>0$, let $u_1$ and $u_2$ be the solutions of~\eqref{PP2} corresponding to the data $g_1$ and $g_2$, respectively. Then,
	\[
	\bigl\|(u_1-u_2)^{+}\bigr\|_{L^{2}(\Omega)}
	\leq
	\Bigl\|
	\bigl(
	u_1-u_2
	+\lambda\bigl(\mathcal{T}_0(u_1)-\mathcal{T}_0(u_2)\bigr)
	\bigr)^{+}
	\Bigr\|_{L^{2}(\Omega)}.
	\]
\end{remark}

\noindent
We now turn to the second stationary problem, which plays a fundamental role in the characterization of the asymptotic behavior of the trajectories associated with problem~\eqref{P1}.
\begin{equation}\label{P7}
\left\{
\begin{aligned}
- \Delta_{p} u + ( -\Delta ) ^{s}_{p} u &= b(x)\, u^{m} + m\, d(x)^{-\gamma} u^{\delta} 
\quad \text{in } \Omega,\\
u &> 0 \quad \text{in } \Omega,\\
u &= 0 \quad \text{in } \mathbb{R}^N \setminus \Omega.
\end{aligned}
\right.
\end{equation}
Here, $b \in L^{\infty}(\Omega) \setminus \{0\}$ is a nonnegative function.   We now introduce the notion of a weak solution.

\begin{definition}
	Let $m \ge 0$. A function $u$ such that  $ u \in W^{1,p}_0(\Omega) $  if  $ m=0, $ and $ u \in W^{1,p}_0(\Omega)\cap L^{2(m+1)}(\Omega)  $ if $ m>0, $
	with $u>0$ a.e. in $\Omega$, is said to be a  weak solution of problem~\eqref{P7} if
	\begin{equation}\label{equuuu}
	\begin{aligned}
	& \int_{\Omega} |\nabla u|^{p-2}\nabla u \cdot \nabla \varphi \, dx 
	+ \iint_{\mathbb{R}^{2N}} 
	\frac{\left[ u(x)-u(y)\right] ^{p-1}\big(\varphi(x)-\varphi(y)\big)}
	{|x-y|^{N+sp}} \, dx \, dy \\
	& \qquad = \int_{\Omega} \left( b(x)\, u^{m} + m\, d(x)^{-\gamma} u^{\delta} \right)\varphi \, dx,
	\end{aligned}
	\end{equation}
	for every test function  $	\varphi \in W^{1,p}_0(\Omega) $ if $ m=0, $
	and $ \varphi \in W^{1,p}_0(\Omega)\cap L^{2(m+1)}(\Omega) $ if $ m>0. $
\end{definition}

\begin{theorem}\label{the2}
Assume that $\gamma < \frac{1}{\textbf{d}}$. Then problem~\eqref{P7} admits a unique weak solution  $ u \in C^{1}(\overline{\Omega}) \cap \mathcal{M}^{1}_{1,\alpha}(\Omega), $ for every $\alpha \in [s,1)$ with $\alpha \neq \frac{ps}{p-1}$.
\end{theorem}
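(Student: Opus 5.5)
The plan is to prove existence by sub- and super-solutions (or by iterating the approximating problem), uniqueness by the Díaz–Saa-type inequality of Lemma~\ref{Lem2}, and regularity and boundary behaviour from Theorem~\ref{theorem1} and \cite{antonini2025global}.

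\emph{Existence.} I would not use a direct variational argument, because the right-hand side is sublinear: $m<p-1$ and $\delta<m$. The term $d^{-\gamma}u^{\delta}$ is singular at the boundary and not differentiable at $u=0$. For this reason I would run a monotone iteration built on problem~\eqref{P8}.

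\emph{Subsolution.} Take $\underline{u}=\varepsilon\phi^{s}_{1,p}$. By Remark~\ref{remark1}, $\underline{u}\in\mathcal{M}^1_{1,\alpha}(\Omega)$. We have $-\Delta_p\underline u+(-\Delta)^s_p\underline u=\lambda^s_{1,p}\varepsilon^{p-1}(\phi^{s}_{1,p})^{p-1}$. Since $\delta<m<p-1$ and $\phi^{s}_{1,p}\le 1$, this is at most $m\,d^{-\gamma}(\varepsilon\phi^{s}_{1,p})^{\delta}$ once $\varepsilon$ is small; this uses $d^{-\gamma}\ge c>0$, which holds when $\gamma\ge0$, and for $\gamma<0$ one uses the bound $\phi^{s}_{1,p}\le c\,d^\alpha$ instead. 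When $m=0$, I would use the term $b\,u^{m}=b$ directly with the torsion-like function.

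\emph{Supersolution.} Take $\overline u=M w$, where $w$ solves \eqref{P8} with $f=1+d^{-\gamma}$. Theorem~\ref{theorem1} applies here because $\gamma<1/\textbf{d}<1$ and $f\in L^{\textbf{d}}$, since $\gamma\textbf{d}<1$. By $(p-1)$-homogeneity, $M^{p-1}f\ge \|b\|_\infty M^m\|w\|_\infty^m+mM^\delta\|w\|_\infty^\delta d^{-\gamma}$ for $M$ large, again because $m,\delta<p-1$. Increasing $M$ if needed, $\underline u\le\overline u$ holds, using $\phi^{s}_{1,p}\le c\,d^\alpha$ together with $w\ge c_1 d$ and $\alpha\ge s$; near the boundary it is simplest to take $\varepsilon$ smaller.

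\emph{Iteration.} Define $u_{k+1}$ as the solution of \eqref{P8} with $f=b\,u_k^m+m\,d^{-\gamma}u_k^\delta$, starting from $u_0=\underline u$. Each $f$ lies between $c\,d^{0}$ and $C d^{-\gamma}$, so it is in $L^{\textbf d}$ and Theorem~\ref{theorem1} applies. The weak comparison principle \cite[Proposition~4.1]{antonini2025global} makes the sequence monotone and keeps it inside $[\underline u,\overline u]$. The uniform $C^{1,\xi}$ bounds of \cite{antonini2025global} then give a limit $u\in C^1(\overline\Omega)$, and passing to the limit in \eqref{equuuu} gives a weak solution. Since $\underline u\le u\le \overline u$, we get $u\in\mathcal M^1_{1,\alpha}(\Omega)$.

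\emph{Uniqueness.} Let $u,v$ be two solutions. Both lie in $\mathcal{M}^1_{1,\alpha}$, so $u/v$ and $v/u$ belong to $L^\infty$. Apply Lemma~\ref{Lem2} with $r=p-1$, $\mathcal H(x,z)=|z|^p/p$ and $K=|x-y|^{-N-sp}$. The test functions $(u^p-v^p)/u^{p-1}$ and $(v^p-u^p)/v^{p-1}$ are admissible because $u\ge c\,d$ and $u\in C^1$. The left side of \eqref{equ0} then equals
\[
\int_\Omega\Big(\frac{b u^m+m d^{-\gamma}u^\delta}{u^{p-1}}-\frac{b v^m+m d^{-\gamma}v^\delta}{v^{p-1}}\Big)(u^p-v^p)\,dx \ \ge 0.
\]
The map $t\mapsto t^{m-p+1}$ is strictly decreasing, and so is $t\mapsto t^{\delta-p+1}$. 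Hence the integrand is $\le0$ pointwise, so the integral must vanish and $u=v$ wherever $b>0$ or $m>0$. For $m=0$ with $b\not\equiv0$, equality in Lemma~\ref{Lem2} gives $u=cv$; substituting into the equation and using $(p-1)$-homogeneity together with $b\neq0$ forces $c=1$.

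\emph{Main obstacle.} The delicate step is making the iteration and the test functions rigorous near $\partial\Omega$. The forcing $f$ is singular like $d^{-\gamma}$, and one must check that it lies in $L^{\textbf d}$ and satisfies $f\le Cd^{-\beta}$ with $\beta<1$, so that Theorem~\ref{theorem1} and the $C^{1,\xi}$ theory apply uniformly in $k$. One must also check that the quotient test functions in Lemma~\ref{Lem2} lie in $W^{1,p}_0$, which relies on the two-sided bounds $c\,d\le u\le C d^\alpha$.
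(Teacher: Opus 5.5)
\textbf{Gap: the subsolution.} The step that fails is your subsolution $\underline u=\varepsilon\phi^{s}_{1,p}$ when $m>0$ and $\gamma<0$. The hypothesis $\gamma<1/\mathbf d$ allows $\gamma$ to be arbitrarily negative, and $b\ge0$ may vanish on a boundary strip $\{d<\rho\}$. On that strip the subsolution inequality reduces to $\lambda^{s}_{1,p}\,\varepsilon^{p-1-\delta}(\phi^{s}_{1,p})^{p-1-\delta}\le m\,d^{-\gamma}$. Since Hopf's lemma gives $\phi^{s}_{1,p}\ge c_1 d$, this would force $d^{\,p-1-\delta}\lesssim d^{-\gamma}$ as $d\to0$. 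That is false for every $\varepsilon>0$ as soon as $-\gamma>p-1-\delta$. Your suggested remedy, the bound $\phi^{s}_{1,p}\le c\,d^{\alpha}$, points the wrong way: it would need $\alpha(p-1-\delta)\ge-\gamma$. Without a positive subsolution below $\overline u$, the monotone scheme has no lower barrier. Iterating down from $\overline u$ still produces a limit solution, but nothing excludes the trivial solution $u\equiv0$, which solves \eqref{P7} when $m>0$. So, as written, you have not established existence of a positive solution, nor the lower bound $u\ge c\,d$ (which you read off from $\underline u$), on the full parameter range.

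\textbf{Repairs and smaller points.} The gap is easy to close. For instance, take $\underline u=\varepsilon\psi$, where $\psi$ solves $-\Delta_p\psi+(-\Delta)^s_p\psi=\chi_K$ for a ball $K\Subset\Omega$. The operator applied to $\varepsilon\psi$ equals $\varepsilon^{p-1}\chi_K$. On $K$ this is dominated by $m\,d^{-\gamma}(\varepsilon\psi)^{\delta}$ for small $\varepsilon$, since $d^{-\gamma}$ and $\psi$ are bounded below on $K$ and $\delta<p-1$; off $K$ the inequality is trivial. Hopf's lemma gives $\psi\ge c\,d$.

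Alternatively, follow the paper: minimize the energy $\mathcal L$, then obtain $u>0$ and $u\ge c_1 d$ from the strong maximum principle and Hopf's lemma, so no explicit subsolution is needed. Your reason for avoiding this route is mistaken. The map $t\mapsto (t^+)^{\delta+1}$ is $C^1$ because $\delta+1>1$. The sublinearity $\delta<m<p-1$ is exactly what makes $\mathcal L$ coercive and gives $\mathcal L(t\phi)<0$ for small $t$.

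Your argument for $\underline u\le\overline u$ also needs fixing. Pairing $\phi^{s}_{1,p}\le c\,d^{\alpha}$ with $w\ge c_1 d$ cannot work near $\partial\Omega$, since $d^{\alpha}/d\to\infty$, and shrinking $\varepsilon$ does not help. Use instead $\phi^{s}_{1,p}\le C\,d$ (or $\psi\le C\,d$), which holds because these functions belong to $C^{1}(\overline\Omega)$ and vanish on $\partial\Omega$. The same remark shows that the upper bound $u\le c_2\,d^{\alpha}$ is automatic for any $C^1(\overline\Omega)$ solution.

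Your uniqueness argument is correct. Applying Lemma~\ref{Lem2} with $r=p-1$, together with the strict decrease of $t\mapsto (b\,t^{m}+m\,d^{-\gamma}t^{\delta})/t^{p-1}$, is a valid variant of the paper's argument, which uses $r=m$ with $\varepsilon$-regularized quotients.
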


\noindent
Next, following the strategy employed in the proof of Corollary~\ref{cor1} (see below) and leveraging the boundary behavior of the weak solution to~\eqref{P7} established in Theorem~\ref{the2}, we consider the perturbed problem associated with the operator $\mathcal{T}_m$, corresponding to the problem~\eqref{P7} for $m > 0$:
\begin{equation}\label{equu22}
\begin{cases}
\mathcal{T}_m v = b, & \text{in } \Omega,\\[1mm]
v > 0, & \text{in } \Omega,\\[1mm]
v = 0, & \text{on } \mathbb{R}^N \setminus \Omega.
\end{cases}
\end{equation} 

\begin{corollary}
Under the assumptions of Theorem~\ref{the2}, problem~\eqref{equu22} admits a unique weak solution
\[
v \in \dot{V}_{+}^{m+1} \cap \mathcal{M}^{\frac{1}{m+1}}_{1,\alpha}(\Omega),
\]
 for every \( \alpha \in [s,1) \) with \( \alpha \neq \frac{ps}{p-1} \). Furthermore, this solution satisfies the integral identity
	\begin{equation*}
	\begin{aligned}
	&\int_{\Omega} \left|\nabla \left( v^{\frac{1}{m+1}} \right)\right|^{p-1} 
	\nabla \left( v^{\frac{1}{m+1}}\right) \cdot 
	\nabla \left(\frac{\Psi}{v^{\frac{m}{m+1}}}\right) \, dx \\
	&\quad + \iint_{\mathbb{R}^{2N}}
	\frac{\left[  v^{\frac{1}{m+1}}(x)-v^{\frac{1}{m+1}}(y)\right] ^{p-1}}{|x-y|^{N+sp}}  
	\left[ \left( \frac{\Psi}{v^{\frac{m}{m+1}}}\right)(x)
	- \left( \frac{\Psi}{v^{\frac{m}{m+1}}}\right)(y) \right]
	\, dx \, dy  \\
	&= \int_{\Omega} b(x)\,\Psi \, dx
	+ m \int_{\Omega} d(x)^{-\gamma} 
	v^{\frac{\delta-m}{m+1}}\,\Psi \, dx,
	\end{aligned}
	\end{equation*}
	for all test functions $\Psi$ satisfying
	\[
	|\Psi| \in L^{\infty}_{d^{m+1}}(\Omega)
	\quad \text{and} \quad
	\frac{|\nabla \Psi|}{d(\cdot)^{m}} \in L^{p}(\Omega).
	\]
\end{corollary}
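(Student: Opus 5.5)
The plan is to transfer Theorem~\ref{the2} to problem~\eqref{equu22} through the substitution $v = u^{m+1}$, following the same route as Corollary~\ref{cor1}. I will carry out four steps: existence and boundary behaviour, the weak identity for the admissible test class, uniqueness, and the verification that each integral is finite.

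\emph{Existence and boundary behaviour.} Let $u \in C^{1}(\overline{\Omega}) \cap \mathcal{M}^{1}_{1,\alpha}(\Omega)$ be the unique weak solution of \eqref{P7} from Theorem~\ref{the2}, and set $v := u^{m+1}$. Since $u>0$ in $\Omega$ and $u \in W^{1,p}_0(\Omega)$, its zero extension lies in $W^{1,p}(\mathbb{R}^N)$. This gives $v^{\frac{1}{m+1}} = u \in W^{1,p}(\mathbb{R}^N)$, so $v \in \dot{V}_{+}^{m+1}$. The two-sided bound $c^{-1} d \le u \le c\, d^{\alpha}$ gives $c^{-1} d \le v^{\frac{1}{m+1}} \le c\, d^{\alpha}$, which is exactly $v \in \mathcal{M}^{\frac{1}{m+1}}_{1,\alpha}(\Omega)$.

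\emph{The weak identity.} Fix an admissible $\Psi$, that is, $|\Psi| \le C d^{m+1}$ and $|\nabla\Psi|/d^m \in L^p(\Omega)$. Set $\varphi := \Psi/v^{\frac{m}{m+1}} = \Psi/u^{m}$. Since $u \ge c_1 d$, we get $|\varphi| \le C d^{m+1}/d^{m} = C d$, so $\varphi \in L^\infty$ and $\varphi = 0$ on $\partial\Omega$. Its gradient is
\[
\nabla\varphi = \frac{\nabla\Psi}{u^{m}} - m\,\frac{\Psi\,\nabla u}{u^{m+1}},
\]
with the following bounds on each term:
\begin{itemize}
\item the first term is bounded by $C|\nabla\Psi|/d^{m} \in L^{p}$;
\item the second term is bounded by $C\, d^{m+1} |\nabla u|/d^{m+1} \le C\|\nabla u\|_\infty$, because $u \in C^1(\overline\Omega)$.
\end{itemize}
Hence $\varphi \in W^{1,p}_0(\Omega)\cap L^\infty(\Omega)$, which makes $\varphi$ an admissible test function in \eqref{equuuu}. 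Substituting $u = v^{\frac{1}{m+1}}$ and $\varphi = \Psi/v^{\frac{m}{m+1}}$, the right-hand side of \eqref{equuuu} becomes
\[
\int_\Omega b\,u^{m}\varphi + m\,d^{-\gamma}u^{\delta}\varphi
= \int_\Omega b\,\Psi + m\,d^{-\gamma} v^{\frac{\delta-m}{m+1}}\Psi .
\]
This is the required identity.

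\emph{Uniqueness.} Suppose $v_1, v_2$ both solve \eqref{equu22} in the stated class. I expect uniqueness to be the main obstacle. Simply reversing the map, by taking $u_i = v_i^{\frac{1}{m+1}}$ and testing with $\Psi = u_i^m\varphi$ for $\varphi \in C_c^\infty(\Omega)$, shows that each $u_i$ solves \eqref{P7} against test functions in $C^\infty_c(\Omega)$. A density argument then extends this to the full test space $W^{1,p}_0\cap L^{2(m+1)}$. That density step needs care, because of the singular weight $d^{-\gamma}u^{\delta}$ and because $u_i$ is only known to lie in $W^{1,p}$ with $d \lesssim u_i \lesssim d^\alpha$. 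I would handle it by noting that $d^{-\gamma}u^{\delta}$ belongs to $L^{\textbf{d}}$, using $\gamma<1/\textbf{d}$, and then approximating in $W^{1,p}_0$. Once this is done, the uniqueness part of Theorem~\ref{the2} gives $u_1=u_2$, hence $v_1=v_2$.

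Alternatively, I would obtain uniqueness directly from Lemma~\ref{Lem2} with $r=m$. I would take $\Psi = v_1 - v_2 = u_1^{m+1}-u_2^{m+1}$, which is admissible because both $u_i$ are comparable to functions between $d$ and $d^\alpha$ and have gradients controlled suitably. Subtracting the two identities yields the left side of \eqref{equ0}. This must be nonnegative, while the right-hand side is
\[
\int_\Omega m\,d^{-\gamma}\bigl(u_1^{\delta-m}-u_2^{\delta-m}\bigr)\bigl(u_1^{m+1}-u_2^{m+1}\bigr)\,dx \le 0,
\]
since $\delta<m$ makes $t\mapsto t^{\delta-m}$ decreasing. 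The $b$-terms cancel. Equality therefore holds in \eqref{equ0}, and since $m+1\neq p$ by the assumption $m<p-1$, part~(ii) of Lemma~\ref{Lem2} gives $u_1\equiv u_2$.

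\emph{Finiteness of the integrals.} I will check each integral using the same bounds as in the identity step. For the singular term, $|\Psi|\, d^{-\gamma} v^{\frac{\delta-m}{m+1}} \le C d^{m+1-\gamma+\delta-m}$, where the exponent $\delta - m$ is negative, so the lower bound $u\ge c_1 d$ is the one that controls this factor. This term is integrable since $1+\delta-\gamma>0$.
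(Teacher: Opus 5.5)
Your proof is correct. The paper gives no separate proof of this corollary and instead refers to the proof of Corollary~\ref{cor1}. Your uniqueness argument follows that template exactly: pull $v_i$ back to $u_i=v_i^{1/(m+1)}$, check that $u_i$ solves \eqref{P7}, and invoke the uniqueness in Theorem~\ref{the2}.

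For the integral identity you take a different route. The template uses the transformed energy $\xi(v)=\mathcal{L}(v^{1/(m+1)})$. It notes that $v_0=u_0^{m+1}$ minimizes $\xi$ because $u_0$ minimizes $\mathcal{L}$, and then differentiates $t\mapsto\xi(v_0+t\Psi)$ at $t=0$. There, $|\Psi|\lesssim d^{m+1}\lesssim v_0$ serves only to keep $v_0+t\Psi>0$. You instead insert $\varphi=\Psi/u^{m}$ directly into \eqref{equuuu}. The $t$-derivative of $(v_0+t\Psi)^{1/(m+1)}$ at $t=0$ is $\frac{1}{m+1}\Psi/v_0^{m/(m+1)}$, so both routes produce the same identity. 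Yours has three advantages:
\begin{itemize}
\item it applies to any weak solution, not only the minimizer;
\item it needs neither G\^ateaux differentiability of $\xi$ along $\Psi$ nor membership of $v_0+t\Psi$ in $\dot{V}_{+}^{m+1}$;
\item it makes explicit the check $\Psi/u^{m}\in W^{1,p}_0(\Omega)\cap L^\infty(\Omega)$, which the paper leaves implicit and which is where $u\ge c_1 d$, $u\in C^1(\overline\Omega)$ and the weighted hypotheses on $\Psi$ are actually used.
\end{itemize}
Your density step from $C_c^\infty(\Omega)$ to $W^{1,p}_0(\Omega)\cap L^{2(m+1)}(\Omega)$ also fills a gap the paper passes over. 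One addition is needed there. The uniqueness proof of Theorem~\ref{the2} is carried out for solutions in $C^{1,\xi}(\overline\Omega)$, since it uses Lemma~\ref{Lem2}, which requires $u_1/u_2\in L^\infty$. So you should state that each $u_i$ is a bounded weak solution of \eqref{P7} with right-hand side in $L^{\textbf{d}}$. By the global regularity result the paper cites, $u_i\in C^{1,\xi}(\overline\Omega)$, and hence $u_i$ is comparable to $d$.

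Your alternative uniqueness argument via Lemma~\ref{Lem2} with $\Psi=v_1-v_2$ is not justified as written. From $c^{-1}d\le u_i\le c\,d^{\alpha}$ with $\alpha<1$ you only get $|v_1-v_2|\lesssim d^{\alpha(m+1)}$ and $|\nabla(v_1-v_2)|/d^{m}\lesssim d^{m(\alpha-1)}(|\nabla u_1|+|\nabla u_2|)$. So $\Psi=v_1-v_2$ is not shown to be an admissible test function. The hypothesis $u_1/u_2\in L^\infty(\Omega)$ of Lemma~\ref{Lem2} does not follow from these bounds either. Both problems disappear once $u_i\lesssim d$, but that again requires the $C^1$ regularity obtained only after showing that $u_i$ solves \eqref{P7}, at which point your first argument already finishes. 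If admissibility were available, you would not even need part (ii) of Lemma~\ref{Lem2}: the integrand $d^{-\gamma}(u_1^{\delta-m}-u_2^{\delta-m})(u_1^{m+1}-u_2^{m+1})$ is strictly negative wherever $u_1\neq u_2$.
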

\subsection{Main Results for Problem~\eqref{P1}}
We begin by presenting the main results concerning the existence, uniqueness, regularity, and qualitative properties of weak solutions to problem~\eqref{PP1}.

\begin{theorem}\label{2thm1}
Let $p>1$ and $s\in(0,1)$ be such that $p(1-s)<1$. Let $T>0$ be fixed and let $m\in[0,p-1)$. In the case $m>0$, assume that the parameters $\delta$ and $\gamma$ satisfy $ \delta\in(0,m) $ $ 2\delta - \textbf{d}\,\gamma > 2m - 1 $ for some $\textbf{d}>N$. In addition, assume that $g\in L^{\infty}(Q_T)$ satisfies condition~\eqref{equ1}, and that
\begin{equation}\label{1equ2}
u_0 \in W^{1,p}_0(\Omega)\cap \mathcal{M}^{1}_{\alpha',\alpha'}(\Omega),
\qquad
\alpha'=\frac{p}{p-1+\vartheta},
\end{equation}
where the parameter $\vartheta$ fulfills $ 1+p\Bigl(\frac{1-s}{s}\Bigr)<\vartheta<2+\frac{1}{p-1}. $ Then problem~\eqref{PP1} admits a weak solution $u$ in the sense of Definition~\ref{definition1}. Moreover, this solution enjoys the regularity property $ u\in C\bigl([0,T];L^{r}(\Omega)\bigr) $ for all  $ 1\le r<\infty, $ and there exists a constant $C>0$ such that, for every $t\in[0,T]$,
\begin{equation}\label{4equ41}
C^{-1} d(x)\le u(t,x)\le C\, d(x)^{\alpha'}
\quad \text{for a.e. } x\in\Omega.
\end{equation}
	In addition, $ u\in C\bigl([0,T];W^{1,p}_0(\Omega)\bigr), $ and, for every $t\in[0,T]$, the following energy identity holds:
	\begin{equation}\label{4equ31}
	\begin{aligned}
	&\int_{0}^{t}\!\!\int_{\Omega} \bigl(\partial_{t}u^{m+1}\bigr)^{2}\,dx\,ds+ \frac{m+1}{p}\Big(
	\|u(t)\|_{W^{1,p}_0(\Omega)}^{p}
	+ \|u(t)\|_{W^{s,p}_0(\Omega)}^{p}
	\Big) \\
	&= \int_{0}^{t}\!\!\int_{\Omega} g(s, x)\,\partial_{t}\bigl(u^{m+1}\bigr)\,dx\,ds
	+ m \int_{0}^{t}\!\!\int_{\Omega} d(x)^{-\gamma} u^{\delta - m}\,
	\partial_{t}\bigl(u^{m+1}\bigr)\,dx\,ds \\
	&\quad + \frac{m+1}{p}\Big(
	\|u(t_{0})\|_{W^{1,p}_0(\Omega)}^{p}
	+ \|u(t_{0})\|_{W^{s,p}_0(\Omega)}^{p}
	\Big).
	\end{aligned}
	\end{equation}
	Furthermore, let $u$ and $\tilde u$ be weak solutions to~\eqref{PP1}
	corresponding to initial data $u_0,\tilde u_0\in L^{2(m+1)}(\Omega)$ with
	$u_0,\tilde u_0\ge0$, and  $g,\tilde g\in L^2(Q_T)$. Then the following $T$--accretivity estimate in $L^2(\Omega)$ holds for all $t\in[0,T]$:
	\begin{equation}\label{equ34}
	\|u^{m+1}(t)-\tilde u^{m+1}(t)\|_{L^2(\Omega)}
	\le
	\|u_0^{m+1}-\tilde u_0^{m+1}\|_{L^2(\Omega)}
	+ \int_0^{t}\|g(s)-\tilde g(s)\|_{L^2(\Omega)}\,ds.
	\end{equation}
\end{theorem}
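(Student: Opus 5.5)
We work throughout with the reformulation \eqref{0equ4} for $v=u^{m+1}$, and build $v$ by a backward Euler (Rothe) scheme on a uniform partition of $[0,T]$. Fix $n\in\mathbb{N}$, $\Delta t=T/n$, $t_k=k\Delta t$, and set $g_k(x):=\frac{1}{\Delta t}\int_{t_{k-1}}^{t_k} g(\tau,x)\,d\tau\in L^\infty(\Omega)$. By \eqref{equ1} we have $g_k\ge \underline g$. Starting from $v^0=u_0^{m+1}$, we define $v^k$ recursively as the unique solution of
\[
v^k+\Delta t\,\mathcal{T}_m v^k = v^{k-1}+\Delta t\, g_k ,
\]
that is, \eqref{equ22} with $\lambda=\Delta t$ and datum $g_0=v^{k-1}+\Delta t\,g_k$. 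Condition \eqref{equ3} holds because $v^{k-1}\ge 0$. For $m=0$ we use \eqref{PP2} directly. Corollary~\ref{cor1} (Theorem~\ref{theorem3} for $m=0$) then gives existence, uniqueness, $C^1$ regularity and the bounds $v^k\in\mathcal M^{1/(m+1)}_{1,\alpha}$.

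\textbf{Uniform barriers.} The key uniform-in-$n$ bounds come from the comparison principle, Theorem~\ref{theorem2}, applied at each step with sub- and supersolutions independent of $k$ and $n$.
\begin{itemize}
\item Subsolution: $\underline u=\epsilon\phi^s_{1,p}$ with $\epsilon$ small. This uses $g_k\ge\underline g$, the sign of the $d^{-\gamma}u^\delta$ term, and $u_0\ge c\,d^{\alpha'}\ge c\,d$.
\item Supersolution: $\overline u=M w$, where $w$ solves \eqref{P8} with right-hand side $d^{-\beta}$ and $M$ is large. Theorem~\ref{theorem1} gives $w\le c\,d^{\alpha}$.
\end{itemize}
The role of $\alpha'=p/(p-1+\vartheta)$ and of the window for $\vartheta$ is to make $\overline u\ge u_0$ compatible with $u_0\le C d^{\alpha'}$. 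It also ensures that the supersolution inequality absorbs $\|g\|_\infty \overline u^{m}+m d^{-\gamma}\overline u^{\delta}$, using $\delta<m<p-1$ and $2\delta-\mathbf d\gamma>2m-1$. Inducting in $k$ yields $\underline u\le (v^k)^{1/(m+1)}\le\overline u$, which gives \eqref{4equ41} for the discrete solutions and an $L^\infty$ bound.

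\textbf{Energy estimate.} We test the $k$-th equation with $\Psi=v^k-v^{k-1}$. This is admissible in \eqref{equ24} because of the two-sided $d^{m+1}$ bounds; checking the gradient condition $|\nabla\Psi|/d^{m}\in L^p$ requires the $C^1$ bounds and $u_0\in W^{1,p}_0$. The gradient terms become $\int|\nabla u^k|^{p-2}\nabla u^k\cdot\nabla(u^k-\ldots)$ after the identity $\Psi/v^{m/(m+1)}$. Convexity of $w\mapsto \frac{m+1}{p}(\|w^{1/(m+1)}\|^p_{W^{1,p}_0}+\|w^{1/(m+1)}\|^p_{W^{s,p}_0})$ (Proposition~\ref{pro3}, with $r=m\le p-1$) gives the discrete inequality
\[
\frac{\|v^k-v^{k-1}\|_2^2}{\Delta t}+\mathcal E(v^k)-\mathcal E(v^{k-1})\le \int g_k(v^k-v^{k-1})+m\int d^{-\gamma}(u^k)^{\delta-m}(v^k-v^{k-1}).
\]
The singular last term is controlled by Young's inequality. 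For this we need $d^{-\gamma}(u^k)^{\delta-m}\in L^2$ uniformly, and the lower barrier $u^k\ge c\,d$ reduces this to $2(\delta-m)-2\gamma>-1$, which follows from the hypothesis $2\delta-\mathbf d\gamma>2m-1$. I expect this step to be the main obstacle: proving admissibility of the test function and controlling the singular term near $\partial\Omega$.

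\textbf{Passage to the limit and contraction.} The energy estimate bounds $\partial_t v_n$ in $L^2(Q_T)$ and $u_n$ in $L^\infty(0,T;W^{1,p}_0)$ for the piecewise linear and piecewise constant interpolants. Aubin--Lions compactness together with the $L^\infty$ bounds gives $v_n\to v$ in $C([0,T];L^r)$ for every $r<\infty$. We identify the monotone mixed operator by Minty's trick, using the energy inequality in its $\limsup$ form, and pass to the limit in the lower-order terms by dominated convergence using the barriers. This yields a weak solution satisfying \eqref{4equ41}. Passing to the limit in the discrete energy inequality gives one inequality; the reverse one follows by testing the limit equation with $\partial_t v$ (a chain rule for the convex functional). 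Together they give \eqref{4equ31}, and continuity of $t\mapsto\mathcal E(v(t))$ combined with uniform convexity of $W^{1,p}_0$ gives $u\in C([0,T];W^{1,p}_0)$.

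Finally, \eqref{equ34} follows from the $T$-accretivity estimate \eqref{1equ27} iterated along the scheme. At each step this gives
\[
\|(v^k-\tilde v^k)^{\pm}\|_2\le \|v^{k-1}-\tilde v^{k-1}\|_2+\Delta t\,\|g_k-\tilde g_k\|_2 .
\]
Summing over $k$ and passing to the limit using Crandall--Liggett theory yields the estimate. Alternatively, one can show it directly by testing the difference of the equations with $v-\tilde v$, with monotonicity supplied by Lemma~\ref{Lem2}.
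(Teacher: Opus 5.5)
Your overall architecture matches the paper's, and so does your reading of the singular term: backward Euler in $v=u^{m+1}$ via Corollary~\ref{cor1}, uniform barriers through Theorem~\ref{theorem2}, a discrete energy estimate, compactness, the energy identity, then contraction. However, the upper barrier you propose cannot work.

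\textbf{Upper barrier.} Theorem~\ref{theorem1} gives $w\le c\,d^{\alpha}$ with $\alpha\ge s$; in fact $w\in C^{1,\xi}(\overline{\Omega})$, so $w\le C\,d$. On the other hand, $u_0\in\mathcal{M}^{1}_{\alpha',\alpha'}(\Omega)$ forces $u_0\ge c^{-1}d^{\alpha'}$ with $\alpha'<s$. Hence $Mw<u_0$ in a neighbourhood of $\partial\Omega$ for every $M$. So $\overline u\ge u_0$ fails, and the induction never starts: it needs $\overline u\ge u_{k-1}$ to get $\overline u^{\,2m+1}\ge u_{k-1}^{m+1}\overline u^{\,m}$.

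What is missing is a supersolution whose boundary decay is exactly $d^{\alpha'}$. That forces a right-hand side too singular at $\partial\Omega$ for Theorem~\ref{theorem1}, or any $C^1$ theory, to apply. The paper takes $\overline u=M^{1/(p-1)}\tilde u$, where $\tilde u$ solves $-\Delta_p\tilde u+(-\Delta)^s_p\tilde u=\tilde u^{-\vartheta}$ and satisfies $\tilde u\asymp d^{p/(p-1+\vartheta)}=d^{\alpha'}$ \cite{bal2024regularity}. This is where the window on $\vartheta$ actually enters:
\begin{itemize}
\item $\vartheta>1+p\frac{1-s}{s}$ is equivalent to $\alpha'<s$;
\item $\vartheta<2+\frac{1}{p-1}$ is equivalent to $\alpha'>\frac{p-1}{p}$, which is what puts $\tilde u$ in $W^{1,p}_0(\Omega)$.
\end{itemize}
Large $M$ then gives both $\overline u\ge u_0$ and absorption of $\|g\|_{L^\infty}\overline u^{\,m}+m\,d^{-\gamma}\overline u^{\,\delta}$. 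The operator applied to $\overline u$ equals $M\tilde u^{-\vartheta}$, while those terms grow only like $M^{m/(p-1)}$ and $M^{\delta/(p-1)}$.

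\textbf{Energy step.} The same misreading of the boundary behaviour breaks your energy step. The uniform bound is only $u_k\le C\,d^{\alpha'}$, and $v^0=u_0^{m+1}\asymp d^{(m+1)\alpha'}$. So already at $k=1$, $\Psi=v^1-v^0$ is not admissible in \eqref{equ24}:
\begin{itemize}
\item $|\Psi|\asymp d^{(m+1)\alpha'}$ does not belong to $L^\infty_{d^{m+1}}(\Omega)$;
\item $\nabla\bigl(\Psi/u_1^{m}\bigr)$ contains $(m+1)(u_0/u_1)^m\nabla u_0$, and $u_0/u_1\gtrsim d^{\alpha'-1}$ is unbounded.
\end{itemize}
The paper instead tests \eqref{4equ1} with $\frac{(u_n+\varepsilon)^{m+1}-(u_{n-1}+\varepsilon)^{m+1}}{(u_n+\varepsilon)^m}\in W^{1,p}_0(\Omega)\cap L^\infty(\Omega)$. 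It then uses the local and nonlocal Picone inequalities plus Young, which is the rigorous form of your convexity argument. Finally it removes $\varepsilon$ by Fatou and dominated convergence.

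\textbf{Smaller points.} For $m=0$, $\epsilon\phi^s_{1,p}$ is a subsolution only if $\lambda^s_{1,p}(\epsilon\phi^s_{1,p})^{p-1}\le g_k$. This fails where $\underline g$ vanishes. The paper instead uses the solution of $-\Delta_p\tilde u+(-\Delta)^s_p\tilde u=k^{-1}\bigl(\underline g\,\tilde u^{m}+m\,d^{-\gamma}\tilde u^{\delta}\bigr)$ with $k$ large.

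Also, \eqref{equ34} is asserted for arbitrary weak solutions with $u_0\in L^{2(m+1)}(\Omega)$ and $g\in L^2(Q_T)$, and this is what yields uniqueness. Iterating \eqref{1equ27} along the scheme only controls scheme-generated solutions. Your ``alternative'' direct argument is the one needed, and it is the paper's Step~2. Besides Lemma~\ref{Lem2}, it uses that $t\mapsto t^{\delta-m}$ is decreasing to give the singular term a favourable sign, and then Gr\"onwall.
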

\begin{remark}
		We make the following two observations: 
		\begin{enumerate}
			\item The condition $p(1-s) < 1$ guarantees that  $ 1 + p\Bigl(\frac{1-s}{s}\Bigr) < 2 + \frac{1}{p-1}. $ As established in Theorem~\ref{theorem1}, the boundary behavior of solutions is not determined by the same power of the distance function. This discrepancy introduces challenges in deriving uniform estimates near the boundary for solutions of problem~\eqref{PP1}. To address this issue, the aforementioned condition is employed to construct an appropriate sub-solution whose behavior is suitably aligned with the distance function, thereby allowing the selection of a compatible exponent.
			
			\item In the proof of Theorem~\ref{2thm1}, we utilize a semi-discretization in time via the implicit Euler method. To obtain the required a priori estimates, we impose the additional condition  $ 2\delta - \mathbf{d}\,\gamma > 2m - 1 $ for some $\mathbf{d} > N$. This condition also ensures the desired regularity of solutions.  Although effective, this condition is not optimal.
		\end{enumerate}
\end{remark}

\noindent
The uniqueness of the weak solution, in the sense of Definition~\ref{definition1} and as established in Theorem~\ref{2thm1}, follows directly from~\eqref{equ34}.  We emphasize that this uniqueness result holds without imposing the restriction $p(1-s)<1$ and under weaker assumptions on the initial datum $u_{0}$ and the source term $g$. 
More precisely, we have the following result.

\begin{corollary}\label{cor3}
	Let $u$ and $\tilde u$ be weak solutions of problem~\eqref{PP1} in the sense of Definition~\ref{definition1}, corresponding to the same nonnegative initial datum $u_0 \in L^{2(m+1)}(\Omega)$ and source term $g \in L^2(Q_T)$. Then,  $ u \equiv \tilde u $ in $ Q_T.	 $
\end{corollary}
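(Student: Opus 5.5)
The plan is to apply the $T$-accretivity estimate~\eqref{equ34} of Theorem~\ref{2thm1} with $\tilde u_0=u_0$ and $\tilde g=g$. Both integrals on the right-hand side then vanish, so
\[
\|u^{m+1}(t)-\tilde u^{m+1}(t)\|_{L^2(\Omega)}=0 \quad\text{for every } t\in[0,T].
\]
Since $u,\tilde u\ge 0$ and $\tau\mapsto\tau^{m+1}$ is strictly increasing on $[0,\infty)$, this gives $u(t)=\tilde u(t)$ a.e. in $\Omega$ for each $t$, hence $u\equiv\tilde u$ a.e. in $Q_T$.

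The substantive point is that the statement claims this without $p(1-s)<1$ and without the boundary-behaviour hypothesis on $u_0$. So I must check that~\eqref{equ34} depends only on Definition~\ref{definition1}, and not on the existence construction. I would therefore reprove~\eqref{equ34} directly for arbitrary weak solutions. Set $v=u^{m+1}$ and $\tilde v=\tilde u^{m+1}$, and subtract the weak formulations~\eqref{weakform}. Then test with
\[
\varphi=\frac{(v-\tilde v)^+}{u^m}\ \text{in the $u$-equation},\qquad \tilde\varphi=\frac{(v-\tilde v)^+}{\tilde u^m}\ \text{in the $\tilde u$-equation}.
\]
The time-derivative terms combine to $\int_0^t\!\int_\Omega \partial_t(v-\tilde v)\,(v-\tilde v)^+$, which equals $\tfrac12\|(v-\tilde v)^+(t)\|_2^2-\tfrac12\|(v-\tilde v)^+(0)\|_2^2$; the chain rule is justified because $\partial_t v,\partial_t\tilde v\in L^2(Q_T)$. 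The diffusion terms, local and nonlocal, are handled by the Díaz–Saa-type inequality of Lemma~\ref{Lem2} with $r=m$, applied on the set $\{v>\tilde v\}$ (or via its truncated variant), which makes their sum nonnegative. The source terms $g u^m\varphi$ and $g\tilde u^m\tilde\varphi$ cancel exactly. The singular terms give
\[
m\,d^{-\gamma}\bigl(u^{\delta-m}-\tilde u^{\delta-m}\bigr)(v-\tilde v)^+\le 0,
\]
because $\delta<m$ and $u>\tilde u$ on the support. Gronwall is therefore not even needed: we get $\|(v-\tilde v)^+(t)\|_2\le\|(v_0-\tilde v_0)^+\|_2=0$, then swap the roles of $u$ and $\tilde u$.

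The main obstacle is admissibility of these test functions. The quotients $1/u^m$ and $1/\tilde u^m$ may be singular near $\partial\Omega$ or where the solutions are small, and Definition~\ref{definition1} only gives $u>0$, boundedness, and $W^{1,p}_0$ regularity. To deal with this, I would first use $\varphi_\varepsilon=(v-\tilde v)^+/(u+\varepsilon)^m$, and similarly for $\tilde u$. These lie in $L^2(Q_T)\cap L^1(0,T;W^{1,p}_0)$ because $u,\tilde u\in L^\infty$. Then pass to the limit $\varepsilon\to0$ by Fatou's lemma on the nonnegative diffusion part, using a regularized form of the convexity argument from Proposition~\ref{pro3} and Lemma~\ref{Lem2}, and by dominated convergence on the cancelling source terms. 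If this regularized Díaz–Saa step turns out to be too delicate, the fallback is to treat the case $m=0$ separately. There the test function $(u-\tilde u)^+$ is directly admissible and monotonicity of $-\Delta_p+(-\Delta)^s_p$ suffices. The remaining singular term $d^{-\gamma}$ is controlled since $\gamma<1/\mathbf d$ makes it integrable against bounded functions.
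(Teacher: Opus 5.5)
Your proposal is correct and follows the paper's route. The corollary is read off from the $T$-accretivity estimate \eqref{equ34} with $\tilde u_0=u_0$ and $\tilde g=g$. The paper proves that estimate in Step~2 of Theorem~\ref{2thm1} using only Definition~\ref{definition1}: it tests with $\varepsilon$-regularized D\'{i}az--Saa functions, uses dominated convergence for the time and source terms, and Fatou's lemma for the singular term.

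The only difference is in the choice of test function, and it affects one of your justifications. The paper tests with $\bigl((u+\varepsilon)^{m+1}-(\tilde u+\varepsilon)^{m+1}\bigr)/(u+\varepsilon)^m$, with no positive part. That makes the diffusion part exactly nonnegative for each $\varepsilon$, by the pointwise Picone/D\'{i}az--Saa inequalities applied to $u+\varepsilon$ and $\tilde u+\varepsilon$.

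Your $(v-\tilde v)^+/(u+\varepsilon)^m$ gives nonnegativity only in the limit $\varepsilon\to0$. In the nonlocal term it also needs a separate argument on the cross region $\{v>\tilde v\}\times\{v\le\tilde v\}$. There you need the monotonicity of $\tau\mapsto[\tau-\tau_0]^{p-1}/(\tau+\varepsilon)^m$, which holds since $m\le p-1$. Lemma~\ref{Lem2} applied on $\{v>\tilde v\}$ alone does not cover these pairs.
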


\noindent
On the other hand, by combining Theorem~\ref{2thm1} with Corollary~\ref{cor3}, we obtain the following existence result for the parabolic problem~\eqref{0equ4} involving the operator $\mathcal{T}_{m}$, with $m>0$.

\begin{theorem}\label{thm6}
Under the assumptions of Theorem~\ref{2thm1}, let $v_0$ be such that  $ v_0^{\frac{1}{m+1}} \in W^{1,p}_0(\Omega) \cap \mathcal{M}^{1}_{\alpha',\alpha'}(\Omega), $ where 
	\[
	\alpha' = \frac{p}{\,p - 1 + \vartheta\,}, \quad \text{with} \quad \vartheta > p \left(\frac{1-s}{s}\right),
	\] 
	which ensures that $\alpha' \in (0,s)$. Then there exists a unique weak solution $v\in L^{\infty}(Q_T)$ to problem~\eqref{0equ4}. In particular, $ v^{1/m+1} \in L^{\infty}\bigl(0,T; W^{1,p}_0(\Omega)\bigr), $ and $ \partial_t v \in L^2(Q_T), $ and there exists  $C>0$ such that, for any $t\in[0,T]$,
	\[
	C^{-1} d(x) \le v^{1/m+1}(t,x) \le C\, d(x)^{\alpha'}
	\quad \text{a.e. in } \Omega.
	\]
	Moreover, for any $t\in[0,T]$, the function $v$ satisfies
	\begin{align*}
	&\int_0^{t}\!\!\int_{\Omega} \partial_t v \,\Psi \,dx\,ds
	+ \int_0^{t}\!\!\int_{\Omega}
	\left|\nabla\!\left(v^{\frac{1}{m+1}}\right)\right|^{p-1}
	\nabla\!\left(v^{\frac{1}{m+1}}\right)\!\cdot\!
	\nabla\!\left(\frac{\Psi}{v^{\frac{m}{m+1}}}\right) dx\,ds  \\
	&\quad
	+ \int_0^{t}\!\!\iint_{\mathbb{R}^{2N}}
	\frac{\left[ v^{\frac{1}{m+1}}(x)-v^{\frac{1}{m+1}}(y)\right] ^{p-1}}
	{|x-y|^{N+sp}}
	\left[
	\left(\frac{\Psi}{v^{\frac{m}{m+1}}}\right)(x)
	-\left(\frac{\Psi}{v^{\frac{m}{m+1}}}\right)(y)
	\right]\,dx\,dy\,ds \\
	&= \int_0^{t}\!\!\int_{\Omega} g(x,s)\,\Psi \,dx\,ds
	+ m \int_0^{t}\!\!\int_{\Omega} d(x)^{-\gamma}
	v^{\frac{\delta-m}{m+1}}\,\Psi \,dx\,ds,
	\end{align*}
	for any test function $\Psi\in L^2(Q_T)$ such that
	\[
	|\Psi| \in L^{\infty}\bigl(0,T; L^{\infty}_{d^{m+1}}(\Omega)\bigr)
	\quad \text{and} \quad
	\frac{|\nabla \Psi|}{d(\cdot)^m} \in L^{1}\bigl(0,T; L^{p}(\Omega)\bigr).
	\]
	Finally, for any $1\le r<\infty$, the solution $v$ belongs to $ C\bigl([0,T]; L^r(\Omega)\bigr). $
\end{theorem}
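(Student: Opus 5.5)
The plan is to obtain Theorem~\ref{thm6} as a direct transfer of Theorem~\ref{2thm1} and Corollary~\ref{cor3} through the change of unknown $v=u^{m+1}$.

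\textbf{Existence.} Set $u_0:=v_0^{1/(m+1)}$. By hypothesis $u_0\in W^{1,p}_0(\Omega)\cap\mathcal{M}^1_{\alpha',\alpha'}(\Omega)$, so Theorem~\ref{2thm1} provides a weak solution $u$ of \eqref{PP1}. This $u$ lies in $L^\infty(0,T;W^{1,p}_0(\Omega))\cap L^\infty(Q_T)$, satisfies $\partial_t(u^{m+1})\in L^2(Q_T)$, obeys the two-sided bound \eqref{4equ41}, and belongs to $C([0,T];L^r(\Omega))$. Define $v:=u^{m+1}$. Then $v\in L^\infty(Q_T)$, $\partial_t v\in L^2(Q_T)$, $v^{1/(m+1)}=u\in L^\infty(0,T;W^{1,p}_0(\Omega))$, and the barrier $C^{-1}d\le v^{1/(m+1)}\le Cd^{\alpha'}$ follows immediately from \eqref{4equ41}.

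Continuity in $L^r$ passes from $u$ to $v$ because $u$ is uniformly bounded and $|a^{m+1}-b^{m+1}|\le (m+1)\max(a,b)^m|a-b|$.

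\textbf{The weak formulation for $v$.} Given an admissible $\Psi$, I take $\varphi:=\Psi/v^{m/(m+1)}=\Psi u^{-m}$ as the test function in \eqref{weakform}. The integrals then transform as follows:
\begin{itemize}
\item the time term becomes $u^m\partial_t(u^{m+1})\varphi=\partial_t v\,\Psi$;
\item the source term becomes $g\Psi+m d^{-\gamma}u^{\delta-m}\Psi=g\Psi+m d^{-\gamma}v^{(\delta-m)/(m+1)}\Psi$;
\item the diffusion terms give exactly the stated expressions.
\end{itemize}

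\textbf{Main obstacle: admissibility of $\varphi$.} The delicate step is checking that $\varphi\in L^2(Q_T)\cap L^1(0,T;W^{1,p}_0(\Omega))$. Using $u\ge C^{-1}d$ and $|\Psi|\le c\,d^{m+1}$, we get $|\varphi|\le c\,d$, so $\varphi$ is bounded and vanishes at the boundary. For the gradient,
\[
\nabla\varphi=u^{-m}\nabla\Psi-m\Psi u^{-m-1}\nabla u ,
\]
with $|u^{-m}\nabla\Psi|\le C|\nabla\Psi|/d^m\in L^1(0,T;L^p)$ and $|\Psi u^{-m-1}\nabla u|\le C|\nabla u|\in L^\infty(0,T;L^p)$. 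This shows $\varphi\in L^1(0,T;W^{1,p}_0)$, and a density or truncation argument near $\partial\Omega$ justifies the zero trace. The nonlocal term is finite because $\varphi(t)\in W^{1,p}_0(\Omega)\subset W^{s,p}_0(\Omega)$.

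If the lower barrier is only available a.e. in $t$, I would first test with $\Psi u_\varepsilon^{-m}$ for $u_\varepsilon=u+\varepsilon$ and pass to the limit by dominated convergence. The domination comes from the same $d$-bounds, which hold uniformly in $\varepsilon$.

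\textbf{Uniqueness.} If $v,\tilde v$ are two solutions with the stated regularity and barriers, then $u=v^{1/(m+1)}$ and $\tilde u=\tilde v^{1/(m+1)}$ are weak solutions of \eqref{PP1} in the sense of Definition~\ref{definition1}. To see this, reverse the substitution: a test function $\varphi\in L^2\cap L^1(0,T;W^{1,p}_0)$ corresponds to $\Psi=\varphi u^m$. For general $\varphi$ this $\Psi$ need not satisfy $|\Psi|\le cd^{m+1}$, so I would approximate $\varphi$ by functions of the form $\varphi_k=\psi_k d$-type cutoffs, or truncations $\mathbf{T}_k$ combined with multiplication by $\min(1,d/\epsilon)$, and then pass to the limit.

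Once $u$ and $\tilde u$ are known to be weak solutions of \eqref{PP1}, Corollary~\ref{cor3} gives $u\equiv\tilde u$, hence $v\equiv\tilde v$. Alternatively, one can bypass the reverse substitution and apply estimate \eqref{equ34} directly, since its proof only uses the test function $(u^{m+1}-\tilde u^{m+1})^+u^{-m}$. That test function is admissible here, because $v-\tilde v\le Cd^{(m+1)\alpha'}$ and the barriers again control the singular factor.
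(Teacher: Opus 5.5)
Your existence half is the paper's argument. You apply Theorem~\ref{2thm1} to $u_0=v_0^{1/(m+1)}$, set $v=u^{m+1}$, and test \eqref{weakform} with $\varphi=\Psi u^{-m}$. Your bounds $|\varphi|\le c\,d$ and $|\nabla\varphi|\le C\bigl(|\nabla\Psi|d^{-m}+|\nabla u|\bigr)$ are exactly what the paper leaves as ``readily seen''.

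For uniqueness you and the paper both pull back to the $u$-problem, but by different means. The paper substitutes $\Psi_i=\varphi_i u_i^m$ only for $\varphi_i\in L^\infty(0,T;L^\infty_d(\Omega))\cap L^1(0,T;W^{1,p}_0(\Omega))$, then reruns Step~2 of Theorem~\ref{2thm1} with the $\varepsilon$-shifted quotients. You instead recover the full test class of Definition~\ref{definition1} by approximation and then quote Corollary~\ref{cor3}. Your route is the sounder one. Since only $u_i\le C d^{\alpha'}$ with $\alpha'<1$ is available, $\varphi_i u_i^m$ with $|\varphi_i|\lesssim d$ is only $O(d^{1+m\alpha'})$, not $O(d^{m+1})$. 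Likewise the $\varepsilon$-shifted quotients are only controlled by $C_\varepsilon|u_1-u_2|\le C_\varepsilon d^{\alpha'}$ near $\partial\Omega$. So the paper's admissibility claims at that point are loose, whereas test functions vanishing near $\partial\Omega$ avoid the issue entirely.

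Two points in your write-up need fixing.

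\textbf{The cutoff.} $\min(1,d/\epsilon)$ fails for the same reason: it gains only one power of $d$. You get only $|\Psi|\lesssim d^{1+m\alpha'}$, and $|\nabla\Psi|/d^m$ contains $u^m d^{-m}|\nabla\varphi_\epsilon|$, where $u^m d^{-m}$ may be as large as $d^{m(\alpha'-1)}$. Take instead $\varphi_{k,\epsilon}=\mathbf{T}_k(\varphi)\,\eta_\epsilon$, with $\eta_\epsilon$ Lipschitz, $\eta_\epsilon=0$ on $\{d<\epsilon\}$, $\eta_\epsilon=1$ on $\{d>2\epsilon\}$, and $|\nabla\eta_\epsilon|\le C/\epsilon$. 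Then:
\begin{itemize}
\item $\Psi=\varphi_{k,\epsilon}u^m$ is supported where $u\ge C^{-1}\epsilon$, so both weighted conditions are immediate. The truncation is what puts $\varphi_{k,\epsilon}u^{m-1}\nabla u$ in $L^1(0,T;L^p(\Omega))$.
\item $\varphi_{k,\epsilon}\to\mathbf{T}_k(\varphi)$ in $L^1(0,T;W^{1,p}_0(\Omega))$, because Hardy's inequality gives $\|\mathbf{T}_k(\varphi)\nabla\eta_\epsilon\|_{L^p}\le C\|\mathbf{T}_k(\varphi)/d\|_{L^p(\{d<2\epsilon\})}\to0$.
\item The time and source terms converge by dominated convergence, using $\partial_t v,\varphi\in L^2(Q_T)$ and $d^{-\gamma}\in L^2(\Omega)$.
\end{itemize}

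\textbf{The ``alternative'' shortcut.} Discard it. $\Psi=(v-\tilde v)^+$ is not admissible in the $v$-formulation: $|v-\tilde v|\le Cd^{(m+1)\alpha'}$ is weaker than $c\,d^{m+1}$, and $|\nabla v|/d^m=(m+1)u^m|\nabla u|/d^m$ carries the same singular factor. The corresponding $\varphi=(u^{m+1}-\tilde u^{m+1})^+u^{-m}$ is admissible in the $u$-formulation, but getting there is exactly the density step, so it is not a bypass.
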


\noindent
In the particular case $m=0$, by invoking the theory of maximal accretive operators in Banach spaces and following the approach developed in \cite[Theorems~4.2 and~4.4]{barbu1976nonlinear}, one can derive the following regularity result for solutions of problem~\eqref{PP1}.  
To this end, we introduce the operator
\[
\mathcal{T}_0^{\star} : D^{\star}(\mathcal{T}_0^{\star}) \subset L^{\infty}(\Omega) \longrightarrow L^{\infty}(\Omega),
\qquad
\mathcal{T}_0^{\star} w := -\Delta_{p} w + (-\Delta)_{p}^{s} w,
\]
whose domain is defined by
\[
D^{\star}(\mathcal{T}_0^{\star})
:= \Big\{\, w : \Omega \to [0,\infty) \ \text{measurable} \;\big|\;
w \in W_0^{1,p}(\Omega) \cap L^{\infty}(\Omega)
\ \text{and} \
\mathcal{T}_0^{\star} w \in L^{\infty}(\Omega) \,\Big\}.
\]

\begin{theorem}\label{thereom1}
	Let $m=0$, $T>0$, and assume that $g\in L^{\infty}(Q_T)$. Suppose further that $u_0 \in \overline{D^{\star}(\mathcal{T}_0^{\star})}^{\,L^{\infty}}$. Then the following assertions hold:
	\begin{itemize}
		\item[(i)] The unique weak solution $u$ to problem~\eqref{PP1}, obtained in Theorem~\ref{2thm1}, satisfies	$ u \in C\big([0,T]; C_0(\overline{\Omega})\big). $
		\item[(ii)] Let $\tilde{u}$ be another mild solution to~\eqref{PP1} with $\tilde{u}_0 \in D^{\star}(\mathcal{T}_0^{\star})\cap L^{\infty}(\Omega)$ and right-hand side $\tilde{g}\in L^{\infty}(Q_T)$. Then, for all $t\in[0,T]$, the following stability estimate holds:
		\begin{equation*}
		\|u(t)-\tilde{u}(t)\|_{L^{\infty}(\Omega)}
		\le
		\|u_0-\tilde{u}_0\|_{L^{\infty}(\Omega)}
		+ \int_0^{t}\|g(s)-\tilde{g}(s)\|_{L^{\infty}(\Omega)}\,ds.
		\end{equation*}
		\item[(iii)] If $u_0\in D^{\star}(\mathcal{T}^{\star}_0)$, $g\in W^{1,1}(0,T;L^{\infty}(\Omega))$, then $ 	u\in W^{1,\infty}(0,T;L^{\infty}(\Omega)),
		\,
		-\Delta_p u + (-\Delta)^s_p u \in L^{\infty}(Q_T), $	and the following estimate is satisfied:
		\begin{equation*}
		\left\| \partial_t u(\cdot,t) \right\|_{L^{\infty}(\Omega)}
		\le
		\left\| -\Delta_p u_0 + (-\Delta)^s_p u_0 + g(\cdot,0) \right\|_{L^{\infty}(\Omega)}
		+ \int_0^T \left\| \partial_t g(\cdot,s) \right\|_{L^{\infty}(\Omega)}\,ds.
		\end{equation*}
	\end{itemize}
\end{theorem}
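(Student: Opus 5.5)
The plan is to treat Theorem~\ref{thereom1} in the framework of nonlinear semigroups on $X:=L^{\infty}(\Omega)$, following \cite[Theorems~4.2 and~4.4]{barbu1976nonlinear}. The first task is to show that $\mathcal{T}_0^{\star}$ is m-accretive in $L^{\infty}(\Omega)$. For accretivity, take $w_1,w_2\in D^{\star}(\mathcal{T}_0^{\star})$ and $\lambda>0$, and set $h_i:=w_i+\lambda\mathcal{T}_0^{\star}w_i$. We want
\[
\|(w_1-w_2)^{+}\|_{L^{\infty}(\Omega)}\le\|(h_1-h_2)^{+}\|_{L^{\infty}(\Omega)} .
\]
With $k:=\|(h_1-h_2)^{+}\|_{L^{\infty}}$, test the difference of the two equations with $(w_1-w_2-k)^{+}\in W^{1,p}_0(\Omega)$. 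The local part is nonnegative by monotonicity of $z\mapsto|z|^{p-2}z$. The nonlocal part is nonnegative by the standard monotonicity of $t\mapsto[t]^{p-1}$ combined with the fact that $t\mapsto (t-k)^{+}$ is nondecreasing. The zeroth order part gives
\[
\int_\Omega (w_1-w_2-h_1+h_2)(w_1-w_2-k)^{+}\,dx\le 0 ,
\]
so $(w_1-w_2-k)^{+}=0$. Exchanging roles gives the $L^{\infty}$ contraction. For the range condition, given $h\in L^{\infty}(\Omega)$ we solve $w+\lambda(-\Delta_p w+(-\Delta)^s_p w)=h$ by minimizing the strictly convex, coercive functional
\[
\tfrac12\|w\|_{L^2}^2+\tfrac{\lambda}{p}\bigl(\|w\|^p_{W^{1,p}_0}+[w]^p_{s,p}\bigr)-\int_\Omega hw\,dx ,
\]
and obtain $\|w\|_{\infty}\le\|h\|_{\infty}$ by the same truncation test, since constants are supersolutions. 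Nonnegativity of the domain elements, when $h\ge0$, follows from testing with $w^-$. Thus $\mathcal{T}_0^{\star}w=(h-w)/\lambda\in L^{\infty}$.

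Since the operator is m-accretive, the Crandall--Liggett/Bénilan theory yields, for $u_0\in\overline{D^{\star}(\mathcal{T}_0^{\star})}^{L^{\infty}}$ and $g\in L^{1}(0,T;L^{\infty})$, a unique mild solution $u\in C([0,T];L^{\infty}(\Omega))$. It is obtained as the uniform limit of the implicit Euler scheme $u_{k}+\tau\mathcal{T}_0^{\star}u_{k}=u_{k-1}+\tau g_k$, which is exactly the semi-discretization used for Theorem~\ref{2thm1}. The limit therefore coincides with the weak solution there; Corollary~\ref{cor3} gives uniqueness once we check that the mild solution is also a weak energy solution, which follows by consistency of the two schemes in $L^2$. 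Statement~(ii) is then the standard estimate between mild solutions of m-accretive evolutions,
\[
\|u(t)-\tilde u(t)\|_{\infty}\le\|u_0-\tilde u_0\|_{\infty}+\int_0^t\|g-\tilde g\|_{\infty}\,ds ,
\]
proved first at the discrete level by the resolvent contraction and then passed to the limit.

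For~(i), we need $C_0(\overline{\Omega})$-valued continuity. The plan is to show that each resolvent maps $L^{\infty}$ data into $C_0(\overline{\Omega})$. Global Hölder/$C^{1,\xi}$ regularity from \cite[Theorem~1.1]{antonini2025global}, applied to $-\Delta_pw+(-\Delta)^s_pw=(h-w)/\lambda\in L^{\infty}$, gives $w\in C^{1,\xi}(\overline{\Omega})$ with $w=0$ on $\partial\Omega$. Hence the Euler iterates $u_k$ lie in $C_0(\overline{\Omega})$. Since they converge uniformly in $t$ in $L^{\infty}$ norm, and $C_0(\overline{\Omega})$ is closed in $L^{\infty}$, we get $u\in C([0,T];C_0(\overline{\Omega}))$. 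This requires first approximating $u_0$ in $L^{\infty}$ by elements of $D^{\star}$, which already lie in $C_0(\overline{\Omega})$, and approximating $g$ by step functions. This is the main obstacle: one must make sure that the regularity constants do not need to be uniform. They need not be, since only qualitative membership of each iterate is used, together with uniform convergence.

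For~(iii), with $u_0\in D^{\star}$ and $g\in W^{1,1}(0,T;L^{\infty})$, we apply (ii) to $\tilde u(t)=u(t+h)$, which has initial datum $u(h)$ and source $g(\cdot+h)$. This gives
\[
\|u(t+h)-u(t)\|_{\infty}\le\|u(h)-u_0\|_{\infty}+\int_0^t\|g(s+h)-g(s)\|_{\infty}\,ds .
\]
The standard discrete estimate $\|u(h)-u_0\|_\infty\le h\bigl(\|g(0)-\mathcal{T}_0^{\star}u_0\|_\infty+\int_0^h\|\partial_tg\|_\infty\bigr)$ then gives Lipschitz continuity in time with the stated constant. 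The equation then identifies $-\Delta_pu+(-\Delta)^s_pu=g-\partial_tu\in L^{\infty}(Q_T)$, via a.e. differentiability in the weak-star sense.
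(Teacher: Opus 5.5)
Your proposal is correct and follows the same route as the paper. Both arguments establish $L^{\infty}$-accretivity of $\mathcal{T}_0^{\star}$ by testing with $(w_1-w_2-k)^{+}$ and solve the resolvent equation, then use Crandall--Liggett/Barbu theory for (ii) and the time-shift Lipschitz argument underlying \cite[Theorem~4.4]{barbu1976nonlinear} for (iii).

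Your version is more complete than the paper's in three places. The paper takes the range condition from Theorem~\ref{theorem3}, whose hypothesis \eqref{equ3} arbitrary data $g_1,g_2\in L^{\infty}(\Omega)$ need not satisfy; your direct minimization covers all bounded data. The paper gives no argument for (i), nor for identifying the mild solution with the weak solution of Theorem~\ref{2thm1}, and you supply both. The one thing to fix is the phrase ``the stated constant'' in (iii): you obtain $\|g(\cdot,0)-\mathcal{T}_0^{\star}u_0\|_{L^{\infty}(\Omega)}$, which is the correct bound, whereas the statement prints $+\,g(\cdot,0)$, apparently a sign slip in the statement.
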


\noindent
Using the $T$-accretivity of $\mathcal{T}_m$ in $L^2(\Omega)$ for $m \ge 0$  (cf.~\eqref{1equ1} for $m=0$, \eqref{equ37} for $m>0$) and under additional regularity of the initial data, 
we obtain the following stabilization result for the weak solutions of \eqref{PP1}.

\begin{theorem} \label{the3}
	Assume that the hypotheses of Theorem \ref{2thm1} are satisfied for any $T>0$.  
	Let $u$ denote the weak solution of \eqref{PP1} with initial data   $ u_0 \in W^{1,p}_0(\Omega)\cap \mathcal{M}^{1}_{\alpha',\alpha'}(\Omega), $  where $\alpha' \in (0,s)$ is defined by \eqref{1equ2}. Furthermore, assume that there exists a function $g_{\infty} \in L^{\infty}(\Omega)$ such that, for some $t_{0} \geq 0$,
	\begin{equation}\label{equ51}
	\exp(t - t_{0})\,\| g(t, \cdot) - g_{\infty}\|_{L^2(\Omega)} = O(1) \quad \text{as } t \to \infty.
	\end{equation}
	Then, for any \(r \geq 1\) and \(m \geq 0\), we have
	\begin{equation*}
	\| u^{m+1}(t, \cdot) - u_{\infty}^{m+1} \|_{L^r(\Omega)} \longrightarrow 0 
	\quad \text{as } t \to \infty,
	\end{equation*}
	where \(u_{\infty}\) denotes the unique stationary solution of \eqref{P7} corresponding to the potential \(b = g_{\infty}\).
\end{theorem}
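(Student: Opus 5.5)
The plan is to pass to the variable $v=u^{m+1}$ and use semigroup theory for the evolution problem \eqref{0equ4}, $\partial_t v+\mathcal{T}_m v=g$.

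\textbf{Step 1: the autonomous semigroup.} By Corollaries~\ref{cor1} and~\ref{cor22}, for every $\lambda>0$ the resolvent problem \eqref{equ22} is uniquely solvable for a dense class of data, and estimate \eqref{1equ27} says that $\mathcal{T}_m$ is $T$-accretive in $L^2(\Omega)$. Remark~\ref{remark2} gives the same for $m=0$. The Crandall--Liggett theorem then yields a contraction semigroup $S(t)$ on $\overline{D(\mathcal{T}_m)}^{L^2}$. This semigroup generates mild solutions of $\partial_t w+\mathcal{T}_m w=g_\infty$. By uniqueness (Corollary~\ref{cor3}) and the semi-discretization construction in Theorem~\ref{2thm1}, these mild solutions coincide with $u^{m+1}$ whenever the data are admissible.

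\textbf{Step 2: reduce to the autonomous problem.} Let $w$ solve the autonomous problem with $w(t_0)=v(t_0)$. The accretivity estimate \eqref{equ34}, applied on $[t_0,t]$, gives
\[
\|v(t)-w(t)\|_{L^2}\le \int_{t_0}^{t}\|g(s)-g_\infty\|_{L^2}\,ds\le C\int_{t_0}^\infty e^{-(s-t_0)}\,ds<\infty.
\]
This bound is uniform but does not tend to zero. To fix this, I would restart the comparison at a late time $\tau$: compare $v$ on $[\tau,t]$ with the autonomous solution issued from $v(\tau)$. The error is then at most $\int_\tau^\infty\|g-g_\infty\|_{L^2}\,ds\to0$ as $\tau\to\infty$, by \eqref{equ51}. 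It therefore suffices to show that every autonomous trajectory starting from $v(\tau)$ converges to $v_\infty:=u_\infty^{m+1}$, uniformly over such initial data.

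\textbf{Step 3: convergence of the autonomous trajectories, via order and barriers.} Let $v_\infty$ be the stationary solution given by Theorem~\ref{the2} and its corollary for $\mathcal{T}_m v=g_\infty$. Using the boundary behaviour $\mathcal{M}^{1}_{1,\alpha}$ and the estimates \eqref{4equ41}, which are uniform in $t$, I would construct a sub-solution $\underline{w}_0=(\epsilon\phi^{s}_{1,p})^{m+1}$ and a super-solution $\overline{w}_0=(M u_\infty)^{m+1}$ (or a large multiple of the solution of \eqref{P8}) with $\underline{w}_0\le v(\tau)\le\overline{w}_0$. The sublinearity $m<p-1$ and $\delta<m$ is what makes these barriers stationary sub- and super-solutions.

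By the comparison principle (the $T$-accretivity \eqref{equ34} is order preserving), the trajectory starting at $\underline{w}_0$ is nondecreasing in time, the one starting at $\overline{w}_0$ is nonincreasing, and both sandwich $S(t)v(\tau)$. Monotone and bounded, they converge in every $L^r$. The energy identity \eqref{4equ31} gives $\int_0^\infty\|\partial_t v\|_2^2<\infty$ together with a uniform $W^{1,p}_0$ bound, so the limits are stationary weak solutions. By uniqueness in Theorem~\ref{the2}, which is the Díaz--Saa argument of Lemma~\ref{Lem2}, both limits equal $v_\infty$.

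\textbf{Step 4: upgrade the norm.} This gives $L^2$, and hence $L^1$, convergence. Since $v$ is bounded in $L^\infty$ uniformly in $t$ by the barriers, interpolation gives convergence in every $L^r$, $r\ge1$.

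\textbf{Main obstacle.} The hard part is Step 3, specifically the ordering $\underline{w}_0\le v(\tau)\le\overline{w}_0$ uniformly in $\tau$. Because of the mismatched boundary exponents ($d$ versus $d^{\alpha'}$), the lower bound from \eqref{4equ41} has to be kept for all times. I would try first to obtain it by comparing $u$ itself with the time-independent sub-solution $\epsilon\phi^{s}_{1,p}$, using \eqref{equ1} and the comparison principle of Theorem~\ref{theorem2} at each discrete Euler step.
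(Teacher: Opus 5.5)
Your Step~3 breaks down at the upper barrier. Both candidates, $\overline{w}_0=(Mu_\infty)^{m+1}$ and a multiple of the solution of \eqref{P8}, are built from functions in $C^{1}(\overline{\Omega})$ that vanish on $\partial\Omega$, so they are $O(d^{m+1})$ near the boundary. But $u_0\in\mathcal{M}^{1}_{\alpha',\alpha'}(\Omega)$ gives $u_0\ge c^{-1}d^{\alpha'}$ with $\alpha'<s<1$, and \eqref{4equ41} only yields $u(t)\le C\,d^{\alpha'}$ at later times. Hence $v(\tau)\le\overline{w}_0$ is false for every $M$ already at $\tau=0$, and nothing available upgrades $u(\tau)$ to $O(d)$ at positive times. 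Without this ordering you lose both the sandwich $S(t)\underline{w}_0\le S(t)v(\tau)\le S(t)\overline{w}_0$ and the $L^\infty$ bound you rely on in Step~4. The exponent mismatch you flag therefore bites on the upper side.

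What is missing is a stationary super-solution whose boundary profile is exactly $d^{\alpha'}$. The paper takes $\overline{u}=\tilde u_M=M^{1/(p-1)}\tilde u$, where $\tilde u$ solves the singular problem $-\Delta_p\tilde u+(-\Delta)^s_p\tilde u=\tilde u^{-\vartheta}$ and satisfies $c^{-1}d^{\alpha'}\le\tilde u\le c\,d^{\alpha'}$ with $\alpha'=p/(p-1+\vartheta)$. The right-hand side of \eqref{4equ100} then satisfies $M^{1+\frac{\vartheta}{p-1}}\tilde u_M^{-\vartheta}\ge c^{-\vartheta}M\,d^{-\alpha'\vartheta}$. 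For $M$ large this dominates $\|g\|_{L^\infty}\tilde u_M^{m}+m\,d^{-\gamma}\tilde u_M^{\delta}$, because $m<p-1$. This is precisely why $\vartheta$ and $\alpha'$ enter the hypotheses.

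Your lower barrier $\epsilon\phi^{s}_{1,p}$ is also unsafe. For $m=0$ the right-hand side of \eqref{P7} is just $g_\infty$, so $\lambda^{s}_{1,p}(\epsilon\phi^{s}_{1,p})^{p-1}\le g_\infty$ fails wherever $g_\infty$ vanishes. Use instead the solution $\tilde u_k$ of \eqref{4equ6} with $k$ large. It is a sub-solution at every Euler step by \eqref{equ1}, and for the limit problem since $g_\infty\ge\underline g$. The argument of Claim~\ref{claim1} keeps it below $u(t)$ for all $t$.

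Your Step~2, on the other hand, is correct and actually needed. The paper's Case~2 only bounds $\|v(t+t_0)-\tilde v(t)\|_{L^2}$ by a constant; your late restart is what makes the error vanish. You also do not need uniformity over the initial data for the $L^2$ claim. Suppose that for each fixed $\tau$ you know $S(t)v(\tau)\to v_\infty$ (Case~1 with the barriers above, with $M$ chosen depending on $\tau$). Then
\[
\limsup_{t\to\infty}\|v(t)-v_\infty\|_{L^2(\Omega)}\le\int_\tau^\infty\|g(s)-g_\infty\|_{L^2(\Omega)}\,ds,
\]
and you let $\tau\to\infty$.

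Two caveats remain. First, $u(\tau)$ is only known to lie between $C^{-1}d$ and $C\,d^{\alpha'}$, not in $\mathcal{M}^{1}_{\alpha',\alpha'}(\Omega)$. When restarting from it, you must check that the construction in Theorem~\ref{2thm1} uses only these two bounds. Second, the interpolation to $r>2$ still requires a time-uniform upper barrier, and hence a bound on $\sup_t\|g(t)\|_{L^\infty(\Omega)}$.
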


\noindent
The article is organized as follows:\\[4pt]
 In \textbf{Section}~\ref{sec1}, we provide rigorous proofs of the main results for the stationary problems~\eqref{PP2} and~\eqref{P7}, which are associated with the parabolic problem~\eqref{PP1}. \textbf{Section}~\ref{sec2} is devoted to the proofs of the main results concerning problem~\eqref{PP1}.

\section{Stationary Problems Associated with Problem \eqref{PP1}}\label{sec1}
In this section, we provide rigorous proofs of the main results previously stated, which are concerned with the stationary problems corresponding to \eqref{PP1}. In particular, we establish the following fundamental results.
\subsection{Weak Comparison Principle for Problem~\eqref{PP2}}

\begin{proof}[\textbf{Proof of Theorem} \ref{theorem2}] 
	For any nonnegative functions $\Psi, \Phi \in \textbf{WL} $, the following inequalities hold:
	\begin{equation}\label{equ5}
	\begin{aligned}
	& \int_{\Omega} \underline{u}^{2m+1}\, \Psi \, dx 
	+ \lambda \int_{\Omega} |\nabla \underline{u}|^{p-2} \nabla \underline{u} \cdot \nabla \Psi \, dx  + \lambda \, \iint_{\mathbb{R}^{2N}} 
	\frac{\left[ \underline{u}(x)-\underline{u}(y)\right] ^{p-1} \big(\Psi(x)-\Psi(y)\big)}{|x-y|^{N+sp}} \, dx \, dy \\
	& \qquad \leq \int_{\Omega} \Big( g_{0}(x)\, \underline{u}^{m} + \lambda m\, d(x)^{-\gamma}\, \underline{u}^{\delta} \Big) \Psi \, dx,
	\end{aligned}
	\end{equation}
	and
	\begin{equation}\label{equ6}
	\begin{aligned}
	& \int_{\Omega} \overline{u}^{2m+1}\, \Phi \, dx 
	+ \lambda \int_{\Omega} |\nabla \overline{u}|^{p-2} \nabla \overline{u} \cdot \nabla \Phi \, dx  + \lambda \, \iint_{\mathbb{R}^{2N}} 
	\frac{\left[ \overline{u}(x)-\overline{u}(y)\right] ^{p-1} \big(\Phi(x)-\Phi(y)\big)}{|x-y|^{N+sp}} \, dx \, dy \\
	& \qquad \geq \int_{\Omega} \Big( g_{0}(x)\, \overline{u}^{m} + \lambda m\, d(x)^{-\gamma}\, \overline{u}^{\delta} \Big) \Phi \, dx.
	\end{aligned}
	\end{equation}
	We begin by considering the case \(m=0\). Subtracting the previously mentioned inequalities and choosing the test functions $ 	\Psi(x) = \big(\underline{u}(x) - \overline{u}(x)\big)^{+},\,  \Phi(x) = \big(\overline{u}(x) - \underline{u}(x)\big)^{-}  \in W^{1,p}_{0}(\Omega) \cap L^{2(m+1)}(\Omega), $ we obtain
\begin{equation*}
		\begin{aligned}
		& \int_{\{ \underline{u} > \overline{u} \}} 
		\big( \underline{u} - \overline{u} \big)^{2} \, dx + \lambda \int_{\Omega} 
		\Big( |\nabla \underline{u}|^{p-2} \nabla \underline{u} 
		- |\nabla \overline{u}|^{p-2} \nabla \overline{u} \Big)  
		\cdot \nabla \big(\underline{u} - \overline{u}\big)^{+} \, dx  \\
		&  + \lambda \iint_{\mathbb{R}^{2N}} 
		\frac{ \Big( \big[\underline{u}(x)-\underline{u}(y)\big]^{p-1} 
			- \big[\overline{u}(x)-\overline{u}(y)\big]^{p-1} \Big) 
			\Big( \big(\underline{u}-\overline{u}\big)^{+}(x) 
			- \big(\underline{u}-\overline{u}\big)^{+}(y) \Big) }
		{|x-y|^{N+sp}}  dx dy \\
		& \leq 0.
		\end{aligned}
		\end{equation*}
	
	\noindent 
	By applying the classical elliptic estimates from \cite{pucci2007maximum}, we infer that the local term is nonnegative. Moreover, since the mapping $x \mapsto x^{+}$ is monotone nondecreasing, the fractional term is likewise nonnegative. Hence, we arrive at
	\begin{equation*}
	\int_{\{ \underline{u} > \overline{u} \}} 
	\big( \underline{u} - \overline{u} \big)^{2} \, dx  = 0,
	\end{equation*}
	which yields $ \underline{u} \leq \overline{u} $ a. e. in $\Omega$. We now consider the case $m > 0$. By subtracting \eqref{equ6} from \eqref{equ5} and introducing the test functions
 \[
		\Psi_{\epsilon, k} = \frac{\mathbf{T}_{k}\Big( \big((\underline{u} + \epsilon)^{m+1} - (\overline{u} + \epsilon)^{m+1}\big)^{+} \Big)}{(\underline{u} + \epsilon)^{m}}, 
		\,
		\Phi_{\epsilon, k} = \frac{\mathbf{T}_{k}\Big( \big((\overline{u} + \epsilon)^{m+1} - (\underline{u} + \epsilon)^{m+1}\big)^{-} \Big)}{(\overline{u} + \epsilon)^{m}},
		\]
	
	\noindent 
	for fixed $\epsilon \in (0,1)$ and $k > 0$, and observing that $\Psi_{\epsilon, k}, \Phi_{\epsilon, k} \in W^{1,p}_{0}(\Omega) \cap L^{2(m+1)}(\Omega)$ (see \cite[Remark~2.5]{durastanti2022comparison}), we obtain
\begin{equation}\label{equ8}
		\begin{aligned}
		& \int_{\Omega} \left( \underline{u}^{2m+1}\, \Psi_{\epsilon, k} - \overline{u}^{2m+1}\, \Phi_{\epsilon, k} \right) dx 
		+ \lambda \int_{\Omega} \left( |\nabla \underline{u}|^{p-2} \nabla \underline{u} \cdot \nabla \Psi_{\epsilon, k} 
		- |\nabla \overline{u}|^{p-2} \nabla \overline{u} \cdot \nabla \Phi_{\epsilon, k} \right) dx \\
		&+ \lambda \iint_{\mathbb{R}^{2N}} 
		\left( 
		\frac{\left[ \underline{u}(x)-\underline{u}(y)\right] ^{p-1} \big(\Psi_{\epsilon, k}(x)-\Psi_{\epsilon, k}(y)\big) - \left[ \overline{u}(x)-\overline{u}(y)\right] ^{p-1} \big(\Phi_{\epsilon, k}(x)-\Phi_{\epsilon, k}(y)\big)}{|x-y|^{N+sp}} \right) dxdy \\
		& \quad \leq \int_{\Omega} g_{0}(x)\, \big(\underline{u}^{m} \Psi_{\epsilon, k} - \overline{u}^{m} \Phi_{\epsilon, k}\big) dx 
		+ \lambda \, m \, \int_{\Omega} d(x)^{-\gamma} \big(\underline{u}^{\delta} \Psi_{\epsilon, k} - \overline{u}^{\delta} \Phi_{\epsilon, k}\big) dx .
		\end{aligned}
		\end{equation}
	
	\noindent 
	Consequently, we obtain the following estimates: \\[4pt]
	Firstly, observe that
	\[
	\dfrac{\overline{u}^{2m+1}}{\left( \overline{u}+ \epsilon\right)^{m}} \leq 
	\dfrac{\underline{u}^{2m+1}}{\left( \underline{u}+ \epsilon\right)^{m}} 
	\quad \text{in } \{ \underline{u} > \overline{u} \}.
	\]
	Moreover, noting that $\mathbf{T}_{k}(t) \leq t$ for all $t \geq 0$, we deduce on the set $\{ \underline{u} > \overline{u} \}$ that
	\begin{equation*}
	\begin{aligned}
	0 &\leq \underline{u}^{2m+1}\, \Psi_{\epsilon, k} - \overline{u}^{2m+1}\, \Phi_{\epsilon, k} \\
	&= \left( \dfrac{\underline{u}^{2m+1}}{(\underline{u}+ \epsilon)^{m}} -  
	\dfrac{\overline{u}^{2m+1}}{(\overline{u}+ \epsilon)^{m}} \right) 
	\mathbf{T}_{k}\!\Big( (\underline{u} + \epsilon)^{m+1} - (\overline{u} + \epsilon)^{m+1} \Big) \\[2mm]
	&\leq \left( \underline{u}^{m+1}\left(\dfrac{\underline{u}}{\underline{u} + \epsilon}\right)^{m} 
	- \overline{u}^{m+1}\left(\dfrac{\overline{u}}{\overline{u} + \epsilon}\right)^{m}\right) 
	\Big((\underline{u} + \epsilon)^{m + 1} - (\overline{u} + \epsilon)^{m + 1}\Big) \\[1mm]
	&\leq \underline{u}^{m+1}\,\Big((\underline{u} + \epsilon)^{m + 1} - (\overline{u} + \epsilon)^{m + 1}\Big) 
	\leq \underline{u}^{m+1}\, (\underline{u} + 1)^{m + 1} \in L^{1}(\{ \underline{u} > \overline{u} \}).
	\end{aligned}
	\end{equation*}
	Passing to the limit as $\epsilon \to 0$, we obtain
	\begin{equation*}
	\begin{aligned}
	&\left( \dfrac{\underline{u}^{2m+1}}{(\underline{u}+ \epsilon)^{m}} -  
	\dfrac{\overline{u}^{2m+1}}{(\overline{u}+ \epsilon)^{m}} \right) 
	\mathbf{T}_{k}\!\Big( (\underline{u} + \epsilon)^{m+1} - (\overline{u} + \epsilon)^{m+1} \Big)  \\[1mm]
	&\quad \longrightarrow \; 
	\big(\underline{u}^{m+1} - \overline{u}^{m+1}\big)\,
	\mathbf{T}_{k}\!\big(\underline{u}^{m+1} - \overline{u}^{m+1}\big).
	\end{aligned}
	\end{equation*}
	Hence, by the Dominated Convergence Theorem, it follows that
\begin{equation*}
		\begin{aligned}
		\lim_{\epsilon \to 0} \int_{\{ \underline{u} > \overline{u} \}} 
		\big( \underline{u}^{2m+1} \Psi_{\epsilon, k} - \overline{u}^{2m+1} \Phi_{\epsilon, k} \big) dx 
		= \int_{\{ \underline{u} > \overline{u} \}} 
		\big( \underline{u}^{m+1} - \overline{u}^{m+1} \big)
		\mathbf{T}_{k}\!\big( \underline{u}^{m+1} - \overline{u}^{m+1} \big) dx.
		\end{aligned}
		\end{equation*}
	
	\noindent 
	In addition, consider the sequence
	\[
	U_{k} := \big( \underline{u}^{m+1} - \overline{u}^{m+1} \big)\,
	\mathbf{T}_{k}\!\big( \underline{u}^{m+1} - \overline{u}^{m+1} \big) \;\geq\; 0 
	\quad \text{in } \{ \underline{u} > \overline{u} \}.
	\]
	Clearly, $(U_{k})$ is nondecreasing. Hence, by the Monotone Convergence Theorem, we obtain
	\begin{equation*}
	\lim_{k \to +\infty}\int_{\{ \underline{u} > \overline{u} \}} 
	U_{k}\, dx 
	=
	\int_{\{ \underline{u} > \overline{u} \}} 
	\big( \underline{u}^{m+1} - \overline{u}^{m+1} \big)^{2}\, dx.
	\end{equation*}
	Hence, by passing to the limits $\epsilon \to 0$ and $k \to \infty$, we obtain
	\begin{equation}\label{equ7}
	\begin{aligned}
	\int_{\{ \underline{u} > \overline{u} \}} 
	\Big( \underline{u}^{2m+1}\, \Psi_{\epsilon, k} - \overline{u}^{2m+1}\, \Phi_{\epsilon, k} \Big)\, dx 
	\longrightarrow \int_{\{ \underline{u} > \overline{u} \}} 
	\Big( \underline{u}^{m+1} - \overline{u}^{m+1} \Big)^{2} \, dx.
	\end{aligned}
	\end{equation}
	Analogously, by observing that 
	\[
	\dfrac{\overline{u}}{\overline{u}+ \epsilon} \leq 
	\dfrac{\underline{u}}{ \underline{u}+ \epsilon}  < 1
	\quad \text{in } \{ \underline{u} > \overline{u} \},
	\]
	we obtain
	\begin{equation*}
	\begin{aligned}
	0 &\leq g_{0}(x)\,\big(\underline{u}^{m}\, \Psi_{\epsilon, k} - \overline{u}^{m}\, \Phi_{\epsilon, k}\big) \\[4pt]
	&= g_{0}(x)\,\left(\left(\dfrac{\underline{u}}{\underline{u} + \epsilon}\right)^{m}   
	- \left(\dfrac{\overline{u}}{\overline{u} + \epsilon}\right)^{m} \right)\,
	\mathbf{T}_{k}\!\Big((\underline{u} + \epsilon)^{m + 1} - (\overline{u} + \epsilon)^{m + 1}\Big) \\[4pt]
	&\leq g_{0}(x)\,\left(\left(\dfrac{\underline{u}}{\underline{u} + \epsilon}\right)^{m}   
	- \left(\dfrac{\overline{u}}{\overline{u} + \epsilon}\right)^{m} \right)\,
	\Big((\underline{u} + \epsilon)^{m + 1} - (\overline{u} + \epsilon)^{m + 1}\Big) \\[4pt]
	&\leq g_{0}(x)\, (\underline{u} + 1)^{m + 1} \;\in\; L^{1}(\{ \underline{u} > \overline{u} \}).
	\end{aligned}
	\end{equation*}
	Moreover, we have
	\begin{equation*}
	\begin{aligned}
	\lim_{\epsilon \to 0} \left( g_{0}(x)\, \left(\left(\dfrac{\underline{u}}{\underline{u} + \epsilon}\right)^{m}   
	- \left(\dfrac{\overline{u}}{\overline{u} + \epsilon}\right)^{m} \right)\,
	\mathbf{T}_{k}\!\Big((\underline{u} + \epsilon)^{m + 1} - (\overline{u} + \epsilon)^{m + 1}\Big)\right)  = 0,
	\end{aligned}
	\end{equation*}
	and consequently, by the Dominated Convergence Theorem, we conclude that
	\begin{equation}\label{equ21}
	\lim_{k \to \infty}  \lim_{\epsilon \to 0}\int_{\{ \underline{u} > \overline{u} \}} 
	g_{0}(x)\, \big(\underline{u}^{m} \Psi_{\epsilon, k} - \overline{u}^{m} \Phi_{\epsilon, k}\big)\, dx  = 0.
	\end{equation}
	Now, by applying Lagrange's Theorem, we deduce that
	\begin{equation*}
	\begin{gathered}
	\begin{aligned}
	&  d(x)^{-\gamma}  \left(\dfrac{\underline{u}^{\delta}}{\left(\underline{u} + \epsilon\right) ^{m}} - \dfrac{\overline{u}^{\delta}}{\left(\overline{u} + \epsilon\right) ^{m}} \right)^{+} \textbf{T}_{k} \left(\left(\underline{u} + \epsilon\right) ^{m+1} - \left( \overline{u} + \epsilon\right) ^{m+1} \right) \\
	& \leq d(x)^{-\gamma} \, \underline{u}^{\delta} \left( \dfrac{\left(\underline{u} + \epsilon\right) ^{m+1} - \left( \overline{u} + \epsilon\right) ^{m+1}}{\left(\underline{u} + \epsilon\right) ^{m}}\right)   \\
	& \leq  d(x)^{-\gamma} \, \underline{u}^{\delta} \,\left( \dfrac{\left(\underline{u} + \epsilon\right) ^{m+1} - \epsilon ^{m+1}}{\left(\underline{u} + \epsilon\right) ^{m}}\right) \leq (m+1) d(x)^{-\gamma} \underline{u}^{\delta +1} \in L^{1}(\{ \underline{u} > \overline{u} \}).
	\end{aligned}
	\end{gathered}
	\end{equation*}
	Invoking the Dominated Convergence Theorem once more, we obtain that, as $\epsilon \to 0$, 
	\begin{equation}\label{equ18}
	\begin{aligned}
	&\int_{\{\underline{u} > \overline{u}\}} d(x)^{-\gamma}  
	\left(
	\dfrac{\underline{u}^{\delta}}{(\underline{u} + \epsilon)^{m}} - \dfrac{\overline{u}^{\delta}}{(\overline{u} + \epsilon)^{m}} 
	\right)^{+} 
	\mathbf{T}_{k} 
	\Big((\underline{u} + \epsilon)^{m+1} - (\overline{u} + \epsilon)^{m+1} \Big) \, dx \\[2mm]
	&\quad \longrightarrow 
	\int_{\{\underline{u} > \overline{u}\}} d(x)^{-\gamma}  
	\left(\underline{u}^{\delta - m} - \overline{u}^{\delta - m}\right)^{+} 
	\mathbf{T}_{k} \Big( \underline{u}^{m+1} - \overline{u}^{m+1} \Big) \, dx.
	\end{aligned}
	\end{equation}
	
	\noindent
	On the other hand, by applying Fatou’s lemma, we obtain
		\begin{equation}\label{equ19}
		\begin{gathered}
		\begin{aligned}
		\liminf_{\epsilon \to 0} 
		&{\displaystyle\int_{\{\underline{u} > \overline{u}\}} d(x)^{-\gamma}
			\left(
			\dfrac{\underline{u}^{\delta}}{(\underline{u} + \epsilon)^{m}}
			- \dfrac{\overline{u}^{\delta}}{(\overline{u} + \epsilon)^{m}}
			\right)^{-}
			\mathbf{T}_{k}\!\left(
			(\underline{u} + \epsilon)^{m+1} - (\overline{u} + \epsilon)^{m+1}
			\right) dx } \\[4pt]
		&\geq {\displaystyle\int_{\{\underline{u} > \overline{u}\}}}
		d(x)^{-\gamma}
		\left( \underline{u}^{\delta - m} - \overline{u}^{\delta - m} \right)^{-}
		\mathbf{T}_{k}\!\left(
		\underline{u}^{m+1} - \overline{u}^{m+1}
		\right) dx.
		\end{aligned}
		\end{gathered}
		\end{equation}
	
	\noindent
	Taking the $\liminf$ as $ \epsilon \to 0 $ and using \eqref{equ18} and \eqref{equ19}, we get that
	\begin{equation*}
	\begin{gathered}
	\begin{aligned}
	\liminf_{\epsilon \to 0}&
	\int_{\{\underline{u} > \overline{u}\}} d(x)^{-\gamma}
	\big(\underline{u}^{\delta} \Psi_{\epsilon, k}
	- \overline{u}^{\delta} \Phi_{\epsilon, k}\big) dx\\
	&\leq {\displaystyle\int_{\{\underline{u} > \overline{u}\}}}
	d(x)^{- \gamma}
	\left(\underline{u}^{\delta - m} - \overline{u}^{\delta - m}\right)
	\mathbf{T}_{k}\!\left(
	\underline{u}^{m+1} - \overline{u}^{m+1}
	\right) dx.
	\end{aligned}
	\end{gathered}
	\end{equation*}
	
	\noindent
	Moreover, by the monotonicity property, we have
	\[
	U_{k} := - d(x)^{- \gamma}
	\left(\underline{u}^{\delta - m} - \overline{u}^{\delta - m}\right)
	\mathbf{T}_{k}\!\left(
	\underline{u}^{m+1} - \overline{u}^{m+1}
	\right) \geq 0 \quad \text{in } \{ \underline{u} > \overline{u} \}.
	\]
	Since $(U_{k}) $ is an increasing sequence, the Monotone Convergence Theorem yields
	\[
	\lim_{k \to \infty} \int_{\Omega} U_{k} \, dx
	= - \int_{\Omega} d(x)^{- \gamma}
	\left(\underline{u}^{\delta - m} - \overline{u}^{\delta - m}\right)
	(\underline{u}^{m+1} - \overline{u}^{m+1})^{+} \, dx.
	\]
	Consequently, since $ \delta < m $, we obtain
	\begin{equation}\label{equ20}
	\begin{gathered}
	\begin{aligned}
	\lim_{k \to \infty} \liminf_{\epsilon \to 0}&
	\int_{\Omega} d(x)^{-\gamma}
	\big(\underline{u}^{\delta} \Psi_{\epsilon, k}
	- \overline{u}^{\delta} \Phi_{\epsilon, k}\big) dx\\
	&\leq
	\int_{\Omega} d(x)^{- \gamma}
	\left(\underline{u}^{\delta - m} - \overline{u}^{\delta - m}\right)
	(\underline{u}^{m+1} - \overline{u}^{m+1})^{+} \, dx
	\leq 0.
	\end{aligned}
	\end{gathered}
	\end{equation}
	Secondly, for the remaining estimates, we adapt the argument used in the proof of Claim~2.3 of \cite[Theorem~1.9]{gouasmia2024uniqueness}. To this end, we introduce the following subdomains:
	\begin{equation*}
	\begin{gathered}
	\begin{aligned}
	& \Omega_{1} = \left\lbrace x \in \Omega : \, \left(\underline{u} + \epsilon\right)^{m+1}(x) - \left(\overline{u}  + \epsilon \right)^{m+1}(x) \leq 0 \right\rbrace, \\[4pt]
	& \Omega_{2} = \left\lbrace x \in \Omega : \, 0 <\left(\underline{u} + \epsilon\right)^{m+1}(x) - \left(\overline{u}  + \epsilon \right)^{m+1}(x)  < k \right\rbrace, \\[4pt]
	& \Omega_{3} = \left\lbrace x \in \Omega : \, \left(\underline{u} + \epsilon\right)^{m+1}(x) - \left(\overline{u}  + \epsilon \right)^{m+1}(x)  \geq k \right\rbrace.
	\end{aligned}
	\end{gathered}
	\end{equation*}
	Hence, we derive the following expression:
	
\begin{equation*}
		\begin{aligned}
		& \int_{\Omega} \left( |\nabla \underline{u}|^{p-2} \nabla \underline{u} \cdot \nabla \Psi_{\epsilon, k} 
		- |\nabla \overline{u}|^{p-2} \nabla \overline{u} \cdot \nabla \Phi_{\epsilon, k} \right) dx \\
		&+ \iint_{\mathbb{R}^{2N}} 
		\left( 
		\frac{\left[ \underline{u}(x)-\underline{u}(y)\right] ^{p-1} \big(\Psi_{\epsilon, k}(x)-\Psi_{\epsilon, k}(y)\big) - \left[ \overline{u}(x)-\overline{u}(y)\right] ^{p-1} \big(\Phi_{\epsilon, k}(x)-\Phi_{\epsilon, k}(y)\big)}{|x-y|^{N+sp}} \right) dxdy \\
		& = \int_{\Omega_{1}} \left( |\nabla \underline{u}|^{p-2} \nabla \underline{u} \cdot \nabla \Psi_{\epsilon, k} 
		- |\nabla \overline{u}|^{p-2} \nabla \overline{u} \cdot \nabla \Phi_{\epsilon, k} \right) dx \\
		&+\int_{\Omega_{2}} \left( |\nabla \underline{u}|^{p-2} \nabla \underline{u} \cdot \nabla \Psi_{\epsilon, k} 
		- |\nabla \overline{u}|^{p-2} \nabla \overline{u} \cdot \nabla \Phi_{\epsilon, k} \right) dx\\
		&+\int_{\Omega_{3}} \left( |\nabla \underline{u}|^{p-2} \nabla \underline{u} \cdot \nabla \Psi_{\epsilon, k} 
		- |\nabla \overline{u}|^{p-2} \nabla \overline{u} \cdot \nabla \Phi_{\epsilon, k} \right) dx \\
		&+\iint_{\Omega_{1} \times \Omega_{1}} 
		\left( 
		\frac{\left[ \underline{u}(x)-\underline{u}(y)\right] ^{p-1} \big(\Psi_{\epsilon, k}(x)-\Psi_{\epsilon, k}(y)\big) - \left[ \overline{u}(x)-\overline{u}(y)\right] ^{p-1} \big(\Phi_{\epsilon, k}(x)-\Phi_{\epsilon, k}(y)\big)}{|x-y|^{N+sp}} \right) dxdy \\
		& +\iint_{\Omega_{2} \times \Omega_{2}} 
		\left( 
		\frac{\left[ \underline{u}(x)-\underline{u}(y)\right] ^{p-1} \big(\Psi_{\epsilon, k}(x)-\Psi_{\epsilon, k}(y)\big) - \left[ \overline{u}(x)-\overline{u}(y)\right] ^{p-1} \big(\Phi_{\epsilon, k}(x)-\Phi_{\epsilon, k}(y)\big)}{|x-y|^{N+sp}} \right) dxdy \\
		&+\iint_{\Omega_{3} \times \Omega_{3}} 
		\left( 
		\frac{\left[ \underline{u}(x)-\underline{u}(y)\right] ^{p-1} \big(\Psi_{\epsilon, k}(x)-\Psi_{\epsilon, k}(y)\big) - \left[ \overline{u}(x)-\overline{u}(y)\right] ^{p-1} \big(\Phi_{\epsilon, k}(x)-\Phi_{\epsilon, k}(y)\big)}{|x-y|^{N+sp}} \right) dxdy \\
		&+2 \iint_{\Omega_{2} \times \Omega_{1}} 
		\left( 
		\frac{\left[ \underline{u}(x)-\underline{u}(y)\right] ^{p-1} \big(\Psi_{\epsilon, k}(x)-\Psi_{\epsilon, k}(y)\big) - \left[ \overline{u}(x)-\overline{u}(y)\right] ^{p-1} \big(\Phi_{\epsilon, k}(x)-\Phi_{\epsilon, k}(y)\big)}{|x-y|^{N+sp}} \right) dxdy \\
		& +2 \iint_{\Omega_{3} \times \Omega_{1}} 
		\left( 
		\frac{\left[ \underline{u}(x)-\underline{u}(y)\right] ^{p-1} \big(\Psi_{\epsilon, k}(x)-\Psi_{\epsilon, k}(y)\big) - \left[ \overline{u}(x)-\overline{u}(y)\right] ^{p-1} \big(\Phi_{\epsilon, k}(x)-\Phi_{\epsilon, k}(y)\big)}{|x-y|^{N+sp}} \right) dxdy \\
		&+2 \iint_{\Omega_{3} \times \Omega_{2}} 
		\left( 
		\frac{\left[ \underline{u}(x)-\underline{u}(y)\right] ^{p-1} \big(\Psi_{\epsilon, k}(x)-\Psi_{\epsilon, k}(y)\big) - \left[ \overline{u}(x)-\overline{u}(y)\right] ^{p-1} \big(\Phi_{\epsilon, k}(x)-\Phi_{\epsilon, k}(y)\big)}{|x-y|^{N+sp}} \right) dxdy.
		\end{aligned}
		\end{equation*}
	
	\noindent
	We first apply Lemma~\ref{Lem2} to obtain the following inequality:
		\begin{equation}\label{equ17}
		\begin{aligned}
		& \int_{\Omega_{2}} |\nabla \underline{u}|^{p-2} \nabla \underline{u} \cdot
		\nabla \!\left( \frac{\underline{u}^{m+1} - \overline{u}^{m+1}}{\underline{u}^{m}} \right) dx 
		+ \int_{\Omega_{2}} |\nabla \overline{u}|^{p-2} \nabla \overline{u} \cdot
		\nabla \!\left( \frac{\overline{u}^{m+1} - \underline{u}^{m+1}}{\overline{u}^{m}} \right) dx \\[2mm]
		& \quad + \iint_{\Omega_{2} \times \Omega_{2}} 
		\dfrac{\big[ \underline{u}(x) - \underline{u}(y) \big]^{p-1}}{|x-y|^{N+sp}}
		\left( \frac{\underline{u}(x)^{m+1} - \overline{u}(x)^{m+1}}{\underline{u}(x)^{m}} 
		- \frac{\underline{u}(y)^{m+1} - \overline{u}(y)^{m+1}}{\underline{u}(y)^{m}} \right) dxdy \\[1mm]
		& \quad + \iint_{\Omega_{2} \times \Omega_{2}} 
		\dfrac{\big[ \overline{u}(x) - \overline{u}(y) \big]^{p-1}}{|x-y|^{N+sp}}
		\left( \frac{\overline{u}(x)^{m+1} - \underline{u}(x)^{m+1}}{\overline{u}(x)^{m}} 
		- \frac{\overline{u}(y)^{m+1} - \underline{u}(y)^{m+1}}{\overline{u}(y)^{m}} \right) dxdy 
		\geq 0.
		\end{aligned}
		\end{equation}
	
	\noindent
	Next, we estimate each of the remaining integrals separately. First, we observe that
	\[
	\Psi_{\epsilon, k}(x) = \Phi_{\epsilon, k}(x) = 0 \quad \text{for all } x \in \Omega_{1},
	\]
	which immediately implies that
	\begin{equation}\label{equ11}
	\int_{\Omega_{1}} 
	\left( 
	|\nabla \underline{u}|^{p-2} \nabla \underline{u} \cdot \nabla \Psi_{\epsilon, k} 
	- |\nabla \overline{u}|^{p-2} \nabla \overline{u} \cdot \nabla \Phi_{\epsilon, k} 
	\right) dx = 0,
	\end{equation}
	and
		\begin{equation}\label{equ12}
		\begin{aligned}
		\iint_{\Omega_{1} \times \Omega_{1}} 
		\frac{
			\big[ \underline{u}(x) - \underline{u}(y) \big]^{p-1} \big( \Psi_{\epsilon, k}(x) - \Psi_{\epsilon, k}(y) \big)
			- \big[ \overline{u}(x) - \overline{u}(y) \big]^{p-1} \big( \Phi_{\epsilon, k}(x) - \Phi_{\epsilon, k}(y) \big)
		}{|x-y|^{N+sp}}  dxdy = 0.
		\end{aligned}
		\end{equation}
	
	\noindent
	Since $\text{meas}(\Omega_{3}) \to 0$ as $k \to \infty$, we infer that
	\begin{equation}\label{equ9}
	\begin{gathered}
	\begin{aligned}
	\int_{\Omega_{3}} &
	\left( 
	|\nabla \underline{u}|^{p-2} \nabla \underline{u} \cdot \nabla \Psi_{\epsilon, k} 
	- |\nabla \overline{u}|^{p-2} \nabla \overline{u} \cdot \nabla \Phi_{\epsilon, k} 
	\right) dx \\[4pt]
	&= k\, m \int_{\Omega_{3}} 
	\left( 
	\dfrac{|\nabla \overline{u}|^{p}}{(\overline{u} + \epsilon)^{m+1}} 
	- \dfrac{|\nabla \underline{u}|^{p}}{(\underline{u} + \epsilon)^{m+1}} 
	\right) dx \\[4pt]
	&\geq -m \int_{\Omega_{3}} |\nabla \underline{u}|^{p} \, dx 
	\longrightarrow 0 \quad \text{as } k \to \infty.
	\end{aligned}
	\end{gathered}
	\end{equation}
	
	\noindent
	Furthermore, by applying Lagrange’s mean value theorem, we obtain
		\begin{equation*}
		\begin{aligned}
		&\iint_{\Omega_{3} \times \Omega_{3}} 
		\frac{
			\left[ \underline{u}(x)-\underline{u}(y)\right]^{p-1} \big(\Psi_{\epsilon, k}(x)-\Psi_{\epsilon, k}(y)\big)
			- \left[ \overline{u}(x)-\overline{u}(y)\right]^{p-1} \big(\Phi_{\epsilon, k}(x)-\Phi_{\epsilon, k}(y)\big)
		}{|x-y|^{N+sp}} \, dxdy \\[4pt]
		&= 
		\iint_{\Omega_{3} \times \Omega_{3}} 
		\dfrac{\big[ \underline{u}(x) - \underline{u}(y) \big]^{p-1}}{|x-y|^{N+sp}} 
		\left( \dfrac{k}{(\underline{u}(x) + \epsilon)^{m}} - \dfrac{k}{(\underline{u}(y) + \epsilon)^{m}} \right) dxdy \\[4pt]
		&\quad - 
		\iint_{\Omega_{3} \times \Omega_{3}} 
		\dfrac{\big[ \overline{u}(x) - \overline{u}(y) \big]^{p-1}}{|x-y|^{N+sp}} 
		\left( \dfrac{k}{(\overline{u}(x) + \epsilon)^{m}} - \dfrac{k}{(\overline{u}(y) + \epsilon)^{m}} \right) dxdy \\[4pt]
		&= 
		- k \iint_{\Omega_{3} \times \Omega_{3}}
		\dfrac{\big[ \underline{u}(x) - \underline{u}(y) \big]^{p-1}}{|x-y|^{N+sp}} 
		\left( 
		\dfrac{(\underline{u}(x) + \epsilon)^{m} - (\underline{u}(y) + \epsilon)^{m}}{(\underline{u}(x) + \epsilon)^{m} (\underline{u}(y) + \epsilon)^{m}} 
		\right) dxdy \\[4pt]
		&\quad + 
		k \underbrace{
			\iint_{\Omega_{3} \times \Omega_{3}} 
			\dfrac{\big[ \overline{u}(x) - \overline{u}(y) \big]^{p-1}}{|x-y|^{N+sp}} 
			\left( 
			\dfrac{(\overline{u}(x) + \epsilon)^{m} - (\overline{u}(y) + \epsilon)^{m}}{(\overline{u}(x) + \epsilon)^{m} (\overline{u}(y) + \epsilon)^{m}} 
			\right) dxdy 
		}_{> 0} \\[4pt]
		&\geq 
		- k\, m \iint_{\Omega_{3} \times \Omega_{3}} 
		\dfrac{\left| \underline{u}(x) - \underline{u}(y) \right|^{p}}{|x - y|^{N + sp}} 
		\left( 
		\dfrac{\max\big\{ (\underline{u}(x) + \epsilon)^{m-1}, (\underline{u}(y) + \epsilon)^{m-1} \big\}}
		{(\underline{u}(x) + \epsilon)^{m} (\underline{u}(y) + \epsilon)^{m}} 
		\right) dxdy.
		\end{aligned}
		\end{equation*}
	
	\noindent
	Now, define
	\[
	\omega_{\underline{u}} := \big\{ (x, y) \in \Omega_{3} \times \Omega_{3} \,\big|\, \underline{u}(x) > \underline{u}(y) \big\}.
	\]
	Since $\text{meas}(\Omega_{3}) \to 0$ as $k \to \infty$, we have
\begin{equation}\label{equ13}
		\begin{aligned}
		\iint_{\Omega_{3} \times \Omega_{3}} &
		\frac{
			\left[ \underline{u}(x)-\underline{u}(y)\right]^{p-1} \big(\Psi_{\epsilon, k}(x)-\Psi_{\epsilon, k}(y)\big)
			- \left[ \overline{u}(x)-\overline{u}(y)\right]^{p-1} \big(\Phi_{\epsilon, k}(x)-\Phi_{\epsilon, k}(y)\big)
		}{|x-y|^{N+sp}} \, dxdy \\[4pt]
		&\geq 
		- m \iint_{(\Omega_{3} \times \Omega_{3}) \cap \omega_{\underline{u}}} 
		\dfrac{\left| \underline{u}(x) - \underline{u}(y) \right|^{p}}{|x - y|^{N + sp}} 
		\left( \dfrac{\underline{u}(y) + \epsilon}{\underline{u}(x) + \epsilon} \right) dxdy \\[4pt]
		&\quad - 
		m \iint_{(\Omega_{3} \times \Omega_{3}) \cap \omega^{c}_{\underline{u}}} 
		\dfrac{\left| \underline{u}(x) - \underline{u}(y) \right|^{p}}{|x - y|^{N + sp}} 
		\left( \dfrac{\underline{u}(x) + \epsilon}{\underline{u}(y) + \epsilon} \right) dxdy \\[4pt]
		&\geq 
		- m \iint_{\Omega_{3} \times \Omega_{3}} 
		\dfrac{\left| \underline{u}(x) - \underline{u}(y) \right|^{p}}{|x - y|^{N + sp}} dxdy 
		\longrightarrow 0 \quad \text{as } k \to \infty.
		\end{aligned}
		\end{equation}
	
	\noindent  
	By noting that the condition $m < p-1$ ensures the monotonicity of the function  
	\[
	\tau \mapsto \frac{\left[ \tau - \tau_{0}\right] ^{p-1}}{\tau^{m}},
	\]
	we deduce that

\begin{equation}\label{equ10}
		\begin{gathered}
		\begin{aligned}
		& \iint_{\Omega_{2} \times \Omega_{1}} 
		\left( 
		\frac{\left[ \underline{u}(x)-\underline{u}(y)\right] ^{p-1} \big(\Psi_{\epsilon, k}(x)-\Psi_{\epsilon, k}(y)\big) - \left[ \overline{u}(x)-\overline{u}(y)\right] ^{p-1} \big(\Phi_{\epsilon, k}(x)-\Phi_{\epsilon, k}(y)\big)}{|x-y|^{N+sp}} \right) dxdy  \\[4pt]
		&= \iint_{\Omega_{2} \times \Omega_{1}} \frac{(\underline{u}(x) + \epsilon)^{m+1} - (\overline{u}(x) + \epsilon)^{m+1}}{\vert x-y \vert^{N+ s p}} \left( \frac{\left[\underline{u}(x) - \underline{u}(y)\right]^{p-1}}{(\underline{u}(x) + \epsilon)^{m}} - \frac{\left[\overline{u} (x) - \overline{u} (y)\right]^{p-1}}{(\overline{u}  + \epsilon)^{m}} \right) dx dy \geq 0.
		\end{aligned}
		\end{gathered}
		\end{equation}
	
	\noindent  
	Similarly, we have
\begin{equation}\label{equ14}
		\begin{gathered}
		\begin{aligned}
		&\iint_{\Omega_{3} \times \Omega_{1}} 
		\left( 
		\frac{\left[ \underline{u}(x)-\underline{u}(y)\right] ^{p-1} \big(\Psi_{\epsilon, k}(x)-\Psi_{\epsilon, k}(y)\big) - \left[ \overline{u}(x)-\overline{u}(y)\right] ^{p-1} \big(\Phi_{\epsilon, k}(x)-\Phi_{\epsilon, k}(y)\big)}{|x-y|^{N+sp}} \right) dxdy \\[4pt]
		&= \iint_{\Omega_{3} \times \Omega_{1}} \frac{k}{\vert x-y \vert^{N+ s p}} \left( \frac{\left[\underline{u}(x) - \underline{u}(y)\right]^{p-1}}{(\underline{u}(x) + \epsilon)^{m}} - \frac{\left[\overline{u}(x) - \overline{u}(y)\right]^{p-1}}{(\overline{u} + \epsilon)^{m}} \right) dx dy \geq 0.
		\end{aligned}
		\end{gathered}
		\end{equation}
	
	\noindent
	Finally, for the last integral, we obtain
\begin{equation*}
		\begin{gathered}
		\begin{aligned}
		&\iint_{\Omega_{3} \times \Omega_{2}} 
		\left( 
		\frac{\left[ \underline{u}(x)-\underline{u}(y)\right] ^{p-1} \big(\Psi_{\epsilon, k}(x)-\Psi_{\epsilon, k}(y)\big) - \left[ \overline{u}(x)-\overline{u}(y)\right] ^{p-1} \big(\Phi_{\epsilon, k}(x)-\Phi_{\epsilon, k}(y)\big)}{|x-y|^{N+sp}} \right) dxdy  \\[4pt]
		&= \underbrace{ \iint_{\Omega_{3} \times \Omega_{2}} \dfrac{\left[ \underline{u}(x) - \underline{u}(y)\right] ^{p-1}}{\left| x -y\right| ^{N+sp}} \left(\dfrac{k}{(\underline{u}(x) + \epsilon)^{m}} - \dfrac{(\underline{u}(y) + \epsilon)^{m+1} - (\overline{u}(y) +\epsilon)^{m+1}}{(\underline{u}(y) + \epsilon)^{m}}\right) dx dy }_{\textbf{(1)}} \\[4pt]
		&\underbrace{ -  \iint_{\Omega_{3} \times \Omega_{2}} \dfrac{\left[ \overline{u}(x) - \overline{u}(y)\right] ^{p-1}}{\left| x -y\right| ^{N+sp}} \left(\dfrac{k}{(\overline{u}(x)+ \epsilon)^{m}} - \dfrac{(\underline{u}(y) + \epsilon)^{m+1} - (\overline{u}(y) + \epsilon)^{m+1}}{(\overline{u}(y) + \epsilon)^{m}}\right) dx dy}_{\textbf{(2)}} .
		\end{aligned}
		\end{gathered}
		\end{equation*}
	
	\noindent
	Now, we define
\[
		\textbf{S}_{\underline{u}} := \left\{ (x, y) \in \Omega_{3} \times \Omega_{2} \; : \; \underline{u}(x) > \underline{u}(y) \right\}
		\text{ and } 
		\textbf{S}_{\overline{u}} := \left\{ (x, y) \in \Omega_{3} \times \Omega_{2} \; : \; \overline{u}(x) > \overline{u}(y) \right\},
		\]
	
	\noindent 
	and by employing Lagrange’s theorem in conjunction with the fractional Picone inequality \cite[Proposition~4.2]{brasco2014convexity}, and subsequently applying H\"{o}lder’s and Young’s inequalities with exponents $ \frac{p}{m+1} $ and $ \frac{p}{p - m -1} $, we deduce that
\begin{equation*}
		\begin{gathered}
		\begin{aligned}
		&\textbf{(1)} \geq  \iint_{\textbf{S}_{\underline{u}} } \dfrac{\left[ \underline{u}(x) - \underline{u}(y)\right] ^{p-1}}{\left| x -y\right| ^{N+sp}}\left(\dfrac{(\underline{u}(y) + \epsilon)^{m+1} - (\overline{u}(y) + \epsilon)^{m+1}}{(\underline{u}(x) + \epsilon)^{m}} - \dfrac{(\underline{u}(y) + \epsilon)^{m+1} - (\overline{u} (y)+ \epsilon)^{m+1}}{(\underline{u}(y) + \epsilon)^{m}}\right) dx dy\\[4pt]
		& + \iint_{\textbf{S}^{\text{c}}_{\underline{u}} } \dfrac{\left[ \underline{u}(x) - \underline{u}(y)\right] ^{p-1}}{\left| x -y\right| ^{N+sp}} \left(\dfrac{(\underline{u}(x) + \epsilon)^{m+1} - (\overline{u}(x) + \epsilon)^{m+1}}{(\underline{u}(x) + \epsilon)^{m}} - \dfrac{(\underline{u}(y) + \epsilon)^{m+1} - (\overline{u}(y) + \epsilon)^{m+1}}{(\underline{u}(y) + \epsilon)^{m}}\right) dx dy\\[4pt]
		&\geq - m \iint_{\textbf{S}_{\underline{u}} } \dfrac{\left| \underline{u}(x) - \underline{u}(y)\right| ^{p}}{\left| x -y\right| ^{N+sp}} \left( \dfrac{\underline{u}(y) + \epsilon}{\underline{u}(x) + \epsilon}\right) dx dy + \iint_{\textbf{S}^{\text{c}}_{\underline{u}} } \dfrac{\left| \underline{u}(x) - \underline{u}(y)\right| ^{p}}{\left| x -y\right| ^{N+sp}} dx dy\\[4pt]
		& - \iint_{\textbf{S}^{\text{c}}_{\underline{u}} } \dfrac{\left[ \underline{u}(x) - \underline{u}(y)\right] ^{p-1}}{\left| x -y\right| ^{N+sp}} \left(\dfrac{(\overline{u}(x) + \epsilon)^{m+1}}{(\underline{u}(x) + \epsilon)^{m}} - \dfrac{(\overline{u}(y) + \epsilon)^{m+1}}{(\underline{u}(y) + \epsilon)^{m}}\right) dx dy\\[4pt]
		&\geq - m \iint_{\textbf{S}_{\underline{u}} } \dfrac{\left| \underline{u}(x) - \underline{u}(y)\right| ^{p}}{\left| x -y\right| ^{N+sp}} dx dy+ \iint_{\textbf{S}^{\text{c}}_{\underline{u}} } \dfrac{\left| \underline{u}(x) - \underline{u}(y)\right| ^{p}}{\left| x -y\right| ^{N+sp}} dx dy  \\[4pt]
		&- \iint_{\textbf{S}^{\text{c}}_{\underline{u}} } \dfrac{\left|\overline{u}(x) - \overline{u}(y)\right| ^{m+1}}{\left| x -y\right| ^{N\frac{m+1}{p}+s(m+1)}} \dfrac{\left| \underline{u}(x) - \underline{u}(y)\right| ^{p -m-1}}{\left| x -y\right| ^{N\frac{p -m -1}{p}+s(p-m-1)}} dx dy\\[4pt]
		&\geq - m \iint_{\textbf{S}_{\underline{u}} } \dfrac{\left| \underline{u}(x) - \underline{u}(y)\right| ^{p}}{\left| x -y\right| ^{N+sp}} dx dy - \dfrac{m+1}{p - m -1} \iint_{\textbf{S}^{\text{c}}_{\underline{u}} } \dfrac{\left| \underline{u}(x) - \underline{u}(y)\right| ^{p}}{\left| x -y\right| ^{N+sp}} dx dy -  \dfrac{m+1}{p} \iint_{\textbf{S}^{\text{c}}_{\underline{u}} } \dfrac{\left| \overline{u}(x) - \overline{u}(y)\right| ^{p}}{\left| x -y\right| ^{N+sp}} dx dy,
		\end{aligned}
		\end{gathered}
		\end{equation*}
and
 
 \begin{equation*}
		\begin{gathered}
		\begin{aligned}
		&\textbf{(2)} \geq -  \iint_{\textbf{S}_{\overline{u}} } \dfrac{\left[ \overline{u}(x) - \overline{u}(y)\right] ^{p-1}}{\left| x -y\right| ^{N+sp}} \left(\dfrac{(\underline{u}(x) + \epsilon)^{m+1} - (\overline{u}(x) +\epsilon)^{m+1}}{(\overline{u}(x) + \epsilon)^{m}} - \dfrac{(\underline{u}(y) + \epsilon)^{m+1} - (\overline{u}(y) + \epsilon)^{m+1}}{(\overline{u}(y) +  \epsilon)^{m}} \right) dx dy\\[4pt]
		& -  \iint_{\textbf{S}^{\text{c}}_{\overline{u}} } \dfrac{\left[ \overline{u}(x) - \overline{u}(y)\right] ^{p-1}}{\left| x -y\right| ^{N+sp}} \left(\dfrac{k}{(\overline{u}(x) +\epsilon)^{m}} - \dfrac{k}{(\overline{u}(y) + \epsilon)^{m}}\right) dx dy\\[4pt]
		& \geq - \iint_{\textbf{S}_{\overline{u}} } \dfrac{\left|\underline{u}(x) - \underline{u}(y)\right| ^{m+1}}{\left| x -y\right| ^{N\frac{m+1}{p}+s(m+1)}} \dfrac{\left| \overline{u}(x) - \overline{u}(y)\right| ^{p -m-1}}{\left| x -y\right| ^{N\frac{p -m -1}{p}+s(p-m-1)}} dx dy + \iint_{\textbf{S}_{\overline{u}}} \dfrac{\left| \overline{u}(x) - \overline{u}(y)\right| ^{p}}{\left| x -y\right| ^{N+sp}} dx dy\\[4pt]
		& \geq  - \dfrac{m+1}{p} \iint_{\textbf{S} _{\overline{u}} } \dfrac{\left| \underline{u}(x) - \underline{u}(y)\right| ^{p}}{\left| x -y\right| ^{N+sp}} dx dy  - \dfrac{p}{p - m - 1}  \iint_{\textbf{S}_{\overline{u}} } \dfrac{\left| \overline{u}(x) - \overline{u}(y)\right| ^{p}}{\left| x -y\right| ^{N+sp}} dx dy + \iint_{\textbf{S}_{\overline{u}}} \dfrac{\left| \overline{u}(x) - \overline{u}(y)\right| ^{p}}{\left| x -y\right| ^{N+sp}} dx dy.
		\end{aligned}
		\end{gathered}
		\end{equation*}
	
	\noindent Then, we obtain 
		\begin{equation}\label{equ15}
		\begin{gathered}
		\begin{aligned}
		&\iint_{\Omega_{3} \times \Omega_{2}} 
		\left( 
		\frac{
			\big[\underline{u}(x)-\underline{u}(y)\big]^{p-1}\big(\Psi_{\epsilon, k}(x)-\Psi_{\epsilon, k}(y)\big)
			- 
			\big[\overline{u}(x)-\overline{u}(y)\big]^{p-1}\big(\Phi_{\epsilon, k}(x)-\Phi_{\epsilon, k}(y)\big)
		}{
			|x-y|^{N+sp}
		} 
		\right) dxdy \\
		&\quad \geq 
		- m \iint_{\mathbf{S}_{\underline{u}} } 
		\dfrac{|\underline{u}(x) - \underline{u}(y)|^{p}}{|x -y|^{N+sp}} \, dx dy 
		- \dfrac{m+1}{p - m -1} 
		\iint_{\mathbf{S}^{c}_{\underline{u}} } 
		\dfrac{|\underline{u}(x) - \underline{u}(y)|^{p}}{|x -y|^{N+sp}} \, dx dy \\
		&\quad 
		- \dfrac{m+1}{p} 
		\iint_{\mathbf{S}^{c}_{\underline{u}} } 
		\dfrac{|\overline{u}(x) - \overline{u}(y)|^{p}}{|x -y|^{N+sp}} \, dx dy 
		- \dfrac{m+1}{p} 
		\iint_{\mathbf{S}_{\overline{u}} } 
		\dfrac{|\underline{u}(x) - \underline{u}(y)|^{p}}{|x -y|^{N+sp}} \, dx dy \\
		&\quad 
		- \dfrac{p}{p - m - 1}  
		\iint_{\mathbf{S}_{\overline{u}} } 
		\dfrac{|\overline{u}(x) - \overline{u}(y)|^{p}}{|x -y|^{N+sp}} \, dx dy 
		+ 
		\iint_{\mathbf{S}_{\overline{u}}} 
		\dfrac{|\overline{u}(x) - \overline{u}(y)|^{p}}{|x -y|^{N+sp}} \, dx dy \\[4pt]
		&\quad \to 0 \quad \text{as } k \to \infty.
		\end{aligned}
		\end{gathered}
		\end{equation}
	
	\noindent 
	Hence, combining \eqref{equ17}–\eqref{equ15}, we deduce that
		\begin{equation}\label{equ16}
		\begin{gathered}
		\begin{aligned}
		& \int_{\Omega} 
		\Big( 
		|\nabla \underline{u}|^{p-2} \nabla \underline{u} \cdot \nabla \Psi_{\epsilon, k} 
		- 
		|\nabla \overline{u}|^{p-2} \nabla \overline{u} \cdot \nabla \Phi_{\epsilon, k} 
		\Big) dx \\
		& + 
		\iint_{\mathbb{R}^{2N}} 
		\left( 
		\frac{
			\big[\underline{u}(x)-\underline{u}(y)\big]^{p-1}\big(\Psi_{\epsilon, k}(x)-\Psi_{\epsilon, k}(y)\big)
			- 
			\big[\overline{u}(x)-\overline{u}(y)\big]^{p-1}\big(\Phi_{\epsilon, k}(x)-\Phi_{\epsilon, k}(y)\big)
		}{
			|x-y|^{N+sp}
		} 
		\right) dxdy \\
		&\qquad \geq 0 \quad \text{as } k \to \infty.
		\end{aligned}
		\end{gathered}
		\end{equation}
	
	\noindent 
	Finally, substituting \eqref{equ7}, \eqref{equ21}, \eqref{equ20}, and \eqref{equ16}, and letting 
	$ \lim_{k \to \infty} \liminf_{\epsilon \to 0} $, we arrive at
	\begin{equation*}
	\begin{aligned}
	0 \leq \int_{\{ \underline{u} > \overline{u} \}} 
	\big( 
	\underline{u}^{m+1} - \overline{u}^{m+1} 
	\big)^{2} \, dx 
	\leq 0,
	\end{aligned}
	\end{equation*}
	which implies that 
	$ \underline{u} \leq \overline{u} $ a.e. in $\Omega$.
\end{proof}
\subsection{Existence and Uniqueness of Weak Solutions for Problem~\eqref{PP2}}

\begin{proof}[\textbf{Proof of Theorem} \ref{theorem3}] 
	We introduce the energy functional $\mathcal{J}$ associated with problem~\eqref{PP2}, which is defined on the space $ \textbf{WL} := W^{1,p}_{0}(\Omega) \cap L^{2(m+1)}(\Omega), $ endowed with the following Cartesian norm:
	\[
	\|u\|_{\textbf{WL}} = \|u\|_{W^{1,p}_{0}(\Omega)} + \|u\|_{L^{2(m+1)}(\Omega)}.
	\]
	The functional $\mathcal{J}$ is expressed as
		\begin{equation} \label{3equ1}
		\begin{aligned}
		\mathcal{J}(u)
		&= \frac{1}{2(m+1)} \int_{\Omega} u^{2(m+1)}\,dx
		+ \frac{\lambda}{p} \int_{\Omega} |\nabla u|^{p}\,dx
		+ \frac{\lambda}{p} \iint_{\mathbb{R}^{2N}}
		\frac{|u(x) - u(y)|^{p}}{|x - y|^{N + sp}}\,dx\,dy \\
		&\quad - \frac{1}{m+1} \int_{\Omega} g_{0}(x)\,(u^{+})^{m+1}\,dx
		- \frac{\lambda m}{\delta + 1} \int_{\Omega} d(x)^{-\gamma} (u^{+})^{\delta + 1}\,dx.
		\end{aligned}
		\end{equation}
	
	\noindent 
	Therefore, it follows directly that the functional $\mathcal{J}$ is well defined and weakly lower semicontinuous on $\mathbf{WL}$. Moreover, when $m = 0$, we have
	\begin{align*}
	\mathcal{J}(u)
	&\geq \frac{1}{2}\|u\|_{L^{2 }(\Omega)}^{2}
	+ \frac{\lambda}{p}\|u\|_{W_0^{1,p}(\Omega)}^{p}
	- \|g_0\|_{L^2(\Omega)}\|u\|_{L^{2}(\Omega)}\\
	&\geq \frac{\lambda}{p}\|u\|_{W_0^{1,p}(\Omega)}^{p}
	+ \|u\|_{L^{2}(\Omega)}\!\left[\frac{1}{2}\|u\|_{L^{2}(\Omega)} - \|g_{0}\|_{L^{2}(\Omega)}\right].
	\end{align*}
	For the case $m > 0$, since $\gamma < \tfrac{1}{2}$, applying H\"{o}lder’s inequality yields
	\begin{align*}
	\mathcal{J}(u)
	&\geq \frac{1}{2(m+1)}\|u\|_{L^{2(m+1)}(\Omega)}^{2(m+1)}
	+ \frac{\lambda}{p}\|u\|_{W_0^{1,p}(\Omega)}^{p}
	- \frac{1}{m+1}\|g_0\|_{L^2(\Omega)}\|u\|_{L^{2(m+1)}(\Omega)}^{m+1} \\
	&\quad - \frac{\lambda m\, |\Omega|^{\frac{m-\delta}{2(m+1)}}}{\delta + 1}
	\left(\int_{\Omega} d(x)^{-2\gamma}\,dx\right)^{\!\frac{1}{2}}
	\|u\|_{L^{2(m+1)}(\Omega)}^{\delta + 1} \\[4pt]
	&\geq \frac{\lambda}{p}\|u\|_{W_0^{1,p}(\Omega)}^{p}
	+ \|u\|_{L^{2(m+1)}(\Omega)}^{m+1}
	\Bigg[
	\frac{1}{2(m+1)}\|u\|_{L^{2(m+1)}(\Omega)}^{m+1}
	- \frac{1}{m+1}\|g_{0}\|_{L^{2}(\Omega)} \\
	&\hspace{4.2cm}
	- \frac{C\,\lambda m\, |\Omega|^{\frac{m-\delta}{2(m+1)}}}{\delta + 1}
	\|u\|_{L^{2(m+1)}(\Omega)}^{\delta - m}
	\Bigg],
	\end{align*}
	where the constant $C$ is independent of $u$. Consequently, in both cases, since $\delta < m$, the functional $\mathcal{J}$ is coercive on $\mathbf{WL}$. Therefore, $\mathcal{J}$ admits a global minimizer on this space, denoted by $u_0$.  Using the notation $t = t^{+} - t^{-}$, we obtain
	\begin{align*}
	&\mathcal{J}(u_0)
	= \mathcal{J}(u_0^{+}) 
	+ \frac{1}{2(m+1)} \int_{\Omega} (u_{0}^{-})^{2(m+1)}\,dx 
	+ \frac{\lambda}{p} \int_{\Omega} \left| \nabla (u_{0}^{-})\right|^{p}\,dx \\
	&\,
	+ \frac{\lambda}{p} \iint_{\mathbb{R}^{2N}}
	\frac{|u_{0}^{-}(x) - u_{0}^{-}(y)|^{p}}{|x - y|^{N + sp}}\,dx\,dy  
	+ \frac{2\lambda}{p} \iint_{\mathbb{R}^{2N}}
	\frac{|u_{0}^{-}(x) - u_{0}^{+}(y)|^{p}}{|x - y|^{N + sp}}\,dx\,dy \geq \mathcal{J}(u_0^{+}).
	\end{align*}
	Consequently, we deduce that $u_{0} \geq 0$ in $\Omega$. To demonstrate that $u_{0} \not\equiv 0$ in $\Omega$, we construct an appropriate test function $u \in \mathbf{WL}$ such that $\mathcal{J}(u) < \mathcal{J}(0) = 0$.  
	Since $m < p - 1$, let $\phi \in C_{c}^{1}(\Omega)$ be a nonnegative, nontrivial function satisfying  $\operatorname{supp}(\phi) \subset \operatorname{supp}(g_{0})$.   Then we obtain
	\[
	\mathcal{J}(t\phi) \leq t^{m+1} \big[\, C_{1} t^{m+1} + C_{2} t^{p - m - 1} - C_{3} \,\big], \quad \forall\, t > 0,
	\]
	where the constants $C_{1}$, $C_{2}$, and $C_{3}$ are independent of $t$, with $C_{3} > 0$ since $g_{0} \geq \lambda \underline{g} \not\equiv 0$.   Hence, for sufficiently small $t > 0$, we have $\mathcal{J}(t\phi) < 0$.   As $\mathcal{J}(0) = 0$, it follows that $u_{0} \not\equiv 0$.  By the Gâteaux differentiability of $\mathcal{J}$, the function $u_{0}$ satisfies the following weak formulation:
	\begin{equation}\label{3weakform1}
	\begin{aligned}
	& \int_{\Omega} u^{2m+1} \varphi\,dx
	+ \lambda \int_{\Omega} |\nabla u|^{p-2} \nabla u \cdot \nabla \varphi\,dx  \\
	&\quad 
	+ \lambda \iint_{\mathbb{R}^{2N}} 
	\frac{[u(x) - u(y)]^{p-1}\big(\varphi(x) - \varphi(y)\big)}{|x - y|^{N + sp}}\,dx\,dy \\
	&= \int_{\Omega} \Big( g_{0}(x) u^{m} + \lambda m\, d(x)^{-\gamma} u^{\delta} \Big) \varphi\,dx, 
	\quad \forall\, \varphi \in \textbf{WL}.
	\end{aligned}
	\end{equation}
	On the other hand, regarding the regularity and positivity of weak solutions, we assert that every weak solution to problem~\eqref{PP2} lies in $L^{\infty}(\Omega)$. To demonstrate this property, we adapt the approach developed in~\cite[Theorem~3.2]{franzina2014fractional} to the mixed local–nonlocal framework. Specifically, let $u_0 \in \textbf{WL}$ denote a weak solution of~\eqref{PP2}. We set
	\begin{equation}\label{equation}
	v_{0} = \frac{u_{0}}{\rho \left\| u_{0}\right\|_{L^{p}(\Omega)}} 
	\quad \text{with} \quad 
	\rho = \max \left\{ 1,\dfrac{1}{\left\| u_{0} \right\|_{L^{p}(\Omega)} }\right\}.
	\end{equation}
	Since $v_{0} \in \textbf{WL}$ and $\|v_{0}\|_{L^{p}(\Omega)} = \rho^{-1}$, we define the sequence $(w_{k})$ by
	\begin{equation*}
	\begin{cases}
	w_0(x) := (v_0(x))^{+}, \\[4pt]
	w_k(x) := \left( v_0(x) - 1 + 2^{-k}\right) ^{+}, & \text{for } k \in \mathbb{N}^{*}.
	\end{cases}
	\end{equation*}
	We begin by noting the following straightforward properties of the sequence $(w_k)$:
	$$
	w_k = 0
	\text{ a.e. in } \mathbb{R}^{N}\setminus \Omega, 
	\quad \text{and} \quad 
	w_k \in \textbf{WL},
	$$
	together with
	\begin{equation}\label{equ53}
	\begin{cases}
	0 \leq w_{k+1}(x) \leq w_k(x), & \text{a.e. in } \mathbb{R}^N,\\[4pt]
	v_0(x) < (2^{k+1} - 1)\, w_k(x), & \text{for } x \in \{ w_{k+1} > 0 \}.
	\end{cases}
	\end{equation}
	Moreover, the following inclusion holds:
	\begin{equation*}
	\{ w_{k+1} > 0 \} \subseteq \{ w_k > 2^{-(k+1)} \},
	\quad \text{for all } k \in \mathbb{N}.
	\end{equation*}
	Consequently, we obtain  
	\begin{align}\label{equ54}
	\left| \{ w_{k+1} > 0 \} \right|
	\leq 2^{p(k+1)}  \| w_{k} \|_{L^{p}(\Omega)}^{p}.
	\end{align}
	We set \( U_{k} := \| w_{k} \|_{L^{p}(\Omega)}^{p}.\) It is worth observing that  $ U_k \to 0 $  as $ k \to \infty. $ Indeed, since $\delta < m < p-1$ and $\rho \, \| u_0 \|_{L^p(\Omega)} \geq 1$, by applying the inequality
	\begin{equation*}
	|x^+ - y^+|^p \leq [x - y]^{p-1} (x^+ - y^+), \quad \text{for all } x, y \in \mathbb{R},
	\end{equation*}
	we obtain
	\begin{align*}
	&\lambda \| w_{k+1}\|^p_{W^{s,p}_0(\Omega)} 
	=\lambda   \iint_{\mathbb{R}^{2N}} \frac{|w_{k+1}(x) - w_{k+1}(y)|^p}{|x - y|^{N+sp}} \, dx dy \\[2mm]
	&\leq \lambda \int_{\Omega} |\nabla w_{k+1}|^p \, dx + \lambda  \iint_{\mathbb{R}^{2N}} \frac{|w_{k+1}(x) - w_{k+1}(y)|^p}{|x - y|^{N+sp}} \, dx dy \\[1mm]
	&\leq \left( \rho \, \| u_0 \|_{L^p(\Omega)}\right) ^{1-p} \int_{\Omega} |\nabla u_0|^{p-2} \nabla u_0 \cdot \nabla w_{k+1} \, dx \\[1mm]
	&\quad + \left( \rho \, \| u_0 \|_{L^p(\Omega)}\right) ^{1-p} \iint_{\mathbb{R}^{2N}} \frac{[u_0(x) - u_0(y)]^{p-1} (w_{k+1}(x) - w_{k+1}(y))}{|x - y|^{N+sp}} \, dx dy \\[1mm]
	&\leq  (\rho \, \| u_0 \|_{L^p(\Omega)})^{1-p+m}  \int_{\Omega} g_{0}(x) v_{0}^{m} w_{k+1} dx + \lambda m (\rho  \| u_0 \|_{L^p(\Omega)})^{1-p+\delta} \int_{\Omega} d(x)^{-\gamma} v_0^{\delta} w_{k+1}  dx \\[1mm]
	&\leq  \int_{\Omega} g_{0}(x) v_{0}^{m} w_{k+1} \, dx + \lambda m \int_{\Omega} d(x)^{-\gamma} v_0^{\delta} w_{k+1} \, dx.
	\end{align*}
	In the case \( m = 0 \), combining \eqref{equ53} with \eqref{equ54} yields  
	\begin{equation}\label{3equ2}
	\begin{aligned}
	\lambda \| w_{k+1}\|_{W^{s,p}_0(\Omega)}^{p} 
	&\leq C 
	\left|\{ w_{k+1} > 0 \}\right|^{1 - \frac{1}{p}}
	U_{k}^{\frac{1}{p}} \\
	&\leq C \, 2^{p(k+1)\left(1 - \frac{1}{p}\right)} U_{k} \\
	&\leq C \left( 2^{k+1} + 1 \right)^{p - 1} U_{k}.
	\end{aligned}
	\end{equation}
	When \( m > 0 \), by applying Hardy’s inequality together with \eqref{equ53} and \eqref{equ54} once again, we obtain
	\begin{equation}\label{3equ3}
	\begin{aligned}
	\lambda \| w_{k+1}\|_{W^{s,p}_0(\Omega)}^{p} 
	&\leq C \left( 2^{k+1} - 1 \right)^{m}
	\left|\{ w_{k+1} > 0 \}\right|^{1 - \frac{m+1}{p}}
	U_{k}^{\frac{m+1}{p}} \\
	&\quad + C \lambda m 
	\left( 2^{k+1} - 1 \right)^{\delta}
	\left|\{ w_{k+1} > 0 \}\right|^{1 - \frac{\delta+1}{p}}
	U_{k}^{\frac{\delta+1}{p}} \\
	&\leq C \Big[
	\left( 2^{k+1} - 1 \right)^{m}
	2^{p(k+1)\left(1 - \frac{m+1}{p}\right)} \\
	&\quad + \lambda m 
	\left( 2^{k+1} - 1 \right)^{\delta}
	2^{p(k+1)\left(1 - \frac{\delta+1}{p}\right)}
	\Big] U_{k} \\
	&\leq C \left( 2^{k+1} + 1 \right)^{p - 1} U_{k},
	\end{aligned}
	\end{equation}
	where \( C > 0 \) denotes a positive constant that may vary from line to line. Moreover, by H\"{o}lder's inequality and Theorem \ref{thm3}, we obtain
	\begin{equation}\label{equ25}
	U_{k+1} = \int_{\{ w_{k+1} > 0 \}} w_{k+1}^{p} \, dx
	\leq C \left| \{ w_{k+1} > 0 \} \right|^{1 - \frac{p}{p_{s}^{*}}}
	\| w_{k+1} \|_{W_{0}^{s,p}(\Omega)}^{p}.
	\end{equation}
	Thus, from both cases \eqref{3equ2} and \eqref{3equ3}, we obtain
	\begin{equation}\label{equ26}
	U_{k+1} \leq C^{k} U_k^{1 + \vartheta}, \quad \text{for all } k \in \mathbb{N},
	\end{equation}
	for some constant $C > 1$ and $\vartheta = \dfrac{s p}{N}$. Consequently,
	\begin{equation}\label{equ28}
	U_k \leq \dfrac{\eta^{k}}{\rho^{p}}, \quad \text{for all } k \in \mathbb{N},
	\end{equation}
	where $\eta = C^{-1 / \vartheta}$ and  $ \rho = \max \left\lbrace 1, \, \left\| u_0 \right\|_{L^{p}(\Omega)}^{-1}, \, C^{1 / (p \vartheta^{2})} \right\rbrace. $ Indeed, the claim follows by induction. From the construction in \eqref{equation}, it is evident that
	
	\[
	U_0 = \left\| v_0^{+} \right\|_{L^{p}(\Omega)}^{p} 
	\leq \left\| v_0 \right\|_{L^{p}(\Omega)}^{p} 
	= \left\| \frac{u_0}{\rho \, \left\| u_0 \right\|_{L^{p}(\Omega)}} \right\|_{L^{p}(\Omega)}^{p} 
	= \frac{1}{\rho^{p}}.
	\]
	Assume now that \eqref{equ28} holds for some $k \in \mathbb{N}$. Using \eqref{equ26} and standard computations, we obtain
	\[
	U_{k+1} \leq C^{k} U_k^{1 + \vartheta} \leq \dfrac{\eta^{k+1}}{\rho^{p}}.
	\]
	Since $\eta \in (0,1)$, we deduce that
	\begin{equation}\label{equ56}
	\lim_{k \to \infty} U_k = 0.
	\end{equation}
	Furthermore, since $w_k \to (v_0 - 1)^{+}$ a.e.\ in $\mathbb{R}^{N}$, relation \eqref{equ56} implies that $w_k \to 0$ a.e.\ in $\Omega$. Consequently, we obtain $v_0 \leq 1$ a.e.\ in $\Omega$, and therefore,
	\[
	\| u_0 \|_{L^{\infty}(\Omega)} \leq \rho\, \| u_0 \|_{L^{p}(\Omega)}.
	\]
Hence, we conclude that $u_0 \in L^{\infty}(\Omega)$. Moreover, since $\gamma < \frac{1}{\textbf{d}}$, by \cite[Theorem~1.1]{antonini2025global} we infer that  $ u_0 \in C^{1,\xi}(\overline{\Omega}) $ for some $\xi \in (0,1)$.  On the other hand, in the case $m=0$, or when $m>0$ with $m+1<p$, there exists a sufficiently small constant $\varepsilon>0$ such that the function $\varepsilon \phi^{s}_{1,p}$ serves as a sub-solution of problem \eqref{PP2}. Indeed, for such a choice of $\varepsilon>0$ small enough, one has
	\begin{equation*}
	\begin{aligned}
	&(\varepsilon \phi^{s}_{1,p})^{2m+1} 
	- \lambda \Delta_{p}(\varepsilon \phi^{s}_{1,p}) 
	+ \lambda (-\Delta)^{s}_{p}(\varepsilon \phi^{s}_{1,p}) \\
	&\qquad \leq g_0(x)\, (\varepsilon \phi^{s}_{1,p})^{m} 
	+ \lambda m\, d(x)^{-\gamma} \left( \varepsilon \phi_{1,p}^{s} \right)^{\delta}
	\quad \text{in } \Omega.
	\end{aligned}
	\end{equation*}
	By applying the comparison principle (Theorem~\ref{theorem2}), we obtain 
	\( \varepsilon\, \phi^{s}_{1,p} \leq u_{0} \) in \( \Omega \). Hence, by Remark~\ref{remark1}, it follows that \( u_{0} \geq c_{1}\, d(x) \) for some constant \( c_{1} > 0 \). Thus, we conclude that \( u_{0} > 0 \) in \( \Omega \). Moreover, according to Theorem \ref{theorem1} we have 
	\( u_{0} \leq c_{2}\, d(x)^{\alpha} \) in \( \Omega \)  for every \( \alpha \in [s,1) \) with \( \alpha \neq \frac{ps}{p-1} \), where 
	\( c_{2} > 0 \) is a constant. Consequently, we deduce that 
	\( u_{0} \in \mathcal{M}_{1,\alpha}^{1}(\Omega) \). Finally, to establish the contraction property \eqref{contraction}, let
	\( u_{1}, u_{2} \in C^{1,\xi}(\overline{\Omega}) \cap \mathcal{M}^{1}_{1,\alpha}(\Omega) \) 
	be two weak solutions of \eqref{PP2} corresponding to 
	\( g_{1} \) and \( g_{2} \), respectively, for some \( \xi \in (0,1) \) and  for every \( \alpha \in [s,1) \) with \( \alpha \neq \frac{ps}{p-1} \).  That is, for any \( \Phi, \Psi \in \mathbf{WL} \), we have
	\begin{equation*}
	\begin{aligned}
	& \int_{\Omega} u_{1}^{2m+1} \Phi \, dx 
	+ \lambda \int_{\Omega} |\nabla u_{1}|^{p-2} \nabla u_{1} \cdot \nabla \Phi \, dx  \\
	& \quad + \lambda \iint_{\mathbb{R}^{2N}} 
	\frac{\left[ u_{1}(x)-u_{1}(y)\right]^{p-1} \big(\Phi(x)-\Phi(y)\big)}{|x-y|^{N+sp}} \, dx \, dy \\
	& \qquad = \int_{\Omega} 
	\Big( g_{1}(x)\, u_{1}^{m} + \lambda m\, d(x)^{-\gamma}\, u_{1}^{\delta} \Big) \Phi \, dx,
	\end{aligned}
	\end{equation*}
	and
	\begin{equation*}
	\begin{aligned}
	& \int_{\Omega} u_{2}^{2m+1} \Psi \, dx 
	+ \lambda \int_{\Omega} |\nabla u_{2}|^{p-2} \nabla u_{2} \cdot \nabla \Psi \, dx  \\
	& \quad + \lambda \iint_{\mathbb{R}^{2N}} 
	\frac{\left[ u_{2}(x)-u_{2}(y)\right]^{p-1} \big(\Psi(x)-\Psi(y)\big)}{|x-y|^{N+sp}} \, dx \, dy \\
	& \qquad = \int_{\Omega} 
	\Big( g_{2}(x)\, u_{2}^{m} + \lambda m\, d(x)^{-\gamma}\, u_{2}^{\delta} \Big) \Psi \, dx.
	\end{aligned}
	\end{equation*}
By choosing appropriate test functions $\Phi$ and $\Psi$, according to the two cases $m=0$ and $m>0$ as in the proof of Theorem~\ref{theorem2}, and by following the same line of arguments, we obtain
\[
\int_{\Omega} \big( (u_{1}^{m+1} - u_{2}^{m+1})^{+} \big)^{2} \, dx
\leq \int_{\Omega} \big( g_{1}(x) - g_{2}(x) \big) \big( u_{1}^{m+1} - u_{2}^{m+1} \big)^{+} \, dx.
\]
	By applying H\"{o}lder’s inequality, we obtain
	\begin{equation*}
	\left\| \left( u_{1}^{m+1} - u_{2}^{m+1}\right)^{+} \right\|_{L^{2}(\Omega)} 
	\leq \left\| \left( g_{1} - g_{2}\right) ^{+} \right\|_{L^{2}(\Omega)}.
	\end{equation*}
\end{proof}
\begin{proof}[\textbf{Proof of Theorem} \ref{theorem4}] 
	Consider $g_{0} \in L^{2}(\Omega)$. Since $C^{\infty}_{c}(\Omega)$ is dense in $L^{2}(\Omega)$, there exists a sequence $(\breve{g}_n) \subset C^{\infty}_{c}(\Omega)$ such that $\breve{g}_n \to g_0$ in $L^{2}(\Omega)$.  
	We define  $ g_n = \max(\breve{g}_n, \lambda \underline{g}). $ According to Theorem~\ref{theorem3}, for any $\varphi \in \mathbf{WL}$ and for all $n \geq n_0$, the problem
	
	\begin{equation}\label{3equ4}
	\begin{aligned}
	& \int_{\Omega} u_{n}^{2m+1} \varphi \, dx 
	+ \lambda \int_{\Omega} |\nabla u_{n}|^{p-2} \nabla u_{n} \cdot \nabla \varphi \, dx  \\
	& \quad + \lambda \iint_{\mathbb{R}^{2N}} 
	\frac{\big[u_{n}(x)-u_{n}(y)\big]^{p-1} \big(\varphi(x)-\varphi(y)\big)}{|x-y|^{N+sp}} \, dx \, dy \\
	& \qquad = \int_{\Omega} \Big( g_{n}(x)\, u_{n}^{m} + \lambda m\, d(x)^{-\gamma}\, u_{n}^{\delta} \Big) \varphi \, dx,
	\end{aligned}
	\end{equation}
	admits a unique positive weak solution  $ u_n \in C^{1, \xi}(\overline{\Omega}) \cap \mathcal{M}^{1}_{1, \alpha}(\Omega), $ for some $\xi \in (0, 1)$ and  for every \( \alpha \in [s,1) \) with \( \alpha \neq \frac{ps}{p-1} \).  Using the inequality
	\begin{equation}\label{equ23}
	\left( a - b \right)^{2(r+1)} \leq \left( a^{r+1} - b^{\,r+1} \right)^{2} 
	\quad \text{for all } r \geq 0,\; a,b \geq 0,
	\end{equation}
	and combining it with \eqref{contraction}, we derive the following properties.  
	First, for any $i, j \in \mathbb{N}\setminus\{0\}$,
	\begin{equation*}
	\left\| \left( u_i - u_j \right)^{+} \right\|_{L^{2(m+1)}(\Omega)}
	\leq \left\| \left( u_i^{m+1} - u_j^{m+1} \right)^{+} \right\|_{L^{2}(\Omega)}^{\frac{1}{m+1}}
	\leq \left\| \left( g_i - g_j \right)^{+} \right\|_{L^{2}(\Omega)}^{\frac{1}{m+1}}.
	\end{equation*}
	Hence, the sequence $(u_n)$ is Cauchy in the Banach space $L^{2(m+1)}(\Omega)$, and therefore it converges to some limit $u \in L^{2(m+1)}(\Omega)$. Second, the limit $u$ does not depend on the particular choice of the approximating sequence $(g_n)$.  
	Indeed, let $(\tilde{g}_n) \subset C_c^{\infty}(\Omega)$ be another sequence such that $\tilde{g}_n \to g_0$ in $L^{2}(\Omega)$, and let $\tilde{u}_n$ denote the corresponding positive solutions to \eqref{3equ4}, converging to $\tilde{u}$. For any $n \in \mathbb{N}$, we have
	\begin{equation*}
	\left\| \left( u_n - \tilde{u}_n \right)^{+} \right\|_{L^{2(m+1)}(\Omega)}^{m+1}
	\leq \left\| \left( u_n^{m+1} - \tilde{u}_n^{m+1} \right)^{+} \right\|_{L^{2}(\Omega)}
	\leq \left\| \left( g_n - \tilde{g}_n \right)^{+} \right\|_{L^{2}(\Omega)}.
	\end{equation*}
	Passing to the limit yields $u \leq \tilde{u}$; interchanging the roles of $u$ and $\tilde{u}$ then shows that $u = \tilde{u}$. Third, for $n \in \mathbb{N}\setminus\{0\}$, let $g_n = \min\{ g_0,\, n \lambda \underline{g} \}$. It is straightforward to verify that the sequence $(u_n)$ is nondecreasing and satisfies $u_n \leq u$ a.e.\ in $\Omega$ for all $n \in \mathbb{N}\setminus\{0\}$. Consequently,
	\begin{equation}\label{equ65}
	0 < C\, d(x) \leq u_1(x) \leq u(x) \quad \text{in } \Omega,
	\end{equation}
	for some constant $C > 0$ independent of $n$. After these properties have been established, taking $\varphi = u_n$ in \eqref{3equ4}, we obtain
	\begin{equation*}
	\lambda \int_{\Omega} \left|\nabla u_{n} \right|^{p} dx 
	\leq \int_{\Omega} g_{n}(x) u_{n}^{m+1} dx 
	+ \lambda\, m \int_{\Omega} d(x)^{-\gamma} u_{n}^{\delta + 1} dx.
	\end{equation*}
	When \( m = 0 \), it follows that
	\begin{equation*}
	\lambda \int_{\Omega} \left|\nabla u_{n} \right|^{p} dx 
	\leq \| g_{n}\|_{L^{\infty}(\Omega)} 
	|\Omega|^{1 - \frac{1}{2}} 
	\| u_{n}\|_{L^{2}(\Omega)}.
	\end{equation*}
	On the other hand, when \( m > 0 \), we have
	\begin{equation*}
	\begin{aligned}
	\lambda \int_{\Omega} \left|\nabla u_{n} \right|^{p} dx 
	&\leq \| g_{n}\|_{L^{\infty}(\Omega)} 
	|\Omega|^{1 - \frac{1}{2}} 
	\| u_{n}\|_{L^{2(m+1)}(\Omega)}^{m+1} \\
	&\quad + \lambda m 
	\left( \int_{\Omega} d(x)^{-2\gamma} dx \right)^{\frac{1}{2}} 
	|\Omega|^{\frac{1}{2}\left(1 - \frac{\delta + 1}{m+1}\right) } 
	\| u_{n}\|_{L^{2(m+1)}(\Omega)}^{\delta + 1}.
	\end{aligned}
	\end{equation*}
	Hence, in both cases,  \( (u_n)_{n\in\mathbb{N}} \) is uniformly bounded in \( W^{1, p}_0(\Omega) \). Consequently, $ u_n \rightharpoonup u \ \text{in } W^{1,p}_{0}(\Omega), 
	\, u_n \to u \ \text{in } L^{l}(\Omega) \ \text{for } 1 \leq l < p^{*}, 
	\, \text{and} \, u_n(x) \to u(x) \ \text{a.e. in } \Omega. $ In fact, our goal is to pass to the limit in \eqref{3equ4}. From \eqref{equ65}, the sequence \( (u_n) \) remains uniformly bounded away from zero in the interior of the domain \( \Omega \). This enables us to apply \cite[Theorem~2.1 and Remark~2.2]{boccardo1992almost}, which imply that
	\[
	\nabla u_n(x) \to \nabla u(x) \quad \text{a.e. in } \Omega \text{ as } n \to \infty.
	\]
	Consequently,
	\[
	\left| \nabla u_{n}\right| ^{p-2} \nabla u_{n} \to \left| \nabla u\right| ^{p-2} \nabla u 
	\quad \text{a.e. in } \Omega,
	\]
	and the sequence \( \left( |\nabla u_{n}|^{p-1}\right)  \) is bounded in \( L^{\frac{p}{p-1}}(\Omega) \). Hence,
	\[
	\left| \nabla u_{n}\right| ^{p-2} \nabla u_{n} 
	\rightharpoonup 
	\left| \nabla u\right| ^{p-2} \nabla u 
	\quad \text{weakly in } L^{\frac{p}{p-1}}(\Omega).
	\]
	Since \( \varphi \in \mathbf{WL} \), we then deduce that
	\begin{equation}
	\int_{\Omega} \left| \nabla u_{n}\right| ^{p-2} \nabla u_{n} \cdot \nabla \varphi \, dx 
	\to 
	\int_{\Omega} \left| \nabla u\right| ^{p-2} \nabla u \cdot \nabla \varphi \, dx 
	\quad \text{as } n \to \infty.
	\end{equation}
	For the nonlocal term, we observe that
	\[
	\left( 
	\frac{\left[ u_n(x) - u_n(y)\right]^{p-1}}{|x - y|^{\frac{(p-1)(N + sp)}{p}}}
	\right)
	\quad \text{is bounded in } L^{\frac{p}{p-1}}(\mathbb{R}^{2N}).
	\]
	Therefore,
	\[
	\frac{\left[ u_n(x) - u_n(y)\right]^{p-1}}{|x - y|^{\frac{(p-1)(N + sp)}{p}}}
	\rightharpoonup
	\frac{\left[ u(x) - u(y)\right]^{p-1}}{|x - y|^{\frac{(p-1)(N + sp)}{p}}}
	\quad \text{weakly in } L^{\frac{p}{p-1}}(\mathbb{R}^{2N}).
	\]
	The latter convergence follows from the fact that \( u_n(x) \to u(x) \) pointwise, and thus
	\[
	\frac{\left[ u_n(x) - u_n(y)\right]^{p-1}}{|x - y|^{\frac{(p-1)(N + sp)}{p}}}
	\to 
	\frac{\left[ u(x) - u(y)\right]^{p-1}}{|x - y|^{\frac{(p-1)(N + sp)}{p}}}
	\quad \text{a.e. in } \mathbb{R}^{2N}.
	\]
	Hence, for all \( \varphi \in \mathbf{WL} \),
	\begin{align*}
	&\iint_{\mathbb{R}^{2N}} 
	\frac{\left[ u_n(x) - u_n(y)\right] ^{p-1}
		(\varphi(x) - \varphi(y))}{|x - y|^{N + sp}} \, dx \, dy  \to 
	\iint_{\mathbb{R}^{2N}} 
	\frac{\left[ u(x) - u(y)\right] ^{p-1}
		(\varphi(x) - \varphi(y))}{|x - y|^{N + sp}} \, dx \, dy 
	\, \text{as } n \to \infty.
	\end{align*}
	Using similar arguments together with H\"{o}lder's inequality, the sequences  
	\( (u_n^{2m+1}) \), \( (g_n u_n^{m}) \), and \( (d(x)^{-\gamma} u_{n}^{\delta}) \)  
	are uniformly bounded in \( L^{\frac{2(m+1)}{2m+1}}(\Omega) \).  
	For any \( \varphi \in \mathbf{WL} \), we obtain
	\begin{gather*}
	\lim_{n \to \infty} \int_{\Omega} u_n^{2m+1} \varphi \, dx 
	= \int_{\Omega} u^{2m+1} \varphi \, dx, \quad
	\lim_{n \to \infty} \int_{\Omega} g_n u_n^{m} \varphi \, dx 
	= \int_{\Omega} g_{0} u^{m} \varphi \, dx, \\
	\lim_{n \to \infty} \int_{\Omega} d(x)^{-\gamma} u_{n}^{\delta} \varphi \, dx 
	= \int_{\Omega} d(x)^{-\gamma} u^{\delta} \varphi \, dx.
	\end{gather*}
	Therefore, by passing to the limit in \eqref{3equ4}, we conclude that \( u \) is a weak solution of \eqref{PP2}, that is,  
	\begin{equation}\label{3weakform2}
	\begin{aligned}
	& \int_{\Omega} u^{2m+1} \varphi\,dx
	+ \lambda \int_{\Omega} |\nabla u|^{p-2} \nabla u \cdot \nabla \varphi\,dx  \\
	&\quad 
	+ \lambda \iint_{\mathbb{R}^{2N}} 
	\frac{[u(x) - u(y)]^{p-1}\big(\varphi(x) - \varphi(y)\big)}{|x - y|^{N + sp}}\,dx\,dy \\
	&= \int_{\Omega} \Big( g_{0}(x) u^{m} + \lambda m\, d(x)^{-\gamma} u^{\delta} \Big) \varphi\,dx, 
	\quad \forall\, \varphi \in \mathbf{WL}.
	\end{aligned}
	\end{equation}
	Now, the boundedness \( u \in L^{\infty}(\Omega) \) follows by reproducing the main steps of the proof in \cite[Theorem 3.1]{brasco2016second}.  
	For every fixed \( \epsilon > 0 \), we introduce \( u_{\epsilon} = u + \epsilon \).  
	Given \( \beta \geq 1 \), we select the test function \( \varphi_{\epsilon} = u_{\epsilon}^{\beta} - \epsilon^{\beta} \in \mathbf{WL} \) in \eqref{3weakform2}, which yields
	\begin{equation*}
	\begin{aligned}
	& \lambda \underbrace{\int_{\Omega} |\nabla u|^{p-2} \nabla u \cdot \nabla \varphi_{\epsilon} \,dx}_{\textbf{I}_{1}}    + \lambda \underbrace{\iint_{\mathbb{R}^{2N}} 
		\frac{[u(x) - u(y)]^{p-1}\big(\varphi_{\epsilon} (x) - \varphi_{\epsilon} (y)\big)}{|x - y|^{N + sp}}\,dx\,dy}_{\textbf{I}_{2}}  \\
	&\qquad\leq \underbrace{\int_{\Omega} \Big( g_{0}(x) u^{m} + \lambda m\, d(x)^{-\gamma} u^{\delta} \Big) \varphi_{\epsilon} \,dx}_{\textbf{I}_{3}} .
	\end{aligned}
	\end{equation*}
	\textbf{Estimate of} \( \textbf{I}_{1}\):  
	\[
	\textbf{I}_{1} = \beta \left(\dfrac{p}{\beta + p -1}\right)^{p} 
	\int_{\Omega} \big| \nabla u_{\epsilon}^{\frac{\beta + p -1}{p}}\big|^{p} dx \geq 0.
	\]
	\textbf{Estimate of} \( \textbf{I}_{2} \):  
	Applying the inequality established in \cite[Lemma A.2]{brasco2016second}, we obtain
	\begin{equation*}
	\textbf{I}_{2} \geq 
	\beta \left(\dfrac{p}{\beta + p - 1}\right)^{p} 
	\iint_{\mathbb{R}^{2N}}
	\frac{\big| u_{\epsilon}(x)^{\frac{\beta + p - 1}{p}} - u_{\epsilon}(y)^{\frac{\beta + p - 1}{p}} \big|^{p}}
	{|x - y|^{N + s p}}\,dx\,dy.
	\end{equation*}
	By virtue of Theorem~\ref{thm3}, we deduce that
	\begin{align*}
	\textbf{C}\left( \int_{\Omega} 
	\big( u_{\epsilon}^{\frac{\beta + p - 1}{p}} - \epsilon^{\frac{\beta + p - 1}{p}} \big)^{p^{*}_{s}} dx \right)^{p/p^*_{s}}
	&\leq 
	\iint_{\mathbb{R}^{2N}}
	\frac{\big| u_{\epsilon}(x)^{\frac{\beta + p - 1}{p}} - u_{\epsilon}(y)^{\frac{\beta + p - 1}{p}} \big|^{p}}
	{|x - y|^{N + s p}}\,dx\,dy,
	\end{align*}
	where \( \textbf{C} > 0 \) denotes a positive constant.  
	Applying the triangle inequality, the left-hand side of the above estimate can be bounded from below as
	
	\begin{equation*}
	\left(  \int_{\Omega} \left(  u_{\epsilon}^{\frac{\beta + p - 1}{p}} \right) ^{p^{*}_{s}} dx \right) ^{p/p^*_{s}} 
	- \epsilon^{\beta + p - 1} |\Omega|^{p/p^*_{s}}
	\leq 
	\left(  \int_{\Omega} 
	\left(  u_{\epsilon}^{\frac{\beta + p - 1}{p}} - \epsilon^{\frac{\beta + p - 1}{p}} \right) ^{p^{*}_{s}} dx\right) ^{p/p^*_{s}}.
	\end{equation*}
	It is straightforward to observe that
	\begin{equation*}
	\epsilon^{\beta + p -1} |\Omega| ^{p/p^*_s}
	\leq 
	\frac{1}{\beta}\left( \frac{\beta + p - 1}{p}\right) ^{p}
	|\Omega| ^{\frac{p}{p^{*}_{s}}-1} 
	\int_{\Omega }u_{\epsilon}^{\beta + p -1}\,dx.
	\end{equation*}
	Consequently, we obtain
	\begin{equation*}
	\textbf{I}_{2} + \textbf{C} |\Omega| ^{\frac{p}{p^{*}_{s}}-1}  \int_{\Omega }u_{\epsilon}^{\beta + p -1}\,dx
	\geq  
	\textbf{C}\, \beta \left(\frac{p}{\beta + p - 1}\right)^{p} 
	\left( \int_{\Omega} \left(  u_{\epsilon}^{\frac{\beta + p - 1}{p}} \right) ^{p^{*}_{s}} dx \right) ^{p/p^*_{s}}.
	\end{equation*}
	\textbf{Estimate of} \( \textbf{I}_{3} \). We distinguish two cases. In the first case, when \( m = 0 \), we employ the inequality \( 1 \leq \epsilon^{1 - p} u_{\epsilon}^{p - 1} \), together with Hölder’s and interpolation inequalities, where 
	\[
	\frac{1}{p r'} = \frac{\alpha}{p^{*}_{s}} + \frac{1 - \alpha}{p}, \quad 0 \leq \alpha \leq 1,
	\]
	and we note that for \( r > \frac{N}{s p} \), it ensures \( p < p r' < p^{*}_{s} \), with \( r' = \frac{r}{r - 1} \). It follows that
	\begin{equation*}
	\begin{aligned}
	\textbf{I}_{3}  
	&\leq \int_{\Omega} g_{0}(x) u_{\epsilon}^{\beta} \, dx 
	\leq \epsilon^{1 - p} \int_{\Omega} g_{0}(x) u_{\epsilon}^{\beta + p - 1} \, dx \\
	&\leq \epsilon^{1 - p} \| g_{0}\|_{L^r(\Omega)} 
	\left( \int_{\Omega} \left( u_{\epsilon}^{\frac{\beta + p - 1}{p}} \right)^{p r'} dx \right)^{1/r'} \\
	&\leq \epsilon^{1 - p} \| g_{0}\|_{L^r(\Omega)} 
	\left( \int_{\Omega} u_{\epsilon}^{\beta + p - 1} dx \right)^{1 - \alpha} 
	\left( \int_{\Omega} \left( u_{\epsilon}^{\frac{\beta + p - 1}{p}} \right)^{p^{*}_{s}} dx \right)^{\frac{p \alpha}{p^{*}_{s}}}.
	\end{aligned}
	\end{equation*}
	Applying Young’s inequality, we obtain
	\begin{equation*}
	\begin{aligned}
	\textbf{I}_{3} 
	\leq \epsilon^{1 - p} \| g_{0}\|_{L^r(\Omega)} 
	\left[ \eta 
	\left( \int_{\Omega} \left( u_{\epsilon}^{\frac{\beta + p - 1}{p}} \right)^{p^{*}_{s}} dx \right)^{p / p^{*}_{s}}
	+ C_{\eta} \int_{\Omega} u_{\epsilon}^{\beta + p - 1} dx \right],
	\end{aligned}
	\end{equation*}
	where \( C_{\eta} = \eta^{- \frac{1}{\alpha - 1}} \). Hence, we deduce that
	\begin{equation*}
	\begin{aligned}
	&\textbf{C}\, \beta 
	\left( \frac{p}{\beta + p - 1} \right)^{p} 
	\left( \int_{\Omega} \left( u_{\epsilon}^{\frac{\beta + p - 1}{p}} \right)^{p^{*}_{s}} dx \right)^{p / p^{*}_{s}} \\
	&\leq \epsilon^{1 - p} \| g_{0}\|_{L^r(\Omega)} 
	\left[ \eta 
	\left( \int_{\Omega} \left( u_{\epsilon}^{\frac{\beta + p - 1}{p}} \right)^{p^{*}_{s}} dx \right)^{p / p^{*}_{s}}
	+ C_{\eta} \int_{\Omega} u_{\epsilon}^{\beta + p - 1} dx \right] \\
	&\quad + \textbf{C} |\Omega|^{\frac{p}{p^{*}_{s}} - 1} 
	\int_{\Omega} u_{\epsilon}^{\beta + p - 1} dx.
	\end{aligned}
	\end{equation*}
	Now, by choosing
	\[
	\eta = \frac{\mathbf{C} \, \beta \, \epsilon^{\,p - 1}}{2 \, \| g_{0}\|_{L^r(\Omega)}} 
	\left( \frac{p}{\beta + p - 1} \right)^{p} > 0,
	\]
	we deduce the following estimate:
	\begin{equation*}
	\begin{aligned}
	\left( \int_{\Omega} \left( u_{\epsilon}^{\frac{\beta + p - 1}{p}} \right)^{p^{*}_{s}} \, dx \right)^{\frac{p}{p^{*}_{s}}}
	&\leq \frac{2}{\mathbf{C} \, \beta} 
	\left( \frac{\beta + p - 1}{p} \right)^{p} \\
	&\quad \times \Bigg[ 
	\epsilon^{1 - p} \, \| g_{0}\|_{L^r(\Omega)} \, C_{\eta} 
	+ \mathbf{C} \, |\Omega|^{\frac{p}{p^{*}_{s}} - 1} 
	\Bigg] 
	\int_{\Omega} u_{\epsilon}^{\beta + p - 1} \, dx.
	\end{aligned}
	\end{equation*}
	Next, we set
	\[
	\epsilon = 
	\left(
	\frac{\mathbf{C} \, |\Omega|^{\frac{p}{p^{*}_{s}} - 1}}{\| g_{0}\|_{L^r(\Omega)} \, C_{\eta}}
	\right)^{\frac{1}{1 - p}}.
	\]
	It then follows that
	\begin{equation*}
	\begin{aligned}
	\left( \int_{\Omega} \left( u_{\epsilon}^{\frac{\beta + p - 1}{p}} \right)^{p^{*}_{s}} \, dx \right)^{\frac{p}{p^{*}_{s}}} 
	&\leq \frac{4}{\beta} 
	\left( \frac{\beta + p - 1}{p} \right)^{p} |\Omega|^{\frac{p}{p^{*}_{s}} - 1} 
	\int_{\Omega} u_{\epsilon}^{\beta + p - 1} \, dx.
	\end{aligned}
	\end{equation*}
	In the second case, when \( m > 0 \), noting that \( \delta < m < p - 1 \), we use the estimates
	\[
	u_{\epsilon}^{m+\beta} \leq \epsilon^{m+1-p} u_{\epsilon}^{\beta + p - 1}, 
	\quad 
	u_{\epsilon}^{\delta+\beta} \leq \epsilon^{\delta+1-p} u_{\epsilon}^{\beta + p - 1},
	\]
	together with Hölder's and interpolation inequalities, where
	\[
	\frac{1}{p r'} = \frac{\alpha}{p^{*}_{s}} + \frac{1 - \alpha}{p}, \quad 0 \leq \alpha \leq 1.
	\]
	Observe that for \( r > \frac{N}{s p} \), it holds that \( p < p r' < p^{*}_{s} \), and we also assume \( r < \frac{1}{\gamma} \) when \( \gamma > 0 \). Consequently, we obtain
	\begin{equation*}
	\begin{aligned}
	\mathbf{I}_{3} 
	&\leq \int_{\Omega} g_{0}(x) \, u_{\epsilon}^{m + \beta} \, dx
	+ \lambda m \int_{\Omega} d(x)^{- \gamma} \, u_{\epsilon}^{\delta + \beta} \, dx \\[1ex]
	&\leq \epsilon^{m + 1 - p} \int_{\Omega} g_{0}(x) \, u_{\epsilon}^{\beta + p - 1} \, dx
	+ \lambda m \, \epsilon^{\delta + 1 - p} \int_{\Omega} d(x)^{- \gamma} \, u_{\epsilon}^{\beta + p - 1} \, dx \\[1ex]
	&\leq 
	\left[ \epsilon^{m + 1 - p} \, \| g_{0} \|_{L^r(\Omega)} 
	+ \lambda m \epsilon^{\delta + 1 - p} \Big( \int_{\Omega} d(x)^{-r\gamma} \, dx \Big)^{1/r} \right] 
	\left( \int_{\Omega} \Big( u_{\epsilon}^{\frac{\beta + p - 1}{p}} \Big)^{p r'} \, dx \right)^{1/r'} \\[1ex]
	&\leq 
	\left[ \epsilon^{m + 1 - p} \, \| g_{0} \|_{L^r(\Omega)} 
	+\lambda m \epsilon^{\delta + 1 - p} \left(  \int_{\Omega} d(x)^{-r\gamma} \, dx \right) ^{1/r} \right] \\ 
	&\qquad \times
	\left( \int_{\Omega} u_{\epsilon}^{\beta + p - 1} \, dx \right)^{1 - \alpha} 
	\left( \int_{\Omega} \left( u_{\epsilon}^{\frac{\beta + p - 1}{p}} \right) ^{p^{*}_{s}} \, dx \right)^{\frac{p \alpha}{p^{*}_{s}}}.
	\end{aligned}
	\end{equation*}
	Applying Young's inequality, we deduce
	\begin{equation*}
	\begin{aligned}
	\mathbf{I}_{3} 
	&\leq 
	\Big(
	\epsilon^{m + 1 - p} \, \| g_{0} \|_{L^r(\Omega)} 
	+ \lambda m \epsilon^{\delta + 1 - p} 
\left( \int_{\Omega} d(x)^{-r\gamma} \, dx \Big)^{1/r} 
\right) \\[1ex]
	&\quad \times
	\Bigg[
	\eta
	\left( \int_{\Omega} \Big( u_{\epsilon}^{\frac{\beta + p - 1}{p}} \Big)^{p^{*}_{s}} \, dx \right)^{p / p^{*}_{s}}
	+ C_{\eta} \int_{\Omega} u_{\epsilon}^{\beta + p - 1} \, dx
	\Bigg],
	\end{aligned}
	\end{equation*}
	where \( C_{\eta} = \eta^{- \frac{1}{\alpha- 1}} \). Hence, we arrive at
	\begin{equation*}
	\begin{aligned}
	&\mathbf{C}\, \beta 
	\left( \frac{p}{\beta + p - 1} \right)^{p}
	\left( \int_{\Omega} \Big( u_{\epsilon}^{\frac{\beta + p - 1}{p}} \Big)^{p^{*}_{s}} \, dx \right)^{p / p^{*}_{s}} \\[1ex]
	&\leq 
\left[ 
	\epsilon^{m + 1 - p} \, \| g_{0} \|_{L^r(\Omega)} 
	+ \lambda m \epsilon^{\delta + 1 - p} 
	\Big( \int_{\Omega} d(x)^{-r\gamma} \, dx \Big)^{1/r}
\right] \\[1ex]
	&\quad \times
\left[ 
	\eta
	\left( \int_{\Omega} \Big( u_{\epsilon}^{\frac{\beta + p - 1}{p}} \Big)^{p^{*}_{s}} \, dx \right)^{p / p^{*}_{s}}
	+ C_{\eta} \int_{\Omega} u_{\epsilon}^{\beta + p - 1} \, dx
	\right] + \textbf{C} |\Omega|^{\frac{p}{p^{*}_{s}} - 1} 
	\int_{\Omega} u_{\epsilon}^{\beta + p - 1} dx.
	\end{aligned}
	\end{equation*}
	Choosing
	\[
	\eta =
	\frac{\mathbf{C} \, \beta}{2 \left( \epsilon^{m + 1 - p} \| g_{0}\|_{L^r(\Omega)} + \lambda m \epsilon^{\delta + 1 - p}
		\left( \int_{\Omega} d(x)^{-r\gamma} \, dx \right)^{1/r}\right) }
	\left( \frac{p}{\beta + p - 1} \right)^{p} > 0,
	\]
	we obtain
	
	\begin{equation*}
	\begin{aligned}
	&\left( \int_{\Omega} \Big( u_{\epsilon}^{\frac{\beta + p - 1}{p}} \Big)^{p^{*}_{s}} \, dx \right)^{p / p^{*}_{s}} \\
	&\leq 
	\frac{2}{\mathbf{C} \, \beta} 
	\left( \frac{\beta + p - 1}{p} \right)^{p} \\
	&\quad \times
	\Bigg[
	\epsilon^{m + 1 - p} \, \| g_{0} \|_{L^r(\Omega)} \, C_{\eta}
	+ \lambda m \epsilon^{\delta + 1 - p} 
	\Big( \int_{\Omega} d(x)^{-2\gamma} \, dx \Big)^{1/2} \, C_{\eta}
	+ \mathbf{C} \, |\Omega|^{\frac{p}{p^{*}_{s}} - 1}
	\Bigg] \\
	&\quad \times \int_{\Omega} u_{\epsilon}^{\beta + p - 1} \, dx.
	\end{aligned}
	\end{equation*}
	Depending on the magnitude of the ratio
	\[
	\dfrac{\mathbf{C} |\Omega|^{\frac{p}{p^{*}_{s}} - 1}}{C_{\eta}  \left( 
		\| g_{0}\|_{L^r(\Omega)} + \lambda m \left( \int_{\Omega} d(x)^{-2\gamma} \, dx \right)^{1/2}\right) },
	\]
	we choose \(\epsilon\) as
	\[
	\epsilon =
	\begin{cases}
	\left(
	\dfrac{\mathbf{C} |\Omega|^{\frac{p}{p^{*}_{s}} - 1}}{C_{\eta}  \left( 
		\| g_{0}\|_{L^r(\Omega)} + \lambda m \left( \int_{\Omega} d(x)^{-2\gamma} \, dx \right)^{1/2}\right)}
	\right)^{1/(\delta + 1 - p)}, & \text{if the ratio} \geq 1, \\[20pt]
	\left(
	\dfrac{\mathbf{C} |\Omega|^{\frac{p}{p^{*}_{s}} - 1}}{C_{\eta}  \left( 
		\| g_{0}\|_{L^r(\Omega)} + \lambda m \left( \int_{\Omega} d(x)^{-2\gamma} \, dx \right)^{1/2}\right)}
	\right)^{1/(m + 1 - p)}, & \text{otherwise},
	\end{cases}
	\]
	we get that
	\begin{equation*}
	\begin{aligned}
	&\left( \int_{\Omega} \left( u_{\epsilon}^{\frac{\beta + p - 1}{p}} \right)^{p^{*}_{s}} dx \right)^{\!\!p / p^{*}_{s}}\leq \frac{4}{\beta} 
	\left( \frac{\beta + p - 1}{p} \right)^{p} 
	|\Omega|^{\frac{p}{p^{*}_{s}} - 1}
	\int_{\Omega} u_{\epsilon}^{\beta + p - 1} \, dx.
	\end{aligned}
	\end{equation*}
Finally, setting $\nu = \beta + p - 1$, we deduce in both cases that
\begin{equation}\label{3equ5}
\left( \int_{\Omega} u_{\epsilon}^{\left( \frac{p^{*}_{s}}{p} \right)\nu} \, dx \right)^{\!\frac{1}{\left( \frac{p^{*}_{s}}{p} \right)\nu}}
\leq 
\left[ 4\, |\Omega|^{\frac{p}{p^{*}_{s}} - 1} \right]^{\!\frac{1}{\nu}}
\left( \frac{\nu}{p} \right)^{\!\frac{p}{\nu}}
\left( \int_{\Omega} u_{\epsilon}^{\nu} \, dx \right)^{\!\frac{1}{\nu}}.
\end{equation}
Next, we iterate this estimate by considering the sequence of exponents
\[
\begin{cases}
\nu_{0} = 1, \\
\nu_{n+1} = \left( \dfrac{p^{*}_{s}}{p} \right)\nu_{n}
= \left( \dfrac{p^{*}_{s}}{p} \right)^{n+1}, \quad n \geq 0.
\end{cases}
\]
Clearly,
\[
\nu_{n} \to \infty \quad \text{as } n \to \infty,
\]
and the recursive inequality becomes
\begin{equation}\label{3equ6}
\| u_{\epsilon} \|_{L^{\nu_{n+1}}(\Omega)}
\leq
\left[ 4\, |\Omega|^{\frac{p}{p^{*}_{s}} - 1} \right]^{\!\frac{1}{\nu_{n}}}
\left( \frac{\nu_{n}}{p} \right)^{\!\frac{p}{\nu_{n}}}
\| u_{\epsilon} \|_{L^{\nu_{n}}(\Omega)} .
\end{equation}
Iterating \eqref{3equ5}–\eqref{3equ6} yields
\[
\| u_{\epsilon} \|_{L^{\nu_{n+1}}(\Omega)}
\leq
\big[ 4\, |\Omega|^{\frac{p}{p^{*}_{s}} - 1} \big]^{\sum_{i=0}^{n} \frac{1}{\nu_i}}
\prod_{i=0}^{n} \left( \frac{\nu_i}{p} \right)^{\!\frac{p}{\nu_i}}
\| u_{\epsilon} \|_{L^{1}(\Omega)}.
\]
Observe that
\[
\sum_{n=0}^{\infty} \frac{1}{\nu_n}
= \sum_{n=0}^{\infty} \left( \frac{p}{p^{*}_{s}} \right)^{n}
= \frac{p^{*}_{s}}{p^{*}_{s} - p},
\qquad
\prod_{n=0}^{\infty}
\left( \frac{\nu_n}{p} \right)^{\!\frac{p}{\nu_n}}
< \infty.
\]
Passing to the limit as $n \to \infty$ in \eqref{3equ6}, we obtain
\[
\| u_{\epsilon} \|_{L^{\infty}(\Omega)}
\leq
\frac{C'}{|\Omega|}
\| u_{\epsilon} \|_{L^{1}(\Omega)}
\leq
\frac{C'}{|\Omega|}
\big( \| u \|_{L^{1}(\Omega)} + \epsilon |\Omega| \big)
< \infty,
\]
for some constant $C' > 0$.  
Finally, to establish \eqref{equ63}, we apply the same argument as in the proof of the weak comparison principle presented in Theorem~\ref{theorem2}.
\end{proof} 

\subsection{Existence and Uniqueness of Weak Solutions and Accretivity Results for Problem~\eqref{equ22}}

\begin{proof}[\textbf{Proof of Corollary} \ref{cor1}] 
	Let $m>0$. We define the energy functional $\xi$ on 
	$\dot{V}_{+}^{m+1} \cap L^{2}(\Omega)$ by
	\begin{equation*}
	\begin{aligned}
	\xi(v)
	&:= \mathcal{J}\!\left(v^{\frac{1}{m+1}}\right)
	\quad \text{(see~\eqref{3equ1})} \\
	&= \frac{1}{2(m+1)} \int_{\Omega} v^{2}\,dx
	+ \frac{\lambda}{p} \int_{\Omega}
	\left|\nabla \left(v^{\frac{1}{m+1}}\right)\right|^{p}\,dx
	+ \frac{\lambda}{p} \iint_{\mathbb{R}^{2N}}
	\frac{\left|v^{\frac{1}{m+1}}(x)-v^{\frac{1}{m+1}}(y)\right|^{p}}
	{|x-y|^{N+sp}}\,dx\,dy \\
	&\quad - \frac{1}{m+1} \int_{\Omega} g_{0}(x)\,v \,dx
	- \frac{\lambda m}{\delta+1} \int_{\Omega}
	d(x)^{-\gamma} v^{\frac{\delta+1}{m+1}} \,dx .
	\end{aligned}
	\end{equation*}
Let $u_{0}$ be the weak solution of \eqref{PP2} obtained via the minimization approach established in Theorem~\ref{theorem3}.
We define $v_{0}=u_{0}^{m+1}$. By the regularity and positivity properties of the solution $u_{0}$, it follows that,  for every \( \alpha \in [s,1) \) with \( \alpha \neq \frac{ps}{p-1} \),
\[
v_{0}\in \dot{V}_{+}^{m+1}\cap \mathcal{M}_{1,\alpha}^{\frac{1}{m+1}}(\Omega).
\]
Moreover, it is straightforward to verify that $v_{0}$ is a minimizer of the functional $\xi$ over the set $\dot{V}_{+}^{m+1}\cap L^{2}(\Omega)$. We also observe that $\xi$ is Gâteaux differentiable. Let $\Psi\geq 0$ be such that
\[
\Psi\in L_{d^{m+1}}^{\infty}(\Omega).
\]
Then there exists $t_{0}=t_{0}(\Psi)>0$ such that $v_{0}+t\Psi>0$ for all $t\in(-t_{0},t_{0})$.
Consequently, we have
\begin{align*}
0
&=\lim_{t\to 0}\frac{\xi(v_{0}+t\Psi)-\xi(v_{0})}{t}
:=\left.\frac{d}{dt}\xi(v_{0}+t\Psi)\right|_{t=0}  \\
&=\frac{1}{m+1}\int_{\Omega} v_{0}\,\Psi \, dx
+\frac{\lambda}{m+1}\int_{\Omega}
\left|\nabla\left(v_{0}^{\frac{1}{m+1}}\right)\right|^{p-1}
\nabla\left(v_{0}^{\frac{1}{m+1}}\right)\cdot
\nabla\left(\frac{\Psi}{v_{0}^{\frac{m}{m+1}}}\right)\, dx  \\
&\quad
+\frac{\lambda}{m+1}\iint_{\mathbb{R}^{2N}}
\frac{\left[ v_{0}^{\frac{1}{m+1}}(x)-v_{0}^{\frac{1}{m+1}}(y)\right] ^{p-1}
	\big(v_{0}^{\frac{1}{m+1}}(x)-v_{0}^{\frac{1}{m+1}}(y)\big)}
{|x-y|^{N+sp}}   \\
&\qquad\qquad\times
\left[
\left(\frac{\Psi}{v_{0}^{\frac{m}{m+1}}}\right)(x)
-\left(\frac{\Psi}{v_{0}^{\frac{m}{m+1}}}\right)(y)
\right]\, dx\, dy  \\
&-\frac{1}{m+1}\int_{\Omega} g_{0}(x)\,\Psi \, dx
- \frac{\lambda m}{m+1}\int_{\Omega} d(x)^{-\gamma}
v_{0}^{\frac{\delta-m}{m+1}}\,\Psi \, dx .
\end{align*}
This implies that $v_{0}$ satisfies \eqref{equ24}. On the other hand, let $v_{1},v_{2}\in \dot{V}_{+}^{m+1}\cap
\mathcal{M}_{1,\alpha}^{\frac{1}{m+1}}(\Omega)$ be two solutions of problem~\eqref{equ22}
satisfying \eqref{equ24}. Setting $v_{1}=u_{1}^{m+1}$ and $v_{2}=u_{2}^{m+1}$,
we see that $u_{1}$ and $u_{2}$ are solutions of \eqref{PP2} in the sense of
Definition~\ref{definition2} with potential $g_{0}$.
The contraction principle \eqref{contraction} then yields $v_{1}=v_{2}$.
Finally, \eqref{equ27} follows directly from \eqref{contraction}.
\end{proof}
\begin{proof}[\textbf{Proof of Corollary} \ref{cor22}] 
We begin by noting that the solution $u_{0} \in L^{\infty}(\Omega)$ corresponding to $g_{0} \in L^{2}(\Omega)$, obtained in Theorem~\ref{theorem4}, is a global minimizer of the functional $\mathcal{J}$ defined in \eqref{3equ1}. This follows from the global minimization method as in Theorem~\ref{theorem3}, together with the result of Theorem~\ref{theorem2}. As in the proof of Corollary \ref{cor1}, we define the energy functional $\xi$ on 
	$ \dot{V}_{+}^{m+1} \cap L^2(\Omega)$ by $ 	\xi(v) = \mathcal{J}(v^{\frac{1}{m+1}}). $ Setting $v_0 = u_0^{m+1}$, we observe that $v_0 \in \dot{V}_{+}^{m+1} \cap L^{\infty}(\Omega)$ and that $v_0$ is a minimizer of the functional $\xi$.
By \eqref{equ65}, it follows that 
	\[
	v_0(x) \geq c \, d^{m+1}(x) \quad \text{a.e. in } \Omega.
	\]
	Let $\Psi$ satisfy \eqref{equuu}, i.e.,
	\[
	|\Psi| \in L_{d^{m+1}}^{\infty}(\Omega)
	\quad \text{and} \quad
	\frac{|\nabla \Psi|}{d(\cdot)^{m}} \in L^{p}(\Omega).
	\]
	Then, for sufficiently small $t$, we have $v_0 + t \Psi > 0$. Using the Gâteaux differentiability of $\xi$, we deduce that $v_0$ satisfies \eqref{equu}. Finally, \eqref{1equ27} yields \eqref{equ63}.
\end{proof}
\subsection{Existence and Uniqueness of Weak Solutions for Problem~\eqref{P7}}
\begin{proof}[\textbf{Proof of Theorem}~\ref{the2}]
	To problem~\eqref{P7}, we associate the energy functional $\mathcal{L}$ defined on the space
	$W^{1,p}_{0}(\Omega)$ in the case $m=0$, and on the space $ \textbf{WL} := W^{1,p}_{0}(\Omega)\cap L^{2(m+1)}(\Omega) $ when $m>0$. The space $\textbf{WL}$ is endowed with the Cartesian norm $ 	\|u\|_{\textbf{WL}}
	:= \|u\|_{W^{1,p}_{0}(\Omega)} + \|u\|_{L^{2(m+1)}(\Omega)}. $ The functional $\mathcal{L}$ is defined by
	\[
	\begin{aligned}
	\mathcal{L}(u)
	&:= \frac{1}{p}\int_{\Omega}|\nabla u|^{p}\,dx
	+ \frac{1}{p}\iint_{\mathbb{R}^{2N}}
	\frac{|u(x)-u(y)|^{p}}{|x-y|^{N+sp}}\,dx\,dy \\
	&\quad
	- \frac{1}{m+1}\int_{\Omega} b(x)\,(u^{+})^{m+1}\,dx
	- \frac{m}{\delta+1}\int_{\Omega} d(x)^{-\gamma}\,(u^{+})^{\delta+1}\,dx .
	\end{aligned}
	\]
Arguing as in the proof of Theorem~\ref{theorem3}, we establish the existence of a nonnegative global minimizer of the functional $\mathcal{L}$. Moreover, following the same approach as in Theorem~\ref{theorem3}, we infer that the minimizer $u_{0}$ belongs to $L^{\infty}(\Omega)$. In addition, by \cite[Theorem~1.1]{antonini2025global}, we conclude that $u_{0}\in C^{1,\xi}(\overline{\Omega})$ for some $\xi\in(0,1)$. Furthermore, owing to \cite[Proposition~6.1]{antonini2025global}, we obtain that $u_{0}>0$ in $\Omega$. Hence, by Hopf's lemma (see the proof of \cite[Theorem~1.2]{antonini2025global}), there exists a constant $c_{1}>0$ such that $ u_{0}\ge c_{1}\,d(x) $ in $ \Omega. $ Now, by Theorem \ref{theorem1}, there exists a constant $c_{2}>0$ satisfying $ u_{0}\le c_{2}\,d(x)^{\alpha} $ in $ \Omega, $  for every \( \alpha \in [s,1) \) with \( \alpha \neq \frac{ps}{p-1} \). As a consequence, we deduce that $u_{0}\in \mathcal{M}^{1}_{1,\mu}(\Omega)$.   To prove the uniqueness result, let $u_{1},u_{2}\in C^{1,\xi}(\overline{\Omega})\cap \mathcal{M}^{1}_{1,\mu}(\Omega)$ be two weak solutions of~\eqref{P7}.  In the case $m=0$, we choose the test functions $ u_{1}-u_{2} $ and $ u_{2}-u_{1}, $ in~\eqref{equuuu}, satisfied by $u_{1}$ and $u_{2}$, respectively. Consequently, summing the resulting identities, we obtain
\begin{equation*}
\begin{aligned}
&\int_{\Omega} 
\Big( |\nabla u_{1}|^{p-2} \nabla u_{1}
- |\nabla u_{2}|^{p-2} \nabla u_{2} \Big)
\cdot \nabla (u_{1}-u_{2})\, dx \\
&\quad + \iint_{\mathbb{R}^{2N}} 
\frac{
	\Big( [u_{1}(x)-u_{1}(y)]^{p-1}
	- [u_{2}(x)-u_{2}(y)]^{p-1} \Big)
	\Big( (u_{1}-u_{2})(x)-(u_{1}-u_{2})(y) \Big)
}
{|x-y|^{N+sp}} \, dx\, dy
\leq 0.
\end{aligned}
\end{equation*}
By applying the elementary inequalities established in~\cite{farina2013monotonicity} (see also \cite[Section~10]{lindqvist2017notes}), we infer that
\begin{equation*}
\int_{\Omega} \big| \nabla (u_{1}-u_{2}) \big|^{p}\, dx \leq 0,
\qquad \text{for } p \geq 2,
\end{equation*}
and
\begin{equation*}
C \left(
\int_{\Omega} \big| \nabla (u_{1}-u_{2}) \big|^{p}\, dx
\left\|
\left( |\nabla u_{1}| + |\nabla u_{2}| \right)^{\frac{p(2-p)}{2}}
\right\|_{L^{\frac{2}{2-p}}(\Omega)}^{-1}
\right)^{\frac{2}{p}}
\leq 0,
\qquad \text{for } p < 2.
\end{equation*}
These estimates lead to the conclusion that $u_{1} \equiv u_{2}$ in $\mathbb{R}^{N}$.  When $m>0$, since $u_{1},u_{2}\in L^{\infty}(\Omega)$  for any fixed $\varepsilon \in (0,1)$, we choose the test functions
\[
\frac{(u_{1}+\varepsilon)^{m+1}
	- (u_{2}+\varepsilon)^{m+1}}{(u_{1}+\varepsilon)^{m}}
\quad \text{and} \quad
\frac{ (u_{2}+\varepsilon)^{m+1}
	- (u_{1}+\varepsilon)^{m+1}}{(u_{2}+\varepsilon)^{m}},
\]
in~\eqref{equuuu}, satisfied by $u_{1}$ and $u_{2}$, respectively. Summing the resulting identities and proceeding as in the proof of Theorem~\ref{theorem2} and Lemma~\ref{Lem2}, while also applying Fatou’s lemma, we may pass to the limit and infer that
\begin{equation*}
\begin{aligned}
0 \leq\; &
\int_{\Omega} |\nabla u_{1}|^{p-2} \nabla u_{1} \cdot
\nabla \!\left( \frac{u_{1}^{m+1}-u_{2}^{m+1}}{u_{1}^{m}} \right) dx
+ \int_{\Omega} |\nabla u_{2}|^{p-2} \nabla u_{2} \cdot
\nabla \!\left( \frac{u_{2}^{m+1}-u_{1}^{m+1}}{u_{2}^{m}} \right) dx \\[2mm]
& + \iint_{\mathbb{R}^{2N}}
\frac{[u_{1}(x)-u_{1}(y)]^{p-1}}{|x-y|^{N+sp}}
\left(
\frac{u_{1}(x)^{m+1}-u_{2}(x)^{m+1}}{u_{1}(x)^{m}}
- \frac{u_{1}(y)^{m+1}-u_{2}(y)^{m+1}}{u_{1}(y)^{m}}
\right) dx\, dy \\[1mm]
& + \iint_{\mathbb{R}^{2N}}
\frac{[u_{2}(x)-u_{2}(y)]^{p-1}}{|x-y|^{N+sp}}
\left(
\frac{u_{2}(x)^{m+1}-u_{1}(x)^{m+1}}{u_{2}(x)^{m}}
- \frac{u_{2}(y)^{m+1}-u_{1}(y)^{m+1}}{u_{2}(y)^{m}}
\right) dx\, dy
\leq 0.
\end{aligned}
\end{equation*}
Invoking Lemma~\ref{Lem2} once again, we conclude that $ u_{1} \equiv u_{2}  $ in $  \mathbb{R}^{N}, $ since $m < p-1$.
\end{proof}

\section{Existence, Uniqueness, Regularity, and Asymptotic Analysis of problem~\eqref{P1}}\label{sec2}
In this section, we provide rigorous proofs of the main results regarding the existence, uniqueness, and regularity of solutions, as well as their asymptotic behavior, for the parabolic problem~\eqref{PP1}, building upon the results established in Section~3. In particular, we establish the following key findings.
\subsection{Existence, Uniqueness, and Regularity of Problem~\eqref{P1}}

\begin{proof}[\textbf{Proof of Theorem}~\ref{2thm1}]
The proof is organized into three principal steps.\\[2mm]
\textbf{Step~1: Existence of a Weak Solution.}
Let $n_0 \in \mathbb{N}^{*}$ and $T>0$. We introduce a uniform time discretization by setting $\Delta t = \frac{T}{n_0}$ and $t_n = n\Delta t$ for $n \in \{1,\ldots,n_0\}$. First, we construct an appropriate approximation of the function \(g\).  For each \(n \in \{1, \ldots, n_0\}\), we define
\[
g_{\Delta t}(t,x) := g^n(x) := \frac{1}{\Delta t} \int_{t_{n-1}}^{t_n} g(s,x)\,ds,
\quad (t,x) \in [t_{n-1},t_n) \times \Omega.
\]
Applying Jensen’s inequality, we obtain
\begin{equation}\label{4equ2}
\begin{aligned}
\|g_{\Delta t}\|^{2}_{L^2(Q_T)} 
&= \Delta t \sum_{n = 1}^{n_{0}} \|g^{n}\|^{2}_{L^2(\Omega)} 
= \Delta t \sum_{n = 1}^{n_{0}} \left\| \frac{1}{\Delta t} \int_{t_{n-1}}^{t_n} g(s,\cdot)\,ds \right\|^{2}_{L^2(\Omega)}  \\
&\leq \sum_{n = 1}^{n_{0}} \int_{t_{n-1}}^{t_n} \| g(s,\cdot)\|^{2}_{L^2(\Omega)} \,ds 
\leq \|g\|_{L^2(Q_T)}^2.
\end{aligned}
\end{equation}
Consequently, $g_{\Delta t} \in L^2(Q_T)$ and, in particular, $g^{n} \in L^2(\Omega)$ for all $n = 1,\dots,n_0$. We next show that
\begin{equation}\label{4equ9}
g_{\Delta t} \longrightarrow g \quad \text{strongly in } L^{2}(Q_T) \quad \text{as } \Delta t \to 0.
\end{equation}
To this end, let $(g_i)_{i\in\mathbb{N}} \subset C_c^\infty(Q_T)$ be a sequence such that $ g_i \to g $ in $ L^{2}(Q_T)  $ as $ i \to \infty. $ In view of \eqref{4equ2}, it follows that
\[
\bigl\| (g_i)_{\Delta t} - g_{\Delta t} \bigr\|_{L^{2}(Q_T)}
\le \| g_i - g \|_{L^{2}(Q_T)} \xrightarrow[i\to\infty]{} 0.
\]
Moreover, for each fixed $i$, the regularity of $g_i$ implies that
\[
(g_i)_{\Delta t} \to g_i \quad \text{strongly in } L^{2}(Q_T) \quad \text{as } \Delta t \to 0.
\]
Therefore, we infer that
\[
\begin{aligned}
\| g_{\Delta t} - g \|_{L^{2}(Q_T)}
\le {} & \bigl\| (g_i)_{\Delta t} - g_{\Delta t} \bigr\|_{L^{2}(Q_T)}
+ \bigl\| (g_i)_{\Delta t} - g_i \bigr\|_{L^{2}(Q_T)}
+ \| g_i - g \|_{L^{2}(Q_T)}.
\end{aligned}
\]
Letting first $\Delta t \to 0$ and then $i \to \infty$, we deduce that
\[
\| g_{\Delta t} - g \|_{L^{2}(Q_T)} \to 0 \quad \text{as } \Delta t \to 0,
\]
which establishes the desired strong convergence. Finally, another application of Jensen’s inequality yields
\[
\|g_{\Delta t}\|_{L^{\infty}(Q_T)} \leq \|g\|_{L^{\infty}(Q_T)}.
\]
Hence, $g_{\Delta t} \in L^{\infty}(Q_T)$ and, in particular, $g^{n} \in L^{\infty}(\Omega)$ for all $n=1,\dots,n_0$.  Next, we discretize problem~\eqref{PP1} in time by means of the implicit Euler scheme. Let the initial datum satisfy
\begin{equation}\label{4equ4}
u_0 \in W^{1,p}_0(\Omega)\cap \mathcal{M}^{1}_{\alpha',\alpha'}(\Omega),
\qquad
\alpha'=\frac{p}{p-1+\vartheta},
\end{equation}
where the parameter $\vartheta$ is assumed to fulfill $ 1+p\Bigl(\frac{1-s}{s}\Bigr)<\vartheta<2+\frac{1}{p-1}. $ For each $n\geq 1$, we denote by $u_n$ the weak solution to the associated discrete problem, which is defined for all $m\geq 0$.
	\begin{equation*}
	\left\{
	\begin{aligned}
	\left( \frac{u_n^{m+1} - u_{n-1}^{m+1}}{\Delta_t} \right) u_n^{m} - \Delta_{p}u_{n} + (-\Delta)^s_p u_n\;&= g^n(x) u_n^{m} + m d(x)^{-\gamma} u_{n}^{\delta} && \text{in } \Omega, \\
	u_n \;&> 0 && \text{in } \Omega, \\
	u_n \;&= 0 && \text{in } \mathbb{R}^N \setminus \Omega.
	\end{aligned}
	\right.
	\end{equation*}	
	Equivalently, this can be written as
	\begin{equation}\label{4equ1}
	\left\{
	\begin{aligned}
	u_n^{2m + 1} - \Delta_{t} \, \Delta_{p} u_n + \Delta_{t} \, (-\Delta)^s_p u_n\;&= \left( u_{n-1}^{m+1} + \Delta_{t} \, g^n(x)\right)  u_n^{m} + \Delta_{t} \, m d(x)^{-\gamma} u_{n}^{\delta} && \text{in } \Omega, \\
	u_n \;&> 0 && \text{in } \Omega, \\
	u_n \;&= 0 && \text{in } \mathbb{R}^N \setminus \Omega.
	\end{aligned}
	\right.
	\end{equation}
It is straightforward to verify that the sequence $(u_n)_{n=1,2,\dots,n_0}$ is \textbf{well defined}. Indeed, we first consider the case $n=1$. The existence and uniqueness of $ u_1 \in C^{1}(\overline{\Omega}) \cap \mathcal{M}^{1}_{1,\alpha}(\Omega), $ for all  $ \alpha \in [s,1), $ follow from Theorem~\ref{theorem3}. Notice that the condition  $2\delta-\textbf{d}\, \gamma >2m-1$
implies $\gamma<\frac{1}{\textbf{d}}$. The theorem is applied with $\lambda=\Delta t$ and $ g_0= u_0^{m+1} + \Delta t\,  g^{1} \in L^{\infty}(\Omega). $ Moreover, we observe that $ u_0^{m+1} + \Delta t\, g^{1} \geq \Delta t\, \underline{g} $ for a.e. $ x\in\Omega. $ Proceeding by induction, we obtain in the same manner the existence and uniqueness of the solution $u_n$ for every $n=2,3,\dots,n_0$, where
$ u_n \in C^{1}(\overline{\Omega}) \cap \mathcal{M}^{1}_{1,\alpha}(\Omega), $
for all  $ \alpha \in [s,1). $ 
\begin{claim}\label{claim1}
	For every $n \in \{0, 1, 2, \dots, n_0\}$, there exist two positive constants $c_1$ and $c_2$, independent of $n$, such that the following estimate holds:
	\begin{equation}\label{4equ3}
	c_1\, d(x) \leq u_n(x) \leq c_2\, d(x)^{\alpha'}, 
	\quad \text{for all } x \in \Omega.
	\end{equation}
\end{claim}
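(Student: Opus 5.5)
The plan is to compare each $u_n$ with a fixed sub-solution and a fixed super-solution of the discrete problem \eqref{4equ1}, using the weak comparison principle of Theorem~\ref{theorem2} with $\lambda=\Delta t$ and $g_0=u_{n-1}^{m+1}+\Delta t\,g^n$. The argument is an induction on $n$, and the two barriers will not depend on $n$ or $\Delta t$. For $n=0$, estimate \eqref{4equ3} is the hypothesis \eqref{4equ4}: the bound $u_0^{1}\ge c^{-1}d^{\alpha'}$ gives the upper bound directly, and the lower bound $u_0\ge c_1 d$ follows because $\alpha'<1$ and $d$ is bounded, so $d\le C d^{\alpha'}$ and hence $c^{-1}d^{\alpha'}\ge c' d$.

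\textbf{Lower barrier.} Take $\underline u:=\varepsilon\phi^{s}_{1,p}$ with $\varepsilon$ small, and fix $\varepsilon\le c\,c_1^{(n-1)}$, where $c_1^{(n-1)}$ is the lower constant already obtained for $u_{n-1}$. Since $\phi^s_{1,p}\le 1$, we have $\underline u^{2m+1}\le \varepsilon^{m+1}(\phi^s_{1,p})^{m}\cdot\varepsilon^{m}$. Next we need $\underline u^{m+1}\le u_{n-1}^{m+1}$, which holds once $\varepsilon\phi^s_{1,p}\le u_{n-1}$; this uses $u_{n-1}\ge c_1 d$ together with $\phi^s_{1,p}\le C d^{\alpha}\le C'd$. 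That last step fails near the boundary because $\alpha<1$. Instead, I would build the lower barrier from the solution of \eqref{PP2} with $g_0=\Delta t\,\underline g$, or more simply use $-\Delta_p\underline u+(-\Delta)^s_p\underline u=\varepsilon^{p-1}\lambda^s_{1,p}(\phi^s_{1,p})^{p-1}$ together with $g^n\ge\underline g$. Since $m<p-1$, we get $\varepsilon^{p-1}\lambda^s_{1,p}\phi^{p-1}\le \underline g\,\varepsilon^m\phi^m$ wherever $\underline g$ is bounded below, and the term $u_{n-1}^{m+1}u^m$ on the right-hand side is nonnegative. To avoid requiring $\underline g>0$ everywhere, I would take $\underline u$ to be a small multiple of the solution $w$ of \eqref{P8} with $f=\underline g$ (or with $f=\mathbf 1$ plus the $d^{-\gamma}$ term), which satisfies $w\ge c\,d$ by Theorem~\ref{theorem1}. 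Then $\varepsilon^{p-1}f\le \varepsilon^m w^m(\underline g + m d^{-\gamma}w^{\delta-m})$ for small $\varepsilon$, and the term $\underline u^{2m+1}$ on the left is absorbed by $u_{n-1}^{m+1}\underline u^m$ as soon as $\underline u\le u_{n-1}$. The induction hypothesis $u_{n-1}\ge \underline u$ closes this step, so $\varepsilon$ is fixed once using the $n=0$ constant and stays the same for every $n$.

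\textbf{Upper barrier, the main obstacle.} I would take $\overline u=\eta\,\overline w_\rho$ with $\overline w_\rho=(d_e)_+^{\alpha'}$ as in the proof of Theorem~\ref{theorem1}. By \eqref{equ111}, near the boundary $-\Delta_p\overline u+(-\Delta)^s_p\overline u\ge c\,\eta^{p-1}d^{(\alpha'-1)(p-1)-1}$. The choice $\alpha'=p/(p-1+\vartheta)$ with $\vartheta>1+p(1-s)/s$ forces $\alpha'<s$, which is the range where the fractional part of the barrier computation is controlled, and it also makes the exponent $(\alpha'-1)(p-1)-1$ strongly negative. Then $\overline u$ dominates the right-hand side $C\,\overline u^m+m d^{-\gamma}\overline u^{\delta}$, since $d^{-\gamma+\alpha'\delta}$ is far less singular; this is where $2\delta-\mathbf d\gamma>2m-1$ and $\gamma<1/\mathbf d$ enter. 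The time term must also be handled: we need $u_{n-1}^{m+1}\overline u^{m}\le \overline u^{2m+1}$, which holds if $u_{n-1}\le\overline u$, again the induction hypothesis. The initial step requires $\eta\ge c$ from \eqref{4equ4}. Interior points are handled by enlarging $\eta$ using $\|g\|_\infty$ and an $L^\infty$ bound on $u_{n-1}$, which propagates because $\overline u$ is bounded. Verifying the weak super-solution inequality for $\overline u$ in $\mathbf{WL}$ near $\partial\Omega$ with $\eta$ independent of $n$ is the delicate point, in particular checking that the lower bound \eqref{equ1212} on the nonlocal term persists for exponents $\alpha'<s$. Theorem~\ref{theorem2} then gives $\underline u\le u_n\le\overline u$, which proves \eqref{4equ3}.
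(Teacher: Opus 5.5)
Your overall structure (induction on $n$, fixed barriers, absorbing the time term through $\underline u\le u_{n-1}\le\overline u$, then Theorem~\ref{theorem2}) matches the paper. Both barriers, however, fail as written.

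\textbf{Lower barrier for $m>0$.} Your final choice $\underline u=\varepsilon w$, with $w$ solving \eqref{P8} for $f=\underline g$ (or $f=1$ plus a $d^{-\gamma}$ term), is not a sub-solution of \eqref{4equ1} when $m>0$. After absorbing the time term you need, pointwise, $\varepsilon^{p-1}\underline g\le g^n(\varepsilon w)^m+m\,d^{-\gamma}(\varepsilon w)^{\delta}$. Since $w\in C^1(\overline\Omega)$ vanishes on $\partial\Omega$, you have $w\le Cd$. Take for instance $\gamma\le 0$ and $g^n=\underline g\equiv 1$: the right-hand side tends to $0$ at $\partial\Omega$, while the left-hand side stays $\varepsilon^{p-1}$. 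Shrinking $\varepsilon$ only thins the bad boundary layer; it never removes it. (For $m=0$ your barrier works and coincides with the paper's.) The missing idea is that the barrier's own equation must carry the factor $\underline u^{m}$. The paper takes $\underline u=\tilde u_k$, the solution of \eqref{4equ6}, so that $-\Delta_p\underline u+(-\Delta)^s_p\underline u=k^{-1}(\underline g\,\underline u^m+m\,d^{-\gamma}\underline u^\delta)\le g^n\underline u^m+m\,d^{-\gamma}\underline u^\delta$ holds identically. It then gets $\underline u\le u_0$ from $\tilde u_k\to 0$ in $C^1(\overline\Omega)$ together with $u_0\ge c\,d^{\alpha'}\ge c' d$.

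\textbf{Upper barrier.} Besides the unproved extension of \eqref{equ1212}--\eqref{equ111} to $\alpha'<s$ (they are only established for $\alpha\in[s,1)$), you verify the super-solution inequality for $\eta(d_e)_+^{\alpha'}$ only against test functions supported in $\Omega_\varrho$. Theorem~\ref{theorem2} needs it on all of $\Omega$.
\begin{itemize}
\item Enlarging $\eta$ does not settle the interior. The operator scales like $\eta^{p-1}$ times $-\Delta_p\overline w_\rho+(-\Delta)^s_p\overline w_\rho$, for which you have no positive lower bound in $\Omega\setminus\Omega_\varrho$: $d$ is not $C^2$ there, and for non-convex $\Omega$ (e.g.\ a thick annulus) even $-\Delta_p d^{\alpha'}$ is negative on open interior sets.
\item The strip argument of Theorem~\ref{theorem1} does not transfer either. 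It needs $\|u_n\|_{L^\infty(\Omega)}\le\eta\varrho^{\alpha'}$ uniformly in $n$ before you know $u_n\le\overline u$, so ``the $L^\infty$ bound propagates'' is circular. It also needs a comparison principle for \eqref{4equ1} on $\Omega_\varrho$ with nonzero exterior data, which Theorem~\ref{theorem2} does not provide.
\end{itemize}
The paper avoids both problems by using the solution $\tilde u$ of the singular problem $-\Delta_p\tilde u+(-\Delta)^s_p\tilde u=\tilde u^{-\vartheta}$ from \cite{bal2024regularity}. This $\tilde u$ satisfies $\tilde u\asymp d^{p/(p-1+\vartheta)}=d^{\alpha'}$ and $\tilde u\in W^{1,p}_0(\Omega)$; this is exactly where $\alpha'=p/(p-1+\vartheta)$ and $\vartheta<2+\frac{1}{p-1}$ come from. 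Then $\overline u=M^{1/(p-1)}\tilde u$ is a super-solution on all of $\Omega$: its right-hand side $M^{1+\vartheta/(p-1)}\overline u^{-\vartheta}\gtrsim M\,d^{-\alpha'\vartheta}$ dominates $\|g\|_{L^\infty(Q_T)}\overline u^m+m\,d^{-\gamma}\overline u^\delta$ for $M$ large, since $m,\delta<p-1$ and $\alpha'\vartheta>1>\gamma$. If you want to keep an explicit barrier, a truncation such as $\eta\min\{d,\varrho\}^{\alpha'}$ would likely fix the interior, because at its maximum the nonlocal term is bounded below by a positive constant. You would still have to prove the boundary-layer estimate for $\alpha'<s$.
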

	\noindent 
Indeed, it suffices to construct suitable sub- and super-solutions, denoted by $\underline{u}$ and $\overline{u}$, respectively, such that  $ \underline{u} \leq u_n \leq \overline{u} $ in $ \Omega, $ for all $ n \in \{0,1,2,\ldots,n_0\}. $ We begin by constructing a uniform sub-solution. Following the approach employed in the proof of Theorem~\ref{theorem3}, we show that for each $k \in \mathbb{N} \setminus \{0\}$, the  following problem
\begin{equation}\label{4equ6}
\left\{
\begin{aligned}
-\Delta_p \tilde{u} + (-\Delta)^s_p \tilde{u} 
&= k^{-1}\Big(\underline{g}(x)\, \tilde{u} ^{m}
+ m\, d(x)^{-\gamma} \tilde{u} ^{\delta}\Big)
&& \text{in } \Omega,\\
\tilde{u}  &> 0
&& \text{in } \Omega,\\
\tilde{u}  &= 0
&& \text{in } \mathbb{R}^N \setminus \Omega,
\end{aligned}
\right.
\end{equation}
admits a unique weak solution  $ \tilde{u}_k \in C^{1}(\overline{\Omega}) \cap \mathcal{M}^{1}_{1,\alpha}(\Omega), $ for all  $ \alpha \in [s,1), $ where $\underline{g}$ is defined as in \eqref{equ1}.  Next, we show that the sequence  $(\tilde{u}_k) $ is decreasing. Indeed, let $k \geq 1$, and denote by $\tilde{u}_k$ and $\tilde{u}_{k+1}$ the respective weak solutions of the problem \eqref{4equ6}. Then, for any test functions $\Phi, \Psi \in W^{1, p}_{0}(\Omega)$ when $m = 0$, and $\Phi, \Psi \in \mathbf{WL}$ when $m > 0$, we have
\begin{align*}
&\int_{\Omega} |\nabla \tilde{u}_{k+1}|^{p-2}\nabla \tilde{u}_{k+1} \cdot \nabla \Phi \, dx
+ \iint_{\mathbb{R}^{2N}}
\frac{\left[ \tilde{u}_{k+1}(x)-\tilde{u}_{k+1}(y)\right] ^{p-1}(\Phi(x)-\Phi(y))}{|x-y|^{N+sp}} \, dx \, dy \\
&\qquad= (k+1)^{-1} \int_{\Omega} \Big(\underline{g}(x)\, \tilde{u}_{k+1}^{m} + m\, d(x)^{-\gamma}\, \tilde{u}_{k+1}^{\delta}\Big) \, \Phi \, dx,
\end{align*}
and
\begin{align*}
&\int_{\Omega} |\nabla \tilde{u}_{k}|^{p-2}\nabla \tilde{u}_{k} \cdot \nabla \Psi \, dx
+ \iint_{\mathbb{R}^{2N}}
\frac{\left[ \tilde{u}_{k}(x)-\tilde{u}_{k}(y)\right] ^{p-1}(\Psi(x)-\Psi(y))}{|x-y|^{N+sp}} \, dx \, dy \\
&\qquad= k^{-1} \int_{\Omega} \Big(\underline{g}(x)\, \tilde{u}_{k}^{m} + m\, d(x)^{-\gamma}\, \tilde{u}_{k}^{\delta}\Big) \, \Psi \, dx.
\end{align*}
Subtracting the above identities, we choose the test functions as follows.  
In the case $m=0$, we take
\[
\Phi=\big(\tilde{u}_{k+1}-\tilde{u}_k\big)^{+},
\qquad
\Psi=\big(\tilde{u}_k-\tilde{u}_{k+1}\big)^{-}.
\]
On the other hand, if $m>0$, since $\tilde{u}_k, \tilde{u}_{k+1}\in L^{\infty}(\Omega)$ and for a fixed $\varepsilon>0$, we select
\[
\Phi
= \frac{\big((\tilde{u}_{k+1}+\varepsilon)^{m+1}
	-(\tilde{u}_k+\varepsilon)^{m+1}\big)^{+}}{(\tilde{u}_{k+1}+\varepsilon)^m},
\qquad
\Psi
= \frac{\big((\tilde{u}_k+\varepsilon)^{m+1}
	-(\tilde{u}_{k+1}+\varepsilon)^{m+1}\big)^{-}}{(\tilde{u}_k+\varepsilon)^m},
\]
and arguing as in the proof of the comparison principle (Theorem~\ref{theorem2}), we infer that $ \tilde{u}_{k+1} \leq \tilde{u}_k  $ in $ \Omega. $ Consequently, by \cite[Theorem~1.1]{antonini2025global}, there exists an integer $k_0 \geq 1$ such that $ \|\tilde{u}_k\|_{C^{1}(\overline{\Omega})} \leq C(k_0) $ for all  $ k \geq k_0. $ It follows that the sequence $(\tilde{u}_k)_{k \geq k_0}$ is uniformly bounded and equicontinuous in $C^{1}(\overline{\Omega})$. Hence, by the Arzel\`a--Ascoli theorem, there exist a subsequence, still denoted by $(\tilde{u}_k)$, and  $\tilde{u} \in C^{1}(\overline{\Omega})$ such that $ \tilde{u}_k \to \tilde{u} $ in $ C^{1}(\overline{\Omega})  $ as $ k \to \infty. $ Moreover, \cite[Lemma~3.1]{antonini2025global} guarantees that $ \|\tilde{u}_k\|_{L^{\infty}(\Omega)} \to  0 $
as $ k \to \infty, $ which implies that $ \tilde{u}_k \to 0 $ in $ C^{1}(\overline{\Omega}) $ as $ k \to \infty. $ Therefore, in view of \eqref{4equ4}, for $k$ sufficiently large, one can define  $ \underline{u} := \tilde{u}_k \in C^{1}(\overline{\Omega}) \cap \mathcal{M}^{1}_{1,\alpha}(\Omega), $ $ \alpha \in [s,1), $ such that $\underline{u} \leq u_0$ in $\Omega$. Furthermore, $\underline{u}$ is a sub-solution of the problem \eqref{4equ1} in the case $n=1$, namely,
\begin{align*}
&\int_\Omega \underline{u}^{2m+1}\varphi\,dx
+ \Delta_t \int_\Omega |\nabla \underline{u}|^{p-2}\nabla \underline{u}\cdot \nabla \varphi\,dx
+ \Delta_t \iint_{\mathbb{R}^{2N}}
\frac{\left[ \underline{u}(x)-\underline{u}(y)\right] ^{p-1}(\varphi(x)-\varphi(y))}{|x-y|^{N+sp}}\,dx\,dy \\
&\qquad \leq \int_\Omega u_0^{m+1}\underline{u}^m \varphi\,dx
+ \Delta_t \int_\Omega g^1 \underline{u}^m \varphi\,dx
+ \Delta_t\, m\,\int_\Omega d(x)^{-\gamma}\underline{u}^{\delta}\varphi\,dx,
\end{align*}
for all $\varphi \in \mathbf{WL}$ with $\varphi \geq 0$. On the other hand, the function $u_1$ satisfies
\begin{align*}
&\int_\Omega u_1^{2m+1}\varphi\,dx
+ \Delta_t \int_\Omega |\nabla u_1|^{p-2}\nabla u_1\cdot \nabla \varphi\,dx
+ \Delta_t \iint_{\mathbb{R}^{2N}}
\frac{\left[ u_1(x)-u_1(y)\right] ^{p-1}(\varphi(x)-\varphi(y))}{|x-y|^{N+sp}}\,dx\,dy \\
&\qquad = \int_\Omega u_0^{m+1}u_1^m \varphi\,dx
+ \Delta_t \int_\Omega g^1 u_1^m \varphi\,dx
+ \Delta_t\, m\, \int_\Omega  d(x)^{-\gamma}u_1^{\delta}\varphi\,dx.
\end{align*}
Hence, the weak comparison principle (see Theorem~\ref{theorem2}) yields $ \underline{u} \leq u_1  $ in $ \Omega $ and then by induction 
\[
\underline{u} \leq u_n \quad \text{in } \Omega,
\qquad \text{for all } n = 0,1,\ldots,n_0.
\]
We now proceed to the construction of a uniform super-solution. To this end, we consider the auxiliary problem
\[
\left\{
\begin{aligned}
-\Delta_p \tilde{u} + (-\Delta)^s_p \tilde{u} &= \tilde{u}^{-\vartheta}
\quad && \text{in } \Omega,\\
\tilde{u} &> 0
\quad && \text{in } \Omega,\\
\tilde{u} &= 0
\quad && \text{in } \mathbb{R}^N \setminus \Omega.
\end{aligned}
\right.
\]
Proceeding along the lines of the proofs of
\cite[Theorems~2.27 and~2.28]{bal2024regularity}, we obtain the existence of a unique function
$\tilde{u} \in W^{1,p}_{\mathrm{loc}}(\Omega)$ and a constant $c>0$ such that
\[
c^{-1} d(x)^{\frac{p}{p-1+\vartheta}}
\leq \tilde{u}(x)
\leq c\, d(x)^{\frac{p}{p-1+\vartheta}}
\quad \text{for all } x \in \Omega.
\]
Moreover, using the assumption  $ \vartheta < 2 + \frac{1}{p-1}, $ which is admissible since $p(1-s)<1$, we get that $ \tilde{u} \in W^{1,p}_0(\Omega). $ In addition, the regularity result of
\cite[Theorem~2.31]{bal2024regularity} ensures that $ \tilde{u} \in C^{0,\frac{p}{p-1+\vartheta}}(\overline{\Omega}). $ Now, for a fixed $M>0$, we define $\tilde{u}_M := M^{\frac{1}{p-1}} \tilde{u}$. A direct computation shows that $\tilde{u}_M$ is the unique weak solution of the problem
\begin{equation}\label{4equ100}
\left\{
\begin{aligned}
-\Delta_p \tilde{u}_M + (-\Delta)^s_p \tilde{u}_M
&= M^{1+\frac{\vartheta}{p-1}}\, \tilde{u}_M^{-\vartheta}
&& \text{in } \Omega,\\
\tilde{u}_M &> 0
&& \text{in } \Omega,\\
\tilde{u}_M &= 0
&& \text{in } \mathbb{R}^N \setminus \Omega.
\end{aligned}
\right.
\end{equation}
and satisfies
\begin{equation}\label{4equ5}
c^{-1} M^{\frac{1}{p-1}} d(x)^{\frac{p}{p-1+\vartheta}}
\leq \tilde{u}_M(x)
\leq c\, M^{\frac{1}{p-1}} d(x)^{\frac{p}{p-1+\vartheta}},
\end{equation}
for some constant $c>0$ independent of $M$. Since $m<p-1$ and $\frac{p}{p-1+\vartheta}\in(0,s)$, it follows from \eqref{4equ4} and \eqref{4equ5}, by choosing $M$
sufficiently large, that $ \overline{u} := \tilde{u}_M \geq u_0 $ in $ \Omega. $ Also, $\overline{u}$ is a super-solution of the problem
\begin{equation}\label{eq:main-super-problem}
\left\{
\begin{aligned}
-\Delta_p \hat{u} + (-\Delta)^s_p \hat{u}
&= \|g\|_{L^\infty(Q_T)} \hat{u}^m + m\, d(x)^{-\gamma} \hat{u}^\delta
\quad && \text{in } \Omega,\\
\hat{u} &> 0
\quad && \text{in } \Omega,\\
\hat{u} &= 0
\quad && \text{in } \mathbb{R}^N \setminus \Omega.
\end{aligned}
\right.
\end{equation}
Consequently, $\overline{u}$ is a super-solution of problem~\eqref{4equ1} for $n=1$, namely,
\begin{align*}
&\int_\Omega \overline{u}^{2m+1}\varphi\,dx
+ \Delta_t \int_\Omega |\nabla \overline{u}|^{p-2}
\nabla \overline{u}\cdot \nabla \varphi\,dx
+ \Delta_t \iint_{\mathbb{R}^{2N}}
\frac{\left[\overline{u}(x)-\overline{u}(y)\right]^{p-1}
	\big(\varphi(x)-\varphi(y)\big)}{|x-y|^{N+sp}}\,dx\,dy \\
&\qquad \geq \int_\Omega u_0^{m+1}\overline{u}^m \varphi\,dx
+ \Delta_t \int_\Omega g^1 \overline{u}^m \varphi\,dx
+ \Delta_t \, m\, \int_\Omega  d(x)^{-\gamma}
\overline{u}^\delta \varphi\,dx,
\end{align*}
for all $\varphi \in \mathbf{WL}$ with $\varphi \geq 0$. By the weak comparison principle (see Theorem~\ref{theorem2}), we infer that $ \overline{u} \geq u_1 $ in $ \Omega. $ Iterating this argument yields
\[
\overline{u} \geq u_n \quad \text{in } \Omega,
\quad \text{for all } n=0,1,2,\ldots,n_0.
\]
This completes the proof of Claim \ref{claim1}. Now, with the sequence $(u_n)$, we define the piecewise functions $u_{\Delta_t} $ and $\tilde{u}_{\Delta_t} $ on the interval $[0, T]$ for $n \in \{1, 2, \dots, n_0\}$ as follows:
\begin{gather*}
u_{\Delta_t}(t) = u_n, \quad \text{for } t \in [t_{n-1}, t_n),\\
\tilde{u}_{\Delta_t}(t) = \frac{t - t_{n-1}}{\Delta_t} \left( u_n^{m+1} - u_{n-1}^{m+1} \right) + u_{n-1}^{m+1}, \quad \text{for } t \in [t_{n-1}, t_n).
\end{gather*}
It can then be readily verified that the following relation holds in the weak sense for all $t \in (0, T)$:
\begin{equation*}
u_{\Delta_t}^{m}\partial_{t}\tilde{u}_{\Delta_t}
- \Delta_{p}u_{\Delta_t}
+ (-\Delta)^{s}_{p}u_{\Delta_t}
= g^n u_{\Delta_t}^{m}
+ m\,d(x)^{-\gamma}u_{\Delta_t}^{\delta}.
\end{equation*}
Equivalently, for any test function $\varphi \in C_{c}^{\infty}(Q_T)$, we have
\begin{equation}\label{4equ7}
\begin{aligned}
&\int_{0}^{T}\!\!\int_{\Omega}
u_{\Delta_t}^{m}\partial_{t}\tilde{u}_{\Delta_t}\,\varphi(t,x)\,dx\,dt
+ \int_{0}^{T}\!\!\int_{\Omega}
|\nabla u_{\Delta}|^{p-2}\nabla u_{\Delta}\cdot\nabla\varphi(t,x)\,dx\,dt \\
&\quad
+ \int_{0}^{T}\!\!\iint_{\mathbb{R}^{2N}}
\frac{\big[u_{\Delta_t}(t,x)-u_{\Delta_t}(t,y)\big]^{p-1}
	\big(\varphi(t,x)-\varphi(t,y)\big)}
{|x-y|^{N+sp}}\,dx\,dy\,dt \\
&=
\int_{0}^{T}\!\!\int_{\Omega}
g^n u_{\Delta_t}^{m}\varphi(t,x)\,dx\,dt
+ m\int_{0}^{T}\!\!\int_{\Omega}
d(x)^{-\gamma}u_{\Delta_t}^{\delta}\varphi(t,x)\,dx\,dt.
\end{aligned}
\end{equation}
In order to pass to the limit as $\Delta_t \to 0$, we consider the following claim.

\begin{claim}\label{claim2}
	Let $m\geq 0$. The following uniform bounds and convergence properties hold:
	
	\noindent\textnormal{(i)}
	$\big(\partial_t \tilde{u}_{\Delta_t}\big)_{\Delta_t}$
	is uniformly bounded in $L^2(Q_T)$, independently of $\Delta_t$.
	
	\noindent\textnormal{(ii)}
	$\big(\tilde{u}_{\Delta_t}^{1/(m+1)}\big)_{\Delta_t}$ and
	$\big(u_{\Delta_t}\big)_{\Delta_t}$
	are uniformly bounded in
	$L^\infty(0,T;W^{1,p}_0(\Omega))$.
	
	\noindent\textnormal{(iii)}
	$\tilde{u}_{\Delta_t}^{1/m+1}
	\stackrel{\star}{\rightharpoonup} u$
	in $L^\infty(0,T;W^{1,p}_0(\Omega))$ and $ u_{\Delta_t}\stackrel{\star}{\rightharpoonup} u$
	in $L^\infty(0,T;W^{1,p}_0(\Omega))$.
	
	\noindent\textnormal{(iv)}
	$\tilde{u}_{\Delta_t}\to u^{m+1}$
	in $C([0,T];L^r(\Omega))$ and
	$u_{\Delta_t}\to u$
	in $L^\infty(0,T;L^r(\Omega))$,
	for all $r\ge 1$.
	
	\noindent\textnormal{(v)}
	$\partial_t \tilde{u}_{\Delta_t}
	\rightharpoonup \partial_t u^{m+1}$
	in $L^2(Q_T)$.
	
	\noindent\textnormal{(vi)}
	There exists $C>0$ such that, for every $t\in[0,T]$,
	\[
	C^{-1} d(x)\le u(t,x)\le C\, d(x)^{\alpha'}
	\quad \text{a.e. in } \Omega,
	\quad \text{ for some } \alpha' \in (0, s).
	\]
\end{claim}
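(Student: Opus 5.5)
The core of the argument is a discrete energy estimate obtained by testing the scheme \eqref{4equ1} with the discrete time derivative. This yields (i) and (ii) simultaneously. Items (iii)–(vi) then follow from compactness, with Claim~\ref{claim1} supplying the uniform pointwise bounds.

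\textbf{Energy estimate.} In the weak form of the $n$-th discrete problem I would take
\[
\varphi=\frac{u_n^{m+1}-u_{n-1}^{m+1}}{u_n^{m}} .
\]
This is admissible: by Claim~\ref{claim1} we have $c_1 d\le u_n,u_{n-1}\le c_2 d^{\alpha'}$, and each $u_n\in C^1(\overline\Omega)$. The time term becomes $\Delta t\int_\Omega\bigl(\tfrac{u_n^{m+1}-u_{n-1}^{m+1}}{\Delta t}\bigr)^2$.

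For the diffusion terms, write $v_k=u_k^{m+1}$. By the convexity of $\mathcal W$ (Proposition~\ref{pro3}, with $r=m$ and $\mathcal H=|z|^p/p$), the functional $v\mapsto \mathcal W(v)=\tfrac1p\|v^{1/(m+1)}\|^p_{W^{1,p}_0}+\tfrac1p\|v^{1/(m+1)}\|^p_{W^{s,p}_0}$ is convex. Its derivative at $v_n$ in direction $v_n-v_{n-1}$ equals $\tfrac{1}{m+1}$ times the local-plus-nonlocal term tested with $\varphi$, as in the computation of $\Phi'$ in Lemma~\ref{Lem2}. Hence
\[
\tfrac{1}{m+1}\cdot\langle\text{diffusion},\varphi\rangle\ \ge\ \mathcal W(v_n)-\mathcal W(v_{n-1}).
\]
The right-hand side splits into two pieces:
\[
\int g^n (v_n-v_{n-1})\le \tfrac{\Delta t}{2}\|g^n\|_2^2+\tfrac{\Delta t}{2}\Bigl\|\tfrac{v_n-v_{n-1}}{\Delta t}\Bigr\|_2^2,
\qquad
m\int d^{-\gamma}u_n^{\delta-m}(v_n-v_{n-1}).
\]
For the second piece I use $u_n\ge c_1 d$, so that $d^{-\gamma}u_n^{\delta-m}\le C d^{-\gamma-(m-\delta)}$. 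This weight lies in $L^2$ exactly when $2(\gamma+m-\delta)<1$, and the hypothesis $2\delta-\mathbf d\gamma>2m-1$ (with $\mathbf d>N\ge1$) gives this when $\gamma\ge0$; the case $\gamma<0$ is easier. Young's inequality then absorbs this piece as well.

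Summing over $n\le k$ gives
\[
\tfrac12\sum_{n\le k}\Delta t\Bigl\|\tfrac{v_n-v_{n-1}}{\Delta t}\Bigr\|_2^2+(m+1)\mathcal W(v_k)\le C\bigl(\|g\|^2_{L^2(Q_T)}+T+\mathcal W(u_0^{m+1})\bigr),
\]
uniformly in $\Delta t$. This proves (i), since $\partial_t\tilde u_{\Delta t}$ equals the discrete difference quotient, and it gives the bound on $u_{\Delta t}$ in (ii). For $\tilde u_{\Delta t}^{1/(m+1)}$, which is a convex combination of $v_{n-1}$ and $v_n$, I would use the convex-cone property of $\dot V^{m+1}_+$ together with convexity of $\mathcal W$, giving $\mathcal W(\tilde u_{\Delta t})\le\max\{\mathcal W(v_{n-1}),\mathcal W(v_n)\}$.

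\textbf{Compactness and identification of limits.} Both families are bounded in $L^\infty(Q_T)$ by Claim~\ref{claim1}. The family $\tilde u_{\Delta t}$ is bounded in $H^1(0,T;L^2)$, and its spatial compactness comes from (ii) via $W^{1,p}_0\hookrightarrow\hookrightarrow L^1$ combined with the uniform $L^\infty$ bound. Aubin–Lions–Simon then gives $\tilde u_{\Delta t}\to w$ in $C([0,T];L^1)$, and interpolation with $L^\infty$ upgrades this to $C([0,T];L^r)$ for every $r<\infty$.

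Next,
\[
\|\tilde u_{\Delta t}-u_{\Delta t}^{m+1}\|_{L^2(Q_T)}\le \Delta t\,\|\partial_t\tilde u_{\Delta t}\|_{L^2(Q_T)}\to0 .
\]
Combined with the elementary inequality \eqref{equ23}, this shows $u_{\Delta t}\to u:=w^{1/(m+1)}$ in $L^r$. I expect this to hold even in $L^\infty(0,T;L^r)$, using the pointwise-in-$t$ bound $\|v_n-v_{n-1}\|_2\le C\sqrt{\Delta t}$, which follows from the energy estimate. Weak-$\star$ compactness in $L^\infty(0,T;W^{1,p}_0)$, with limits identified through the a.e. convergence, gives (iii). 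Weak compactness in $L^2(Q_T)$ and the distributional identification of the limit give (v). Finally, (vi) passes to the limit from \eqref{4equ3}, using a.e. convergence.

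\textbf{Main obstacle.} The hardest step is justifying the energy inequality: the test function must be admissible, and the convexity inequality for $\mathcal W$ must hold in the form needed for the nonlocal term. I would first work with $u_n+\varepsilon$ as in the proof of Theorem~\ref{theorem2} and then let $\varepsilon\to0$ via Fatou's lemma. A secondary difficulty is the singular term $d^{-\gamma}u_n^{\delta-m}$, which is controlled only through the lower barrier $c_1 d$ from Claim~\ref{claim1} together with the exponent condition above.
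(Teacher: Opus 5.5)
Your proposal is essentially the paper's proof: the same $\varepsilon$-regularized test function $\frac{(u_n+\varepsilon)^{m+1}-(u_{n-1}+\varepsilon)^{m+1}}{(u_n+\varepsilon)^m}$, with your convexity of $\mathcal W$ doing the work of the paper's Picone-plus-Young estimate (two formulations of the same hidden convexity), Young absorption of $d^{-\gamma}u_n^{\delta-m}$ via $u_n\ge c_1 d$, the bound $\|u_n^{m+1}-u_{n-1}^{m+1}\|_{L^2(\Omega)}\le C\sqrt{\Delta t}$ to identify the limits of $u_{\Delta t}$ and $\tilde u_{\Delta t}^{1/(m+1)}$, and compactness in $C([0,T];L^r(\Omega))$. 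Two corrections: first, the regularization is required, not optional, because at $n=1$ the datum $u_0\asymp d^{\alpha'}$ is not $C^1$, so $u_0/u_1$ is unbounded near $\partial\Omega$ and $u_0^{m+1}/u_1^m$ need not belong to $W^{1,p}_0(\Omega)$ (for $u_0=d^{\alpha'}$ its gradient is of order $d^{-(1-\alpha')(m+1)}$); second, the case $\gamma<0$ is not easier, since $d^{-\gamma}u_n^{\delta-m}\in L^2(\Omega)$ requires $2(\delta-\gamma)>2m-1$, which the hypothesis $2\delta-\mathbf d\gamma>2m-1$ does not imply when $\gamma<0$ (the paper simply invokes $2(\delta-\gamma)>2m-1$ at this step).
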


\medskip
\noindent
Assuming Claim~\ref{claim2}, we show that the limit $u$ is a solution to problem~\eqref{PP1} in the sense of Definition~\ref{definition1}.
Indeed, let $\varphi\in C_c^\infty(Q_T)$. By Claim~\ref{claim2}-(ii), the quantity
\[
\frac{\left[u_{\Delta_t}(t,x)-u_{\Delta_t}(t,y)\right]^{p-1}}
{|x-y|^{\frac{N+sp}{p'}}},
\qquad p'=\frac{p}{p-1},
\]
is uniformly bounded in $L^\infty(0,T;L^{p'}(\mathbb{R}^{2N}))$.
Moreover, the convergence of $u_{\Delta_t}$ to $u$ yields, for a.e. $t\in[0,T]$,
\[
\frac{\left[u_{\Delta_t}(t,x)-u_{\Delta_t}(t,y)\right]^{p-1}}
{|x-y|^{\frac{N+sp}{p'}}}
\longrightarrow
\frac{\left[u(t,x)-u(t,y)\right]^{p-1}}
{|x-y|^{\frac{N+sp}{p'}}}
\quad\text{a.e. in } \mathbb{R}^{2N}.
\]
Consequently,
\[
\frac{\left[u_{\Delta_t}(t,x)-u_{\Delta_t}(t,y)\right]^{p-1}}
{|x-y|^{\frac{N+sp}{p'}}}
\rightharpoonup
\frac{\left[u(t,x)-u(t,y)\right]^{p-1}}
{|x-y|^{\frac{N+sp}{p'}}}
\quad
\text{in } L^{p'}((0,T)\times\mathbb{R}^{2N}),
\]
which implies
\begin{equation}\label{4equ8}
\begin{aligned}
\int_0^T &\!\!\iint_{\mathbb{R}^{2N}}
\frac{\left[u_{\Delta_t}(t, x)-u_{\Delta_t}(t, y)\right]^{p-1}
	(\varphi(t, x)-\varphi(t, y))}
{|x-y|^{N+sp}}\,dx\,dy\,dt\\
&\longrightarrow
\int_0^T \!\!\iint_{\mathbb{R}^{2N}}
\frac{\left[u(t, x)-u(t, y)\right]^{p-1}
	(\varphi(t, x)-\varphi(t, y))}
{|x-y|^{N+sp}}\,dx\,dy\,dt .
\end{aligned}
\end{equation}
We now distinguish two cases. \textbf{Case $m=0$.}
By virtue of Claim~\ref{claim2}-(v), we obtain
\begin{gather}\label{4equ12}
\int_{0}^{T}\!\!\int_{\Omega}
\partial_{t}\tilde{u}_{\Delta_t}\,\varphi(t,x)\,dx\,dt
\longrightarrow
\int_{0}^{T}\!\!\int_{\Omega}
\partial_{t}u\,\varphi(t,x)\,dx\,dt,
\quad \text{as } \Delta_t \to 0.
\end{gather}
Moreover, thanks to \eqref{4equ9}, it holds that
\begin{equation}\label{4equ13}
\int_{0}^{T}\!\!\int_{\Omega}
g^n \varphi(t, x) \,dx\,dt
\longrightarrow
\int_{0}^{T}\!\!\int_{\Omega}
g \varphi(t, x) \,dx\,dt,
\quad \text{as } \Delta_t \to 0.
\end{equation}

\medskip
\noindent
\textbf{Case $m>0$.}
From \eqref{equ23} and Claim~\ref{claim2}-(i), (iv), we obtain
\begin{equation}\label{4equ17}
\begin{aligned}
&\|u_{\Delta_t}^{m}-u^{m}\|_{L^{2}(Q_T)}
\le T^{\frac{1}{2}} \, \|u_{\Delta_t}^{m}-u^{m}\|_{L^{\infty}(0,T;L^{2}(\Omega))} \\
&\qquad\le T^{\frac{1}{2}} |\Omega|^{\frac{1}{2(m+1)}} 
\|u_{\Delta_t}^{m}-u^{m}\|_{L^{\infty}(0,T;L^{\frac{2(m+1)}{m}}(\Omega))} 
\quad (\to 0 \text{ if } m=1 \text{ by Claim} ~\ref{claim2}\text{-(iv)}) \\
&\qquad\le T^{\frac{1}{2}} |\Omega|^{\frac{1}{2(m+1)}} 
\|u_{\Delta_t}^{m+1}-u^{m+1}\|_{L^{\infty}(0,T;L^{2}(\Omega))}^{\frac{m}{m+1}} \\
&\qquad\le T^{\frac{1}{2}} |\Omega|^{\frac{1}{2(m+1)}} 
\Big[ \|u_{\Delta_t}^{m+1}- \tilde{u}_{\Delta_{t}}\|_{L^{\infty}(0,T;L^{2}(\Omega))} 
+ \|\tilde{u}_{\Delta_{t}} - u^{m+1}\|_{L^{\infty}(0,T;L^{2}(\Omega))} \Big]^{\frac{m}{m+1}} \\
&\qquad\le T^{\frac{1}{2}} |\Omega|^{\frac{1}{2(m+1)}} 
\Big[ C\,\Delta_t^{\frac{1}{2}} + \|\tilde{u}_{\Delta_{t}} - u^{m+1}\|_{L^{\infty}(0,T;L^{2}(\Omega))} \Big]^{\frac{m}{m+1}} 
\longrightarrow 0, \quad \text{as } \Delta_t \to 0.
\end{aligned}
\end{equation}
Using H\"older’s inequality together with Claim~\ref{claim2}-(i), (iv), (v),
\eqref{4equ17}, and \eqref{4equ9}, we infer that
\begin{align*}
&\Bigg|\int_{0}^{T}\!\!\int_{\Omega}
\Big(u_{\Delta_t}^{m}\partial_{t}\tilde{u}_{\Delta_t}
- u^{m}\partial_{t}u^{m+1}\Big)\varphi(t,x)\,dx\,dt\Bigg| \\
&\le
\|u_{\Delta_t}^{m}-u^{m}\|_{L^{2}(Q_T)}
\|\partial_{t}\tilde{u}_{\Delta_t}\varphi\|_{L^{2}(Q_T)}
+ \left|\int_{0}^{T}\!\!\int_{\Omega}
u^{m}\big(\partial_{t}\tilde{u}_{\Delta_t}-\partial_{t}u^{m+1}\big)
\varphi\,dx\,dt\right|  \longrightarrow 0,
\quad \text{as } \Delta_t\to 0.
\end{align*}
Similarly,
\begin{align*}
\int_{0}^{T}\!\!\int_{\Omega}(g^n u_{\Delta_t}^{m}-g u^{m})\varphi\,dx\,dt
&=
\int_{0}^{T}\!\!\int_{\Omega} g^n(u_{\Delta_t}^{m}-u^{m})\varphi\,dx\,dt
+ \int_{0}^{T}\!\!\int_{\Omega}(g^n-g)u^{m}\varphi\,dx\,dt \\[4pt]
&\le
\|g^n\varphi\|_{L^{2}(Q_T)}\|u_{\Delta_t}^{m}-u^{m}\|_{L^{2}(Q_T)}
+ \|u^{m}\varphi\|_{L^{2}(Q_T)}\|g^n-g\|_{L^{2}(Q_T)} \\[4pt]
&\longrightarrow 0,
\quad \text{as } \Delta_t\to 0.
\end{align*}
Consequently,
\begin{equation}\label{4equ14}
\int_{0}^{T}\!\!\int_{\Omega}
u_{\Delta_t}^{m}\partial_{t}\tilde{u}_{\Delta_t}\varphi(t, x)\,dx\,dt
\longrightarrow
\int_{0}^{T}\!\!\int_{\Omega}
u^{m}\partial_{t}u^{m+1}\varphi(t, x)\,dx\,dt,
\quad \text{as } \Delta_t\to 0,
\end{equation}
and
\begin{equation}\label{4equ15}
\int_{0}^{T}\!\!\int_{\Omega} g^n u_{\Delta_t}^{m}\varphi(t, x)\,dx\,dt
\longrightarrow
\int_{0}^{T}\!\!\int_{\Omega} g u^{m}\varphi(t, x)\,dx\,dt,
\quad \text{as } \Delta_t\to 0.
\end{equation}
Furthermore, in view of Claim~\ref{claim2}-(iv), we have
$ d(x)^{-\gamma} u_{\Delta_t}^{\delta} \varphi \to d(x)^{-\gamma} u^{\delta} \varphi $  a.e. in $ Q_T, $ up to a subsequence.   Since $2(\delta-\gamma) > 2m-1$, it follows from \eqref{4equ3} that   $ \left(d(x)^{-\gamma} u_{\Delta_t}^{\delta}\right) $ is uniformly bounded in $L^{2}(Q_T)$.   Therefore, by the Dominated Convergence Theorem, we deduce that
\begin{equation}\label{4equ10}
\int_{0}^{T}\!\!\int_{\Omega}
d(x)^{-\gamma}u_{\Delta_t}^{\delta}\varphi(t, x)\,dx\,dt
\longrightarrow
\int_{0}^{T}\!\!\int_{\Omega}
d(x)^{-\gamma}u^{\delta}\varphi(t, x)\,dx\,dt,
\quad \text{as } \Delta_t\to 0.
\end{equation}
To pass to the limit in the local term, we test the weak formulation
\eqref{4equ7} with $u_{\Delta_{t}}-u$.
Using again \eqref{4equ3} and Claim~\ref{claim2}-(iv), (v), we obtain
\begin{equation*}
\begin{aligned}
&\left|\int_{0}^{T}\!\!\int_{\Omega}
|\nabla u_{\Delta_{t}}|^{p-2}\nabla u_{\Delta_{t}}\cdot\nabla(u_{\Delta_{t}}-u)\,dx\,dt\right| \\
&\le
\left|\int_{0}^{T}\!\!\iint_{\mathbb{R}^{2N}}
\frac{[u_{\Delta_{t}}(t, x)-u_{\Delta_{t}}(t, y)]^{p-1}
	\big((u_{\Delta_{t}}-u)(t, x)-(u_{\Delta_{t}}-u)(t, y)\big)}
{|x-y|^{N+sp}}\,dx\,dy\,dt\right| \\
&\quad
+ \left|\int_{0}^{T}\!\!\int_{\Omega} g^n u_{\Delta_t}^{m}(u_{\Delta_{t}}-u)\,dx\,dt\right|
+ m\left|\int_{0}^{T}\!\!\int_{\Omega}
d(x)^{-\gamma}u_{\Delta_t}^{\delta}(u_{\Delta_{t}}-u)\,dx\,dt\right| \\
&\quad
+ \left|\int_{0}^{T}\!\!\int_{\Omega}
u_{\Delta_t}^{m}\partial_{t}\tilde{u}_{\Delta_t}(u_{\Delta_{t}}-u)\,dx\,dt\right|
\longrightarrow 0,
\quad \text{as } \Delta_t\to 0.
\end{aligned}
\end{equation*}
By \cite[Proposition~2.6]{constantin2025doubly}, we conclude that
\[
|\nabla u_{\Delta_{t}}|^{p-2}\nabla u_{\Delta_{t}}
\longrightarrow
|\nabla u|^{p-2}\nabla u
\quad \text{in } (L^{\frac{p}{p-1}}(Q_T))^{N},
\]
and
\begin{equation}\label{4equ11}
\int_{0}^{T}\!\!\int_{\Omega}
|\nabla u_{\Delta_{t}}|^{p-2}\nabla u_{\Delta_{t}}\cdot\nabla\varphi(t,x)\,dx\,dt 
\longrightarrow
\int_{0}^{T}\!\!\int_{\Omega}
|\nabla u|^{p-2}\nabla u\cdot\nabla\varphi(t,x)\,dx\,dt,
\quad \text{as } \Delta_t\to 0.
\end{equation}
Finally, collecting
\eqref{4equ8}, \eqref{4equ12}, \eqref{4equ13}, \eqref{4equ14},
\eqref{4equ15}, \eqref{4equ10}, and \eqref{4equ11},
and passing to the limit in \eqref{4equ7} as $\Delta_t\to 0^{+}$,
we deduce that $u$ satisfies \eqref{PP1}, namely,
\begin{equation*}
\begin{aligned}
&\int_{0}^{T}\!\!\int_{\Omega}
u^{m}\partial_{t}(u^{m+1})\varphi(t, x)\,dx\,dt
+ \int_{0}^{T}\!\!\int_{\Omega}
|\nabla u|^{p-2}\nabla u\cdot\nabla\varphi(t, x)\,dx\,dt \\
&\quad
+ \int_{0}^{T}\!\!\iint_{\mathbb{R}^{2N}}
\frac{[u(t,x)-u(t,y)]^{p-1}
	(\varphi(t,x)-\varphi(t,y))}
{|x-y|^{N+sp}}\,dx\,dy\,dt \\
&=
\int_{0}^{T}\!\!\int_{\Omega}
\big(g(t,x)u^{m}
+ m\,d(x)^{-\gamma}u^{\delta}\big)\varphi(t, x)\,dx\,dt,
\end{aligned}
\end{equation*}
for all $\varphi\in C_{c}^{\infty}(Q_T)$.
By density, the above identity remains valid for any
$\varphi\in L^{2}(Q_T)\cap L^{1}(0,T;W_{0}^{1,p}(\Omega))$.
We now prove the previous Claim~\ref{claim2}. By virtue of \eqref{4equ3}, there exists a constant $c>0$, independent of $\Delta_t$, such that for every $(t,x)\in Q_T$, for all $m\geq 0$ and some $\alpha'\in (0, s)$, the following estimates hold:
\begin{equation}\label{equ68}
\begin{aligned}
c^{-1} \, d(x) &\leq u_{\Delta_t}(t,x) \leq c\, d(x)^{\alpha'},\\
c^{-1} \, d(x) &\leq \tilde{u}_{\Delta_t}^{1/m+1}(t,x) \leq c\, d(x)^{\alpha'}.
\end{aligned}
\end{equation}
We distinguish two cases, beginning with the situation where $m=0$. In this case, we employ the test function $u_n - u_{n-1}$ in the weak formulation~\eqref{4equ1} satisfied by $u_n$. This choice yields the discrete energy identity:
\begin{align*}
&\Delta_t \int_\Omega \left(\frac{u_n-u_{n-1}}{\Delta_t}\right)^2 \,dx
+ \int_\Omega |\nabla u_n|^{p-2}\nabla u_n \cdot \nabla (u_n-u_{n-1}) \,dx \\
&\quad
+ \iint_{\mathbb{R}^{2N}}
\frac{\left[ u_n(x)-u_n(y)\right] ^{p-1}
	\big((u_n-u_{n-1})(x)-(u_n-u_{n-1})(y)\big)}
{|x-y|^{N+sp}} \,dx\,dy \\
&\qquad = \int_\Omega g_n \,(u_n-u_{n-1}) \,dx .
\end{align*}
By applying Young’s inequality and exploiting the convexity of the functional $ u \longmapsto \frac{1}{p}\Big(
\|u\|_{W^{1,p}_0(\Omega)}^{p}
+ \|u\|_{W^{s,p}_0(\Omega)}^{p}
\Big), $ we obtain the following discrete energy inequality:
\begin{align*}
&\frac{\Delta_t}{2}\int_\Omega \left(\frac{u_n-u_{n-1}}{\Delta_t}\right)^2 \,dx
+ \frac{1}{p}\Big(
\|u_n\|_{W^{1,p}_0(\Omega)}^{p}
- \|u_{n-1}\|_{W^{1,p}_0(\Omega)}^{p}
\Big) \\
&\quad
+ \frac{1}{p}\Big(
\|u_n\|_{W^{s,p}_0(\Omega)}^{p}
- \|u_{n-1}\|_{W^{s,p}_0(\Omega)}^{p}
\Big)
\leq \frac{\Delta_t}{2}\|g_n\|_{L^2(\Omega)}^{2}.
\end{align*}
Summing this inequality from $n=1$ up to $n=n_0'<n_0$, we infer that
\begin{align*}
&\frac{\Delta_t}{2} \sum_{n=1}^{n_0'}
\int_\Omega \left(\frac{u_n-u_{n-1}}{\Delta_t}\right)^2 \,dx \\
&\quad
+ \frac{1}{p}\Big(
\|u_{n_0'}\|_{W^{1,p}_0(\Omega)}^{p}
- \|u_0\|_{W^{1,p}_0(\Omega)}^{p}
\Big)
+ \frac{1}{p}\Big(
\|u_{n_0'}\|_{W^{s,p}_0(\Omega)}^{p}
- \|u_0\|_{W^{s,p}_0(\Omega)}^{p}
\Big) \\
&\qquad
\leq \frac{\Delta_t}{2} \sum_{n=1}^{n_0'} \|g_n\|_{L^2(\Omega)}^{2}
\leq \frac{1}{2}\|g\|_{L^2(Q_T)}^{2}.
\end{align*}
Since $n_0'$ is arbitrary, we deduce
\begin{align}\label{4equ18}
\left\| \partial_t \tilde{u}_{\Delta_t} \right\|_{L^{2}(Q_T)}^{2}
+ \frac{1}{p}\|u_{\Delta_t}\|_{L^\infty(0,T;W^{1,p}_0(\Omega))}^{p}
\leq \frac{1}{2}\|g\|_{L^2(Q_T)}^{2}
+ \frac{2}{p}\|u_0\|_{W^{1,p}_0(\Omega)}^{p}.
\end{align}
As a consequence, the family $\big(\partial_t \tilde{u}_{\Delta_t}\big)_{\Delta_t}$
is uniformly bounded in $L^2(Q_T)$ independently of $\Delta_t$, while
$\big(u_{\Delta_t}\big)_{\Delta_t}$ is uniformly bounded in
$L^\infty\! (0,T;W^{1,p}_0(\Omega))$.
Moreover, by convexity, the family
$(\tilde{u}_{\Delta_t})_{\Delta_t}$ is also uniformly bounded in
$L^\infty\! (0,T;W^{1,p}_0(\Omega))$.
This proves Claim~\ref{claim2}-(i) and~(ii). Therefore, up to the extraction of a sub- sequence, there exist
$u,v \in L^\infty\! (0,T;W^{1,p}_0(\Omega))$ such that
\[
\tilde{u}_{\Delta_t} \stackrel{\ast}{\rightharpoonup} u
\quad \text{in } L^\infty\!\left(0,T;W^{1,p}_0(\Omega)\right),
\]
and
\[
u_{\Delta_t} \stackrel{\ast}{\rightharpoonup} v
\quad \text{in } L^\infty\!\left(0,T;W^{1,p}_0(\Omega)\right).
\]
Furthermore, the uniform boundedness of
$\big(\partial_t \tilde{u}_{\Delta_t}\big)_{\Delta_t}$ in $L^2(Q_T)$ implies
\[
\partial_t \tilde{u}_{\Delta_t} \rightharpoonup \partial_t u
\quad \text{in } L^2(Q_T).
\]
As a direct consequence, we obtain
\begin{equation}\label{4equ19}
\|u_{\Delta_t} - \tilde{u}_{\Delta_t}\|_{L^\infty(0,T;L^2(\Omega))}
\leq
\max_{1 \leq n \leq n_0}
\|u_n - u_{n-1}\|_{L^2(\Omega)}
\leq
\Delta_t^{\frac{1}{2}}
\|\partial_t \tilde{u}_{\Delta_t}\|_{L^2(Q_T)}
\leq
C\,\Delta_t^{\frac{1}{2}}.
\end{equation}
It follows that $u = v$. Together with \eqref{equ68}, this completes the proof of Claim~\ref{claim2} parts~(ii), (v), and~(vi). Finally, in view of~\eqref{4equ18} and the compact embedding
$W^{1,p}_0(\Omega)\hookrightarrow L^2(\Omega)$, the Aubin--Simon Lemma
(see e.g. \cite[Theorem~II.5.16]{boyer2012mathematical}) yields
\[
\tilde{u}_{\Delta_t} \to u
\quad \text{in } C\!\left([0,T];L^2(\Omega)\right).
\]
By interpolation, we further deduce, up to a subsequence, that
\[
\tilde{u}_{\Delta_t} \to u
\quad \text{in } C\!\left([0,T];L^r(\Omega)\right),
\qquad \text{for all } r \geq 1.
\]
Combining this convergence with~\eqref{4equ19}, we conclude that
$u_{\Delta_t} \to u$ in $L^\infty\!\left(0,T;L^2(\Omega)\right)$, and,
by interpolation,
\[
u_{\Delta_t} \to u
\quad \text{in } L^\infty\!\left(0,T;L^r(\Omega)\right),
\qquad \text{for all } r \geq 1.
\]
This concludes the proof of Claim~\ref{claim2} in the case $m = 0$.  
Next, we turn our attention to the case $m > 0$.   Since $u_n,\, u_{n-1} \in L^{\infty}(\Omega)$, for any fixed $\varepsilon > 0 $, we may select the test function
 \[
 \frac{(u_n+\varepsilon)^{m+1}-(u_{n-1}+\varepsilon)^{m+1}}{(u_n+\varepsilon)^m}
 \in W_0^{1,p}(\Omega)\cap L^{\infty}(\Omega),
 \]
 in the weak formulation~\eqref{4equ1} satisfied by $u_n$.
 As a consequence, we obtain
 \begin{equation}\label{4equ16}
 \begin{aligned}
 &\Delta_t \int_\Omega
 \left(\frac{u_n^{m+1}-u_{n-1}^{m+1}}{\Delta_t}\right)
 u_n^{m}
 \left(
 \frac{(u_n+\varepsilon)^{m+1}-(u_{n-1}+\varepsilon)^{m+1}}{(u_n+\varepsilon)^m}
 \right)\,dx \\
 &\quad
 + \int_\Omega |\nabla u_n|^{p-2}\nabla u_n \cdot
 \nabla\!\left(
 \frac{(u_n+\varepsilon)^{m+1}-(u_{n-1}+\varepsilon)^{m+1}}{(u_n+\varepsilon)^m}
 \right)\,dx \\
 &\quad
 + \iint_{\mathbb{R}^{2N}}
 \frac{\left[ u_n(x)-u_n(y)\right] ^{p-1}}{|x-y|^{N+sp}} \\
 &\quad\times
 \Biggl[\left( 
 \frac{(u_n+\varepsilon)^{m+1}-(u_{n-1}+\varepsilon)^{m+1}}{(u_n+\varepsilon)^m}\right) (x)
 -
 \left( \frac{(u_n+\varepsilon)^{m+1}-(u_{n-1}+\varepsilon)^{m+1}}{(u_n+\varepsilon)^m}\right) (y)
 \Biggr]
 \,dx\,dy \\
 &=
 \int_\Omega g_n u_n^{m}
 \left(
 \frac{(u_n+\varepsilon)^{m+1}-(u_{n-1}+\varepsilon)^{m+1}}{(u_n+\varepsilon)^m}
 \right)\,dx \\
 &\quad
 + m \int_\Omega d(x)^{-\gamma} u_n^{\delta}
 \left(
 \frac{(u_n+\varepsilon)^{m+1}-(u_{n-1}+\varepsilon)^{m+1}}{(u_n+\varepsilon)^m}
 \right)\,dx .
 \end{aligned}
 \end{equation}
Since $m < p - 1$, it follows from the \textbf{Picone-type inequalities} in both the local and nonlocal frameworks \cite[Propositions~2.6 and~4.2]{brasco2014convexity}, together with Young's inequality, that
 \begin{equation*}
 \begin{aligned}
 &|u_n(x)-u_n(y)|^{p-2}\big(u_n(x)-u_n(y)\big)
 \Biggl[
 \frac{(u_{n-1}+\varepsilon)^{m+1}}{(u_n+\varepsilon)^m}(x)
 -
 \frac{(u_{n-1}+\varepsilon)^{m+1}}{(u_n+\varepsilon)^m}(y)
 \Biggr] \\
 &\leq |u_{n-1}(x)-u_{n-1}(y)|^{m+1}
 |u_n(x)-u_n(y)|^{p-m-1} \\
 &\leq \frac{m+1}{p}|u_{n-1}(x)-u_{n-1}(y)|^{p}
 + \frac{p-m-1}{p}|u_n(x)-u_n(y)|^{p},
 \end{aligned}
 \end{equation*}
 and
 \begin{equation*}
 \begin{aligned}
 |\nabla u_n|^{p-2}\nabla u_n \cdot
 \nabla\!\left(\frac{(u_{n-1}+\varepsilon)^{m+1}}{(u_n+\varepsilon)^m}\right)
 \leq
 \frac{m+1}{p}|\nabla u_{n-1}|^{p}
 + \frac{p-m-1}{p}|\nabla u_n|^{p}.
 \end{aligned}
 \end{equation*}
 Combining these estimates with~\eqref{4equ16}, we infer
 \begin{align*}
 &\Delta_t \int_\Omega
 \left(\frac{u_n^{m+1}-u_{n-1}^{m+1}}{\Delta_t}\right)
 u_n^{m}
 \left(
 \frac{(u_n+\varepsilon)^{m+1}-(u_{n-1}+\varepsilon)^{m+1}}{(u_n+\varepsilon)^m}
 \right)\,dx \\
 &\quad
 + \frac{m+1}{p}\Big(
 \|u_n\|_{W^{1,p}_0(\Omega)}^{p}
 - \|u_{n-1}\|_{W^{1,p}_0(\Omega)}^{p}
 \Big)
 + \frac{m+1}{p}\Big(
 \|u_n\|_{W^{s,p}_0(\Omega)}^{p}
 - \|u_{n-1}\|_{W^{s,p}_0(\Omega)}^{p}
 \Big) \\
 &\qquad
 \leq \int_\Omega g_n u_n^{m}
 \left(
 \frac{(u_n+\varepsilon)^{m+1}-(u_{n-1}+\varepsilon)^{m+1}}{(u_n+\varepsilon)^m}
 \right)\,dx \\
 &\quad
 + m \int_\Omega d(x)^{-\gamma} u_n^{\delta}
 \left(
 \frac{(u_n+\varepsilon)^{m+1}-(u_{n-1}+\varepsilon)^{m+1}}{(u_n+\varepsilon)^m}
 \right)\,dx .
 \end{align*}
 Letting $\varepsilon\to0$ and applying Fatou’s lemma together with the dominated convergence theorem, we obtain
 \begin{equation}\label{4equ21}
  \begin{aligned}
 &\frac{\Delta_t}{2}\int_\Omega
 \left(\frac{u_n^{m+1}-u_{n-1}^{m+1}}{\Delta_t}\right)^2\,dx \\
 &\quad
 + \frac{m+1}{p}\Big(
 \|u_n\|_{W^{1,p}_0(\Omega)}^{p}
 - \|u_{n-1}\|_{W^{1,p}_0(\Omega)}^{p}
 \Big)
 + \frac{m+1}{p}\Big(
 \|u_n\|_{W^{s,p}_0(\Omega)}^{p}
 - \|u_{n-1}\|_{W^{s,p}_0(\Omega)}^{p}
 \Big) \\
 &\qquad
 \leq
 \Delta_t\|g_n\|_{L^2(\Omega)}^{2}
 + m^2\Delta_t\int_\Omega d(x)^{-2\gamma}u_n^{2(\delta-m)}\,dx .
  \end{aligned}
 \end{equation}
 Using~\eqref{4equ3} and the assumption $2(\delta-\gamma)>2m-1$, we have
 \[
 \int_\Omega d(x)^{-2\gamma}u_n^{2(\delta-m)}\,dx
 \leq \int_\Omega d(x)^{-2\gamma+2(\delta-m)}\,dx < \infty .
 \]
 Summing the inequality \eqref{4equ21} from $n=1$ up to $n=n_0'<n_0$, we deduce that
 \begin{align*}
 &\frac{\Delta_t}{2}\sum_{n=1}^{n_0'}\int_\Omega
 \left(\frac{u_n^{m+1}-u_{n-1}^{m+1}}{\Delta_t}\right)^2\,dx \\
 &\quad
 + \frac{m+1}{p}\Big(
 \|u_{n_0'}\|_{W^{1,p}_0(\Omega)}^{p}
 - \|u_0\|_{W^{1,p}_0(\Omega)}^{p}
 \Big)
 + \frac{m+1}{p}\Big(
 \|u_{n_0'}\|_{W^{s,p}_0(\Omega)}^{p}
 - \|u_0\|_{W^{s,p}_0(\Omega)}^{p}
 \Big) \\
 &\qquad
 \leq \|g\|_{L^2(Q_T)}^{2} + C,
 \end{align*}
 where $C$ is independent of $n_0'$.
 Since $n_0'$ is arbitrary, we infer that
 \begin{align}\label{4equ20}
 \left\|\partial_t\tilde{u}_{\Delta_t}\right\|_{L^2(Q_T)}^{2}
 + \frac{m+1}{p}\|u_{\Delta_t}\|_{L^\infty(0,T;W^{1,p}_0(\Omega))}^{p}
 \leq
 \|g\|_{L^2(Q_T)}^{2}
 + \frac{2(m+1)}{p}\|u_0\|_{W^{1,p}_0(\Omega)}^{p}.
 \end{align}
 Once again, the sequence $\big(\partial_t \tilde{u}_{\Delta_t}\big)_{\Delta_t}$ is uniformly bounded in $L^2(Q_T)$, independently of $\Delta_t$, whereas $\big(u_{\Delta_t}\big)_{\Delta_t}$ is uniformly bounded in $L^\infty(0,T; W^{1,p}_0(\Omega))$.  
 Moreover, since $ \tilde{u}_{\Delta_t} = \zeta\, u_n^{m+1} + (1-\zeta)\, u_{n-1}^{m+1}, $  with $ \zeta := \frac{t-t_{n-1}}{\Delta_t}, $
 an application of \cite[Proposition~2.6]{brasco2014convexity} yields
 \[
 \int_{\Omega}
 \left|\nabla\!\left(\tilde{u}_{\Delta_t}^{1/m+1}\right)\right|^p
 \,dx
 \leq
 \zeta \int_{\Omega} |\nabla u_n|^p \,dx
 +
 (1-\zeta)\int_{\Omega} |\nabla u_{n-1}|^p \,dx .
 \]
 As a consequence, 
 $\big(\tilde{u}_{\Delta_t}^{1/m+1}\big)_{\Delta_t}$
 is uniformly bounded in
 $L^\infty\! (0,T;W^{1,p}_0(\Omega))$.
 This proves Claim~\ref{claim2}-(i) and~(ii). Therefore, up to the extraction of a subsequence, there exist
 $u,v \in L^\infty\! (0,T;W^{1,p}_0(\Omega))$ such that
 \[
 u_{\Delta_t} \stackrel{\ast}{\rightharpoonup} u
 \quad \text{in }
 L^\infty\!\left(0,T;W^{1,p}_0(\Omega)\right),
 \]
 and
 \[
 \tilde{u}_{\Delta_t}^{1/m+1}
 \stackrel{\ast}{\rightharpoonup} v
 \quad \text{in }
 L^\infty\!\left(0,T;W^{1,p}_0(\Omega)\right).
 \]
Moreover, by invoking \eqref{equ23} together with the uniform boundedness of
 $\big(\partial_t \tilde{u}_{\Delta_t}\big)_{\Delta_t}$ in $L^2(Q_T)$, we infer that
 \begin{equation}\label{4equ23}
 \begin{aligned}
 \sup_{t\in[0,T]}
 \left\|
 \tilde{u}_{\Delta_t}^{1/m+1} - u_{\Delta_t}
 \right\|_{L^{2(m+1)}(\Omega)}^{2(m+1)}
 &\leq
 \sup_{t\in[0,T]}
 \left\|
 \tilde{u}_{\Delta_t} - u_{\Delta_t}^{m+1}
 \right\|_{L^2(\Omega)}^{2} \\
 &\leq
 \max_{1\leq n \leq n_0}
 \|u_n - u_{n-1}\|_{L^2(\Omega)}^{2} \\
 &\leq
 \Delta_t
 \|\partial_t \tilde{u}_{\Delta_t}\|_{L^2(Q_T)}^{2}
 \leq
 C\,\Delta_t^{\frac{1}{2}} .
 \end{aligned}
 \end{equation}
It follows that $u = v$. Together with \eqref{equ68}, this completes the proof of Claim~\ref{claim2} parts~(ii), (v), and~(vi). By the mean value theorem and the uniform boundedness of the family $\big(\partial_t \tilde{u}_{\Delta_t}\big)_{\Delta_t}$ in $L^2(Q_T)$,
it follows that the sequence $\big(\tilde{u}_{\Delta_t}\big)_{\Delta_t}$
is equicontinuous in $C\!\left([0,T];L^r(\Omega)\right)$ for $1<r\leq 2$.
Hence, in view of~\eqref{equ68} and by interpolation, we deduce that
$\big(\tilde{u}_{\Delta_t}\big)_{\Delta_t}$ is equicontinuous in
$C\!\left([0,T];L^r(\Omega)\right)$ for all $r>1$.
Moreover, by~\eqref{equ23}, the sequence
$\big(\tilde{u}_{\Delta_t}^{1/m+1}\big)_{\Delta_t}$
is uniformly bounded and equicontinuous in
$C\!\left([0,T];L^r(\Omega)\right)$ for $r>1$.
Therefore, by the Arzel\`a--Ascoli theorem, there exists a subsequence
(still denoted by $(\tilde{u}_{\Delta_t})$) such that
\begin{equation}\label{4equ22}
\tilde{u}_{\Delta_t} \to u^{m+1}
\quad \text{in } C\!\left([0,T];L^r(\Omega)\right),
\qquad \text{for all } r \geq 1.
\end{equation}
Combining this convergence with~\eqref{4equ23}, we conclude that
$u_{\Delta_t}\to u$ in $L^\infty\!\left(0,T;L^2(\Omega)\right)$ and,
by interpolation,
\[
u_{\Delta_t}\to u
\quad \text{in } L^\infty\!\left(0,T;L^r(\Omega)\right),
\qquad \text{for all } r \geq 1.
\]
Furthermore, since the family
$\big(\partial_t \tilde{u}_{\Delta_t}\big)_{\Delta_t}$ is uniformly
bounded in $L^2(Q_T)$, independently of $\Delta_t$, it follows from
\eqref{4equ22} that $ \partial_t \tilde{u}_{\Delta_t}
\rightharpoonup \partial_t u^{m+1} $
in $ L^2(Q_T). $ This completes the proof of Claim~\ref{claim2} in the case $m>0$.\\[4pt]
\textbf{Step~2: Uniqueness of weak solutions.}  
Let $u$ and $\tilde u$ be two weak solutions of~\eqref{PP1} corresponding to the data $g_{1}$ and $g_{2}$, respectively.   We begin with the case $m = 0$.   By choosing the test functions $\Phi = u - \tilde u$ and $\Psi = \tilde u - u$ in~\eqref{weakform}, which are satisfied by $u$ and $\tilde u$, respectively, we deduce that for every $t \in (0,T]$,
\begin{equation*}
\begin{aligned}
& \int_{0}^{t}\!\!\int_{\Omega} \partial_{t} u \, (u-\tilde u)\, dx\, ds
+ \int_{0}^{t}\!\!\int_{\Omega} |\nabla u|^{p-2} \nabla u \cdot \nabla (u-\tilde u)\, dx\, ds \\
&\quad + \int_{0}^{t}\!\!\iint_{\mathbb{R}^{2N}}
\frac{\left[ u(s,x)-u(s,y)\right] ^{p-1}
	\big((u-\tilde u)(s,x)-(u-\tilde u)(s,y)\big)}
{|x-y|^{N+sp}} \, dx\, dy\, ds \\
&= \int_{0}^{t}\!\!\int_{\Omega} g_{1}(s,x)\,(u-\tilde u)\, dx\, ds,
\end{aligned}
\end{equation*}
and

\begin{equation*}
\begin{aligned}
& \int_{0}^{t}\!\!\int_{\Omega} \partial_{t} \tilde u \, (\tilde u-u)\, dx\, ds
+ \int_{0}^{t}\!\!\int_{\Omega} |\nabla \tilde u|^{p-2} \nabla \tilde u \cdot \nabla (\tilde u-u)\, dx\, ds \\
&\quad + \int_{0}^{t}\!\!\iint_{\mathbb{R}^{2N}}
\frac{\left[ \tilde u(s,x)-\tilde u(s,y)\right] ^{p-1}
	\big((\tilde u-u)(s,x)-(\tilde u-u)(s,y)\big)}
{|x-y|^{N+sp}} \, dx\, dy\, ds \\
&= \int_{0}^{t}\!\!\int_{\Omega} g_{2}(s,x)\,(\tilde u-u)\, dx\, ds .
\end{aligned}
\end{equation*}
Summing the two identities yields
\begin{equation*}
\begin{aligned}
& \int_{0}^{t}\!\!\int_{\Omega} \partial_{t}(u-\tilde u)\,(u-\tilde u)\, dx\, ds \\
&\quad + \int_{0}^{t}\!\!\int_{\Omega}
\Big(|\nabla u|^{p-2}\nabla u-|\nabla \tilde u|^{p-2}\nabla \tilde u\Big)
\cdot \nabla (u-\tilde u)\, dx\, ds \\
&\quad + \int_{0}^{t}\!\!\iint_{\mathbb{R}^{2N}}
\frac{
	\Big( [u(x)-u(y)]^{p-1}- [\tilde u(x)-\tilde u(y)]^{p-1} \Big)
	\Big( (u-\tilde u)(x)-(u-\tilde u)(y) \Big)
}
{|x-y|^{N+sp}}\, dx\, dy\, ds \\
&= \int_{0}^{t}\!\!\int_{\Omega} (g_{1}-g_{2})(u-\tilde u)\, dx\, ds .
\end{aligned}
\end{equation*}
Invoking the elementary monotonicity inequalities from~\cite{farina2013monotonicity} (see also~\cite[Section~10]{lindqvist2017notes}), we deduce
\begin{equation*}
\frac{1}{2}\int_{0}^{t}\!\!\int_{\Omega} \partial_{t}(u-\tilde u)^{2}\, dx\, ds
\le \int_{0}^{t}\!\!\int_{\Omega} (g_{1}-g_{2})(u-\tilde u)\, dx\, ds .
\end{equation*}
Using H\"older's inequality together with Gr\"onwall's lemma~\cite[Lemma~A.5]{brezis1973ope}, we conclude that for all $t\in[0,T]$,
\[
\|u(t)-\tilde u(t)\|_{L^{2}(\Omega)}
\le \|u(0)-\tilde u(0)\|_{L^{2}(\Omega)}
+ \int_{0}^{t} \|g_{1}(s)-g_{2}(s)\|_{L^{2}(\Omega)}\, ds .
\]
This estimate proves the uniqueness of weak solutions in the case $m=0$. We now turn to the case $m>0$. Since $u,\tilde u\in L^{\infty}(Q_{T})$, for any $\varepsilon\in(0,1)$ we define
\begin{equation}\label{equ35}
\Phi := \frac{(u+\epsilon)^{m+1} - (\tilde{u}+\epsilon)^{m+1}}{(u+\epsilon)^{m}}, \quad
\Psi := \frac{(\tilde{u}+\epsilon)^{m+1} - (u+\epsilon)^{m+1}}{(\tilde{u}+\epsilon)^{m}}.
\end{equation}
Clearly, $\Phi, \Psi \in L^2(Q_T) \cap L^1(0,T; W^{1,p}_0(\Omega))$, and for any $t \in (0,T]$ we have
\begin{align*}
& \int_0^t \int_\Omega u^{m} \, \partial_t(u^{m+1}) \, \Phi \, dx \, ds\\
&\quad   + \int_0^t \int_\Omega |\nabla u|^{p-2} \nabla u \cdot \nabla \Phi \, dx \, ds \\
&\quad + \int_0^t \iint_{\mathbb{R}^{2N}} 
\frac{[u(s,x)-u(s,y)]^{p-1} (\Phi(s,x)-\Phi(s,y))}{|x-y|^{N+sp}} \, dx \, dy \, ds \\
&= \int_0^t \int_\Omega \Big(g_{1}(s,x) u^{m} + m\, d(x)^{-\gamma} u^{\delta}\Big) \Phi \, dx \, ds,
\end{align*}
and
\begin{align*}
& \int_0^t \int_\Omega \tilde{u}^{m} \, \partial_t(\tilde{u}^{m+1}) \, \Psi \, dx \, ds \\
&\quad  + \int_0^t \int_\Omega |\nabla \tilde{u}|^{p-2} \nabla \tilde{u} \cdot \nabla \Psi \, dx \, ds \\
&\quad + \int_0^t \iint_{\mathbb{R}^{2N}} 
\frac{[\tilde{u}(s,x)-\tilde{u}(s,y)]^{p-1} (\Psi(s,x)-\Psi(s,y))}{|x-y|^{N+sp}} \, dx \, dy \, ds \\
&= \int_0^t \int_\Omega \Big(g_{2}(s,x) \tilde{u}^{m} + m\, d(x)^{-\gamma} \tilde{u}^{\delta}\Big) \Psi \, dx \, ds.
\end{align*}
Summing these identities, we obtain
\begin{align*}
& \int_0^t \int_\Omega 
\Bigg( \frac{u^{m}\, \partial_t(u^{m+1})}{(u+\epsilon)^{m}} 
- \frac{\tilde{u}^{m} \, \partial_t(\tilde{u}^{m+1})}{(\tilde{u}+\epsilon)^{m}} \Bigg) 
\big((u+\epsilon)^{m+1} - (\tilde{u}+\epsilon)^{m+1}\big) \, dx \, ds \\
&\quad + \int_0^t \int_\Omega |\nabla u|^{p-2} \nabla u \cdot \nabla\left(  \frac{(u+\epsilon)^{m+1} - (\tilde{u}+\epsilon)^{m+1}}{(u+\epsilon)^{m}}\right)  \, dx \, ds \\
&\quad + \int_0^t \int_\Omega |\nabla \tilde{u}|^{p-2} \nabla \tilde{u} \cdot \nabla \left( \frac{(\tilde{u}+\epsilon)^{m+1} - (u+\epsilon)^{m+1}}{(\tilde{u}+\epsilon)^{m}}\right)  \, dx \, ds \\
&\quad + \int_0^t \iint_{\mathbb{R}^{2N}} \frac{[u(s,x)-u(s,y)]^{p-1}}{|x-y|^{N+sp}} \Bigg[ \left( \frac{(u+\epsilon)^{m+1} - (\tilde{u}+\epsilon)^{m+1}}{(u+\epsilon)^{m}}\right) (s, x) \\
&\qquad - \left( \frac{(u+\epsilon)^{m+1} - (\tilde{u}+\epsilon)^{m+1}}{(u+\epsilon)^{m}}\right) (s, y) \Bigg] \, dx \, dy \, ds \\
&\quad + \int_0^t \iint_{\mathbb{R}^{2N}} \frac{[\tilde{u}(s,x)-\tilde{u}(s,y)]^{p-1}}{|x-y|^{N+sp}} \Bigg[ \left( \frac{(\tilde{u}+\epsilon)^{m+1} - (u+\epsilon)^{m+1}}{(\tilde{u}+\epsilon)^{m}}\right) (s, x) \\
&\qquad - \left( \frac{(\tilde{u}+\epsilon)^{m+1} - (u+\epsilon)^{m+1}}{(\tilde{u}+\epsilon)^{m}}\right) (s, y) \Bigg] \, dx \, dy \, ds \\
&= \int_0^t \int_\Omega \Bigg( \frac{g_{1} u^{m}}{(u+\epsilon)^{m}} - \frac{g_{2} \tilde{u}^{m}}{(\tilde{u}+\epsilon)^{m}} \Bigg)
\big((u+\epsilon)^{m+1} - (\tilde{u}+\epsilon)^{m+1}\big) \, dx \, ds \\
&\quad + m \int_0^t \int_\Omega d(x)^{-\gamma} \Bigg( \frac{u^{\delta}}{(u+\epsilon)^{m}} - \frac{\tilde{u}^{\delta}}{(\tilde{u}+\epsilon)^{m}} \Bigg) 
\big((u+\epsilon)^{m+1} - (\tilde{u}+\epsilon)^{m+1}\big) \, dx \, ds.
\end{align*}
By Lemma~\ref{Lem2}, it follows that
\begin{equation}\label{4equ29}
 \begin{aligned}
& \int_0^t \int_\Omega 
\Bigg( \frac{u^{m} \, \partial_t(u^{m+1})}{(u+\epsilon)^{m}} 
- \frac{ \tilde{u}^{m} \, \partial_t(\tilde{u}^{m+1})}{(\tilde{u}+\epsilon)^{m}} \Bigg) 
\big((u+\epsilon)^{m+1} - (\tilde{u}+\epsilon)^{m+1}\big) \, dx \, ds  \\
&\le \int_0^t \int_\Omega \Bigg( \frac{g_{1} u^{m}}{(u+\epsilon)^{m}} - \frac{g_{2} \tilde{u}^{m}}{(\tilde{u}+\epsilon)^{m}} \Bigg)
\big((u+\epsilon)^{m+1} - (\tilde{u}+\epsilon)^{m+1}\big) \, dx \, ds \\
&\quad + m \int_0^t \int_\Omega d(x)^{-\gamma} \Bigg( \frac{u^{\delta}}{(u+\epsilon)^{m}} - \frac{\tilde{u}^{\delta}}{(\tilde{u}+\epsilon)^{m}} \Bigg)
\big((u+\epsilon)^{m+1} - (\tilde{u}+\epsilon)^{m+1}\big) \, dx \, ds.
\end{aligned}
\end{equation}
Since $\frac{u}{u+\epsilon}, \frac{\tilde{u}}{\tilde{u}+\epsilon} < 1$ and $u, \tilde{u} \in L^{\infty}(Q_T)$, we have
\begin{align*}
& \Bigg| 
\Bigg( \frac{u^{m} \, \partial_t(u^{m+1})}{(u+\epsilon)^{m}} 
- \frac{\tilde{u}^{m} \, \partial_t(\tilde{u}^{m+1})}{(\tilde{u}+\epsilon)^{m}} \Bigg) 
((u+\epsilon)^{m+1} - (\tilde{u}+\epsilon)^{m+1}) 
\Bigg| \\[2mm]
& \quad \le C \big(|\partial_t(u^{m+1})| + |\partial_t(\tilde{u}^{m+1})|\big) \in L^1(Q_T),
\end{align*}
where $C>0$ is independent of $\epsilon$. Moreover, as $\epsilon \to 0^+$,
\[
\Bigg( \frac{u^{m}\, \partial_t(u^{m+1}) }{(u+\epsilon)^{m}} - \frac{\tilde{u}^{m} \, \partial_t(\tilde{u}^{m+1})}{(\tilde{u}+\epsilon)^{m}} \Bigg)
((u+\epsilon)^{m+1} - (\tilde{u}+\epsilon)^{m+1}) \to \frac{1}{2} \partial_t (u^{m+1} - \tilde{u}^{m+1})^2.
\]
Therefore, by the Dominated Convergence Theorem, we obtain
\begin{equation*}
\begin{aligned}
& \int_0^t \int_\Omega 
\Bigg( \frac{u^{m} \, \partial_t(u^{m+1})}{(u+\epsilon)^{m}} 
- \frac{\tilde{u}^{m} \, \partial_t(\tilde{u}^{m+1})}{(\tilde{u}+\epsilon)^{m}} \Bigg) \notag  ((u+\epsilon)^{m+1} - (\tilde{u}+\epsilon)^{m+1}) \, dx \, ds \notag \\
& \quad \longrightarrow \frac{1}{2} \int_0^t \int_\Omega \partial_t(u^{m+1}-\tilde{u}^{m+1})^2 \, dx \, ds.
\end{aligned}
\end{equation*}
Using Fatou's Lemma, we also have
\begin{equation}\label{4equ24}
\begin{aligned}
\liminf_{\epsilon \to 0} \int_0^t \int_\Omega d(x)^{-\gamma}\frac{u^{\delta} (\tilde{u}+\epsilon)^{m+1}}{(u+\epsilon)^{m}} \, dx \, ds &\ge \int_0^t \int_\Omega d(x)^{-\gamma}u^{\delta - m } \tilde{u}^{m+1}\, dx \, ds, \\
\liminf_{\epsilon \to 0} \int_0^t \int_\Omega d(x)^{-\gamma}\frac{\tilde{u}^{\delta} (u+\epsilon)^{m+1}}{(\tilde{u}+\epsilon)^{m}} \, dx \, ds &\ge \int_0^t \int_\Omega d(x)^{-\gamma} \tilde{u}^{\delta - m} u^{m+1}\, dx \, ds.
\end{aligned}
\end{equation}
Combining \eqref{4equ24} and the above estimates, we deduce from \eqref{4equ29} that
\[
\frac{1}{2} \int_0^t \int_\Omega \partial_t(u^{m+1} - \tilde{u}^{m+1})^2 \, dx \, ds 
\le \int_0^t \int_\Omega (g_{1}-g_{2})(u^{m+1}-\tilde{u}^{m+1}) \, dx \, ds.
\]
Using again H\"{o}lder's inequality and Gr\"{o}nwall's Lemma \cite[Lemma A.5]{brezis1973ope}, we conclude that for any $t \in [0,T]$,
\[
\|u^{m+1}(t)-\tilde{u}^{m+1}(t)\|_{L^2(\Omega)} 
\le \|u^{m+1}(0)-\tilde{u}^{m+1}(0)\|_{L^2(\Omega)} + \int_0^t \|g_{1}(s)-g_{2}(s)\|_{L^2(\Omega)} \, ds,
\]
which establishes the uniqueness of weak solutions in the case $ m > 0 $.\\[4pt]
\textbf{Step~3: Regularity of weak solutions.}  
We show that $u \in C([0,T]; W_{0}^{1,p}(\Omega))$ and then verify~\eqref{4equ31}, which completes the proof of this step.  
The argument closely follows that of \cite[Theorem~3.3, Step~4]{giacomoni2018quasilinear} (see also \cite{arora2020doubly}). We first consider the case $m = 0$.   Since $ u \in L^{\infty}(0,T; W_{0}^{1,p}(\Omega)) \cap L^{\infty}(Q_T) $ and $ \partial_t u \in L^{2}(Q_T), $ the Aubin--Simon compactness lemma (see e.g. \cite[Theorem~II.5.16]{boyer2012mathematical})  yields $ u \in C([0,T]; L^{r}(\Omega)) $  for all  $ r \geq 1. $ We now turn to the case $m > 0$.   In this situation, we have $ u \in L^{\infty}(0,T; W_{0}^{1,p}(\Omega)) \cap C([0,T]; L^{p}(\Omega)). $ Furthermore, in both of the above cases, by the Sobolev embedding theorems (Theorems~\ref{thm0} and~\ref{thm3}), the spaces
$W_{0}^{1,p}(\Omega)$ and $W_{0}^{s,p}(\Omega)$ are compactly embedded into $L^{p}(\Omega)$. As a consequence, the mapping $ u : [0,T] \longrightarrow W_{0}^{1,p}(\Omega) $ is weakly continuous. Therefore, for any $t_{0} \in [0,T]$, we have $u(\cdot,t_{0}) \in W_{0}^{1,p}(\Omega)$ and
\begin{equation}\label{4equ28}
\|u(t_{0})\|_{W_{0}^{1,p}(\Omega)}
\leq \liminf_{t \to t_{0}} \|u(t)\|_{W_{0}^{1,p}(\Omega)}.
\end{equation}
Moreover,
\begin{equation}\label{4equ32}
\|u(t_{0})\|_{W_{0}^{s,p}(\Omega)}
\leq \liminf_{t \to t_{0}} \|u(t)\|_{W_{0}^{s,p}(\Omega)}.
\end{equation}
On the other hand, in the case $m = 0$, proceeding as in \textbf{Step~1}, we select the test function $u_n - u_{n-1}$ in the weak formulation~\eqref{4equ1} satisfied by $u_n$. Summing the resulting identity over $n$ from $n_{0}'$ to $n_{0}''< n_{0}$, we obtain
\begin{equation}\label{4equ25}
\begin{aligned}
&\Delta_t \sum_{n=n_{0}'}^{n_{0}''}
\int_\Omega \left(\frac{u_n-u_{n-1}}{\Delta_t}\right)^2 \,dx \\
&\quad
+ \frac{1}{p}\Big(
\|u_{n_{0}''}\|_{W^{1,p}_0(\Omega)}^{p}
- \|u_{n_{0}'}\|_{W^{1,p}_0(\Omega)}^{p}
\Big)
+ \frac{1}{p}\Big(
\|u_{n_{0}''}\|_{W^{s,p}_0(\Omega)}^{p}
- \|u_{n_{0}'}\|_{W^{s,p}_0(\Omega)}^{p}
\Big) \\
&\qquad
\leq \Delta_t \sum_{n=n_{0}'}^{n_{0}''}
\int_{\Omega} g_{\Delta_t}
\left(\frac{u_n-u_{n-1}}{\Delta_t}\right)\,dx.
\end{aligned}
\end{equation}
Let $t \in [t_{0},T]$ be arbitrary, and choose integers $n_{0}'$ and $n_{0}''$ such that $ t_{n_{0}'} = n_{0}'\Delta_t \to t_{0} $ and $ t_{n_{0}''} = n_{0}''\Delta_t \to t, $ as $ \Delta_t \to 0. $ Passing to the limit in~\eqref{4equ25}, we infer that
\begin{align}\label{4equ30}
&\int_{t_{0}}^{t}\int_{\Omega} \left(\partial_{t}u\right)^{2}\,dx\,ds
+ \frac{1}{p}\Big(
\|u(t)\|_{W^{1,p}_0(\Omega)}^{p}
+ \|u(t)\|_{W^{s,p}_0(\Omega)}^{p}
\Big) \\
&\qquad
\leq \int_{t_{0}}^{t}\int_{\Omega} g\,\partial_{t}u \,dx\,ds
+ \frac{1}{p}\Big(
\|u(t_{0})\|_{W^{1,p}_0(\Omega)}^{p}
+ \|u(t_{0})\|_{W^{s,p}_0(\Omega)}^{p}
\Big).
\end{align}
From the above inequality, by letting $t \to t_{0}^{+}$ and using \eqref{4equ32}, we obtain
\begin{equation}\label{4equ26}
\begin{aligned}
\lim_{t \to t_{0}^{+}} \|u(t)\|_{W^{1,p}_0(\Omega)}^{p}
+ \|u(t_{0})\|_{W^{s,p}_0(\Omega)}^{p}
&\leq \lim_{t \to t_{0}^{+}}
\left(
\|u(t)\|_{W^{1,p}_0(\Omega)}^{p}
+ \|u(t)\|_{W^{s,p}_0(\Omega)}^{p}
\right) \\
&\leq
\|u(t_{0})\|_{W^{1,p}_0(\Omega)}^{p}
+ \|u(t_{0})\|_{W^{s,p}_0(\Omega)}^{p}.
\end{aligned}
\end{equation}
Consequently, in view of \eqref{4equ28}, we infer that
\begin{equation}\label{4equ33}
\|u(t_{0})\|_{W^{1,p}_0(\Omega)}
= \lim_{t \to t_{0}^{+}} \|u(t)\|_{W^{1,p}_0(\Omega)}.
\end{equation}
Similarly,
\begin{equation}\label{4equ34}
\|u(t_{0})\|_{W^{s,p}_0(\Omega)}
= \lim_{t \to t_{0}^{+}} \|u(t)\|_{W^{s,p}_0(\Omega)}.
\end{equation}
Furthermore, $u(t)\to u(t_{0})$ in $W^{1,p}_0(\Omega)$ as $t\to t_{0}^{+}$, which implies that $u$ is right--continuous on $[0,T]$. Now, we turn to the proof of left continuity. Let $0<\eta\leq t-t_{0}$. Multiplying \eqref{PP1} (recall that $m=0$) by $ \tau_{\eta}(u)
:= \frac{1}{\eta} (u(x,\cdot+\eta) - u(x,\cdot))$ and integrating over $(t_{0},t) \times \mathbb{R}^{N}$, we obtain, by exploiting the convexity of the functional $ u \longmapsto \frac{1}{p}\Big(
\|u\|_{W^{1,p}_0(\Omega)}^{p}
+ \|u\|_{W^{s,p}_0(\Omega)}^{p}
\Big), $ the inequality
\begin{equation*}
\begin{aligned}
&\int_{t_{0}}^{t} \int_{\Omega} \partial_{t}u\, \tau_{\eta}(u)\,dx\,ds
+ \frac{1}{p\,\eta} \int_{t_{0}}^{t}
\left(
\|u(s+\eta)\|_{W^{1,p}_0(\Omega)}^{p}
- \|u(s)\|_{W^{1,p}_0(\Omega)}^{p}
\right)\,ds \\
&\quad
+ \frac{1}{p\,\eta} \int_{t_{0}}^{t}
\left(
\|u(s+\eta)\|_{W^{s,p}_0(\Omega)}^{p}
- \|u(s)\|_{W^{s,p}_0(\Omega)}^{p}
\right)\,ds
\geq
\int_{t_{0}}^{t} \int_{\Omega} g(x, s)\, \tau_{\eta}(u)\,dx\,ds.
\end{aligned}
\end{equation*}
This implies
\begin{equation}\label{4equ27}
\begin{aligned}
&\int_{t_{0}}^{t} \int_{\Omega} \partial_{t}u\, \tau_{\eta}(u)\,dx\,ds
+ \frac{1}{p\,\eta} \int_{t}^{t+\eta}
\left(
\|u(s)\|_{W^{1,p}_0(\Omega)}^{p}
+ \|u(s)\|_{W^{s,p}_0(\Omega)}^{p}
\right)\,ds \\
&\quad
- \frac{1}{p\,\eta} \int_{t_{0}}^{t_{0}+\eta}
\left(
\|u(s)\|_{W^{1,p}_0(\Omega)}^{p}
+ \|u(s)\|_{W^{s,p}_0(\Omega)}^{p}
\right)\,ds
\geq
\int_{t_{0}}^{t} \int_{\Omega} g(x, s)\, \tau_{\eta}(u)\,dx\,ds.
\end{aligned}
\end{equation}
By virtue of \eqref{4equ33} and \eqref{4equ34}, we have
\[
\lim_{\eta \to 0}
\frac{1}{\eta}
\int_{t}^{t+\eta}
\left(
\|u(s)\|_{W^{1,p}_{0}(\Omega)}^{p}
+ \|u(s)\|_{W^{s,p}_{0}(\Omega)}^{p}
\right)\,ds
=
\|u(t)\|_{W^{1,p}_{0}(\Omega)}^{p}
+ \|u(t)\|_{W^{s,p}_{0}(\Omega)}^{p},
\]
and
\[
\lim_{\eta \to 0}
\frac{1}{\eta}
\int_{t_{0}}^{t_{0}+\eta}
\left(
\|u(s)\|_{W^{1,p}_{0}(\Omega)}^{p}
+ \|u(s)\|_{W^{s,p}_{0}(\Omega)}^{p}
\right)\,ds
=
\|u(t_{0})\|_{W^{1,p}_{0}(\Omega)}^{p}
+ \|u(t_{0})\|_{W^{s,p}_{0}(\Omega)}^{p}.
\]
Passing to the limit as $\eta \to 0$ in \eqref{4equ27}, we obtain
\begin{align*}
&\int_{t_{0}}^{t}\int_{\Omega} \left(\partial_{t}u\right)^{2}\,dx\,ds
+ \frac{1}{p}\Big(
\|u(t)\|_{W^{1,p}_0(\Omega)}^{p}
+ \|u(t)\|_{W^{s,p}_0(\Omega)}^{p}
\Big) \\
&\qquad
\geq
\int_{t_{0}}^{t}\int_{\Omega} g(x, s)\,\partial_{t}u \,dx\,ds
+ \frac{1}{p}\Big(
\|u(t_{0})\|_{W^{1,p}_0(\Omega)}^{p}
+ \|u(t_{0})\|_{W^{s,p}_0(\Omega)}^{p}
\Big).
\end{align*}
In view of \eqref{4equ26}, the above inequality must in fact be an equality. Hence, we infer that $ u \in C([0,T]; W^{1,p}_{0}(\Omega)). $ Moreover, identity~\eqref{4equ31} follows immediately by taking $t_{0}=0$ in the above equality. We now consider the case $m>0$. In this setting, we take as a test function
\[
\frac{(u_n+\varepsilon)^{m+1}-(u_{n-1}+\varepsilon)^{m+1}}{(u_n+\varepsilon)^m}
\in W^{1,p}_0(\Omega)\cap L^{\infty}(\Omega)
\]
in the weak formulation~\eqref{4equ1} satisfied by $u_n$. Arguing as in \textbf{Step~1} and summing the resulting identity with respect to $n$ from $n_{0}'$ up to $n_{0}''<n_0$, we obtain
\begin{equation}\label{4equ35}
\begin{aligned}
&\Delta_t \sum_{n=n_{0}'}^{n_{0}''}
\int_\Omega \left(\frac{u^{m+1}_n-u^{m+1}_{n-1}}{\Delta_t}\right)^2 \,dx \\
&\quad
+ \frac{m + 1}{p}\Big(
\|u_{n_{0}''}\|_{W^{1,p}_0(\Omega)}^{p}
- \|u_{n_{0}'}\|_{W^{1,p}_0(\Omega)}^{p}
\Big)
+ \frac{m+1}{p}\Big(
\|u_{n_{0}''}\|_{W^{s,p}_0(\Omega)}^{p}
- \|u_{n_{0}'}\|_{W^{s,p}_0(\Omega)}^{p}
\Big) \\
&\qquad
\leq
\Delta_t \sum_{n=n_{0}'}^{n_{0}''}
\int_{\Omega}
g_{\Delta_t}
\left(\frac{u^{m+1}_n-u^{m+1}_{n-1}}{\Delta_t}\right)\,dx \\
&\qquad\quad
+ m\,\Delta_t \sum_{n=n_{0}'}^{n_{0}''}
\int_{\Omega}
d(x)^{-\gamma} u_n^{\delta-m}
\left(\frac{u^{m+1}_n-u^{m+1}_{n-1}}{\Delta_t}\right)\,dx .
\end{aligned}
\end{equation}
Let $t\in[t_{0},T]$ be arbitrary, and choose integers $n_{0}'$ and $n_{0}''$ such that $ t_{n_{0}'} = n_{0}'\Delta_t \to t_{0} $ and
$ t_{n_{0}''} = n_{0}''\Delta_t \to t, $ as  $ \Delta_t \to 0. $
Passing to the limit in~\eqref{4equ35}, we infer
\begin{equation}\label{4equ36}
\begin{aligned}
&\int_{t_{0}}^{t}\int_{\Omega}
\left(\partial_{t}u^{m+1}\right)^{2}\,dx\,ds
+ \frac{m+1}{p}\Big(
\|u(t)\|_{W^{1,p}_0(\Omega)}^{p}
+ \|u(t)\|_{W^{s,p}_0(\Omega)}^{p}
\Big) \\
&\qquad
\leq
\int_{t_{0}}^{t}\int_{\Omega}
g(x, s)\,\partial_{t}\big(u^{m+1}\big)\,dx\,ds
+ m \int_{t_{0}}^{t}\int_{\Omega}
d(x)^{-\gamma} u^{\delta-m}\,
\partial_{t}\big(u^{m+1}\big)\,dx\,ds \\
&\qquad\quad
+ \frac{m+1}{p}\Big(
\|u(t_{0})\|_{W^{1,p}_0(\Omega)}^{p}
+ \|u(t_{0})\|_{W^{s,p}_0(\Omega)}^{p}
\Big).
\end{aligned}
\end{equation}
Letting $t\to t_{0}^{+}$ in the above inequality and using~\eqref{4equ28} and~\eqref{4equ32}, we deduce that
\begin{equation}\label{4equ37}
\|u(t_{0})\|_{W^{1,p}_0(\Omega)}
=
\lim_{t \to t_{0}^{+}} \|u(t)\|_{W^{1,p}_0(\Omega)},
\end{equation}
and, analogously,
\begin{equation}\label{4equ38}
\|u(t_{0})\|_{W^{s,p}_0(\Omega)}
=
\lim_{t \to t_{0}^{+}} \|u(t)\|_{W^{s,p}_0(\Omega)}.
\end{equation}
Consequently, the mapping $ u : [0,T] \longrightarrow W^{1,p}_0(\Omega) $ is \textbf{right--continuous}. We now prove the left continuity. Let $0<\eta \leq t-t_{0}$ and multiply \eqref{PP1} by
\[
\tau_{\eta,\varepsilon}(u)
:= \frac{\big(u(x,\cdot+\eta)+\varepsilon\big)^{m+1}
	- \big(u(x,\cdot)+\varepsilon\big)^{m+1}}
{\eta\,(u+\varepsilon)^{m}}
\in L^{2}(Q_{T}) \cap L^{1}(0,T; W^{1,p}_{0}(\Omega)).
\]
Integrating over $(t_{0},t)\times \mathbb{R}^{N}$, and using the assumption $m<p - 1$, together with the Picone--type inequalities in both the local and nonlocal settings
\cite[Propositions~2.6 and~4.2]{brasco2014convexity} and the Dominated Convergence Theorem, we obtain
\begin{equation*}
\begin{aligned}
&\int_{t_{0}}^{t}\int_{\Omega}
u^{m}\partial_{t}u\,\tau_{\eta,0}(u)\,dx\,ds
+ \frac{m+1}{p\,\eta}\int_{t_{0}}^{t}
\left(
\|u(s+\eta)\|_{W^{1,p}_{0}(\Omega)}^{p}
- \|u(s)\|_{W^{1,p}_{0}(\Omega)}^{p}
\right)\,ds \\
&\quad
+ \frac{m+1}{p\,\eta}\int_{t_{0}}^{t}
\left(
\|u(s+\eta)\|_{W^{s,p}_{0}(\Omega)}^{p}
- \|u(s)\|_{W^{s,p}_{0}(\Omega)}^{p}
\right)\,ds \\
&\geq
\int_{t_{0}}^{t}\int_{\Omega}
g(x, s)\,u^{m}\tau_{\eta,0}(u)\,dx\,ds
+ m\int_{t_{0}}^{t}\int_{\Omega}
d(x)^{-\gamma}u^{\delta}\tau_{\eta,0}(u)\,dx\,ds .
\end{aligned}
\end{equation*}
Consequently, we infer
\begin{equation}\label{4equ45}
\begin{aligned}
&\int_{t_{0}}^{t}\int_{\Omega}
\partial_{t}u\,\tau_{\eta,0}(u)\,dx\,ds
+ \frac{m+1}{p\,\eta}\int_{t}^{t+\eta}
\left(
\|u(s)\|_{W^{1,p}_{0}(\Omega)}^{p}
+ \|u(s)\|_{W^{s,p}_{0}(\Omega)}^{p}
\right)\,ds \\
&\quad
- \frac{m+1}{p\,\eta}\int_{t_{0}}^{t_{0}+\eta}
\left(
\|u(s)\|_{W^{1,p}_{0}(\Omega)}^{p}
+ \|u(s)\|_{W^{s,p}_{0}(\Omega)}^{p}
\right)\,ds \\
&\geq
\int_{t_{0}}^{t}\int_{\Omega}
g(x, s)\,u^{m}\tau_{\eta,0}(u)\,dx\,ds
+ m\int_{t_{0}}^{t}\int_{\Omega}
d(x)^{-\gamma}u^{\delta}\tau_{\eta,0}(u)\,dx\,ds .
\end{aligned}
\end{equation}
By \eqref{4equ37} and \eqref{4equ38}, we have
\[
\lim_{\eta\to0^{+}}
\frac{1}{\eta}\int_{t}^{t+\eta}
\left(
\|u(s)\|_{W^{1,p}_{0}(\Omega)}^{p}
+ \|u(s)\|_{W^{s,p}_{0}(\Omega)}^{p}
\right)\,ds
=
\|u(t)\|_{W^{1,p}_{0}(\Omega)}^{p}
+ \|u(t)\|_{W^{s,p}_{0}(\Omega)}^{p},
\]
\[
\lim_{\eta\to0^{+}}
\frac{1}{\eta}\int_{t_{0}}^{t_{0}+\eta}
\left(
\|u(s)\|_{W^{1,p}_{0}(\Omega)}^{p}
+ \|u(s)\|_{W^{s,p}_{0}(\Omega)}^{p}
\right)\,ds
=
\|u(t_{0})\|_{W^{1,p}_{0}(\Omega)}^{p}
+ \|u(t_{0})\|_{W^{s,p}_{0}(\Omega)}^{p}.
\]
Letting $\eta\to0^{+}$ in \eqref{4equ45}, we deduce
\begin{equation*}
\begin{aligned}
&\int_{t_{0}}^{t}\int_{\Omega}
\left(\partial_{t}u^{m+1}\right)^{2}\,dx\,ds
+ \frac{m+1}{p}\Big(
\|u(t)\|_{W^{1,p}_{0}(\Omega)}^{p}
+ \|u(t)\|_{W^{s,p}_{0}(\Omega)}^{p}
\Big) \\
&\qquad
\geq
\int_{t_{0}}^{t}\int_{\Omega}
g(x, t)\,\partial_{t}\big(u^{m+1}\big)\,dx\,ds
+ m\int_{t_{0}}^{t}\int_{\Omega}
d(x)^{-\gamma}u^{\delta-m}\,
\partial_{t}\big(u^{m+1}\big)\,dx\,ds \\
&\qquad\quad
+ \frac{m+1}{p}\Big(
\|u(t_{0})\|_{W^{1,p}_{0}(\Omega)}^{p}
+ \|u(t_{0})\|_{W^{s,p}_{0}(\Omega)}^{p}
\Big).
\end{aligned}
\end{equation*}
In view of \eqref{4equ36}, the above inequality is in fact an equality. Consequently, we get that $ u \in C([0,T]; W^{1,p}_{0}(\Omega)). $ Moreover, identity~\eqref{4equ31} follows by taking $t_{0}=0$ in the above equality.
\end{proof}

\begin{proof}[\textbf{Proof of Theorem}~\ref{thm6}]
Let $m>0$ and let $u$ be the weak solution of problem~\eqref{PP1}, in the sense of Definition~\ref{definition1}, obtained in Theorem~\ref{2thm1}.  
We define $v:=u^{m+1}$.  
By testing the weak formulation~\eqref{weakform} with $\varphi=\Psi v^{-m}$, where
\[
\Psi \in L^{\infty}\bigl(0,T;L^{\infty}_{d^{m+1}}(\Omega)\bigr),
\qquad
|\nabla \Psi|d(\cdot)^{-m}\in L^{1}\bigl(0,T;L^{p}(\Omega)\bigr),
\]
it is readily seen that $v$ is a weak solution of problem~\eqref{0equ4} in the following sense:
\begin{equation}\label{4equ40}
\begin{aligned}
&\int_0^{t}\!\!\int_{\Omega} \partial_t v\,\Psi \,dx\,ds
+ \int_0^{t}\!\!\int_{\Omega}
\left|\nabla\!\left(v^{\frac{1}{m+1}}\right)\right|^{p-1}
\nabla\!\left(v^{\frac{1}{m+1}}\right)
\cdot
\nabla\!\left(\frac{\Psi}{v^{\frac{m}{m+1}}}\right)
\,dx\,ds \\[1ex]
&+ \int_0^{t}\!\!\iint_{\mathbb{R}^{2N}}
\frac{
	\left[ v^{\frac{1}{m+1}}(x)-v^{\frac{1}{m+1}}(y)\right] ^{p-1}
}
{|x-y|^{N+sp}} 
\left[
\left(\frac{\Psi}{v^{\frac{m}{m+1}}}\right)(x)
-
\left(\frac{\Psi}{v^{\frac{m}{m+1}}}\right)(y)
\right]
\,dx\,dy\,ds \\[1ex]
&= \int_0^{t}\!\!\int_{\Omega} g(x,s)\,\Psi \,dx\,ds
+ m\int_0^{t}\!\!\int_{\Omega}
d(x)^{-\gamma}
v^{\frac{\delta-m}{m+1}}\,\Psi
\,dx\,ds .
\end{aligned}
\end{equation}
	Moreover, by the regularity properties of the solution $u$, it follows that for any $1\le r<\infty$, $ v\in C\bigl([0,T];L^{r}(\Omega)\bigr), $ and there exists a constant $C>0$ such that, for every $t\in[0,T]$,
	\[
	C^{-1}d(x)\le v^{1/m+1}(t,x)\le C\,d(x)^{\alpha}
	\quad \text{a.e. in }\Omega,\qquad \forall\,\alpha\in[s,1).
	\]
Now, let $v_{1},v_{2}$ be two solutions of problem~\eqref{0equ4}.  
Define  $ u_{i}:=v_{i}^{1/m+1}, \,i=1,2, $ and choose  $ \Psi_{i}:=\varphi_{i}v_{i}^{m}, \,
\varphi_{i}\in L^{\infty}\bigl(0,T;L^{\infty}_{d}(\Omega)\bigr) \cap L^{1}\bigl(0,T;W_{0}^{1,p}(\Omega)\bigr), \, i=1,2. $ Substituting $\Psi_{1}$ and $\Psi_{2}$ into~\eqref{4equ40}, we deduce that $u_{1}$ and $u_{2}$ satisfy the weak formulation~\eqref{weakform} with test functions  $ \varphi_{1}, \varphi_{2} \in L^{\infty}\bigl(0,T;L^{\infty}_{d}(\Omega)\bigr) \cap L^{1}\bigl(0,T;W_{0}^{1,p}(\Omega)\bigr). $ Since $u_{1}$ and $u_{2}$ also verify~\eqref{4equ41}, for any $\varepsilon\in(0,1)$ we define the test functions
\[
 \varphi_{1}:=\frac{(u_{1}+\varepsilon)^{m+1}-(u_{2}+\varepsilon)^{m+1}}{(u_{1}+\varepsilon)^{m}}, 
\qquad
 \varphi_{2}:=\frac{(u_{2}+\varepsilon)^{m+1}-(u_{1}+\varepsilon)^{m+1}}{(u_{2}+\varepsilon)^{m}},
\]
which belong to $ L^{\infty}\bigl(0,T;L^{\infty}_{d}(\Omega)\bigr) \cap L^{1}\bigl(0,T;W_{0}^{1,p}(\Omega)\bigr). $ Proceeding as in \textbf{Step~2} of the proof of Theorem~\ref{2thm1}, we derive~\eqref{equ34}, which establishes the desired uniqueness result $ v_{1}=v_{2}. $
\end{proof}
\begin{proof}[\textbf{Proof of Theorem}~\ref{thereom1}]
	Let $u_0 \in \overline{D^{\star}(\mathcal{T}_0^{\star})}^{\,L^{\infty}(\Omega)}$, let $\lambda>0$, and let $g_{1},g_{2}\in L^{\infty}(\Omega)$.  
	By Theorem~\ref{theorem3}, there exist unique solutions $ 	u_{1},u_{2}\in W^{1,p}_{0}(\Omega)\cap C^{1,\xi}(\overline{\Omega})\cap \mathcal{M}^{1}_{1,\alpha}(\Omega), $ for some $\xi\in(0,1)$ and for every $\alpha\in[s,1)$ with $\alpha\neq \frac{ps}{p-1}$, to the following problems, respectively:
	\begin{equation}\label{4equ123}
	\begin{aligned}
	u_{1}+\lambda \mathcal{T}_0^{\star}u_{1}&=g_{1},\\
	u_{2}+\lambda \mathcal{T}_0^{\star}u_{2}&=g_{2},
	\end{aligned}
	\end{equation}
	Clearly, $u_{1},u_{2}\in D^{\star}(\mathcal{T}_0^{\star})$. Set
	\[
	w:=\big(u_{1}-u_{2}-\|g_{1}-g_{2}\|_{L^{\infty}(\Omega)}\big)^{+}.
	\]
	Using $w$ as a test function in the weak formulation of \eqref{4equ123}, we obtain
	\begin{align*}
	&\int_{\Omega} w^{2}\,dx
	+ \lambda\int_{\Omega}
	\big(|\nabla u_{1}|^{p-2}\nabla u_{1}-|\nabla u_{2}|^{p-2}\nabla u_{2}\big)\cdot \nabla w\,dx \\
	&\quad + \lambda \iint_{\mathbb{R}^{2N}}
	\frac{\Big(\,[u_{1}(x)-u_{1}(y)]^{p-1}-[u_{2}(x)-u_{2}(y)]^{p-1}\Big)\big(w(x)-w(y)\big)}
	{|x-y|^{N+sp}}\,dx\,dy
	\le 0 .
	\end{align*}
	By applying the elementary inequalities established in~\cite{farina2013monotonicity} (see also~\cite[Section~10]{lindqvist2017notes}) for the local term, together with the computations in the proof of Lemma~3.5 in~\cite{giacomoni2018existence} for the nonlocal term, we deduce that
	\[
	u_{1}-u_{2}\le \|g_{1}-g_{2}\|_{L^{\infty}(\Omega)}.
	\]
	By reversing the roles of $u_{1}$ and $u_{2}$, we obtain
	\[
	\|u_{1}-u_{2}\|_{L^{\infty}(\Omega)}
	\le \|g_{1}-g_{2}\|_{L^{\infty}(\Omega)}.
	\]
	This shows that the operator $\mathcal{T}_0^{\star}$ is maximal accretive in $L^{\infty}(\Omega)$. Assertion~(ii) of Theorem~\ref{thereom1} then follows immediately from~\cite[Theorem~4.2, p.~130]{barbu1976nonlinear}.  
	Assertion~(iii) can be proved by suitably adapting the arguments used in the proof of~\cite[Theorem~4.4, p.~141]{barbu1976nonlinear}; see also~\cite[Proof of Proposition~0.1]{badra2012singular}.
\end{proof}

\subsection{Asymptotic Analysis of Problem~\eqref{P1}}

\begin{proof}[\textbf{Proof of Theorem}~\ref{the3}]
We follow the approach presented in the proof of \cite[Theorem 3.10]{giacomoni2018existence}, suitably adapted to accommodate both the local and nonlocal operators. In particular, we consider two distinct cases.\\
\textbf{Case 1:} $g = g_{\infty}$.  
Let $m \ge 0$ and consider the family $\{S(t) : t \ge 0\}$ defined as 
\[ 
\begin{cases} 
u(t) = S(t)u_0, & m = 0,\ u_0 \in W^{1,p}_0(\Omega) \cap \mathcal{M}^{1}_{\alpha',\alpha'}(\Omega),\\[1mm]
 v(t) = S(t)v_0, & m > 0,\ v_0 \in \dot{\mathbf{V}}_{+}^{\, m+1} \cap \mathcal{M}^{1/(m+1)}_{\alpha',\alpha'}(\Omega), 
 \end{cases}
  \]
   where $\alpha' \in (0,s)$ is defined by \eqref{1equ2}, and $w := u$ (resp.\ $v$) denotes the unique solution given by Theorem~\ref{2thm1} (resp.\ Theorem~\ref{thm6}).Then, the family $\{S(t)\}_{t \ge 0}$ generates a contraction semigroup on $L^2(\Omega)$. Indeed, by the uniqueness and the aforementioned properties of the solution $w$, it follows that for all $t,s \ge 0$,
\begin{equation}\label{equ49_combined}
S(t+s)w_0 = S(t)S(s)w_0, \qquad S(0)w_0 = w_0,
\end{equation}
for any initial datum $w_0 = u_0$ or $v_0$.  Moreover, owing to the regularity property $w \in C([0,T];L^r(\Omega))$ for every $1 \le r < \infty$, the mapping $t \mapsto S(t)w_0$ is strongly continuous in $L^2(\Omega)$. In addition, the operator $S(t)$ is $T$-accretive in $L^2(\Omega)$.  In fact, it suffices to establish the case $m > 0$, since the case $m = 0$ can be proved analogously.   For $m > 0$, let $u$ denote the solution of \eqref{PP1} corresponding to $g = g_{\infty}$ with initial data $u_0$. Hence, we have 
\[
v(t) = u(t)^{m+1} = S(t)v_0, \quad \text{with } v_0 = u_0^{m+1}.
\]  
Let $\underline{u} := u_k$ be the solution of problem~\eqref{4equ6}, and let $\overline{u} := \tilde{u}_M$ denote the solution of problem~\eqref{4equ100}. Then, both $\underline{u}$ and $\overline{u}$ belong to $\mathcal{M}^{1}_{\alpha',\alpha'}(\Omega)$. Moreover, for $k>0$ sufficiently small and $K>0$ sufficiently large, $\underline{u}$ and $\overline{u}$ are, respectively, a sub-solution and a super-solution to problem~\eqref{P7} with $b=g_{\infty}$, satisfying $ \underline{u} \leq u_0 \leq \overline{u}. $ We then define
\[
\underline{v}(t) := S(t)\,\underline{u}^{\,m+1}, 
\qquad 
\overline{v}(t) := S(t)\,\overline{u}^{\,m+1},
\]
which correspond to the solutions of problem~\eqref{0equ4}. Consequently, the functions $\underline{v}=\underline{u}^{\,m+1}$ and $\overline{v}=\overline{u}^{\,m+1}$ are generated through the iterative scheme~\eqref{4equ1} with initial data $u_0=\underline{u}$ and $u_0=\overline{u}$, respectively.  By virtue of the comparison principle, the mappings $t \mapsto \underline{v}(t)$ and $t \mapsto \overline{v}(t)$ are nondecreasing and nonincreasing, respectively.  On the other hand, \eqref{equ34} ensures that for any $t \geq 0$,
\begin{equation}\label{equ50}
\underline{u} \leq \underline{v}(t) \leq v(t) \leq \overline{v}(t) \leq \overline{u}.
\end{equation}  
We set 
\[
\underline{v}_{\infty} = \lim_{t \to \infty} \underline{v}(t), \qquad \overline{v}_{\infty} = \lim_{t \to \infty} \overline{v}(t).
\]  
It then follows from \eqref{equ49_combined} that
\begin{align*}
\underline{v}_{\infty} &= \lim_{s \to \infty} S(t+s) \underline{u}^{m+1} 
= S(t) \lim_{s \to \infty} \bigl(S(s) (\underline{u}^{m+1})\bigr) = S(t) \underline{v}_{\infty}, \\
\overline{v}_{\infty} &= \lim_{s \to \infty} S(t+s) \overline{u}^{m+1} 
= S(t) \lim_{s \to \infty} \bigl(S(s) (\overline{u}^{m+1})\bigr) = S(t) \overline{v}_{\infty}.
\end{align*}  
This shows that $\underline{v}_{\infty}$ and $\overline{v}_{\infty}$ are stationary solutions of \eqref{equu22} with $b = g_{\infty}$. By uniqueness, we deduce that
\[
v_{\mathrm{stat}} := \underline{v}_{\infty} = \overline{v}_{\infty},
\]  
where $v_{\mathrm{stat}}$ denotes the stationary solution of \eqref{0equ4}. Consequently, using \eqref{equ50} together with the Dominated Convergence Theorem, we obtain
\[
\| v(t) - v_{\mathrm{stat}} \|_{L^2(\Omega)} \to 0 \quad \text{as } t \to \infty.
\]  
Furthermore, applying \eqref{equ50} and the standard interpolation inequality, valid for $2 < r < \infty$,
\[
\| \cdot \|_{L^r(\Omega)} \leq \| \cdot \|_{L^{\infty}(\Omega)}^{\alpha} \| \cdot \|_{L^2(\Omega)}^{1-\alpha},
\]  
we conclude that the convergence extends to all $r \geq 1$.\\[4pt]
\noindent\textbf{Case 2:} $g \not\equiv g_{\infty}$.   From \eqref{equ51},  there exists $t_0 > 0$ sufficiently large such that, for any $t \geq t_0$,
\[
\exp(t - t_{0})\| g(t, \cdot) - g_{\infty} \|_{L^2(\Omega)} \leq M \quad \text{for some } M>0.
\]  
Let $T > 0$ and let $u$ be the solution of problem \eqref{PP1} obtained by Theorem Theorem~\ref{2thm1}  with $g$ and initial data $u_0 = v_0^{1/m+1}$, and set $v = u^{m+1}$. Since $u$ satisfies \eqref{4equ41}, we define
\[
\tilde{v}(t) = S(t+t_0) v_0 = S(t) v(t_0).
\]  
Then, by \eqref{equ34} and the uniqueness argument, for any $t > 0$,
\begin{align*}
\| v(t+t_0, \cdot) - \tilde{v}(t, \cdot) \|_{L^2(\Omega)}
&\leq \int_0^{t} \| g(s+t_0, \cdot) - g_{\infty} \|_{L^2(\Omega)} ds \\
&\leq M \int_{t_0}^{+\infty} \exp(-t+ t_{0})ds \leq M .
\end{align*}  
By \textbf{Case 1}, we have $\tilde{v}(t) \to v_{\text{stat}}$ in $L^2(\Omega)$ as $t \to \infty$. Therefore, we deduce
\[
\| v(t) - v_{\text{stat}} \|_{L^2(\Omega)} \to 0 \quad \text{as } t \to \infty.
\]  
Using again the interpolation inequality above, the proof of Theorem \ref{the3} is complete.
\end{proof}

\section*{Statements and Declarations}
\subsection*{Ethics approval and consent to participate}
Not applicable.
\subsection*{Funding}
Not applicable
\subsection*{Availability of data and materials}
Not applicable
\subsection*{Conflict of interest}
The author declares that there is no conflict of interest.

\end{document}